\documentclass[reqno,10pt]{amsart}

\usepackage{amsmath}
\usepackage{amsfonts,amsthm,amssymb}
\usepackage{mathabx}
\usepackage{indentfirst}
\usepackage{graphicx}
\usepackage{graphics}
\usepackage{pict2e}
\usepackage{epic}
\numberwithin{equation}{section}
\usepackage[margin=1in]{geometry}
\usepackage{epstopdf} 
\usepackage[colorlinks,linkcolor=blue]{hyperref}

\usepackage[capitalise,noabbrev]{cleveref}

\usepackage{cleveref}  

\crefformat{equation}{(#2#1#3)}
\crefrangeformat{equation}{(#3#1#4)--(#5#2#6)}
\newsavebox{\dummybox}

\newcommand{\fullrangefour}[4]{%
	(\ref{#1})--(\ref{#4})
	\sbox{\dummybox}{\ref{#2}\ref{#3}}
}

\usepackage{todonotes}
\makeatletter
\def\thanks#1{\protected@xdef\@thanks{\@thanks
		\protect\footnotetext{#1}}}
\makeatother

\usepackage[normalem]{ulem}

\allowdisplaybreaks

\theoremstyle{plain}
\newtheorem{Thm}{Theorem}[section]
\newtheorem*{Thm*}{Theorem}
\newtheorem{Lem}[Thm]{Lemma}
\newtheorem{Cor}[Thm]{Corollary}
\newtheorem{Prop}[Thm]{Proposition}

\newtheorem{Def}[Thm]{Definition}

\newtheorem{Rem}[Thm]{Remark}
\newtheorem{?}[Thm]{Problem}

\newcommand{\aaa}{\mathbf{a}}
\newcommand{\bb}{\mathbf{b}}
\newcommand{\cc}{\mathbf{c}}

\newcommand{\Ff}{\mathbf{F}}
\newcommand{\Ee}{\mathbf{E}}
\newcommand{\Pp}{\mathbf{P}}

\newcommand{\p}{\partial}
\newcommand{\pt}{\partial_t}

\newcommand{\R}{\mathbb{R}}
\newcommand{\T}{\mathbf{T}}
\newcommand{\E}{\mathbb{E}}

\newcommand{\Z}{\mathbf{Z}}
\newcommand{\D}{\mathbf{D}}
\newcommand{\Do}{\mathbf{D}_0}
\newcommand{\Dn}{\mathbf{D}_{\neq}}
\newcommand{\Et}{\tilde{\mathbb{E}}}

\newcommand{\Eb}{\bar{\mathbb{E}}}
\newcommand{\Ec}{\check{\mathbb{E}}}
\newcommand{\Em}{\mathring{\mathbb{E}}}

\newcommand{\Tb}{\bar{\theta}}
\newcommand{\Tc}{\check{\theta}}

\newcommand{\varphim}{\mathring{\varphi}}

\newcommand{\Wbj}{\bar{\mathbf{W}}^{\#}}
\newcommand{\W}{\mathbf{W}}
\newcommand{\Abj}{\bar{\mathcal{A}}^{\#}}
\newcommand{\Bbj}{\bar{\mathcal{B}}^{\#}}
\newcommand{\Rm}{\mathcal{R}}
\newcommand{\vm}{\mathbf{v}}
\newcommand{\Vm}{\mathbf{V}}

\newcommand{\Vmc}{\check{\mathbf{V}}}

\newcommand{\Psic}{\check{\Psi}}
\newcommand{\Phic}{\check{\Phi}}

\newcommand{\Hc}{\check{H}}
\newcommand{\Dt}{\tilde{D}}
\newcommand{\Mt}{\tilde{M}}

\newcommand{\Mb}{\bar{M}}
\newcommand{\Pb}{\bar{P}}
\newcommand{\Wb}{\bar{W}}

\newcommand{\U}{\mathbf{U}}

\newcommand{\uf}{u}

\newcommand{\f}{\mathbf{f}}

\newcommand{\Torus}{\mathbb{T}}

\newcommand{\ra}{\rangle}
\newcommand{\la}{\langle}

\newcommand{\dv}{\text{div}}

\newcommand{\B}{\mathbf{B}}

\newcommand{\rhob}{\bar{\rho}}
\newcommand{\rhoc}{\check{\rho}}
\newcommand{\rhot}{\tilde{\rho}}

\newcommand{\rhom}{\mathring{\rho}}

\newcommand{\ub}{\bar{u}}
\newcommand{\uc}{\check{u}}
\newcommand{\ut}{\tilde{u}}

\newcommand{\um}{\mathring{u}}

\newcommand{\mb}{\bar{m}}
\newcommand{\mc}{\check{m}}
\newcommand{\mt}{\tilde{m}}

\newcommand{\mm}{\mathring{m}}

\newcommand{\thetat}{\tilde{\theta}}
\newcommand{\thetab}{\bar{\theta}}
\newcommand{\Thetab}{\bar{\Theta}}

\newcommand{\thetam}{\mathring{\theta}}

\newcommand{\deltab}{\bar{\delta}}
\newcommand{\deltac}{\check{\delta}}

\newcommand{\phim}{\mathring{\phi}}
\newcommand{\psim}{\mathring{\psi}}
\newcommand{\zetam}{\mathring{\zeta}}

\newcommand{\abs}[1]{\left\lvert#1\right\rvert}

\newcommand{\norm}[1]{\left\lVert#1\right\rVert}

\begin{document}
	
	\title[Stability and decay rate of planar entropy wave for Landau equation]{Nonlinear stability and optimal decay rate of the planar entropy wave for Landau equation}
	
	\author[R.-J. Duan]{Renjun Duan}
	\address[R.-J. Duan]{Department of Mathematics, The Chinese University of Hong Kong, Shatin, Hong Kong, China}
	\email{rjduan@math.cuhk.edu.hk}

	\author[F.~Huang]{Feimin Huang}
	\address[F.~Huang]{Academy of Mathematics and Systems Science, Chinese Academy of Sciences, Beijing 100190, China, and School of Mathematical Sciences, University of Chinese Academy of Sciences, Beijing 100049, China.}
	\email{fhuang@amt.ac.cn}

	\author[R.~Li]{Rui Li}
	\address[R.~Li]{Department of Mathematics, The Chinese University of Hong Kong, Shatin, Hong Kong, China.}
	\email{ruili001@cuhk.edu.hk}

	\author[L.~Xu]{Lingda Xu}
	\address[L.~Xu]{Department of Applied Mathematics, The Hong Kong Polytechnic University, Hong Kong, China.}
	\email{lingda.xu@polyu.edu.hk}
	
	\begin{abstract}
		
		This paper investigates the nonlinear asymptotic stability and optimal decay rates of entropy waves for the Landau equation with physically realistic Coulomb interactions under general perturbations. We consider the infinite channel domain $\mathbb{R} \times \mathbb{T}^2$ in three dimensions, which possesses both one-dimensional and high-dimensional   characteristics, thereby posing  two primary analytical challenges: (i) for the one-dimensional Landau equation with Coulomb potentials, the absence of a spectral gap in the linearized operator has obstructed the derivation of wave pattern stability results with explicit time decay rates; (ii) in the study of contact discontinuities, the multidimensional case fundamentally differs from the one-dimensional setting due to lack of a key structural condition. We develop effective analytical approaches to treat those difficulties. To overcome the weak dissipation caused by the spectral gap deficiency, we implement a time-velocity interpolation  technique to enhance dissipation and simultaneously construct coupled diffusion waves to compensate for the loss of time decay. To address the missing structural condition in higher dimensions, a novel transformation is introduced to recover the two-sided structural condition within the perturbation system. By developing a derivative-level transformation and a refined energy framework, we restore the necessary structural condition for derivatives, establish the optimal decay of the solution, and prove the stretched exponential decay of its non-zero modes. 
		
	\end{abstract}
	
	\date{\today}
	
	\subjclass[2020]{35Q84, 35Q20, 35L65, 35B40, 35B20}
	
	
	\keywords{Landau equation, planar entropy wave,  nonlinear stability, optimal decay rates, stretched exponential decay}
	
	\maketitle
	
	\setcounter{tocdepth}{2}
	\tableofcontents
	\thispagestyle{empty}
	
	\section{Introduction}
\subsection{Formulation of the problem}
The Landau equation in three spatial dimensions reads
\begin{align}\label{equ-landau}
	&\p_t{f} + {\xi }\cdot\nabla_x{f} =  Q\left( {f,f}\right) ,
\end{align}
where \(f\left( {t,x,\xi }\right)  \geq  0\) is the density distribution function for the particles with velocity \(\xi  = \left( {{\xi }_{1},{\xi }_{2},{\xi }_{3}}\right)  \in  {\R}^{3}\) at time \(t \geq  0\) and position \(x=(x_1,x_2,x_3) \in \Omega:= \R \times\Torus^2\), where $\Torus:=\R/\mathbb{Z}$. Through the paper, we only consider the Landau collision operator for physically most relevant Coulomb potentials:
\begin{align}\label{landau-operator}
	Q\left( {f,g}\right) \left( \xi \right)  = \mathop{\sum }\limits_{{i,j = 1}}^{3}{\partial }_{\xi_i}{\int }_{{\R}^{3}}\tilde{\phi}^{ij}\left( {\xi  - {\xi }^{\prime }}\right) \left\{  {f\left( {\xi }^{\prime }\right) {\partial }_{\xi_j}g\left( \xi \right)  - {\partial }_{\xi_j}f\left( {\xi }^{\prime }\right) g\left( \xi \right) }\right\}  d{\xi }^{\prime },
\end{align}
where the collision kernel is given by the nonnegative definite matrix-valued function \(\tilde\phi(z)=(\tilde\phi(z))_{1\leq i,j\leq 3}\) with $z=\xi  - {\xi }^{\prime }$ taking the form
\begin{align}\notag
	\tilde{\phi}^{ij}\left( z\right)  = \left\{  {{\delta }^{ij} - \frac{{z }_{i}{z }_{j}}{{\left| z \right| }^{2}}}\right\}  {\left| z \right| }^{\gamma  + 2},\;\gamma  =  - 3.
\end{align}
We also call very soft potentials in case $-3\leq \gamma<-2$ including the Coulomb potentials given above. Since their derivation, the Boltzmann and Landau equations have been closely linked to fluid dynamics, especially the Euler and Navier-Stokes equations. Early attempts to derive fluid equations from kinetic theory trace back to Maxwell and Boltzmann, who used intuitive, somewhat ad hoc arguments. To formalize this, Hilbert introduced a systematic expansion method in 1912, later refined independently by Enskog (1916) and Chapman (1917); see the recent breakthrough \cite{DHM} by Deng-Hani-Ma.

The Riemann problem, first studied in the 1860s for one-dimensional isentropic flow, is foundational to hyperbolic conservation laws. Its solution captures the local and global nonlinear structure of such systems. The Euler equations feature three basic waves: compressive shocks, expansive rarefactions, and contact discontinuities. The last one splits into entropy waves (discontinuous entropy) and vortex sheets (discontinuous shear velocity), both with a diffusive structure. Consequently, analyzing the hydrodynamic limit of the Boltzmann or Landau equation for a full Riemann solution --- the  superposition of all three wave patterns --- remains  a major mathematical challenge in kinetic theory; see \cite{HWWY} and references therein.

In this paper, we are interested in the Cauchy problem related to the basic wave patterns for \eqref{equ-landau} supplemented with the following initial condition
\begin{equation}\label{ini}
	f(0,x,\xi) = f_0(x,\xi).
\end{equation}
We assume that $f_0(x,\xi)$ connects two different global Maxwellians at the far fields of the first spatial direction \( x_1 = \pm\infty \):
\begin{equation}\notag
	f_0(x,\xi) \to \frac{\rho_\pm}{(2\pi R \theta_\pm)^{3/2}} \exp\left( -\frac{|\xi - u_\pm|^2}{2 R \theta_\pm} \right)
	\quad \text{as}\quad x_1 \to \pm\infty,
\end{equation}
where \( \rho_\pm > 0 \), \( \theta_\pm > 0 \), and  \( u_\pm = (u_{1\pm}, 0, 0)  \) with \( u_{1\pm} \) are constants independent of $(x_2,x_3)\in \Torus^2$, and \( R>0 \) is the gas constant. To the end, we study only the entropy wave for which the far-field data above satisfy 
\begin{align}\label{eqs11.se1.1}
	\rho_-\neq\rho_+\qquad\qquad u_{1-}=u_{1+},\qquad\quad R\rho_+\theta_+=p_+=p_-=R\rho_-\theta_-.
\end{align}
Next, we shall start from the micro-macro decomposition of \eqref{equ-landau}.

\subsection{The micro-macro decomposition}

It is well known  that the collision invariants \({\tilde{\psi}}_{\alpha }\left( \xi \right)\) for the Landau collision operator \eqref{landau-operator} are given by
\begin{align}\notag
	\tilde{\psi }_{0}\left( \xi \right)  =  1, \quad\quad\tilde{\psi }_{i}\left( \xi \right)  =  {\xi }_{i},\;\text{ for }i = 1,2,3, \quad\quad  \tilde{\psi }_{4}\left( \xi \right)  =  \frac{1}{2}{\left| \xi \right| }^{2}, 
\end{align}
satisfying
\begin{align}\notag
	{\int }_{{\R}^{3}}\tilde{\psi }_{i}\left( \xi \right) Q\left( {f,f}\right) {d\xi } = 0,\text{ for }i = 0,1,2,3,4.
\end{align}
Motivated by \cite{Liu-Yang-Yu}, we decompose the solution of Landau equation \eqref{equ-landau} as
\begin{align}\label{assumption-1}
	f\left( {t,x,\xi }\right)  = M_{[\rho,u, \theta]}\left( {t,x,\xi }\right)  +  G\left( {t,x,\xi }\right) ,
\end{align}
where the local Maxwellian \(M_{[\rho,u,\theta]}\) and \(G\) represent the macroscopic and microscopic component in the solution, respectively. Precisely,  the local Maxwellian \(M_{[\rho,u,\theta]}\) is defined as 
\begin{align}\label{2025.6.14-1}
	M =  {M}_{\left[ \rho , u, \theta \right] }\left( t,x,\xi \right):=  \frac{\rho \left( t,x \right) }{\sqrt{\left( 2\pi R \theta \left( t,x\right) \right) ^{3}}} \exp{ \left( -\frac{\left| \xi  - u \left( t,x \right) \right| ^{2}}{2R \theta \left( t,x\right)}\right)},  
\end{align}
in terms of five conserved quantities mass density \(\rho \left( {t,x}\right)\), momentum \(m\left( {t,x}\right)  :=(\rho u) \left( {t,x}\right)\), and energy density \(\mathbb{E}\left( {t,x}\right)  :=  \rho\left( {t,x}\right) \left( {e\left( {t,x}\right)  + \frac{1}{2}{\left| u\left( t,x\right) \right| }^{2}}\right)\) given by
\begin{align*}
	\left\{ \begin{aligned}
		&\rho \left( {t,x}\right)  :=  {\int }_{{\R}^{3}}f\left( {t,x,\xi }\right) {d\xi }, \\ 
		&{m}_{i}\left( {t,x}\right)  :=  {\int }_{{\R}^{3}}{\tilde{\psi} }_{i}\left( \xi \right) f\left( {t,x,\xi }\right) {d\xi },\text{ for }i = 1,2,3, \\  
		&\mathbb{E}\left( {t,x}\right)  = \left[  \rho \left( e \left( {t,x}\right)  + \frac{1}{2}{\left| u \left( t,x\right) \right| }^{2} \right) \right]  :=  {\int }_{{\R}^{3}}{\tilde{\psi} }_{4}\left( \xi \right) f\left( {t,x,\xi }\right) {d\xi }.  
	\end{aligned} \right.
\end{align*}
Here \(\theta \left( {t,x}\right)\) is the temperature which is related to the internal energy \(e\) by \(e = \frac{3}{2}{R\theta }\), and \(u \left( {t,x}\right)\) is the fluid velocity. 
In the sequel, the inner product of two functions \(h\) and \(g\) in \({L}^{2}( {\R}^{3}_\xi)\) or \({L}^{2}( \Omega\times \R^{3}_{\xi })\)  with respect to an arbitrary Maxwellian \(\widetilde{M}\) is defined by
\begin{align}\notag
	\langle h,g \rangle_{\widetilde{M}} =  {\int }_{{\R}^{3}}\frac{1}{\widetilde{M}}h\left( \xi \right) g\left( \xi \right) {d\xi },\quad {\left( h,g\right) _{\widetilde{M}} =  \int_{\Omega}\int_{\R^{3}}\frac{1}{\widetilde{M}}h\left( \xi \right) g\left( \xi \right) {d\xi dx}.}
\end{align}
If \(\widetilde{M}\) is taken as the local Maxwellian \(M\) associated with $f(t,x,\xi)$ as in \eqref{2025.6.14-1}, then the macroscopic $L^2_\xi$ function space with respect to  the corresponding inner product is spanned by the following  five pairwise orthonormal functions
\begin{align}\label{chi-1-1-1}
	\left\{  \begin{array}{l} {\chi }_{0}\left( \xi \right)  =  \frac{1}{\sqrt{\rho }}M, \qquad\qquad\qquad\qquad
		{\chi }_{i}\left( \xi \right)  =  \frac{{\xi }_{i} - {u}_{i}}{\sqrt{R\theta \rho }}M,\text{ for }i = 1,2,3, \\ 
		{\chi }_{4}\left( \xi \right)  =  \frac{1}{\sqrt{6\rho} }\left( {\frac{{\left| \xi  -u\right| }^{2}}{R\theta } - 3}\right) M, \qquad
		\left\langle  {{\chi }_{i},{\chi }_{j}}\right\rangle   = {\delta }_{ij},\;i,j = 0,1,2,3,4. \end{array}\right.
\end{align}
Using these five base functions, we define the macroscopic projection \({P}_{0}\) and  microscopic projection \({P}_{1}\) as follows:
\begin{align}\label{2025-2025}
	{P}_{0}h =  \mathop{\sum }\limits_{{j = 0}}^{4}\left\langle  {h,{\chi }_{j}}\right\rangle  {\chi }_{j},\qquad{P}_{1}h =  h - {P}_{0}h.
\end{align}
The projections \({P}_{0}\) and \({P}_{1}\) are orthogonal and satisfy
\begin{align}\notag
	{P}_{0}{P}_{0} = {P}_{0},\;{P}_{1}{P}_{1} = {P}_{1},\;{P}_{1}{P}_{0} = {P}_{0}{P}_{1} = 0.
\end{align}
A function \(h\left( \xi \right)\) is called microscopic if
\begin{align}\notag
	{\int }_{{\R}^{3}}h\left( \xi \right) \tilde{\psi }_{i}\left( \xi \right) {d\xi } = 0,\text{ for }i = 0,1,2,3,4.
\end{align}
Under the decomposition \eqref{2025-2025}, the solution \(f\left( {t,x,\xi }\right)\) of the Landau equation \eqref{equ-landau} satisfies
\begin{align}\notag
	{P}_{0}f = M,\qquad{P}_{1}f = G.
\end{align}
Moreover, the Landau equation \eqref{equ-landau} can be re-written as
\begin{align}\notag
	\p_t{\left(M +  G\right) } + \xi \cdot \nabla_x{\left( M + G\right) } = Q\left( {M,G}\right)  + Q\left( {G,M}\right)  + Q\left( {G,G}\right) ,
\end{align}
which is equivalent to the following fluid-type system for the macroscopic quantities of $f$:
\begin{align}\label{landau-1}
	\left\{\begin{array}{l}
		\p_t \rho +\operatorname{div}_x(\rho u)=0, \\[2mm]
		\p_t (\rho u)+\operatorname{div}_{x}(\rho u \otimes u)+\nabla_x p=-\int_{\R^3} \xi \otimes \xi \cdot \nabla_x G\, d \xi, \\[2mm]
		{\p_t \big[\rho\big(e+\frac{|u|^{2}}{2}\big)\big]+\operatorname{div}_x\big[\rho u\big(e+\frac{|u|^{2}}{2}\big)+p u\big]=-\int_{\R^3} \frac{1}{2}|\xi|^{2} \xi \cdot \nabla_x G\, d \xi},
	\end{array}\right.
\end{align}    
coupled with the equation for the microscopic component \(G\):
\begin{align}\label{equ-G}
	\p_t{ G} + {P}_{1}\left( \xi \cdot\nabla_x G \right)  + {P}_{1}\left(\xi \cdot\nabla_x M \right)  = {L}_{M}G + Q\left( {G,G}\right) ,
\end{align}
where \(p = {R\rho \theta }\), 
\begin{align}\label{G}
	G = {L}_{M}^{-1}\left( {{P}_{1}\left( \xi \cdot \nabla_x M\right) }\right)  + {L}_{M}^{-1}\Pi,
\end{align}
and
\begin{align}\label{Theta}
	\Pi  = \p_t {G} + {P}_{1}\left( \xi \cdot \nabla_x G\right)  - Q\left( {G,G}\right) .
\end{align}
Here \({L}_{M}\) is the linearized collision operator with respect to the local  Maxwellian \(M\):
\begin{align}\label{2025.6.14.-2}
	{L}_{M}h = Q\left( {M,h}\right)  + Q\left( {h,M}\right),
\end{align}
and the null space \(\mathcal{N}\) of \({L}_{M}\) is spanned by the macroscopic variables: \({\chi }_{j},\ j =\)  \(0,1,2,3,4\) \eqref{chi-1-1-1}.
Plugging \eqref{G} into \eqref{landau-1}, we have
\begin{equation}\label{landau-3}
	\begin{cases}
		\p_t \rho +\operatorname{div}_{x}(\rho u)=0, \\[1mm]
		\p_t (\rho u)+\operatorname{div}_{x}(\rho u \otimes u)+\nabla_{x} p \\
		\qquad\qquad\qquad =-\int_{\R^3} \xi \otimes \xi \cdot \nabla_{x}\left(L_{M}^{-1}\left[P_{1}\left(\xi \cdot \nabla_{x} \mathbf{M}\right)\right]\right) \,d \xi-\int_{\R^3} \xi \otimes \xi \cdot \nabla_{x}\left( L_M^{-1}\Pi\right)\, d \xi, \\[1mm]
		{\p_t \big[\rho\big(e+\frac{|u|^{2}}{2}\big)\big]+\operatorname{div}_{x}\big[\rho u\big(e+\frac{|u|^{2}}{2}\big)+p u\big]} \\
		\qquad\qquad\qquad =-\int_{\R^3} \frac{1}{2}|\xi|^{2} \xi \cdot \nabla_{x}\left(L_{M}^{-1}\left[P_{1}\left(\xi \cdot \nabla_{x} \mathbf{M}\right)\right]\right)\,\, d \xi-\int_{\R^3} \frac{1}{2}|\xi|^{2} \xi \cdot\nabla_{x}\left( L_M^{-1}\Pi\right)\, d \xi.
	\end{cases}
\end{equation}
Furthermore, a direct calculation yields 
\begin{align}
	&-\int_{\R^3} \xi_{i} \xi \cdot \nabla_{x}\left(L_{M}^{-1}\left[P_{1}\left(\xi \cdot \nabla_{x} \mathbf{M}\right)\right]\right) d \xi=\sum_{j=1}^{3}\p_{x_j}\left[\mu(\theta)\left(\p_{x_j}u_{i }+\p_{x_i}u_{j}-\frac{2}{3} \delta_{i j} \operatorname{div}_{x} u\right)\right]=:\sum_{j=1}^3\p_{x_j}[\mu(\theta)S_{ij}],\label{diff}\\
	&-\int_{\R^3} \frac{1}{2}|\xi|^{2} \xi \cdot \nabla_{x}\left(L_{M}^{-1}\left[P_{1}\left(\xi \cdot \nabla_{x} \mathbf{M}\right)\right]\right) d \xi 
	=\dv_x\left(\kappa(\theta) \nabla_x \theta\right)+\dv_x\left\{\mu(\theta) \uf\cdot \mathbf{S}\right\},\label{diff.2}
\end{align} 
where $\mathbf{S}:=(S_{ij})_{1\leq i,j\leq 3}\in\R^{3\times3}$ in \eqref{diff.2} is defined in \eqref{diff}. Here,  $\mu(\theta)$ and $\kappa(\theta)$ are to be introduced in subsection \ref{sec7.1} using the Burnett functions in the Appendix. Therefore, \eqref{landau-3} can also be written as 
\begin{align}\label{landau-4}
	\left\{
	\begin{aligned}
		&\p_t \rho +u \cdot \nabla_x \rho+\rho \operatorname{div}_x u=0, \\[-1mm]
		&\rho \p_tu+\rho u \cdot \nabla_x u+\frac{2 }{3 }\theta \nabla_x \rho+\frac{2}{3}\rho \nabla_x \theta=\mu(\theta)\big(\Delta_x u+\frac{1}{3} \nabla_x \operatorname{div}_x u\big) \\[-1mm]
		&\qquad\;+\mu^{\prime}(\theta) \nabla_x \theta \cdot\big(\nabla_x u+(\nabla_x u)^{t}-\frac{2}{3}\mathbb{I}_1 \operatorname{div}_x u\big)-\int_{\R^3} \xi \otimes \xi \cdot \nabla_{x} (L_M^{-1}\Pi) d \xi, \\[-1mm]
		&\rho\p_t\theta+\rho u \cdot \nabla_x \theta+\frac{2}{3} \rho\theta \operatorname{div}_x u=\kappa(\theta) \Delta_x \theta+{\mu(\theta)}\Big[\frac{\left(\nabla_x u+(\nabla_x u)^{t}\right)^{2}}{2}-\frac{2}{3}(\operatorname{div}_x u)^{2}\Big]  \\[-1mm]
		&\qquad\;+{\kappa^{\prime}(\theta)}|\nabla_x \theta|^{2}-\int_{\R^3} \frac{1}{2}|\xi|^{2} \xi \cdot \nabla_{x} (L_M^{-1}\Pi) d \xi+u \cdot \int_{\R^3} \xi \otimes \xi \cdot \nabla_{x} (L_M^{-1}\Pi) d \xi .
	\end{aligned}\right.
\end{align}
To the end we have normalized the gas constant $R$ to be $\frac{2}{3}$ for convenience, so $e=\theta$ and $p=\frac{2}{3}\rho\theta.$  

\subsection{The entropy wave}
The macroscopic system \eqref{landau-3} can be regarded as the corresponding compressible Navier-Stokes equations with some source terms coming from non-fluid components. We construct the ansatz for the entropy wave profile of the Landau equation as follows. It is worth giving an explicit expression of the entropy wave for the hyperbolic conservation laws. Corresponding to \eqref{landau-1} for $x=(x_1,x_2,x_3) \in \Omega= \R \times\Torus^2$, the 1-D Euler equations depending only one the first spatial direction $x_1\in \R$ read as
\begin{align}\label{eqs36}
	\left\{\begin{aligned}
		&\p_t\rho+\p_{x_1}(\rho u_1)=0,\\
		&\p_t(\rho u_1)+\p_{x_1}(\rho u_1^2+p)=0,\ \ \ x_1\in\mathbb{R},\ t>0,\\
		&
		\p_t\mathbb{E}+\p_{x_1}\left( \mathbb{E}u_1+pu_1
		\right)=0,
	\end{aligned}\right.
\end{align}
with the initial data:
\begin{align*}
	(\rho, u_1,\theta)(0,x_1)=
	\begin{cases}
		(\rho_-,0,\theta_-),\ \ x_1<0,\\
		(\rho_+,0,\theta_+),\ \ x_1>0.
	\end{cases}
\end{align*}
Using the conserved quantities, \eqref{eqs36} can be re-written as
\begin{align}\label{eq-rmns}
	&\left\{\begin{array}{l}
		\pt\rho+\p_{x_1} m_1=0,\\
		\pt{m_1}+\frac23\p_{x_1}\left({\E}+\frac{{m}_{1}^2}{\rho}\right)=0, \\
		\pt{\E}+\p_{x_1}\left(\frac{5{m}_1 {\E}}{3\rho}-\frac{{m}_1^{3}}{3 \rho^{2}}\right)=0.
	\end{array}\right.
\end{align}
Studies for entropy waves are somehow very subtle because of their linear degeneracy. Accordingly, there are some structural conditions that need to be considered when studying entropy waves. The Jacobi matrix of the flux for \eqref{eq-rmns} is
\begin{align*}
	A\left(\rho, m_1, \E\right)=\left(\begin{array}{ccc}
		0 & 1 & 0 \\
		-\frac23\frac{m_1^2}{\rho^2} & \frac43\frac{m_1}{ \rho} & \frac23 \\
		-\frac{5 m_1 \E}{ 3\rho^2}+\frac{2m_1^3}{ 3\rho^3} & \frac{5 \E}{3 \rho}-\frac{m_1^2}{ \rho^2} & \frac{5 m_1}{ 3\rho}
	\end{array}\right),
\end{align*}
which has the second eigenvalue $\lambda_2=\frac{m_1}{\rho}$ with the corresponding left and right eigenvectors given by
\begin{align}\label{NSlr}
	r_2=\left(1,\frac{m_1}{\rho},\frac{m^2_1}{2\rho^2}\right)^t,\qquad\qquad l_2=\left(\frac{5\E}{3\rho}-\frac{4m_1^2}{3\rho^2},\frac{2m_1}{3\rho},-1\right).
\end{align}
From direct computations, \eqref{NSlr} further gives that
\begin{align}\label{NSlr.veri}
	\nabla l_2\cdot r_2\neq0,\qquad\nabla r_2\cdot r_2=0.
\end{align}
However, the classical left and right structural conditions as in \cite{Liu-Xin} are given as 
\begin{align}\label{SC}
	\nabla l_2\cdot r_2=0, \quad \text{and} \quad \nabla r_2\cdot r_2=0,\quad\text{ respectively. } 
\end{align}
Therefore, from \eqref{NSlr.veri} the left structural condition is violated for the Euler equations \eqref{eq-rmns} in Eulerian
coordinates. The presence of two-sides structural conditions has enhanced the decay of lower-order terms \cite{Huang-Matsumura-Xin,Huang-Xin-Yang,Liu-Xin,DYY-CMP}. For more details, see difficulties and strategies to be specified in subsection \ref{sss} later on.

As in \eqref{eqs11.se1.1}, the entropy wave for \eqref{eq-rmns}  satisfies
\begin{align}\label{eqs33}
	\frac{2}{3}\rho_+\theta_+=p_+=p_-=\frac{2}{3}\rho_-\theta_-,\qquad u_{1+}=u_{1-}.
\end{align}
We assume that the strength of wave $\delta:=|\theta_+-\theta_-|$ is small. As in \cite{Huang-Xin-Yang, Huang-Yang}, let $\rho^{cd}(\frac{x_1}{\sqrt{1+t}})$ be the self-similar solution of the following nonlinear diffusion equation
\begin{align}\label{eqs14.cdw}
	\left\{\begin{aligned}
		&\p_t\rho^{cd}=\p_{x_1}\left(\frac{3\tilde{\kappa}(\rho^{cd})}{5}\frac{\p_{x_1}\rho^{cd}}{\rho^{cd}}\right), \ \ \tilde{\kappa}(\rho^{cd})=\kappa(\frac{3p_{+}}{2 \rho^{cd}})
		,\\
		&\rho^{cd}(t,+\infty)=\rho_{+},\qquad\qquad \rho^{cd}(t,-\infty)=\rho_{-}.
	\end{aligned}\right.
\end{align}
It then holds
\begin{align}\label{eqs27}
	\left\{\begin{aligned}
		&\left|(1+t)^{\frac{1}{2}}\p_{x_1}\rho^{cd}\right|=O(\delta)e^{-\frac{d x_1^2}{1+t}},\ x_1\in \R, \\
		&|\rho^{cd}-\rho_{+}|=O(\delta) e^{-\frac{d x_1^2}{1+t}},\ x_1\geq 0; 
		\quad |\rho^{cd}-\rho_{-}|=O(\delta) e^{-\frac{d x_1^2}{1+t}},\ x_1\leq 0, 
	\end{aligned}\right.
\end{align}
where $d>0$ is a constant.

In terms of the self-similar diffusion profile $\rho^{cd}(\cdot)$ \eqref{eqs14.cdw}, we can further construct the viscous entropy wave profiles $\rhoc,\check{u},\check{\theta}$ corresponding to the compressible Euler fluid part of \eqref{landau-3} for the Landau equation: 
\begin{equation}\label{efvew}
	\begin{aligned}
		\rhoc=\rho^{cd},\quad \check{u}=\left(-\frac{3\tilde{\kappa}(\rhoc)\p_{x_1}\rhoc}{5\rhoc^2},0,0\right)^t, \quad
		\check{\theta}=\frac{3p_+}{2\rhoc}-
		\frac{\uc_1^2}{2},\quad\frac23\rhoc\check\theta=\check p=p_+-\frac{1}{3}
		\rhoc \uc_1^2.
	\end{aligned}
\end{equation}
Moreover, the associated 1-D local Maxwellian corresponding to \eqref{efvew} above is defined as
\begin{align}\label{2026-5-9-1}
	{M}_{[ \rhoc , \uc, \check\theta ] }\left( t,x_1,\xi \right)\equiv  \frac{\rhoc \left( t,x_1 \right) }{\sqrt{\left( 2\pi R \check\theta \left( t,x_1\right) \right) ^{3}}} \exp{ \left( -\frac{\left| \xi  - \uc \left( t,x_1 \right) \right| ^{2}}{2R \check\theta \left( t,x_1\right)}\right)} .
\end{align}
\begin{Rem}
	For convenience of the proof regarding the dynamical stability of the planar entropy wave \eqref{2026-5-9-1}, throughout the paper we fix a normalized global Maxwellian with the
	fluid constant state $(1,0,\frac32)$:
	\begin{align}\label{2026-5-9-2}
		\mu=M_{[1,0,\frac32]}=(2\pi)^{-\frac32}e^{-\frac{\abs{\xi}^2}{2}}
	\end{align}
	as a reference equilibrium state, and choose both the far-field data \eqref{eqs33} to be close enough to the constant state $(1,0,\frac32)$ such that the viscous contact
	wave in \eqref{efvew} further satisfies that
	\begin{align*}
		\left\{
		\begin{aligned}
			&\sup_{t\ge0,x_1\in\R}\{ \abs{\rhoc(t,x_1)-1}+\abs{\uc(t,x_1)}+|\check \theta(t,x_1)-\frac32| \} \le3\delta,\\
			&\frac12 \sup_{t\ge0,x_1\in\R}\check\theta(t,x_1)\le \frac32 \le \inf_{t\ge0,x_1\in\R} \check\theta(t,x_1),
		\end{aligned}\right.
	\end{align*}
	where $\delta>0$ is the wave strength small enough.
\end{Rem}

Setting $\mc_i=\rhoc\uc_i$ and $\check{\mathbb{E}}=\rhoc\left(\check{\theta}+\frac{\abs{\uc}^2}{2}\right)$, by direct calculations, it follows from \eqref{efvew} that
\begin{align}\label{contact-wave-check}
	\left\{  \begin{aligned} 
		&\p_t{\rhoc} + \p_{x_1}{\mc}_{1 } = 0 \\ 
		&\p_t{\mc}_{1} + \p_{x_1}{\left( \frac{{\mc}_{1}^{2}}{\rhoc } + \check{p}\right) } = \frac{4}{3} \p_{x_1}{\left( \mu \left( \check{\theta} \right) \p_{x_1} \uc_{1}\right) } +\p_{x_1}\Rm_{1 }, \\ 
		&\p_t{\mc}_{i} + \p_{x_1}{\left( \frac{{\mc}_{1}{\mc}_{i}}{\rhoc }\right) } = \p_{x_1}{\left( \mu \left( \check{\theta} \right) {\p_{x_1}\check{u}}_{i}\right) } ,\;i = 2,3, \\ 
		&\p_t{\check{\mathbb{E}}} + \p_{x_1}{\left( \frac{{\mc}_{1}\check{\mathbb{E}}}{\rhoc } + \frac{{\mc}_{1}}{\rhoc }\check{p}\right) } = \p_{x_1}{\left( \kappa \left( \check{\theta} \right) {\p_{x_1}\check{\theta} }\right) } + \frac{4}{3}\p_{x_1}{\left(  \mu \left( \check{\theta} \right) {\uc}_{1}\p_{x_1} \uc_{1}\right) }+ \mathop{\sum }\limits_{{i = 2}}^{3} \p_{x_1}{ \left(  \mu \left( \check{\theta} \right) {\uc}_{i}\p_{x_1} \uc_{i}\right) }+\p_{x_1}\Rm_{4} ,  \end{aligned}\right. 
\end{align}
where
\begin{align*}
	&\Rm_1=-\frac{3\tilde{\kappa}(\rhoc)\p_t\rhoc}{5\rhoc}+\frac{2}{3}\rhoc \uc_1^2-\frac{4}{3}\mu(\check \theta) \p_{x_1}\uc_{1}=O(\delta)(1+t)^{-1}e^{-\frac{c x_1^2}{1+t}},\\
	&\Rm_4=\left[\kappa(\check{\theta})-\kappa(\frac{3p_{+}}{2 \rhoc}) \right]\frac{3p_{+}\p_{x_1}\rhoc}{2 \rhoc^2}-\frac{1}{3}\rhoc \uc_1^3-\left(\frac{4}{3}\mu(\check{\theta})+\kappa(\check\theta)\right)\uc_1\p_{x_1}\uc_{1}=O(\delta)(1+t)^{-\frac{3}{2}}e^{-\frac{c x_1^2}{1+t}}, 
\end{align*}
with some constant $c>0$. For convenience, we denote the conserved quantities by
\begin{align}\label{conserved-law}
	\U=(\rho,m_1,m_2,m_3,\mathbb{E})^{t},\;\;\;\check{\U}=(\rhoc,\mc_1, \mc_2,\mc_3,\check{\mathbb{E}})^{t},\;\;\; \U^{\mathcal{\#}}=(\rho,m_1,\E)^{t}, \;\;\;\check{\U}^{\#}=(\rhoc,\mc_1, \check{\mathbb{E}})^{t}.  
\end{align}

\subsection{Literature review}

The Landau equation with Coulomb interactions is central to plasma physics (cf.~\cite{Hilton}) and has been widely studied. Among most fundamental results in the mathematical literature, we mention: global weak solutions (with defect measure) by Lions \cite{Lions} and Villani \cite{Vi98, Vi96}; grazing collision limit from the non-cutoff Boltzmann equation by Desvillettes \cite{Des} and Alexandre–Villani \cite{AV04}; spectral analysis of the linearized equation by Degond–Lemou \cite{DL}; derivation from particle systems by Bobylev–Pulvirenti–Saffirio \cite{BPS}; recent breakthrough on no blow-up via the Fisher information by Guillen-Silvestre \cite{GuSi} for the spatially homogeneous case; and all references therein. 

Directly related work concerns solutions near global Maxwellians: Guo \cite{Guo-2002} first established global existence on the torus, with decay rates given by Strain–Guo \cite{Strain-Guo-2006, Strain-Guo-2008}. For the Vlasov–Poisson–Landau system, global solutions near Maxwellians were obtained by Guo \cite{Guo-JAMS} on the torus and Strain–Zhu \cite{Strain-Zhu} on $\R^3$
(see also \cite{Duan, Wang, Wang-1}). Other contributions include \cite{CaMi, CTW, DV2000, GIMV, GHJO, HeSn, KGH, Luk}.

A fundamental open problem is the global existence and behavior of solutions to the Cauchy problem \eqref{equ-landau} with initial data of small total variation connecting two different Maxwellians at the far fields (cf. \cite{SR, Ukai-Yang}). Based on fluid dynamic limits, the long-time profile is expected to be a nonlinear wave pattern—shock, rarefaction, or contact wave—or their superposition (cf. \cite{Smoller}), an expectation motivated by pointwise estimates via the Green's function method \cite{LY-G04}.

For the cutoff Boltzmann equation, wave patterns are well-studied: shock profiles \cite{Caflisch-Nicolaenko, Liu-Yu, Yu}; contact wave stability and hydrodynamics \cite{Huang-Xin-Yang, Huang-Yang, Huang-Wang-Yang}; rarefaction wave stability \cite{Liu-Yang-Yu-Zhao, Xin-Yang-Yu, Yang-Zhao}; composite wave stability \cite{Wang-Yu}; and patterns with self-consistent forces \cite{Duan-Liu-2015, Li-Wang-Yang-Zhong}. For viscous conservation laws (e.g., compressible Navier-Stokes), contact wave stability was established earlier \cite{Duan-Huang-Li-Xu,Huang-Li-Matsumura, HMS, Huang-Matsumura-Xin, Kawashima-M,LWX, Liu-Xin, Xin} (see survey \cite{Ma}).

In contrast, the known results for the non-cutoff Boltzmann and Landau equations are quite few, with all previous ones concentrated on only the one-dimensional case over $\R$. Recent progress includes the stability of rarefaction waves for the Landau equation \cite{Duan-Yu1} and the analysis of its small-Knudsen limit \cite{DYY}. The stability of viscous contact waves was established in \cite{DYY-CMP}, with its corresponding small-Knudsen limit further investigated in \cite{YDC}. Meanwhile, the existence of shock profiles has been proven in \cite{Bed}, although their nonlinear stability and behavior in the small-Knudsen limit remain open questions.

Several important issues merit further investigation. The result in \cite{DYY-CMP}, while establishing stability, was only obtained in 1-d Lagrangian coordinates and does not provide an explicit decay rate—a property that is well-understood for the corresponding Navier-Stokes equations. This problem is particularly challenging for the Landau equation with very soft potentials ($\gamma<-2$), where the linearized operator lacks a spectral gap, distinguishing it fundamentally from the fluid dynamic case. Furthermore, it should be noted that the known decay-rate results for the Landau equation (e.g., \cite{Wang,Wang-1}) are obtained in a multi-dimensional whole-space setting. In such multi-dimensional frameworks, the analytical difficulty associated with the spectral gap is generally less severe.

The current work aims at studying the nonlinear stability of viscous entropy waves for the  Landau equation with Coulomb interactions in the three-dimensional infinite channel domain $\Omega=\R\times\Torus^2$. We are able to obtain (i) the existence of a unique global solution near a local Maxwellian whose fluid components are viscous entropy wave profiles, and (ii) the time-asymptotic stability and optimal decay rate of this solution. This is the first result of such kind for the Landau equation. The rough version of the main result will be given in Theorem \ref{thm.rvmt}; we also refer to Remarks \ref{thm.rk1}, \ref{thm.rk2} and \ref{thm.rk3} for more discussions on the result. The precise statement will be given in Theorem \ref{mt} together with the more detailed version Theorem \ref{mt-1} in the next section.

\subsection{Norms and rough version of the main theorem}
\subsubsection{Norms}
Corresponding to the global Maxwellian $\mu$ in \eqref{2026-5-9-2}, the Landau collision frequency is
\begin{equation}
	\label{1.34}
	\sigma^{ij}(\xi):=\tilde{\phi}^{ij}\ast \mu(\xi)=\int_{{\mathbb R}^3}\tilde{\phi}^{ij}(\xi-\xi')\mu(\xi')\,d\xi'.
\end{equation}
We remark that $\sigma^{ij}(\xi)$ is a positive definite symmetric matrix. 
We first denote the weight function
\begin{align}\label{1.32}
	\omega=\omega(\beta)(\xi):=\langle\xi\rangle^{(l-|\beta|)}e^{q\langle\xi\rangle^2},\quad l\geq|\beta|,\quad\langle\xi\rangle=\sqrt{1+|\xi|^2},\quad 0<q<1.
\end{align} We denote the
weighted $L^2$ norms as
$$
|\omega g|^2_{2}:= \int_{{\mathbb R}^3}\omega^2 g^2\,d\xi,\ \ \ \|\omega g\|_2^2:={ \int_{\mathbb R\times\Torus^2}\int_{{\mathbb
			R}^3}}\omega^2 g^2\,d\xi dx.
$$
In terms of linearization of the nonlinear Landau operator around $\mu$ \eqref{2026-5-9-2} (cf.~\cite{Guo-2002}), with \eqref{1.34} we define the weighted dissipative norms:
$$| g|^2_{\sigma,\omega}:=\sum_{i,j=1}^3\int_{{\mathbb
		R}^3}\omega^{2}[\sigma^{ij}\partial_{\xi_i} g\partial_{\xi_j} g+\sigma^{ij}\frac{\xi_i}{2}\frac{\xi_j}{2} g^2]\,d\xi,\quad \mbox{and}\quad \ \| g\|_{\sigma,\omega}:= \|| g|_{\sigma,\omega} \|_{L_x^2}.$$
And let $| g|_{\sigma}=| g|_{\sigma,1}$ and $\| g\|_{\sigma}=\| g\|_{\sigma,1}$.
From \cite[Corollary 1, p.399]{Guo-2002} and \cite[Lemma 5, p.315]{Strain-Guo-2008}, one has
\begin{equation}
	\label{1.35}
	| g|_\sigma\approx |\langle \xi\rangle^{-\frac{1}{2}} g|_2+\Big|\langle \xi\rangle^{-\frac{3}{2}}\frac{\xi}{|\xi|}\cdot\nabla_\xi  g\Big|_2+\Big|\langle \xi\rangle^{-\frac{1}{2}}\frac{\xi}{|\xi|}\times \nabla_\xi  g\Big|_2.
\end{equation}
We also denote
$$\|\partial^\alpha_\beta
g\|^2_{2,\omega}:=\int_{\mathbb R\times\Torus^2}\int_{{\mathbb
		R}^3}\omega^{2}(\beta)|\partial^\alpha_\beta  g(x,\xi)|^2\, d\xi dx,$$
and
$$
\|\partial^\alpha_\beta g\|^2_{\sigma,\omega}:=\sum_{i,j=1}^3\int_{\mathbb R\times\Torus^2}\int_{{\mathbb
		R}^3}\omega^{2}(\beta)[\sigma^{ij}\partial_{\xi_i}\partial^\alpha_\beta g(x,\xi)\partial_{\xi_j}\partial^\alpha_\beta  g(x,\xi)+\sigma^{ij}\frac{\xi_i}{2}\frac{\xi_j}{2}|\partial^\alpha_\beta  g(x,\xi)|^2]\, d\xi dx.
$$

\subsubsection{Rough version of the main result}
\begin{Thm}[Rough version]\label{thm.rvmt}
	Let $\Omega=\R\times\Torus^2$. The Cauchy problem \eqref{equ-landau} and \eqref{ini} admits a unique global-in-time solution $f=f(t,x,\xi)\geq 0$ satisfying 
	\begin{align}
		\norm{\frac{f-{M}_{[\rhoc,\uc,\Tc]}}{\sqrt{\mu}}}_{L^{\infty}_{x}(\Omega)L^2_\xi(\R^3)}\leq C\tilde{\delta} (1+t)^{-\frac{1}{2}}, \label{decay} \qquad \norm{\frac{(f-{M}_{[\rhoc,\uc,\Tc]})_{\neq}}{\sqrt{\mu}}}_{L_x^\infty(\Omega) L_\xi^2(\R^3)}^2 \leq C \tilde\delta e^{-c t^{\frac23}}. 
	\end{align}
	where the non-zero mode $f_{\neq}$ is defined by $f_{\neq}:=f-\int_{\Torus^2}fdx_2d_3$,  ${M}_{[\rhoc,\uc,\Tc]}$ and $\mu$ are defined in \eqref{2026-5-9-1} and \eqref{2026-5-9-2}, respectively, and $\tilde{\delta}$ is a small constant  associated with the wave strength and the appropriate initial data $f_0(x,\xi)$ near $\mu$.
\end{Thm}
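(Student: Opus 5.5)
The plan is to combine a local existence theorem with uniform a priori estimates in a weighted energy space, and then close everything by a continuity argument. The perturbation is written in the micro-macro form. Set $(\phi,\psi,\zeta)=(\rho-\rhoc,u-\uc,\theta-\Tc)$ and $\tilde G=G-\check G$, where $\check G=L_M^{-1}P_1(\xi_1\p_{x_1}M_{[\rhoc,\uc,\Tc]})$ is the first-order Chapman--Enskog correction. Subtracting \eqref{contact-wave-check} from \eqref{landau-3} gives a compressible Navier--Stokes type system for $(\phi,\psi,\zeta)$ with sources $\p_{x_1}\Rm_1,\p_{x_1}\Rm_4$ and the $\Pi$-terms. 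This is coupled to \eqref{equ-G} for $\tilde G$, or equivalently to the equation for $g=(f-M_{[\rhoc,\uc,\Tc]})/\sqrt\mu$ near $\mu$. The energy functional will combine:
\begin{itemize}
\item the relative entropy $\int \rho\,\Phi(\cdot)$ for the fluid part;
\item weighted norms $\|\p^\alpha_\beta g\|_{2,\omega}$ with the time-velocity weight $\omega$ from \eqref{1.32}, the factor $e^{q\la\xi\ra^2}$ handling the Coulomb growth of the velocity derivatives;
\item the dissipation $\|\cdot\|_{\sigma,\omega}$ coming from \eqref{1.35}.
\end{itemize}

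\textbf{Step 1: fix the structural defect.} The violated left condition \eqref{NSlr.veri} produces the bad term $\p_{x_1}\uc_1\,\phi^2$-type in the energy identity. I would remove it with a transformation that brings in the antiderivative of the zero mode, $\Phi(t,x_1)=\int_{-\infty}^{x_1}(\rho-\rhoc)_{0}\,dy$, and related quantities, together with a change of unknowns in the energy variable (substituting $\E$ by $\E-\frac{|m|^2}{2\rho}$ around the profile). The aim is that the perturbation system satisfies both conditions in \eqref{SC}.

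\textbf{Step 2: split into zero and non-zero modes in $(x_2,x_3)$.}
\begin{itemize}
\item \emph{Zero mode.} This is effectively one-dimensional. I would run the anti-derivative energy estimate together with the weighted estimate $\int w\,|\phi|^2$, where $w$ is built from the heat kernel $e^{-cx_1^2/(1+t)}$ to absorb the wave-interaction terms of size $\delta(1+t)^{-1}e^{-cx_1^2/(1+t)}$. Multiplying the energy identities by $(1+t)^{k}$ at successive derivative levels then gives $\|\p_{x_1}^k(\phi,\psi,\zeta)\|^2\lesssim (1+t)^{-k-1/2}$. Sobolev interpolation in $x_1$ yields the $L^\infty$ rate $(1+t)^{-1/2}$ for the zero mode. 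Applying the derivative-level transformation from the abstract at each order $k$ restores the structural condition for derivatives.
\item \emph{Non-zero modes.} Poincaré on $\Torus^2$ gives $\|\nabla_x f_{\neq}\|\gtrsim\|f_{\neq}\|$, so the fluid part has a spectral gap.
\end{itemize}

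\textbf{Step 3: the main obstacle, soft-potential dissipation.} The micro part has no spectral gap: $|g|_\sigma$ only controls $|\la\xi\ra^{-1/2}g|_2$. For the non-zero modes I would use the time-velocity splitting: on $\{\la\xi\ra\le t^{1/3}\}$ the dissipation is at least $t^{-1/3}$ times the energy, and on the complement I would use the $e^{q\la\xi\ra^2}$ weight so that the energy is bounded by $e^{-q t^{2/3}}$ times a higher-weighted energy. Optimizing gives $\frac{d}{dt}\mathcal E_{\neq}+c t^{-1/3}\mathcal E_{\neq}\lesssim e^{-ct^{2/3}}$, hence $\mathcal E_{\neq}\lesssim e^{-ct^{2/3}}$, which is the stated stretched exponential.

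The same interpolation applied to the zero-mode microscopic part loses time decay. To compensate, I would add coupled diffusion waves to the ansatz that carry the excess mass and energy along the acoustic characteristics, so that the remaining perturbation has zero mass and the antiderivative stays in $L^2$. The hardest estimates are bounding the nonlinear $Q(G,G)$ and $\Pi$ terms with weight loss using $\|\cdot\|_{\sigma,\omega}$ and the smallness of $\tilde\delta$, and closing the hierarchy of $\xi$-derivative weights $\la\xi\ra^{l-|\beta|}$.

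\textbf{Step 4: close the argument.} Taking the a priori assumption $\sup_t\mathcal E(t)\le\tilde\delta$, the steps above yield a uniform bound, and continuation gives global existence. Uniqueness follows from an $L^2$ stability estimate, and $f\ge0$ follows from the standard maximum principle for the Landau equation. Finally, the $L^\infty_xL^2_\xi$ bounds in \eqref{decay} follow from the rates in Steps 2–3 via Gagliardo--Nirenberg in $\Omega$.
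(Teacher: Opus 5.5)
Your overall architecture matches the paper: micro--macro decomposition, mass-carrying diffusion waves so that the anti-derivative lies in $L^2$, a structural transformation of the anti-derivative variables, and Poincar\'e plus time--velocity splitting for the non-zero modes. Your Step~3 for the non-zero modes is essentially the proof of Theorem~\ref{lem999} with $\epsilon=\frac13$. The gap is the time-decay budget for the zero mode, which you correctly flag but do not resolve.

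\textbf{Why the plan as written does not reach the rate.} After adding $\Theta_1,\Theta_3$, the anti-derivative system is still forced, undifferentiated, by three sources: $\Rm_1$, the interaction errors $\bar{\Rm}$ in \eqref{tildeE}, and the microscopic profile error $\mathcal R_G$ coming from $L^{-1}_{\bar M}\bar{\Pi}_1$. All are of size $\deltab D_{-1}$, with $L^1_{x_1}$ norm $\sim(1+t)^{-1/2}$. A heat-kernel weight absorbs quadratic terms like $\deltab D_{-1}|\Vm|^2$, but not such a linear forcing. So the natural bound is $\|\Vm\|^2+\int_0^t\mathcal D_1\,d\tau\lesssim(1+t)^{1/2}$. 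The $(1+t)$-weighted estimate at the next level then needs $\int_0^t\|g\|_2^2\,d\tau$, which the Coulomb dissipation does not control. Lemma~\ref{key-trans} costs $(1+t)^{\epsilon}$ and gives only $\mathcal E_2\lesssim(1+t)^{-1/2+\epsilon}$. That is neither the optimal rate nor enough to close \eqref{apa}.

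Your remedy, ``coupled diffusion waves that carry the excess mass'', conflates two different objects. Carrying mass is the job of $\Theta_1,\Theta_3$, and these \emph{create} further $D_{-1}$ errors rather than removing them.

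\textbf{The missing idea.} The paper uses the coupled diffusion wave $\Xi$ of \eqref{coupled-diffusion-wave-1}. It is the solution with zero data of the linearization around $\bar{\U}$, plus quadratic terms, forced by $-\p_{x_1}\bar{\Rm}-\p_{x_1}\mathcal R_G$. With it, $\tilde{\U}=\bar{\U}+\Xi$ leaves only the residual \eqref{new-error}, with $\|\p_{x_1}^k\tilde{\Rm}\|_{L^2}^2\lesssim\deltab^3(1+t)^{-5/2-k}$. Consequently:
\begin{itemize}
\item \eqref{decay-2-1} has forcing $\deltab(1+t)^{-3/2}$;
\item the anti-derivative energy grows only like $(1+t)^{C\deltac}\le(1+t)^{1/10}$, leaving room for $\epsilon=\frac1{10}$;
\item the auxiliary functionals $\tilde{\mathcal E}_2,\tilde{\mathcal D}_2$ then handle the $\mathcal E_3$ level.
\end{itemize}
This also requires a separate proof that $\Xi$ itself decays like a diffusion wave (Theorem~\ref{res-Wt}). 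That proof uses the transformation \eqref{trans-W-aa}, the diagonalization \eqref{eq-diaB}, and the extra dissipation $G_k$ from the weights $v_1^{\pm N}$. None of this is in your plan.

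\textbf{Step~1 is only a statement of intent.} The harmful terms are $\deltab D_{-1/2}|\Vm|^2$, generated by $\p_{x_1}\rhoc$ and $\p_{x_1}\Tc$. The $\p_{x_1}\uc_1$ terms you point to are $O(\delta(1+t)^{-1})$ and harmless. The paper removes the harmful terms with $\Phic=\thetat\Phi$, $\Hc=H-\thetat\Phi$. At second order this no longer suffices, so the paper rewrites the system as \eqref{T-A-D-2} and tests it with the specific multipliers of Remark~\ref{2026-5-14-1}.
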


\begin{Rem}\label{thm.rk1}
	To the best of our knowledge, there have been no existing results addressing time-decay rate of solutions around entropy waves for the Landau equation in the spatial domain either the infinite 3-D channel $\Omega=\mathbb{R}\times\mathbb{T}^2$ or 1-D whole line $\Omega=\mathbb{R}$ with slab symmetry. Indeed, even the global-in-time existence result around planar waves for the Landau equation is not yet known. Therefore, Theorem \ref{thm.rvmt} above provides the first result of such kind and also resolves the question on rate of convergence left open in \cite{DYY-CMP}. Moreover, our proof of Theorem \ref{thm.rvmt} furnishes a robust framework for analyzing the time-decay behavior of the Landau equation near a local Maxwellian.
\end{Rem}

\begin{Rem}\label{thm.rk2}
	In this paper, we consider the spatial domain $\mathbb{R}\times\mathbb{T}^2$. The analysis in this setting presents two main challenges: 
	(i) It is highly nontrivial to establish stability results with an explicit time-decay rate for the Landau equation with very soft potentials $\gamma<-2$. Indeed, the methods used in existing works, such as \cite{Wang, Wang-1}, may not be directly adapted to the one-dimensional problem in an effective manner. 
	(ii) Studying the planar entropy wave in $\mathbb{R}\times\mathbb{T}^2$ introduces fundamental analytical difficulties that are distinct from those arising in the purely one-dimensional setting, cf.~\cite{DYY-CMP}.
\end{Rem}

\begin{Rem}\label{thm.rk3}
	Regarding the rate of convergence of solutions toward entropy waves in \eqref{decay}, the polynomial decay rate $(1+t)^{-\frac{1}{2}}$ is optimal under generic perturbations, due to the presence of diffusion waves propagating along the one-dimensional unbounded direction $x_1\in\mathbb{R}$. Moreover, for non-zero modes we obtain the stretched exponential decay $\exp(-c t^{\frac{2}{3}})$ corresponding to Coulomb interaction potentials $\gamma=-3$. As far as we know, this is the first result of exponential decay of perturbations in non-zero modes near a local Maxwellian.
\end{Rem}

\subsection{Difficulties and Strategies}\label{sss}
The main difficulties of this work lie in the following aspects. First, we consider the feature of the entropy waves.  Previous studies often relied on the fact that entropy waves satisfy certain structural conditions \eqref{SC} at the macroscopic level, cf. \cite{DYY-CMP,Huang-Matsumura-Xin,Huang-Xin-Yang,Liu-Xin}. The structural conditions lead to cancellation in lower order terms, resulting in the following form
\begin{align}\notag
	\int_{\Omega}(1+t)^{-1}e^{-\frac{cx^2_1}{1+t}}|(\rho-\rhoc,u-\uc,\theta-\Tc)|^2dx.
\end{align}
However, those structural conditions are derived from one-dimensional models formulated in Lagrangian coordinates, and they generally do not hold in higher dimensions. Actually, only the following form with the slower time decay coefficient can be expected
\begin{align}\notag
	\int_{\Omega}(1+t)^{-\frac{1}{2}}e^{-\frac{cx^2_1}{1+t}}|(\rho-\rhoc,u-\uc,\theta-\Tc)|^2dx.
\end{align}
In this work, we introduce a novel transformation under which the perturbed equations in multiple dimensions can satisfy the structural condition in a suitable sense.

Second, for the Landau operator with Coulomb interactions, the lack of spectral gap leads to very weak dissipation, {\it i.e.}, $\abs{g}_{\sigma}$ \eqref{1.35}. Specifically, the dissipation induced by the linearized collision operator is not strong enough to control the $L^2$-norm $\abs{g}_{L^2}$. Moreover, it is important to note that the local Maxwellian $M_{[\rhoc,\uc,\check \theta]}$ \eqref{2026-5-9-1} associated with the entropy wave is not an exact solution to the underlying equation, and the corresponding error $\bar{\mathcal{R}}$ \eqref{tildeE} is not well-controlled.
The combination of weak dissipation and slowly decaying errors makes it extremely challenging to establish asymptotic stability of planar entropy wave, and it has remained an outstanding open problem for obtaining optimal decay rates, cf. \cite{DYY,DYY-CMP,HY}.

To overcome these challenges, we develop the following strategies. 

\begin{enumerate}
	\item {\it Macro-micro decomposition for Landau equation around the entropy wave.}
	
	By decomposing the solution of the Landau equation \eqref{equ-landau} into its macroscopic and microscopic components, we are able to take advantage of the desirable properties of the entropy wave in the macroscopic equations.
	
	\item {\it The time-velocity interpolation  technique  sacrifices time decay in exchange for stronger dissipation.}\label{2026-5-14}
	
	In this paper, due to lack of spectral gap, we use a time-velocity interpolation  technique as in Lemma \ref{key-trans} introduced by Strain-Guo \cite{Strain-Guo-2006,Strain-Guo-2008} to obtain stronger dissipation, {\it i.e.}
	\begin{align*}
		\abs{\p^{\alpha} f}_2^2 \leq \upsilon(1+t)^{\epsilon} \abs{\p^{\alpha} f}_{\sigma}^2+e^{-\frac{q}{4}\upsilon^2( 1+t )^{{2\epsilon}}} \abs{e^{\frac{q}{8}\la \xi\ra^2} \p^{\alpha} f}_2^2.
	\end{align*}
	but at the cost of a $(1+t)^{\epsilon}$  loss in the decay rate. Unfortunately, previous results \cite{Huang-Xin-Yang} imply that the growth of energy 
	for anti-derivatives is $(1+t)^{1/2}$ due to the error terms $\bar{\mathcal{R}}$ \eqref{tildeE}, and thus 
	the decay of the original energy should be at least $(1+t)^{-1/2}$ to close the 
	{\it a priori} assumptions. However, heat dissipation indicates that the original 
	energy decays faster than its anti-derivative counterpart by at most a factor of 
	$(1+t)^{-1}$.  Therefore, to afford the additional $(1+t)^{\epsilon}$ decay loss introduced by the time-velocity interpolation  technique,  we need to remove these error terms $\bar{\mathcal{R}}$ \eqref{tildeE}.
	
	\item {\it Construct diffusion waves and coupled diffusion waves.}
	
	Under generic perturbations, solutions to the macroscopic equations \eqref{landau-3} generate diffusion waves $\Theta_i$ \eqref{eqs19}, which correspond to the first-order expansion of the macroscopic fluid system; see \cite{Kai,LiuZ}. In general, the  background solution $\bar{\U}:=(\rhob,\mb,\Eb)=(\rhoc,\mc,\Ec)+\Theta_i$, coupled to the entropy wave  $(\rhoc,\mc,\Ec)$ \eqref{contact-wave-check} and  diffusion wave $\Theta_i$ \eqref{eqs19}, satisfies  the system (see \eqref{landau-2} and \eqref{U-bar} for details)
	\begin{align*}
		\p_t\bar\U + \p_{x_1}F(\bar \U)=\p_{x_1}\big(\mathcal{B}(\bar{\U})\p_{x_1}\bar{\U}\big)+\p_{x_1}\bar{\Rm}.
	\end{align*}
	These diffusion waves $\Theta_i$ exhibit insufficient time decay of the spatial $L^2$-norm, making it difficult to control the induced error terms $\bar{\Rm}$ \eqref{tildeE} (see the reason for strategy (\ref{2026-5-14})). 
	In this paper, we construct a family of coupled diffusion waves $\Xi_i$ by exploiting specific features of the entropy wave to cancel the slowly decaying error terms $\bar{\Rm}$ (see \eqref{coupled-diffusion-wave-1} for details)
	\begin{align}\notag
		\quad\qquad\left\{ \begin{aligned}
			&\p_t\Xi+\p_{x_1}(F'(\bar{\U})\Xi)=\p_{x_1}\big(\mathcal{B}(\bar{\U})\p_{x_1}\Xi\big)-\frac{1}{2}\p_{x_1}(\Xi^{t}F''(\bar{\U})\Xi)+\p_{x_1}\big(\Xi^{t}\mathcal{B}{'}(\bar{\U})\p_{x_1}\bar{\U}\big)  -\p_{x_1}\bar{\Rm}-\p_{x_1}\mathcal{R}_{G}, \\
			&\Xi|_{t=0}=0,
		\end{aligned}\right.
	\end{align}
	where $\mathcal{R}_G$ is the slowly decaying term induced by the microscopic parts.
	To obtain optimal decay rates for derivatives of all orders of the coupled diffusion waves $\Xi_i$, we diagonalize the system \eqref{W-t-1}, which is derived from \eqref{coupled-diffusion-wave-1}.  In the diagonalized system \eqref{eq-diaB}, we observe that different components propagate at distinct speeds. Based on this feature, we find out a new dissipation mechanism \eqref{2025.6.07-1}. This enables us to effectively control the slowly decaying terms in \eqref{eq-diaB} and achieve the optimal decay rates.

	\item {\it The transformations to ensure structural conditions hold.}
	
	For the new ansatz $(\rhot,\mt,\Et)$ (see \eqref{New-Ansatz}) formed by coupling $\bar \U$ and the coupled diffusion wave $\Xi_i$, we study the perturbation in an integrated system with
	\begin{align*}
		(\Phi,\Psi,H)=\int_{-\infty}^{x_1}\int_{\Torus^2}(\rho-\rhot,m-\mt,\E-\tilde{\E})(t,y)dy'dy_1,
	\end{align*}
	which presents a possibility to obtain the decay rate. Next, we apply the following transformation\begin{align}\notag 
		&\check{\Phi}=\thetat\Phi,\quad \Psic=\Psi,\quad \Hc=H-\Phic,
	\end{align}
	where $\thetat$ is a non-conserved quantity of temperature for the new ansatz defined in \eqref{non-conserved quantities}. Through this transformation, the new system is simplified a lot and satisfies both the two structural conditions, see \eqref{T-A-D-1}, {\it i.e.},  neither 
	$$
	\int_{\R} (1+t)^{-\frac12}e^{-\frac{cx_1^2}{1+t}}(\Phic^2+|\Psic|^2+\Hc^2)dx_1
	$$
	nor 
	$$\int_{\R} (1+t)^{-\frac12}e^{-\frac{cx_1^2}{1+t}}[(\p_{x_1}\Phic)^2+|\p_{x_1}\Psic|^2+(\p_{x_1}\Hc)^2]dx_1
	$$
	appears in the $L^2$-estimates of the anti-derivative itself and its first derivative, respectively. Unfortunately, the structural conditions \eqref{SC} are once again not met in the derivative system $$(\p_{x_1}^2\Phic,\p_{x_1}^2\Psic,\p_{x_1}^2\Hc).$$ 
	This is also why it is so difficult to obtain the optimal decay rate for contact discontinuities. In this paper, we find a new transformation in the derivative level, {\it i.e.}, we study the following quantities 
	\begin{align}\notag
		\left(\frac{2}{3}\thetat\p_{x_1}^2\Phic,\p_{x_1}\left(\thetat\p_{x_1}\Psic_{1}\right), \p_{x_1}^2\Psic_{2},\p_{x_1}^2\Psic_{3}, \; \thetat \p_{x_1}^2\Hc \right).
	\end{align}
	Several cancellations are achieved under this transformation and thus the optimal decay rates are available, see Remark \ref{2026-5-14-1}.

	\item  {\it The stretched exponential decay for the non-zero mode.}
	
	For non-zero modes, we observe that the Poincar\'{e}'s inequality is applicable. By utilizing this inequality and combining the small strength of the entropy wave, the low-order terms arising in the flux can be effectively controlled through  dissipation. Furthermore, for the non-zero modes, we observe that 
	$$\frac{(f-M_{[\rhoc,\uc,\check{\theta}]})_{\neq}}{\sqrt{\mu}}=\frac{(f-\mu)_{\neq}}{\sqrt{\mu}}.$$
	To obtain the stretched exponential decay for these modes, we perturb the Landau equation around the global Maxwellian $\mu$, thereby deriving dissipation estimates for the non-zero modes of the macroscopic part. Then, combining these estimates with the $\xi$-derivative and the exponentially weighted estimates for the microscopic part, the Poincar\'{e}'s inequality, and the time-velocity interpolation  technique Lemma \ref{key-trans} establishes the stretched exponential decay for the non-zero modes, see Theorem \ref{lem999}.

	\item {\it The delicate energy structure to obtain the optimal polynomial decay rate.}
	
	Since the slowly decaying error terms are eliminated by constructing coupled diffusion waves, and the structural conditions are restored, we obtain the optimal decay rate by deriving the following differential inequalities 
	\begin{align*}
		&\frac{d}{dt}\mathcal{E}_1 + \mathcal{D}_1\leq C\deltac(1+t)^{-1}\mathcal{E}_1 + C\deltab (1+t)^{-\frac{3}{2}},  \\
		&\frac{d}{dt}\mathcal{E}_2 + \mathcal{D}_2\leq C\deltac \left[(1+t)^{-1}\mathcal{D}_1+(1+t)^{-2}\mathcal{E}_1 \right]+ C\deltab (1+t)^{-\frac{3}{2}}, \\
		&\frac{d}{dt}\tilde{\mathcal{E}}_2 +\tilde{\mathcal{D}}_2 \leq C\deltac\left[(1+t)^{-1}\mathcal{D}_1+(1+t)^{-2}\mathcal{E}_1\right]+C\deltab(1+t)^{-\frac52},\\
		&\frac{d}{dt}\mathcal{E}_3 + \mathcal{D}_3\leq C\deltac \left[(1+t)^{-1}\mathcal{D}_2+(1+t)^{-2}\mathcal{D}_1+(1+t)^{-3}\mathcal{E}_1 \right]+ C\deltab (1+t)^{-\frac{5}{2}},
	\end{align*}
	where $\mathcal{E}_{(1,2),\omega}$, $\mathcal{E}_{1,2,3}$ and $\tilde{\mathcal{E}}_{2}$ are transient energies  (see \eqref{2.10}–\eqref{2.10-5}), and  $\mathcal{D}_{(1,2,3),\omega}$, $\mathcal{D}_{1,2,3}$ and $\tilde{\mathcal{D}}_{2}$ are dissipative energies (see \eqref{2.11}–\eqref{2.11-5}), and $\deltab,\deltac$ are small constants associated with wave strength and initial perturbation. It is noteworthy that the introduction of $\tilde{\mathcal{E}}_{2}$ \eqref{2.10-4} and $\tilde{\mathcal{D}}_{2}$ \eqref{2.11-4} effectively circumvents the impact of the time loss induced by the time-velocity interpolation technique  on the optimal decay rate; specifically, we need to use the relation
	\begin{align*}
		&\int_0^t(1+\tau) \tilde{\mathcal{D}}_2 d\tau\le C \deltab (1+t)^{\frac1{5}},\\
		&\int_0^t (1+\tau)\mathcal{E}_3 d \tau \le C(1+t)^{\frac{1}{10}} \int_0^t(1+\tau) \tilde{\mathcal{D}}_2 d\tau +C\deltab.
	\end{align*}
	For the case of $\gamma\ge-2$, it is unnecessary to introduce $\tilde{\mathcal{E}}_2$ and $\tilde{\mathcal{D}}_2$.  See Section \ref{sec5} for all the details.
\end{enumerate}

\subsection{Notations}

Throughout the paper we shall use $\langle \cdot , \cdot \rangle$  to denote the standard $L^{2}$ inner product in $\mathbb{R}_{\xi}^{3}$ with its corresponding $L^{2}$ norm $|\cdot|_2$. For $\Omega= \R \times \Torus^2$, we also use $( \cdot , \cdot )$ to denote the $L^{2}$ inner product in $\Omega\times \mathbb{R}_{\xi}^{3}$  with its corresponding $L^{2}$ norm $\|\cdot\|_2$. Let nonnegative integer $\alpha$ and $\beta$ be multi indices $\alpha=[\alpha_{0},\alpha_{1},\alpha_{2},\alpha_{3}]$ and $\beta=[\beta_{1},\beta_{2},\beta_{3}]$, respectively. Denote $\partial_{\beta}^{\alpha}=\partial_{t}^{\alpha_{0}}\partial_{x_1}^{\alpha_{1}}\partial_{x_2}^{\alpha_{2}}\partial_{x_3}^{\alpha_{3}}\partial_{\xi_{1}}^{\beta_{1}}\partial_{\xi_{2}}^{\beta_{2}}\partial_{\xi_{3}}^{\beta_{3}}$. If each component of $\beta$ is not greater than the corresponding one of $\overline{\beta}$, we use the standard notation $\beta\leq\overline{\beta}$. And $\beta<\overline{\beta}$ means that $\beta\leq\overline{\beta}$ and $|\beta|<|\overline{\beta}|$. $C^{\bar\beta}_{\beta}$ is the usual  binomial coefficient. Throughout the paper, generic positive constants are denoted  by either $c$ or $C$, and $c_{1}$, $c_{2}$ or $C_{1}$, $C_{2}$ etc. are some given constants. The notation $A\lesssim B$ is used to denote that there exists a constant $c_{0}>1$ such that $ A\leq c_{0}B$ and $A\approx B$ is used to denote  $c_{0}^{-1}B\leq A\leq c_{0}B$. For an integrable function $f$ in $\Omega= \R_{x_1} \times \Torus^2_{x_2,x_3}$, we define the zero and non-zero modes, respectively, as
\begin{align}\label{2025-11-10-4}
	\Do f:=\mathring{f}:=\int_{\Torus^2}fdx_2dx_3,\qquad \Dn f:={f}_{\neq}:=f-\Do f.
\end{align}

The rest of the present paper is organized as follows. In Section \ref{sec2}, we construct the diffusion waves and the coupled diffusion waves, and then introduce the new ansatz. With these, we are able to precisely state the main result Theorem \ref{mt}. Then, in Section \ref{sec3}, we give the estimate of the coupled diffusion wave. In Section \ref{sec4}, we study the stability under the new ansatz. The optimal decay rate and the stretched exponential decay are obtained in Sections \ref{sec5} and \ref{sec6}, respectively. Finally, in Section \ref{sec7}, we present some necessary technical lemmas that have been used in the previous sections.

\section{Construction of the ansatz and the main theorem}\label{sec2}
\subsection{Diffusion waves generated by non-zero initial mass}
Recall the definition of $\U,\check\U,\U^{\#},\check\U^{\#}$ in \eqref{conserved-law}.
In this paper, we are concerned with the general non-zero initial perturbation in $\Omega=\mathbb R\times\mathbb T^2$ with $\mathbb T={\mathbb R}/{\mathbb Z}$, that is, the integral $\int_{\Omega}(\U-\check \U)(0,x)dx\neq0$. 
Note that the extra initial mass is distributed along the $x_1$-direction. We consider the far-field Jacobi matrices for the flux of 1-d Navier-Stokes equations in Eulerian coordinates after taking $\int_{\Torus^2}$\eqref{landau-3}$dx_2dx_3$  is 
\begin{align*}
	A_\pm=\left(\begin{array}{ccc}
		0&1&0\\
		0&0&\frac{2}{3}\\
		0&\frac{5}{3}\frac{\E_\pm}{\rho_\pm}&0
	\end{array}\right),
\end{align*}
corresponding to $\U^{\#}_{+}$ and $\U^{\#}_{-}$, respectively. 
It is easy to see that $\lambda_1^-=-\sqrt{\frac{10\E_-}{9\rho_-}}$ is the first eigenvalue of $A_-$ corresponding to $r_1^-=\left(1,\lambda_1^-,\frac{3(\lambda_1^-)^2}{2}\right)^t$ and $\lambda_3^+=\sqrt{\frac{10\E_+}{9\rho_+}}$ is the third eigenvalue of $A_+$ corresponding to $r_3^+=\left(1,\lambda_3^+,\frac{3(\lambda_3^+)^2}{2}\right)^t$. Since the three vectors $r_1^-,\U^{\#}_{-}-\U^{\#}_{+}$ and $r_3^+$ are linearly independent by strict hyperbolicity, we have the following identity in case of non-zero initial mass:
\begin{align}\notag
	\int_{\Omega}
	(\U^{\#}-\check \U^{\#})(0,x)dx=\bar\Theta_1r_1^-+\bar\Theta_2(\U^{\#}_{-}-\U^{\#}_{+})+\bar\Theta_3r_3^+,
\end{align}
with unique constants $\bar\Theta_i,i=1,2,3.$ Without loss of generality, we can assume that $\bar\Theta_2=0$ as in \cite{Huang-Xin-Yang}. Then, 
we construct two diffusion waves to carry the extra initial mass as follows
\begin{align}\label{eqs19}
	\Theta_1(t,x_1)=\frac{1}{\sqrt{4 \pi(1+t)}} e^{-\frac{\left(x_1-\lambda_1^{-}(1+t)\right)^2}{4(1+t)}}, \quad \Theta_3(t,x_1)=\frac{1}{\sqrt{4 \pi(1+t)}} e^{-\frac{\left(x_1-\lambda_3^{+}(1+t)\right)^2}{4(1+t)}},
\end{align}
satisfying
\begin{align}\notag
	\p_t\Theta_{1 }+\lambda_1^{-} \p_{x_1}\Theta_{1 }=\p_{x_1}^2\Theta_{1}, \quad \p_t \Theta_{3 }+\lambda_3^{+} \p_{x_1}\Theta_{3 }=\p_{x_1}^2\Theta_{3 },
\end{align}
and $\int_{-\infty}^{+\infty}\Theta_i(t,x_1)dx_1=1$ for $i=1,3,$ and  $t\ge0.$ Let
\begin{align}\notag
	\bar\Theta_{i+2}:=\int_{\Omega}
	m_i(0,x)-\bar m_i(0,x_1)dx,\ \ \ i=2,3.
\end{align}
We define the new ansatz $(\rhob,\mb,\Eb)$ as
\begin{align}\label{new-ansatz-new}
	\begin{aligned}
		&\bar{\rho}(t,x_1)={\rhoc}\left(t,x_1\right)+\bar{\Theta}_{1} \Theta_{1}+\bar{\Theta}_{3} \Theta_{3}, \\
		&\bar{m}_{1}(t,x_1)={\mc}_{1}\left(t,x_1\right)+\lambda_{1-} \bar{\Theta}_{1} \Theta_{1}+\lambda_{3+} \bar{\Theta}_{3} \Theta_{3}, \\
		&\bar{m}_{i}(t,x_1)=\frac{\bar{\Theta}_{i+2}}{\sqrt{4 \pi(1+t)}} e^{-\frac{x_1^{2}}{4(1+t)}}, \quad i=2,3, \\
		&\bar{\E}(t,x_1)=\check{\mathbb{E}}\left(t,x_1\right)+\left(\frac{3(\lambda_{1-})^2}{2}\bar{\Theta}_{1} \Theta_{1}+\frac{3(\lambda_{3+})^2}{2}\bar{\Theta}_{3} \Theta_{3}\right).
	\end{aligned}
\end{align}
Then the initial extra mass under the new ansatz above becomes 
\begin{align}\label{eq-initial mass}
	&\int_{\Omega}\left(\U-\bar{\U}\right)(0,x)dx=\int_{\Omega}\left(\U^{\#}-\check{\U}^{\#}\right)(0,x)dx+\sum_{i=2}^3\bar{\Theta}_{i+2}+\int_{\Omega}\left(\check{\U}-\bar{\U}\right)(0,x)dx=0.
\end{align}
For $i=1,2,3$, we also define non-conserved quantities: velocity  $\ub_i$, temperature $\thetab$ and pressure $\bar{p}$ as follows
\begin{align}\notag
	\ub_i:=\frac{\mb_i}{\rhob},\qquad \qquad\thetab:=\frac{\Eb}{\rhob}-\frac{\abs{\mb}^2}{2\rhob^2},\qquad\qquad \bar{p}:=\frac{2}{3}\Eb-\frac{\abs{\mb}^2}{3\rhob}.
\end{align}
By \eqref{contact-wave-check} and \eqref{new-ansatz-new}, we find that the  ansatz \eqref{new-ansatz-new} satisfies the following approximate Navier-Stokes equations
\begin{align}\label{landau-2}
	\left\{  \begin{aligned} 
		&\p_t{\rhob} + \p_{x_1}{\mb}_{1} = \p_{x_1}\bar{\Rm}_{0}, \\ 
		&\p_t{\mb}_{1} + \p_{x_1}{\left( \frac{{\mb}_{1}^{2}}{\rhob } + \bar{p}\right) } = \frac{4}{3}\p_{x_1} {\left( \mu \left( \thetab \right)\p_{x_1} {\ub}_{1 }\right) } +\p_{x_1}\bar{\Rm}_{1 }, \\ 
		&\p_t{\mb}_{i} + \p_{x_1}{\left( \frac{{\mb}_{1}{\mb}_{i}}{\rhob }\right) } = \p_{x_1}{\left( \mu \left( \thetab \right) \p_{x_1}{\ub}_{i }\right) } + \p_{x_1}\bar{\Rm}_{i },\;i = 2,3, \\ 
		&\p_t{\Eb} +\p_{x_1} {\left( \frac{{\mb}_{1}\Eb}{\rhob } + \frac{{\mb}_{1}}{\rhob }\bar{p}\right) } = \p_{x_1}{\left( \kappa \left( \thetab \right) \p_{x_1}{\thetab }\right) } \\
		&\qquad\qquad\qquad+ \frac{4}{3}\p_{x_1}{\left(  \mu \left( \thetab \right) {\ub}_{1}\p_{x_1}{\ub}_{1 }\right) }+ \mathop{\sum }\limits_{{i = 2}}^{3}\p_{x_1} { \left(  \mu \left( \thetab \right) {\ub}_{i}\p_{x_1}{\ub}_{i }\right) }+\p_{x_1}\bar{\Rm}_{4} ,  \end{aligned}\right. 
\end{align}
where
\begin{align*}
	& \bar{\Rm}_0:=\Thetab_1\p_{x_1}\Theta_{1}+\Thetab_3\p_{x_1}\Theta_{3},\\
	&\bar{\Rm}_1:=\lambda_{1-}\Thetab_1\p_{x_1}\Theta_{1}+\lambda_{3+}\Thetab_3\p_{x_1}\Theta_{3}-\frac{4}{3}\left( \mu(\bar{\theta})\p_{x_1} \ub_{1}-\mu(\check{\theta})\p_{x_1} \uc_{1} \right)+\Rm_{1 } \nonumber\\
	&\qquad\;\;+\p_{x_1}\left(\frac{\mb_1^2}{\rhob}-\frac{ \abs{\mb}^2}{3\rhob} -\frac{\mc^2_1}{\rhoc}+\frac{ \abs{\mc}^2}{3\rhoc}\right),\nonumber\\
	&\bar{\Rm}_i:=\Thetab_{i+2}\p_{x_1}\Theta_{i+2}-\left( \mu(\bar{\theta})\p_{x_1} \ub_{i}-\mu(\check{\theta})\p_{x_1} \uc_{i} \right)\;+\p_{x_1}\left(\frac{\mb_1\mb_i}{\rhob} -\frac{\mc_1\mc_i}{\rhoc}\right),\quad i=2,3,\nonumber\\
	&\bar{\Rm}_4:=\frac{3(\lambda_{1-})^2}{2}\Thetab_1\p_{x_1}\Theta_{1}+\frac{3(\lambda_{3+})^2}{2}\Thetab_3\p_{x_1}\Theta_{3} - (\kappa(\thetab)\p_{x_1}\thetab - \kappa(\check{\theta})\p_{x_1}\check{\theta}) + \Rm_{4}   \nonumber\\
	&\qquad\;\;+\frac{4}{3}\p_{x_1}\left(  \mu \left( \thetab \right) {\ub}_{1}\p_{x_1} \ub_{1}-\mu \left( \check{\theta} \right) {\uc}_{1}\p_{x_1} \uc_{1} \right)  + \mathop{\sum }\limits_{{i = 2}}^{3} \p_{x_1}{ \left(  \mu \left( \thetab \right) {\ub}_{i}\p_{x_1} \ub_{i}-\left(  \mu \left( \check{\theta} \right) {\uc}_{i}\p_{x_1} \uc_{i}\right)\right) } \nonumber\\
	&\qquad\;\;+\frac53 \left( \frac{\mb_1 \bar{\E}}{\rhob}-\frac{\mc_1 \check{\E}}{\rhoc} -\lambda_1^{-} \frac{\E_{-}}{\rho_{-}}\bar{\Theta}_1\Theta_1 -\lambda_3^{+}\frac{\E_{+}}{\rho_{+}}\bar{\Theta}_3\Theta_3    \right)+\frac23 \left( \frac{\abs{\mb}^2\mb_1}{2\rhob^2} - \frac{\abs{\mc}^2\mc_1}{2\rhoc^2} \right) .\nonumber
\end{align*}
We set $\bar{\Rm}=(\bar{\Rm}_0,\cdots,\bar{\Rm}_4)^{t}$ and $\bar{\Rm}^{*}=(\bar{\Rm}_0,\bar{\Rm}_1,\bar{\Rm}_4)^{t}$. For $i=0,\cdots,4$, simple calculations yield
\begin{align}\label{tildeE}
	\big|\bar{\Rm}_i\big|\leq O\left(\delta+\sum_{j=1}^{5}\left|\bar{\Theta}_{j}\right|\right) \frac{1}{1+t}\left(e^{-\frac{c x_1^{2}}{1+t}}+e^{-\frac{c\left(x_1-\lambda_{1-}(1+t)\right)^{2}}{1+t}}+e^{-\frac{c\left(x_1-\lambda_{3+}(1+t)\right)^{2}}{1+t}}\right), 
\end{align}
where $c>0$ is a constant independent of any small parameters throughout the paper. 
We use the following notation for convenience
\begin{align}\label{errors}
	D_{-\alpha}= \frac{1}{(1+t)^{\alpha}}\left(e^{-\frac{c x_1^{2}}{1+t}}+e^{-\frac{c\left(x_1-\lambda_{1-}(1+t)\right)^{2}}{1+t}}+e^{-\frac{c\left(x_1-\lambda_{3+}(1+t)\right)^{2}}{1+t}}\right), \qquad
	\Upsilon_{-\alpha}={(1+t)^{-\alpha}}e^{-\frac{\tilde{d} x_1^{2}}{1+t}}.
\end{align}
Compared with \eqref{eqs27}, the constant $\tilde{d}>0$ in $\Upsilon_{-
	\alpha}$ above is less than $d$.
We also always use $\tilde{D}_{-\alpha}$ to denote quantities that have the same decay rate as $D_{-\alpha}$ in the sense of the $L^p$-norm, namely, for $1\leq p \le +\infty$,
\begin{align}\label{tilde-d}
	\norm{\Dt_{-\alpha}}_{L^p}\thickapprox \norm{D_{-\alpha}}_{L^p}.
\end{align}

\subsection{Coupled diffusion wave to refine the errors}
In order to write system \eqref{landau-2} in the form of conservation laws with respect to $\rhob,\mb_i,\Eb$, we introduce the following notation:
\begin{align}
	&F(\U):=\left(m_1,\frac{2m_1^2}{3\rho}+\frac{2}{3}\E-\frac{m_2^2+m_3^2}{3\rho},\frac{m_1m_2}{\rho},\frac{m_1m_3}{\rho},\frac{5m_1\E}{3\rho}-\frac{m_1 \abs{m}^2}{3\rho^2}\right)^t,\label{F}\\
	&\mathcal{B}(\U):=\mathcal{B}_1(\U)+\mathcal{B}_1^{*}(\U),\label{mathcal-B}
\end{align}
where
\begin{align*}
	&\mathcal{B}_1(\U)=
	\left(\begin{array}{ccccc}
		0&0&0&0&0\\
		- \frac{4\mu(\theta)m_1}{3\rho^2}& \frac{4\mu(\theta)}{3\rho}&0&0&0\\
		0&0&\frac{\mu(\theta)}{\rho}&0&0\\
		0&0&0&\frac{\mu(\theta)}{\rho}&0\\
		-\frac{\theta\kappa(\theta)}{\rho}&0&0&0&\frac{\kappa(\theta)}{\rho}
	\end{array}\right),\quad
	\mathcal{B}_{1}^{*}(\U)=
	\left(\begin{array}{ccccc}
		0&0&0&0&0\\
		0&0&0&0&0\\
		\mathcal{B}_{31}&0&0&0&0\\
		\mathcal{B}_{41}&0&0&0&0\\
		\mathcal{B}_{51}&\mathcal{B}_{52}&\mathcal{B}_{53}&\mathcal{B}_{54}&0
	\end{array}\right),\\
	&\mathcal{B}_{31}=-\frac{\mu(\theta)m_2}{\rho^2},\quad \mathcal{B}_{41}= -\frac{\mu(\theta)m_3}{\rho^2}, \quad \mathcal{B}_{51}=-\frac{\mu(\theta)}{\rho^3}\left( \abs{m}^2+\frac{m_1^2}{3} \right)+\frac{\abs{m}^2}{2\rho^3}\kappa(\theta),\\
	&\mathcal{B}_{52}=\frac{4 \mu(\theta)m_1-3\kappa(\theta)m_1}{3\rho^2},\quad \mathcal{B}_{53}=\frac{(\mu(\theta)-\kappa(\theta))m_2}{\rho^2},\quad \mathcal{B}_{54}=\frac{(\mu(\theta)-\kappa(\theta))m_3}{\rho^2}.
\end{align*}
Since $\theta=\frac{\E}{\rho}-\frac{|m|^2}{2\rho^2},\;\;p=\frac{2}{3}\rho\theta=\frac{2}{3}\E-\frac{|m|^2}{3\rho}$, combining \eqref{F} and \eqref{mathcal-B},  system \eqref{landau-2} can be written as
\begin{align}\label{U-bar}
	\p_t\bar\U + \p_{x_1}F(\bar \U)=\p_{x_1}\big(\mathcal{B}(\bar{\U})\p_{x_1}\bar{\U}\big)+\p_{x_1}\bar{\Rm}.
\end{align}
For the errors generated by macroscopic quantities, we expect to construct the coupled diffusion wave $\Xi=(\Xi_1,\Xi_2,\Xi_3,\Xi_4,\Xi_5)^t$ such that the error terms  of  the equation for $\tilde{\U}:=\bar{\U}+\Xi$ are improved from $\bar{\Rm}$ to $\tilde{\Rm}$, that is, $\tilde{\U}$ satisfies
\begin{align*}
	\p_t\tilde{\U}+\p_{x_1}F(\tilde{\mathcal\U})=\p_{x_1}\big(\mathcal{B}(\tilde{\U})\p_{x_1}\tilde{\U}\big)+\p_{x_1}\tilde{\Rm},
\end{align*}
with the improved time-decay $\tilde{\Rm}\approx \tilde\delta\Dt_{-\frac32}$, where $\tilde\delta$ is a small constant associated with the wave strength and the initial perturbation, and $\tilde{D}_{-\alpha}$ is defined in \eqref{tilde-d}.  To achieve this goal, we need to calculate the following quantities:
\begin{align}
	&F'(\U)=\mathcal{A}_1(\U)+\mathcal{A}_1^{*}(\U)  ,\label{F-dev-1}\\
	&\Xi^{t}F''(\U)=   \left(\begin{array}{ccccc}  0&0&0&0&0\\
		\mathcal{F}^{\Xi}_{21}&\mathcal{F}^{\Xi}_{22}&\mathcal{F}^{\Xi}_{23}&\mathcal{F}^{\Xi}_{24}&0\\
		\mathcal{F}^{\Xi}_{31}&\mathcal{F}^{\Xi}_{32}&\mathcal{F}^{\Xi}_{33}&0&0\\
		\mathcal{F}^{\Xi}_{41}&\mathcal{F}^{\Xi}_{42}&0&\mathcal{F}^{\Xi}_{44}&0\\
		\mathcal{F}^{\Xi}_{51}&\mathcal{F}^{\Xi}_{52}&\mathcal{F}^{\Xi}_{53}&\mathcal{F}^{\Xi}_{54}&\mathcal{F}^{\Xi}_{55}
	\end{array}\right),\quad \Xi^{t}  \mathcal{B}'(\U)= \left(\begin{array}{ccccc}
		0&0&0&0&0\\
		\mathcal{B}^{\Xi}_{21}&\mathcal{B}^{\Xi}_{22}&0&0&0\\
		\mathcal{B}^{\Xi}_{31}&0&\mathcal{B}^{\Xi}_{33}&0&0\\
		\mathcal{B}^{\Xi}_{41}&0&0&\mathcal{B}^{\Xi}_{44}&0\\
		\mathcal{B}^{\Xi}_{51}&\mathcal{B}^{\Xi}_{52}&\mathcal{B}^{\Xi}_{53}&\mathcal{B}^{\Xi}_{54}&\mathcal{B}^{\Xi}_{55}
	\end{array}\right), \label{higher-F-B-taylor}
\end{align}
where
\begin{align*}
	&\mathcal{A}_1(\U)=\left(\begin{array}{ccccc}
		0&1&0&0&0\\
		0&0&0&0&\frac23\\
		0&0&0&0&0\\
		0&0&0&0&0\\
		0&\frac{5\theta}{3}&0&0&0
	\end{array}\right),\quad \mathcal{A}_1^{*}(\U)=\left(\begin{array}{ccccc}
		0&0&0&0&0\\
		\mathcal{A}_{21}&\mathcal{A}_{22}&\mathcal{A}_{23}&\mathcal{A}_{24}&0\\
		\mathcal{A}_{31}&\mathcal{A}_{32}&\mathcal{A}_{33}&0&0\\
		\mathcal{A}_{41}&\mathcal{A}_{42}&0&\mathcal{A}_{44}&0\\
		\mathcal{A}_{51}&\mathcal{A}_{52}&\mathcal{A}_{53}&\mathcal{A}_{54}&\mathcal{A}_{55}
	\end{array}\right), \\
	&\mathcal{A}_{21}=-\frac{2m_1^2}{3\rho^2}+\frac{m_2^2}{3\rho^2}+\frac{m_3^2}{3\rho^2},\quad \mathcal{A}_{22}=\frac{4m_1}{3\rho}, \quad \mathcal{A}_{23}=-\frac{2m_2}{3\rho}, \quad\mathcal{A}_{24}=-\frac{2m_3}{3\rho},\quad \mathcal{A}_{31}=-\frac{m_1 m_2}{\rho^2}, \\
	& \mathcal{A}_{32}=\frac{m_2}{\rho},\quad \mathcal{A}_{33}=\frac{m_1}{\rho},\quad \mathcal{A}_{41}=-\frac{m_1 m_3}{\rho^2},\quad \mathcal{A}_{42}=\frac{m_3}{\rho},\quad \mathcal{A}_{44}=\frac{m_1}{\rho},\quad \mathcal{A}_{51}=\frac{2m_1 \abs{m}^2}{3\rho^3}-\frac{5m_1 \E}{3\rho^2},\\
	&\mathcal{A}_{52}=\frac{ \abs{m}^2}{2\rho^2}-\frac{2m_1^2 }{3\rho^2},\quad \mathcal{A}_{53}=-\frac{2m_1 m_2 }{3\rho^2},\quad \mathcal{A}_{54}=-\frac{2m_1 m_3 }{3\rho^2},\quad \mathcal{A}_{55}=\frac{5m_1 }{3\rho},
\end{align*}
and
\begin{align*}
	&\Xi^{t} \cdot \nabla_{*} f:=\Xi_1 \p_{\rho} f +  \Xi_2 \p_{m_1} f+  \Xi_3 \p_{m_2} f+  \Xi_4 \p_{m_3} f + \Xi_5 \p_{\E} f,\\[1.5mm]
	&\mathcal{F}^{\Xi}_{21}=\Xi^{t} \cdot \nabla_{*} \mathcal{A}_{21},\quad\mathcal{F}^{\Xi}_{22}=\Xi^{t} \cdot \nabla_{*} \mathcal{A}_{22}, \quad \mathcal{F}^{\Xi}_{23}=\Xi^{t} \cdot \nabla_{*} \mathcal{A}_{23}, \quad\mathcal{F}^{\Xi}_{24}=\Xi^{t} \cdot \nabla_{*} \mathcal{A}_{24}, \quad\mathcal{F}^{\Xi}_{31}=\Xi^{t} \cdot \nabla_{*} \mathcal{A}_{31}, \\[1.5mm]
	& \mathcal{F}^{\Xi}_{32}=\Xi^{t} \cdot \nabla_{*} \mathcal{A}_{32},\quad \mathcal{F}^{\Xi}_{33}=\Xi^{t} \cdot \nabla_{*} \mathcal{A}_{33}, \quad\mathcal{F}^{\Xi}_{41}=\Xi^{t} \cdot \nabla_{*} \mathcal{A}_{41}, \quad\mathcal{F}^{\Xi}_{42}=\Xi^{t} \cdot \nabla_{*} \mathcal{A}_{42}, \quad\mathcal{F}^{\Xi}_{44}=\Xi^{t} \cdot \nabla_{*} \mathcal{A}_{44}, \\[1.5mm]
	& \mathcal{F}^{\Xi}_{51}=\Xi^{t} \cdot \nabla_{*} \mathcal{A}_{51},\quad\mathcal{F}^{\Xi}_{52}=\Xi^{t} \cdot \nabla_{*} \left(\mathcal{A}_{52} +\frac{5\theta}{3}\right), \quad \mathcal{F}^{\Xi}_{53}=\Xi^{t} \cdot \nabla_{*} \mathcal{A}_{53}, \quad \mathcal{F}^{\Xi}_{54}=\Xi^{t} \cdot \nabla_{*} \mathcal{A}_{54}, \\
	& \mathcal{F}^{\Xi}_{55}=\Xi^{t} \cdot \nabla_{*} \mathcal{A}_{55},\quad \mathcal{B}^{\Xi}_{21}=-\Xi^{t} \cdot \nabla_{*} \left(\frac{4 \mu(\theta) m_1}{3 \rho^2}\right),\quad  \mathcal{B}^{\Xi}_{22}=\Xi^{t} \cdot \nabla_{*} \left(\frac{4 \mu(\theta) }{3 \rho}\right), \quad \mathcal{B}^{\Xi}_{31}=\Xi^{t} \cdot \nabla_{*} \mathcal{B}_{31},\\
	& \mathcal{B}^{\Xi}_{33}=\Xi^{t} \cdot \nabla_{*} \left(\frac{ \mu(\theta) }{ \rho}\right),\;\;\mathcal{B}^{\Xi}_{41}=\Xi^{t} \cdot \nabla_{*} \mathcal{B}_{41}, \;\; \mathcal{B}^{\Xi}_{44}=\Xi^{t} \cdot \nabla_{*} \left(\frac{ \mu(\theta) }{ \rho}\right), \;\; \mathcal{B}^{\Xi}_{51}=\Xi^{t} \cdot \nabla_{*}  \left(\mathcal{B}_{51}-\frac{\theta \mu(\theta)}{\rho} \right),  \\
	& \mathcal{B}^{\Xi}_{52}=\Xi^{t} \cdot \nabla_{*} \mathcal{B}_{52},\quad\mathcal{B}^{\Xi}_{53}=\Xi^{t} \cdot \nabla_{*} \mathcal{B}_{53},\quad \mathcal{B}^{\Xi}_{54}=\Xi^{t} \cdot \nabla_{*} \mathcal{B}_{54},\quad \mathcal{B}^{\Xi}_{55}=\Xi^{t} \cdot \nabla_{*}  \left(\frac{ \kappa(\theta) }{ \rho}\right).
\end{align*}

With the preparation of notations above, we now introduce the coupled diffusion wave which plays a vital role in obtaining the optimal time-decay for convergence of the kinetic Landau solution toward the planar entropy wave.

\begin{Def}
	Corresponding to the diffusion wave $\bar{\U}$ which satisfies \eqref{U-bar}, the coupled diffusion wave $\Xi$ is defined to satisfy the following equation with zero initial data:
	\begin{align}\label{coupled-diffusion-wave-1}
		\left\{ \begin{aligned}
			&\p_t\Xi+\p_{x_1}(F'(\bar{\U})\Xi)=\p_{x_1}\big(\mathcal{B}(\bar{\U})\p_{x_1}\Xi\big)-\frac{1}{2}\p_{x_1}(\Xi^{t}F''(\bar{\U})\Xi)+\p_{x_1}\big(\Xi^{t}\mathcal{B}{'}(\bar{\U})\p_{x_1}\bar{\U}\big)  -\p_{x_1}\bar{\Rm}-\p_{x_1}\mathcal{R}_{G}, \\
			&\Xi|_{t=0}=0.
		\end{aligned}\right.
	\end{align}
	Here, $\mathcal{R}_{G}$ in the source term is to be specified in \eqref{2025-11-10-1} later on. The purpose of introducing the coupled diffusion wave  is to cancel out the slowly decaying terms in the macroscopic part corresponding to \eqref{landau-3} or \eqref{landau-4}.
	
\end{Def}

In order to construct $\mathcal{R}_{G}$ in the desired way,  we define the linearized operator around the local Maxwellian 
$
\Mb:=M_{[\rhob,\ub,\thetab]}
$ 
as
\begin{align}\notag
	L_{\Mb}h=Q(\bar{M},h)+Q(h,\bar{M}),
\end{align}
and the base functions for the kernel space of $L_{\Mb}$ are given as
\begin{align}\label{chi-1-1}
	\left\{  \begin{array}{ll} \bar{\chi}_{0}\left( \xi \right)  \equiv  \frac{1}{\sqrt{\rhob }}\Mb, \qquad\quad
		&\bar{\chi}_{i}\left( \xi \right)  \equiv  \frac{{\xi }_{i} - \bar{u}_{i}}{\sqrt{R\bar{\theta} \bar{\rho} }}\Mb,\text{ for }i = 1,2,3, \\ [2mm]
		\bar{\chi}_{4}\left( \xi \right)  \equiv  \frac{1}{\sqrt{6\bar{\rho}} }\left( {\frac{{\left| \xi  -\bar{u}\right| }^{2}}{R\bar{\theta} } - 3}\right) \Mb, \qquad\quad
		&\left\langle  {\bar{\chi}_{i},\bar{\chi }_{j}}\right\rangle   = {\delta}_{ij},\;i,j = 0,1,2,3,4. \end{array}\right.
\end{align}
Using these five base functions in \eqref{chi-1-1} above, we define the macroscopic projection \(\bar{P}_{0}\) and the microscopic projection \(\bar{P}_{1}\) as follows:
\begin{align}\notag
	\bar{P}_{0}h \equiv  \mathop{\sum }\limits_{{j = 0}}^{4}\left\langle  {h,\bar{\chi}_{j}}\right\rangle  \bar{\chi}_{j},\qquad\bar{P}_{1}h \equiv  h - \bar{P}_{0}h.
\end{align}
According to $G$ in \eqref{G}, we then introduce two correction functions by 
\begin{align}
	\bar{G}_0&=\frac{3}{2\theta}{\bf L}_{M}^{-1}\left\{P_1 \left[\xi_1\left(\frac{|\xi-u|^2}{2\theta}\p_{x_1} \bar{\theta}+(\xi-u)\cdot\p_{x_1}\bar u\right){M}\right]\right\}\notag\\
	&=\frac{\sqrt{R} \p_{x_1} \bar{\theta}   }{\sqrt{\theta} } {A}_1\left( \frac{\xi-u}{\sqrt{R \theta}}  \right) + \sum_{j=1}^3 \p_{x_1}\bar u_{j}{B}_{1 j}\left( \frac{\xi-u}{\sqrt{R \theta}}  \right),\label{correction-G-2} \\
	\bar{G}&=\frac{3}{2\thetab}{\bf L}_{\bar{M}}^{-1}\left\{{\bar{P}}_1\left[\xi_1\left(\frac{|\xi-\bar{u}|^2}{2\thetab}\p_{x_1} \bar{\theta}+(\xi-\bar{u})\cdot\p_{x_1}\bar{u}\right){\bar{M}}\right]\right\}\notag\\
	&=\frac{\sqrt{R} \p_{x_1} \bar{\theta}   }{\sqrt{\bar{\theta}} } \bar{A}_1\left( \frac{\xi-\bar{u}}{\sqrt{R \bar{\theta}}}  \right) + \sum_{j=1}^3 \p_{x_1}\bar{u}_{j}\bar{B}_{1 j}\left( \frac{\xi-\bar{u}}{\sqrt{R \bar{\theta}}}  \right).\label{correction-G-1}  
\end{align}
By \eqref{Theta} and \eqref{correction-G-1}, we define $
\bar{\Pi}_1:=\bar{P}_1(\xi_1\p_{x_1} \bar{G})-Q(\bar{G},\bar{G}).$
Thus, the errors of the microscopic components can be written as
\begin{align}\label{2025-11-10-1}
	\mathcal{R}_{G}:=\left(0,\;\int_{\R^3}\xi_1^2L^{-1}_{\bar{M}}\bar{\Pi}_1d\xi,\;\int_{\R^3}\xi_1\xi_2L^{-1}_{\bar{M}}\bar{\Pi}_1d\xi,\;\int_{\R^3}\xi_1\xi_3L^{-1}_{\bar{M}}\bar{\Pi}_1d\xi,\;\frac{1}{2}\int_{\R^3}\xi_1|\xi|^2L^{-1}_{\bar{M}}\bar{\Pi}_1d\xi\right)^t. 
\end{align}
For $ k\geq1$ with $k\in \mathbb{N}$, using \eqref{5.1}, \eqref{5.2} and the property of $L^{-1}_{\Mb}$, we have
\begin{align*}
	&\p_{x_1}^k\int_{\mathbb{R}^{3}}\xi_{1}\xi_{i}L^{-1}_{\bar{M}}\bar{\Pi}_{1} \,d\xi=
	\p_{x_1}^k\int_{\mathbb{R}^{3}} L^{-1}_{\bar{M}}\{\Pb_{1}( \xi_{1}\xi_{i}\bar{M})\}\frac{\bar{\Pi}_{1}}{\bar{M}} \,d\xi
	\nonumber\\
	=&\p_{x_1}^k\int_{\mathbb{R}^{3}} L^{-1}_{\bar{M}}\{R\bar{\theta}\hat{B}_{1i}(\frac{\xi-\ub}{\sqrt{R\bar{\theta}}})\bar{M}\}\frac{\bar{\Pi}_{1}}{\bar{M}}\, d\xi
	=R\p_{x_1}^k\left[\bar{\theta}\int_{\mathbb{R}^{3}}\bar{B}_{1i}(\frac{\xi-\bar{u}}{\sqrt{R\bar{\theta}}})\frac{\bar{\Pi}_{1}}{\bar{M}} \,d\xi\right]\le C\deltab\p_{x_1}^k D_{-1},
\end{align*}
and
\begin{align*}
	&\p_{x_1}^k \int_{\mathbb{R}^{3}} (\frac{1}{2}\xi_{1}|\xi|^{2}-\xi_{1}\xi\cdot \ub)L^{-1}_{\bar{M}}\bar{\Pi}_{1} \,d\xi= \p_{x_1}^k
	\int_{\mathbb{R}^{3}} L^{-1}_{\bar{M}}\{\bar{P}_{1}(\frac{1}{2}\xi_{1}|\xi|^{2}-\xi_{1}\xi\cdot \bar{u})\bar{M}\}\frac{\bar{\Pi}_{1}}{\bar{M}}\, d\xi
	\nonumber\\
	=&\p_{x_1}^k \int_{\mathbb{R}^{3}} L^{-1}_{\bar{M}}\{(R\thetab)^{\frac{3}{2}}\hat{A}_{1}(\frac{\xi-\bar{u}}{\sqrt{R\thetab}})\bar{M}\}\frac{\bar{\Pi}_{1}}{\bar{M}}\, d\xi
	=\p_{x_1}^k \left[(R\thetab)^{\frac{3}{2}}\int_{\mathbb{R}^{3}}\bar{A}_{1}(\frac{\xi-\bar{u}}{\sqrt{R\thetat}})\frac{\bar{\Pi}_{1}}{\bar{M}}\, d\xi\right] \le C \deltab\p_{x_1}^k D_{-1}.
\end{align*}
Combining \eqref{U-bar} and \eqref{coupled-diffusion-wave-1}, the function $\tilde{\U}=(\rhot,\mt_1,\mt_2,\mt_3,\Et)^{t}:=\bar{\U}+\Xi$ defined as the superposition of the background solution $\bar{\U}$ and the coupled diffusion wave  $\Xi$ satisfies
\begin{align}\label{New-Ansatz}
	\p_t\tilde{\U}+\p_{x_1}F(\tilde{\U})&=\p_{x_1}\Big(\mathcal{B}(\tilde{\U})\p_{x_1}\tilde{\U}\Big) +\p_{x_1}\tilde{\Rm} -\p_{x_1} \mathcal{R}_{G},
\end{align}
where 
\begin{align}\label{new-error}
	\tilde{\Rm}=\Big(F(\tilde{\U})-F(\bar{\U})-F'(\bar{\U})\Xi-\frac{1}{2}\Xi^{t}F''(\bar{\U})\Xi\Big)+\Big[\left( \mathcal{B}(\bar{\U})+\Xi^{t}\mathcal{B}'(\bar{\U})-\mathcal{B}(\tilde{\U}) \right) \p_{x_1}\tilde{\U}
	- \Xi^{t}\mathcal{B}'(\bar{\U})\p_{x_1}\Xi  \Big].
\end{align}
It can be seen from \eqref{eq-initial mass} and \eqref{coupled-diffusion-wave-1} that $\U-\tilde{\U}$ still satisfies the zero mass condition, {\it i.e.}
\begin{align}\label{initial-zero-mass-2}
	\int_{\Omega} \U(0,x)-\tilde{\U}(0,x)dx=\int_{\Omega} \U(0,x)-\bar{\U}(0,x)dx-\int_{\Omega} \Xi (0,x)dx=0.
\end{align}
Moreover, as can be seen from Corollary \ref{R-m-1-1-1}, $\tilde{\mathcal{R}}\approx \deltab\tilde{D}_{-\frac32}$ turns out to be satisfied.
For $i=1,2,3$, we also define the background non-conserved quantities: velocity  $\ut_i$, temperature $\thetat$ and pressure $\tilde{p}$ as follows
\begin{align}\label{non-conserved quantities}
	\ut_i:=\frac{\mt_i}{\rhot},\qquad \qquad\thetat:=\frac{\Et}{\rhot}-\frac{\abs{\mt}^2}{2\rhot^2},\qquad\qquad\tilde{p}=\frac{2}{3}\Et-\frac{\abs{\mt}^2}{3\rhot}.
\end{align}
The $L^p$ norm decay rate of the coupled diffusion wave $\Xi$ \eqref{coupled-diffusion-wave-1} aligns with that of the diffusion wave $\Theta$  \eqref{eqs19}. For details, we refer to Theorem \ref{res-Wt} and Corollary  \ref{R-m-1-1-1} later on.

\subsection{Main result}

We define the perturbation for the new ansatz \eqref{correction-G-2},  \eqref{New-Ansatz} and  \eqref{non-conserved quantities} by
\begin{align}\label{ori-perb-1}
	(\phi,\varphi,h,\psi,\zeta,\sqrt{\mu}g):=(\rho-\rhot,m-\mt,\E-\Et,u-\ut,\theta-\thetat,G-\bar{G}_0).
\end{align}
For convenience, we use the following notations:
\begin{align}\label{2025-11-5-1}
	{\Vm}:=(\Phi,\Psi,H)^{t},\qquad{\Vmc}:=(\Phic,\Psic,\Hc)^{t},\qquad \Vmc^{\ast}:=(0,\Psic,\Hc),\qquad \vm:=(\phi,\psi,\zeta)^{t},\qquad {\vm}^{\ast}:=(0,\psi,\zeta)^{t},
\end{align}
where $(\Phi,\Psi,H)$ and $(\Phic,\Psic,\Hc)$ are defined in \eqref{anti-11-5} and \eqref{anti-trans-qwe} respectively.   Next, for the solution  $f$ of landau equation \eqref{equ-landau} and the definition of $\mu$ \eqref{2026-5-9-2}, we introduce the instant energy
functionals  $\mathcal{E}_{1,\omega} (t),\mathcal{E}_{2,\omega}(t)$ with weight $\omega$ \eqref{1.32} and the unweighted instant energy functionals $\mathcal{E}_{1} (t),\mathcal{E}_{2} (t),\mathcal{E}_{3} (t)$, $\tilde{\mathcal{E}}_{2} (t)$ by
\begin{align}
	\mathcal{E}_{1,\omega}(t)=&\norm{\Vmc}_{H^1}^2+\norm{\p_{x_1}^2\left(\Phic,\Hc,\sum_{i=2}^3 \Psic_i\right)}_{L^2}^2 + \norm{\p_{x_1}\left( \thetat \p_{x_1} \Psic_1\right)}_{L^2}^2  +\norm{\vm_{\neq}}_{H^1}^2+\norm{\nabla_x^2 \vm}_{L^2}^2+\tilde{C}\sum_{\abs{\alpha}=3}\norm{\frac{\p^{\alpha}f}{\sqrt{\mu}}}_{2,\omega}^2 \notag\\
	&+\sum_{l=1}^4\mathcal{X}^l+\tilde{c}\left(\sum_{k=0}^1\int_{\R} \p_{x_1}^{k+1}\Phic\p_{x_1}^k\Psic_1 dx_1 +\sum_{k=1}^2\int_{\Omega} \nabla_x^k \psi \cdot\nabla_x^{k+1} \phi dx\right)+\tilde{C}\sum_{0\le \abs{\alpha}\le 2\atop 0\leq\abs{\alpha}+\abs{\beta}\leq3}\|\partial^\alpha_\beta g\|^2_{2,\omega},\label{2.10}\\
	\mathcal{E}_{2,\omega}(t)=&\norm{\p_{x_1}\Vmc}_{L^2}^2+\norm{\p_{x_1}^2\left(\Phic,\Hc,\sum_{i=2}^3 \Psic_i\right)}_{L^2}^2 + \norm{\p_{x_1}\left( \thetat \p_{x_1} \Psic_1\right)}_{L^2}^2 +\norm{\vm_{\neq}}_{H^1}^2+\norm{\nabla_x^2 \vm}_{L^2}^2 \notag\\
	&+c\left(\int_{\R} \p_{x_1}^{2}\Phic\p_{x_1}\Psic_1 dx_1 +\sum_{k=1}^2\int_{\Omega} \nabla_x^k \psi \cdot\nabla_x^{k+1} \phi dx\right)+\tilde{C}\sum_{0\le \abs{\alpha}\le 2\atop 0\leq\abs{\alpha}+\abs{\beta}\leq3}\|\partial^\alpha_\beta g\|^2_{2,\omega}+\tilde{C}\sum_{\abs{\alpha}=3}\norm{\frac{\p^{\alpha}f}{\sqrt{\mu}}}_{2,\omega}^2,\label{2.10-1}\\
	\mathcal{E}_{1}(t)=&\norm{\Vmc}_{H^1}^2+\norm{\p_{x_1}^2\left(\Phic,\Hc,\sum_{i=2}^3 \Psic_i\right)}_{L^2}^2 + \norm{\p_{x_1}\left( \thetat \p_{x_1} \Psic_1\right)}_{L^2}^2 +\norm{\vm_{\neq}}_{H^1}^2+\norm{\nabla_x^2 \vm}_{L^2}^2 +\tilde{C}\sum_{\abs{\alpha}=3}\norm{\frac{\p^{\alpha}f}{\sqrt{\mu}}}^2_{2}\notag\\
	&+\sum_{l=1}^4\mathcal{X}^l+\tilde{c}\left(\sum_{k=0}^1\int_{\R} \p_{x_1}^{k+1}\Phic\p_{x_1}^k\Psic_1 dx_1 +\sum_{k=1}^2\int_{\Omega} \nabla_x^k \psi \cdot\nabla_x^{k+1} \phi dx\right)+\tilde{C}\sum_{0\le \abs{\alpha}\leq 2}\|\partial^\alpha g\|^2_{2},\label{2.10-2}\\
	\mathcal{E}_{2}(t)=&\norm{\p_{x_1}\Vmc}_{L^2}^2+\norm{\p_{x_1}^2\left(\Phic,\Hc,\sum_{i=2}^3 \Psic_i\right)}_{L^2}^2 + \norm{\p_{x_1}\left( \thetat \p_{x_1} \Psic_1\right)}_{L^2}^2+\norm{\vm_{\neq}}_{H^1}^2+\norm{\nabla_x^2 \vm}_{L^2}^2 \notag\\
	&+\tilde{c}\left(\int_{\R} \p_{x_1}^{2}\Phic\p_{x_1}\Psic_1 dx_1 +\sum_{k=1}^2\int_{\Omega} \nabla_x^k \psi \cdot\nabla_x^{k+1} \phi dx\right)+\tilde{C}\sum_{0\le \abs{\alpha}\leq 2}\|\partial^\alpha g\|^2_{2}+\tilde{C}\sum_{\abs{\alpha}=3}\norm{\frac{\p^{\alpha}f}{\sqrt{\mu}}}^2_{2},\label{2.10-3}\\
	\tilde{\mathcal{E}}_{2}(t)=&\norm{\p_{x_1}\Vmc}_{L^2}^2+\norm{\p_{x_1}^2\left(\Phic,\Hc,\sum_{i=2}^3 \Psic_i\right)}_{L^2}^2 + \norm{\p_{x_1}\left( \thetat \p_{x_1} \Psic_1\right)}_{L^2}^2+\norm{\vm_{\neq}}_{H^1}^2+\norm{\nabla_x^2 \vm}_{L^2}^2 \notag\\
	&+\tilde{c}\left(\int_{\R} \p_{x_1}^{2}\Phic\p_{x_1}\Psic_1 dx_1 +\sum_{k=1}^2\int_{\Omega} \nabla_x^k \psi \cdot\nabla_x^{k+1} \phi dx\right)+\tilde{C}\sum_{1\le \abs{\alpha}\leq 2}\|\partial^\alpha g\|^2_{2}+\tilde{C}\sum_{\abs{\alpha}=3}\norm{\frac{\p^{\alpha}f}{\sqrt{\mu}}}^2_{2},\label{2.10-4}\\
	\mathcal{E}_{3}(t)=&\norm{\p_{x_1}^2\left(\Phic,\Hc,\sum_{i=2}^3 \Psic_i\right)}_{L^2}^2 + \norm{\p_{x_1}\left( \thetat \p_{x_1} \Psic_1\right)}_{L^2}^2 +\norm{\nabla_x\vm_{\neq}}_{L^2}^2+\norm{\nabla_x^2 \vm}_{L^2}^2 \notag\\
	&+\tilde{c}\sum_{k=1}^2\int_{\Omega} \nabla_x^k \psi \cdot\nabla_x^{k+1} \phi dx+\tilde{C}\sum_{1\le \abs{\alpha}\leq 2}\|\partial^\alpha g\|^2_{2}+\tilde{C}\sum_{\abs{\alpha}=3}\norm{\frac{\p^{\alpha}f}{\sqrt{\mu}}}^2_{2},\label{2.10-5}
\end{align}
where $\mathcal{X}^l$ is defined in \eqref{definition-of-Xm} and $\tilde{c}$ and $\tilde{C}$  are sufficiently small and sufficiently large constants, respectively, used to ensure that all the transient energy functionals defined above are positive. 

\begin{Rem}
	For $N=1,2$, a direct calculation shows that the transient energy functionals defined above have the following equivalent relationship 
	\begin{align*}
		& \mathcal{E}_{N,\omega}\approx\abs{N-2}\norm{\Vmc}_{L^2}^2+\norm{\vm}_{H^2}^2+\sum_{0\le \abs{\alpha}\le 2\atop 0\leq\abs{\alpha}+\abs{\beta}\leq3}\|\partial^\alpha_\beta g\|^2_{2,\omega}+\sum_{\abs{\alpha}=3}\norm{\frac{\p^{\alpha}f}{\sqrt{\mu}}}_{2,\omega}^2,\\
		&\mathcal{E}_{N}\approx\abs{N-2}\norm{\Vmc}_{L^2}^2+\norm{\vm}_{H^2}^2+\sum_{0\le \abs{\alpha}\le 2}\|\partial^\alpha g\|^2_{2}+\sum_{\abs{\alpha}=3}\norm{\frac{\p^{\alpha}f}{\sqrt{\mu}}}_{2}^2,\\
		&\tilde{\mathcal{E}}_{2}\approx\norm{\vm}_{H^2}^2+\sum_{1\le \abs{\alpha}\le 2}\|\partial^\alpha g\|^2_{2}+\sum_{\abs{\alpha}=3}\norm{\frac{\p^{\alpha}f}{\sqrt{\mu}}}_{2}^2,\qquad\mathcal{E}_{3}\approx\norm{\nabla_x\vm}_{H^1}^2+\sum_{1\le \abs{\alpha}\le 2}\|\partial^\alpha g\|^2_{2}+\sum_{\abs{\alpha}=3}\norm{\frac{\p^{\alpha}f}{\sqrt{\mu}}}_{2}^2.
	\end{align*}
\end{Rem}
And the corresponding dissipation energy  functionals $\mathcal{D}_{1,\omega} (t),\mathcal{D}_{2,\omega} (t),\mathcal{D}_{3,\omega} (t)$ with weight $\omega$ and the unweighted dissipation  energy functionals $\mathcal{D}_{1} (t),\mathcal{D}_{2} (t),\mathcal{D}_{3} (t),\tilde{\mathcal{D}}_{2} (t)$ by
\begin{align}
	&\mathcal{D}_{1,\omega}(t)=\norm{\p_{x_1}\Vmc}_{H^2}^2+\norm{\vm_{\neq}}_{H^2}^2+\norm{\nabla_x^3 \vm}_{L^2}^2+\sum_{0\leq|\alpha|+|\beta|\leq 3}\|\partial^\alpha_\beta g\|^2_{\sigma,\omega}, \label{2.11}\\
	&\mathcal{D}_{2,\omega}(t)=\norm{\p_{x_1}^2\Vmc}_{H^1}^2+\norm{\nabla_x \vm_{\neq}}_{H^1}^2+\norm{\nabla_x^3 \vm}_{L^2}^2+\sum_{0\leq|\alpha|+|\beta|\leq 3}\|\partial^\alpha_\beta g\|^2_{\sigma,\omega},\label{2.11-1}\\
	&\mathcal{D}_{3,\omega}(t)=\norm{\p_{x_1}^3\Vmc}_{L^2}^2+\norm{\nabla_x^2 \vm_{\neq}}_{L^2}^2+\norm{\nabla_x^3 \vm}_{L^2}^2+\sum_{1\leq|\alpha|+|\beta|\leq 3}\|\partial^\alpha_\beta g\|^2_{\sigma,\omega},\label{2.11-1-t}\\
	&\mathcal{D}_{1}(t)=\norm{\p_{x_1}\Vmc}_{H^2}^2+\norm{\vm_{\neq}}_{H^2}^2+\norm{\nabla_x^3 \vm}_{L^2}^2+\sum_{0\leq|\alpha|\leq 3}\|\partial^\alpha g\|^2_{\sigma}, \label{2.11-2}\\
	&\mathcal{D}_{2}(t)=\norm{\p_{x_1}^2\Vmc}_{H^1}^2+\norm{\nabla_x \vm_{\neq}}_{H^1}^2+\norm{\nabla_x^3 \vm}_{L^2}^2+\sum_{0\leq|\alpha|\leq 3}\|\partial^\alpha g\|^2_{\sigma},\label{2.11-3}\\
	&\tilde{\mathcal{D}}_{2}(t)=\norm{\p_{x_1}^2\Vmc}_{H^1}^2+\norm{\nabla_x \vm_{\neq}}_{H^1}^2+\norm{\nabla_x^3 \vm}_{L^2}^2+\sum_{1\leq|\alpha|\leq 3}\|\partial^\alpha g\|^2_{\sigma},\label{2.11-4}\\
	&\mathcal{D}_{3}(t)=\norm{\p_{x_1}^3\Vmc}_{L^2}^2+\norm{\nabla_x^2 \vm_{\neq}}_{L^2}^2+\norm{\nabla_x^3 \vm}_{L^2}^2+\sum_{1\leq|\alpha|\leq 3}\|\partial^\alpha g\|^2_{\sigma}.\label{2.11-5}
\end{align}

Now we can precisely state the main result of this paper corresponding to the rough version Theorem \ref{thm.rvmt} given before.

\begin{Thm}\label{mt}
	Let $M_{[\rhoc,\uc,\check{\theta}](t,x_1)}(\xi)$ be the viscous entropy wave defined in \eqref{2026-5-9-1}  with the
	small wave strength
	$\delta=|\theta_{+}-\theta_{-}|>0$.
	Then, there is a sufficiently small constant $\varepsilon_{0}>0$  such that if the initial data $f_{0}(x,\xi)\geq 0$ satisfies
	\begin{align}\notag
		\norm{(\rho-\rhoc,m-\mc,\E-\Ec)|_{t=0}}_{L^1}^2+\mathcal{E}_{1,\omega}(0)\leq C\varepsilon^{2}_{0},
	\end{align}
	where $\mathcal{E}_{1,\omega}$ is defined in \eqref{2.10} with  $l\geq 2$ being arbitrarily given in \eqref{1.32}, then the Cauchy problem  \eqref{equ-landau} and \eqref{ini} on the Landau equation with Coulomb interaction  admits a unique  global solution $f(t,x,\xi)\geq 0$ satisfying
	\begin{align}\notag
		\Big\|\frac{f-M_{[\rhoc,\uc,\check{\theta}]}}{\sqrt{\mu}}\Big\|_{L_{x}^{\infty}L_{\xi}^{2}}\leq C(\varepsilon_0^{\frac12}+{\delta}^{\frac12})(1+t)^{-\frac{1}{2}}, \quad \norm{\Dn\left(\frac{f-M_{[\rhoc,\uc,\check{\theta}](t,x_1)}}{\sqrt{\mu}}\right)}_{L_x^\infty L_{\xi}^2}^2\leq C (\delta+\varepsilon_0) e^{-ct^{\frac23}}.
	\end{align}
\end{Thm}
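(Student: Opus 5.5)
The plan is the standard continuity argument: local existence plus a priori estimates, closed by the energy hierarchy announced in the strategy section. First, local existence of a nonnegative solution in the weighted space defined by $\mathcal{E}_{1,\omega}$ follows from the known theory of Guo and Strain--Guo for the Landau equation near $\mu$; the smallness of $\delta$ and $\varepsilon_0$ keeps $f$ close to $\mu$. The global result then reduces to an a priori estimate under the assumption
\begin{align*}
\sup_{0\le \tau\le T}\big[\mathcal{E}_{1,\omega}(\tau)+(1+\tau)\mathcal{E}_{2}(\tau)\big]\le \chi^2 \ll 1 .
\end{align*}

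\textbf{Step 1: the coupled diffusion wave.} Solve \eqref{coupled-diffusion-wave-1} with zero data. Diagonalize the linearized system along the distinct characteristic speeds $\lambda_1^-,0,\lambda_3^+$, and use the cross-interaction dissipation to prove that $\Xi$ decays at the rate of $\Theta_i$, so that $\tilde{\Rm}\approx \deltab \tilde D_{-3/2}$. Here $\deltab=\delta+\varepsilon_0$ dominates $|\Thetab_i|$.

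\textbf{Step 2: integrated perturbation and macroscopic estimates.} Because \eqref{initial-zero-mass-2} holds, the anti-derivatives $(\Phi,\Psi,H)$ of the zero modes are well defined in $L^2$. Pass to $(\Phic,\Psic,\Hc)$, for which both structural conditions hold, so that no $(1+t)^{-1/2}e^{-cx_1^2/(1+t)}$ weighted terms appear at the $L^2$ and $\p_{x_1}$ levels. For second derivatives, use the quantities $(\frac23\thetat\p_{x_1}^2\Phic,\p_{x_1}(\thetat\p_{x_1}\Psic_1),\dots)$ to obtain the needed cancellations. The nonzero modes $\vm_{\neq}$ are handled with Poincar\'e's inequality on $\Torus^2$.

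\textbf{Step 3: microscopic estimates.} Combine these with weighted energy estimates for $g$, and for $\p^\alpha f/\sqrt\mu$ with $|\alpha|=3$, using the coercivity of $L_M$ in $|\cdot|_{\sigma,\omega}$. Together the steps yield the four differential inequalities listed in the strategy section.

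\textbf{Step 4: integrating the hierarchy.} From the first inequality, Gronwall gives $\mathcal{E}_1\le C\deltab$. Multiplying the second by $(1+t)$ gives $(1+t)\mathcal{E}_2\le C\deltab$. The main obstacle is that $\mathcal{D}_2$ does not control $\|g\|_2$, because there is no spectral gap. For this I would use the time-velocity interpolation of Lemma \ref{key-trans}: $\|g\|_2^2\le \upsilon(1+t)^\epsilon\|g\|_\sigma^2+e^{-c(1+t)^{2\epsilon}}\|g\|_{2,\omega}^2$. The weighted energy $\mathcal{E}_{1,\omega}$ stays bounded by the weighted version of the first inequality. The loss of $(1+t)^{\epsilon}$ must then be absorbed. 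Following the strategy section, I would derive the $\tilde{\mathcal{E}}_2$ inequality, giving $\int_0^t(1+\tau)\tilde{\mathcal D}_2\le C\deltab(1+t)^{1/5}$. Then, from the $\mathcal{E}_3$ inequality with weight $(1+t)^2$, and using $\int_0^t(1+\tau)\mathcal{E}_3\le C(1+t)^{1/10}\int_0^t(1+\tau)\tilde{\mathcal D}_2+C\deltab$, I would get $(1+t)^2\mathcal{E}_3\le C\deltab$, with the interpolation constants $\upsilon,\epsilon$ tuned so the fractional powers cancel.

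\textbf{Step 5: the $L^\infty$ bound.} Write
\begin{align*}
f-M_{[\rhoc,\uc,\Tc]}=(M-\tilde M)+(\tilde M-M_{[\rhoc,\uc,\Tc]})+\sqrt\mu g+\bar G_0,
\end{align*}
where $\tilde M$ denotes the local Maxwellian built from the non-conserved quantities $(\rhot,\ut,\thetat)$ in \eqref{non-conserved quantities}. Use Gagliardo--Nirenberg on $\R\times\Torus^2$: $\|\vm\|_{L^\infty}\lesssim\|\vm\|^{1/2}\|\p_{x_1}\vm\|^{1/2}+\|\nabla\vm_{\neq}\|_{H^1}$. Combined with $\|\vm\|\lesssim\|\p_{x_1}\Vmc\|$ and the bounds $\mathcal{E}_2\lesssim\deltab(1+t)^{-1}$, $\mathcal{E}_3\lesssim\deltab(1+t)^{-2}$, this gives $(1+t)^{-3/4}$ for the perturbation part. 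The terms $\Theta_i$, $\Xi$ and $\bar G_0$ decay like $(1+t)^{-1/2}$, which yields the stated rate with constant $(\varepsilon_0^{1/2}+\delta^{1/2})$.

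\textbf{Step 6: nonzero modes.} Since $M_{[\rhoc,\uc,\Tc]}$ depends only on $x_1$, the nonzero mode of $f-M_{[\rhoc,\uc,\Tc]}$ equals that of $f-\mu$. Perturbing around $\mu$ gives a Lyapunov inequality $\frac{d}{dt}\mathcal{E}_{\neq}+c\mathcal{D}_{\neq}\le0$, with the small couplings to zero modes controlled using Poincar\'e and the smallness of $\deltab$. Interpolating $\mathcal{E}_{\neq}\le \upsilon\mathcal{D}_{\neq}+e^{-c\upsilon^{2}}\mathcal{E}_{\neq,\omega}$, against the bounded weighted energy, and optimizing $\upsilon\sim t^{1/3}$ gives $e^{-ct^{2/3}}$. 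The Sobolev embedding in $x$ then finishes the proof.
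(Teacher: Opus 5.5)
Your architecture is the paper's: the coupled diffusion wave, the transformed anti-derivative with its derivative-level variant, Poincar\'e for $\vm_{\neq}$, time--velocity interpolation with the auxiliary functional $\tilde{\mathcal{E}}_2$, and the $\epsilon=\frac13$ interpolation for the non-zero modes. However, Step 4 misreads the differential inequalities you quote, and the error carries into your bootstrap hypothesis and into Step 5.

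The $\mathcal{E}_2$-inequality has forcing $C\deltab(1+t)^{-3/2}$. It comes from the zeroth-order microscopic estimate \eqref{5.30-2}, where $\p_t\bar G_0$, $P_1(\xi_1\p_{x_1}\bar G_0)$ and the $\p_{x_1}\Xi$-terms are only of size $\deltab\Dt_{-1}$ pointwise. After multiplying by $(1+t)$ this forcing is $(1+t)^{-1/2}$, which is not integrable. You therefore get $(1+t)\mathcal{E}_2\le C\deltab(1+t)^{1/2}$, i.e. $\mathcal{E}_2\lesssim\deltab(1+t)^{-1/2}$, not $(1+t)^{-1}$. The real difficulty in this weighted step is bounding $\int_0^t\mathcal{E}_2$, which contains $\|g\|_2^2$, by $\int_0^t\mathcal{D}_1$; that is where the interpolation enters.

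In the same way, the forcing $C\deltab(1+t)^{-5/2}$ of the $\mathcal{E}_3$-inequality, weighted by $(1+t)^2$, gives only $\mathcal{E}_3\lesssim\deltab(1+t)^{-3/2}$. The interpolation term alone already contributes $(1+t)^{\frac1{10}}\int_0^t(1+\tau)\tilde{\mathcal D}_2\,d\tau\lesssim(1+t)^{\frac3{10}}$, so no choice of $\upsilon,\epsilon$ makes ``the fractional powers cancel.'' The mechanism is a margin, not a cancellation. Because $\tilde{\mathcal{E}}_2$ omits $\|g\|_2^2$, its forcing is $(1+t)^{-5/2}$, so the interpolation loss costs $(1+t)^{3/10}$, which stays below the $(1+t)^{1/2}$ already produced by the source. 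With $\mathcal{D}_2$ in its place you would get $(1+t)^{\frac1{10}+\frac12}$ and lose the rate. Also, Gronwall on the first inequality, whose coefficient is $C\deltac(1+t)^{-1}$, gives $\mathcal{E}_1\lesssim\deltab(1+t)^{C\deltac}$, not a uniform bound.

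Consequently your hypothesis $\sup_\tau[\mathcal{E}_{1,\omega}+(1+\tau)\mathcal{E}_2]\le\chi^2$ cannot be closed with these estimates. You have to bootstrap, as in \eqref{apa}, on $(1+t)^{1/2}\|\vm\|^2$ and $(1+t)^{3/2}\|\nabla_x\vm\|^2$, on $\|\Vm\|_{L^\infty}$ rather than $\|\Vmc\|_{L^2}$, and on the weighted microscopic norms in $\mathcal{E}_{2,\omega}$. That functional is the one that stays uniformly bounded, via \eqref{decay-1-tt}, and it feeds the remainder in Lemma~\ref{key-trans}.

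Likewise, in Step 5 the perturbation part does not decay like $(1+t)^{-3/4}$. One only has $\|\vm\|_{L^\infty}\lesssim\mathcal{E}_2^{1/4}\mathcal{E}_3^{1/4}+\mathcal{E}_3^{1/2}\lesssim\deltab^{1/2}(1+t)^{-\frac18-\frac38}=\deltab^{1/2}(1+t)^{-1/2}$, which is exactly the rate in the theorem with no room to spare. The statement does follow once you replace your exponents by $\mathcal{E}_2\lesssim\deltab(1+t)^{-1/2}$ and $\mathcal{E}_3\lesssim\deltab(1+t)^{-3/2}$. As written, though, Steps 4--5 assert bounds the hierarchy does not give, and the bootstrap you set up is not self-consistent.
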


In order to prove Theorem \ref{mt}, we first show the following result.

\begin{Thm}\label{mt-1}
	Under  the same assumptions of Theorem \ref{mt}, it holds that
	\begin{align}\notag
		\mathcal{E}_{2,\omega}\le C(\varepsilon_0+\delta),\qquad\quad\mathcal{E}_{2}(t)\leq C(\varepsilon_0+\delta)(1+t)^{-\frac{1}{2}},\qquad\quad\mathcal{E}_{3}(t)\leq C(\varepsilon_0+\delta)(1+t)^{-\frac{3}{2}},
	\end{align}
	where $\mathcal{E}_{2,\omega},\mathcal{E}_{2}(t),\mathcal{E}_{3}(t)$ are defined in \eqref{2.10-1}, \eqref{2.10-3} and \eqref{2.10-5}. Moreover, for the non-zero modes, it holds that
	\begin{align*}
		\norm{\Dn\left(\frac{f-M_{[\rhoc,\uc,\check{\theta}](t,x_1)}}{\sqrt{\mu}}\right)}_{L_x^\infty L_{\xi}^2}^2\leq C (\delta+\varepsilon_0) e^{-ct^{\frac23}},
	\end{align*}
	where  $C$ is a positive constant independent of $\varepsilon_0$ and $\delta$, and $\Dn$ is defined in \eqref{2025-11-10-4}. 
\end{Thm}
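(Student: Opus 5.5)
The proof of Theorem~\ref{mt-1} will go through a continuity (bootstrap) argument. We combine a local existence result with a priori estimates under the assumption
\begin{align*}
\sup_{0\le t\le T}\Big(\mathcal{E}_{2,\omega}(t)+(1+t)^{\frac12}\mathcal{E}_2(t)+(1+t)^{\frac32}\mathcal{E}_3(t)\Big)\le \varepsilon^{2},
\end{align*}
with $\varepsilon$ small, together with the growth bound $\mathcal{E}_{1,\omega}(t)\lesssim \varepsilon^2(1+t)^{\frac12}$ for the anti-derivative level. The first ingredient is the coupled diffusion wave. Using the diagonalization of \eqref{coupled-diffusion-wave-1} and the dissipation mechanism \eqref{2025.6.07-1}, we show that $\Xi$ decays like $\Theta_i$ in every $L^p$ norm (Theorem~\ref{res-Wt}). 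Corollary~\ref{R-m-1-1-1} then gives $\tilde{\Rm}\approx\deltab\tilde D_{-\frac32}$. This is the step that removes the slowly decaying $O(D_{-1})$ errors $\bar{\Rm}$ and $\mathcal{R}_G$ from the perturbation system.

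Next we derive the perturbation equations for the zero-mass variables $(\Phi,\Psi,H)$, which are admissible by \eqref{initial-zero-mass-2}, and pass to $(\Phic,\Psic,\Hc)$ via $\Phic=\thetat\Phi$, $\Hc=H-\Phic$. The new system satisfies both structural conditions, so the wave-interaction terms at the $L^2$ and first-derivative levels carry weight $(1+t)^{-1}e^{-cx_1^2/(1+t)}$. These terms are absorbed by the heat-kernel weighted inequality of Huang--Xin--Yang type. At the second-derivative level we use the transformed quantities $(\tfrac23\thetat\p_{x_1}^2\Phic,\p_{x_1}(\thetat\p_{x_1}\Psic_1),\dots,\thetat\p_{x_1}^2\Hc)$ to recover the cancellations.

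On the kinetic side we run the weighted energy estimates for $g$ and for $\p^\alpha f/\sqrt\mu$ with $|\alpha|=3$, in the spirit of Guo and Strain--Guo, using the $\sigma$-norm coercivity \eqref{1.35}. The non-zero modes $\vm_{\neq}$ are controlled by the Poincar\'e inequality on $\Torus^2$. Together with the cross terms $\int\p_{x_1}^{k+1}\Phic\p_{x_1}^k\Psic_1$ that give macroscopic dissipation, these estimates close the four differential inequalities displayed in strategy (6). The first inequality, with Gronwall, gives $\mathcal{E}_{1}\lesssim(1+t)^{\frac12}$ and $\int_0^t\mathcal{D}_1\lesssim(1+t)^{\frac12}$.

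The main obstacle is the lack of a spectral gap. The dissipation $\mathcal{D}_2$ does not control $\|g\|_2$, so it cannot bound $\mathcal{E}_2$, and a direct weighted-in-time argument fails. To handle this, we apply the time-velocity interpolation Lemma~\ref{key-trans} with $\upsilon(1+t)^{\epsilon}$, $\epsilon$ small such as $\frac1{10}$. This gives $\mathcal{E}_3\lesssim(1+t)^{\epsilon}\mathcal{D}_3+e^{-c(1+t)^{2\epsilon}}\mathcal{E}_{2,\omega}$, and similarly for $\tilde{\mathcal{E}}_2$ versus $\tilde{\mathcal{D}}_2$; the exponentially weighted energy stays bounded by the $\omega$-estimates. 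We then multiply the $\tilde{\mathcal{E}}_2$ inequality by $(1+t)$ and integrate to obtain $\int_0^t(1+\tau)\tilde{\mathcal{D}}_2\lesssim(1+t)^{\frac15}$. This yields $\int_0^t(1+\tau)\mathcal{E}_3\lesssim(1+t)^{\frac3{10}}$. Finally we multiply the $\mathcal{E}_3$ inequality by $(1+t)^{\frac32+}$, using $\int(1+\tau)^{\frac12}\mathcal{E}_3$ bounds, and the $\mathcal{E}_2$ inequality by $(1+t)^{\frac12}$. Each time-loss $(1+t)^{\epsilon}$ is compensated by the extra decay of the $\deltab$ source terms, and the result is $\mathcal{E}_2\lesssim(1+t)^{-\frac12}$ and $\mathcal{E}_3\lesssim(1+t)^{-\frac32}$.

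For the non-zero modes, $(f-M_{[\rhoc,\uc,\Tc]})_{\neq}=(f-\mu)_{\neq}$, so we work with the perturbation around $\mu$. Poincar\'e gives $\|\vm_{\neq}\|\lesssim\|\nabla_{x'}\vm_{\neq}\|$, which turns macroscopic and microscopic dissipation into full dissipation of an energy $\mathcal{E}_{\neq}$ modulo the $\sigma$-weight. Interpolating with Lemma~\ref{key-trans} against the $e^{q\la\xi\ra^2}$-weighted bound gives
\begin{align*}
\frac{d}{dt}\mathcal{E}_{\neq}+c(1+t)^{-\epsilon}\mathcal{E}_{\neq}\lesssim e^{-c(1+t)^{2\epsilon}}.
\end{align*}
Optimizing with $\epsilon=\frac13$, as for $\gamma=-3$, yields $e^{-ct^{2/3}}$, and Sobolev embedding on $\Omega$ converts this into the stated $L^\infty_xL^2_\xi$ bound.
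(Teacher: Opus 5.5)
The overall architecture is the paper's: the coupled diffusion wave, the transformation $\Phic=\thetat\Phi$, $\Hc=H-\Phic$ and its second-derivative analogue, the weighted kinetic estimates, Lemma~\ref{key-trans}, and the $\epsilon=\frac13$ argument for the non-zero modes. The non-zero-mode part is fine. The gap is in the decay bookkeeping, exactly where the paper's construction matters.

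You state, and build into your bootstrap, that the $\mathcal{E}_1$-inequality gives $\mathcal{E}_1\lesssim(1+t)^{\frac12}$ and $\int_0^t\mathcal{D}_1\,d\tau\lesssim(1+t)^{\frac12}$. With that input the interpolation loss cannot be paid for. Essentially the only parts of $\mathcal{E}_2$ and $\tilde{\mathcal{E}}_2$ not controlled by $\mathcal{D}_1$ are the $L^2$ (rather than $\sigma$) norms of $\p^\alpha g$. So Lemma~\ref{key-trans} with $\epsilon=\frac1{10}$ gives $\int_0^t(\mathcal{E}_2+\tilde{\mathcal{E}}_2)\,d\tau\lesssim(1+t)^{\frac1{10}}\int_0^t\mathcal{D}_1\,d\tau+C\deltab$, which under your bound is $(1+t)^{\frac35}$. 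Integrating $(1+t)$ times the $\mathcal{E}_2$-inequality then yields only $\mathcal{E}_2\lesssim(1+t)^{-\frac25}$. Your bounds $\int_0^t(1+\tau)\tilde{\mathcal{D}}_2\,d\tau\lesssim(1+t)^{\frac15}$ and $\int_0^t(1+\tau)\mathcal{E}_3\,d\tau\lesssim(1+t)^{\frac3{10}}$ do not follow either; they degrade to $(1+t)^{\frac35}$ and $(1+t)^{\frac7{10}}$, giving only $\mathcal{E}_3\lesssim(1+t)^{-\frac{13}{10}}$. So neither the stated rates nor your bootstrap close.

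The missing observation is this. Once $\bar{\Rm}$ and $\mathcal{R}_G$ are absorbed into $\Xi$, the source in $\frac{d}{dt}\mathcal{E}_1+\mathcal{D}_1\le C\deltac(1+t)^{-1}\mathcal{E}_1+C\deltab(1+t)^{-\frac32}$ is integrable. Gr\"onwall then gives $\mathcal{E}_1+\int_0^t\mathcal{D}_1\,d\tau\lesssim\deltab(1+t)^{C\deltac}\le C\deltab(1+t)^{\frac1{10}}$, so the anti-derivative energy is essentially non-growing. The $(1+t)^{\frac12}$ growth is what one has with $\bar{\Rm}$ still present, and that is precisely what is incompatible with the $(1+t)^{\epsilon}$ loss.

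Hence the loss is not compensated by "extra decay of the source terms". It multiplies the nearly bounded $\int_0^t\mathcal{D}_1\,d\tau$, and the product $(1+t)^{\frac1{10}+C\deltac}$ stays below the $(1+t)^{\frac12}$ that the source $\deltab(1+t)^{-\frac32}$ itself produces. That happens after multiplying the $\mathcal{E}_2$-inequality by $(1+t)$ and the $\mathcal{E}_3$-inequality by $(1+t)^2$, which is what yields exactly $(1+t)^{-\frac12}$ and $(1+t)^{-\frac32}$. With weight exactly $(1+t)^{\frac12}$ on the $\mathcal{E}_2$-inequality, as you propose, the source contributes $\deltab\log(1+t)$.

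Finally, $\mathcal{E}_3\lesssim(1+t)^{\epsilon}\mathcal{D}_3+\cdots$ is false. $\mathcal{D}_3$ contains $\norm{\p_{x_1}^3\Vmc}_{L^2}^2$ but not $\norm{\p_{x_1}^2\Vmc}_{L^2}^2$, and there is no Poincar\'e inequality in $x_1$. The comparison must be with $\tilde{\mathcal{D}}_2$, which is in fact what your next step uses.
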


Once we have Theorem \ref{mt-1}, we are ready to prove Theorem \ref{mt}.

\medskip
\noindent{\bf{Proof of Theorem \ref{mt}}.} In terms of \eqref{assumption-1}, \eqref{eqs19}, \eqref{new-ansatz-new}, \eqref{New-Ansatz}, \eqref{2.10-3} and \eqref{2.10-5}, one has
\begin{align*}
	&\norm{\frac{f(t,x,\xi)-M_{[\rhoc,\ut,\check{\theta}]}}{\sqrt{\mu}}}_{L_{x}^{\infty}L_{\xi}^{2}}\le \norm{\frac{M_{[\rho,u,\theta]}-M_{[\rhot,\ut,\thetat]}+M_{[\rhot,\ut,\thetat]}-M_{[\rhoc,\uc,\check{\theta}]}+\bar{G}_0+\sqrt{\mu}g}{\sqrt{\mu}}}_{L_{x}^{\infty}L_{\xi}^{2}}\\
	&\qquad\qquad\quad\le C\left(\norm{\vm}_{L_x^{\infty}}+\sum_{i=\{1,3\}}\Thetab_i\norm{\Theta_i}_{L^\infty} +\sum_{i=2}^3\norm{\bar{m}_i}_{L^\infty}+\norm{\Xi}_{L^\infty}+\norm{\abs{\bar{G}_0}_2}_{L_x^\infty}+\norm{\abs{g}_2}_{L_x^\infty}\right)\\
	&\qquad\qquad\quad\le C\left( \mathcal{E}_2^{\frac14}\mathcal{E}_3^{\frac14}+\mathcal{E}_3^{\frac12}+\sum_{i=\{1,3\}}\Thetab_i\norm{\Theta_i}_{L^\infty} +\sum_{i=2}^3\norm{\bar{m}_i}_{L^\infty}+\norm{\Xi}_{L^\infty}+\norm{\abs{\bar{G}_0}_2}_{L_x^\infty}  \right).
\end{align*}
According to \eqref{eqs27}, \eqref{eqs19}, \eqref{new-ansatz-new}, \eqref{correction-G-2}, Theorem \ref{mt-1} and the estimate of coupled diffusion wave $\Xi_i$ (see Corollary \ref{R-m-1-1-1} below), we have completed the proof of Theorem \ref{mt}. \qed

\subsection{The {\it a priori} estimates}
The local existence of the solutions to the Landau system \eqref{equ-landau} near a global Maxwellian was proved in \cite{Guo-2002}.  By a straightforward modification of the argument there, we can obtain the local existence of the solutions to the Landau system \eqref{equ-landau} and \eqref{ini} with $f(t,x,\xi)\ge0$ under the assumptions in Theorem \ref{mt}. As for the local existence of the anti-derivative variables $(\Phi,\Psi,W)$, since 
$f$ already exists, the corresponding macroscopic system has a similar structure to the Navier-Stokes equations, and thus its local existence can be established analogously. For brevity, we omit all details of the proof.

Thus, by the continuity method, it suffices to provide a uniform {\it a priori} estimate. The {\it a priori} assumptions are given by
\begin{align}\label{apa}
	\sup_{0\le t \le T} \bigg\{ &\norm{\Vm}_{L_x^\infty}^2+\sum_{i=0}^1(1+t)^{\frac12+i}\norm{\nabla_x^i \vm}_{L^2}^2 + \sum_{k=2}^3(1+t)^{\frac32}\norm{\nabla_x^k \vm}_{L^2}^2+(1+t)^{\frac12}\norm{g}_2^2+\sum_{\abs{\alpha}=1}^2(1+t)^{\frac32}\norm{\p^{\alpha}g}_2^2 \notag\\
	&\qquad+\sum_{\abs{\alpha}=3}(1+t)^{\frac32}\norm{\frac{\p^{\alpha}f}{\sqrt{\mu}}}_2^2+\sum_{\abs{\alpha}+\abs{\beta}=0}^2 \norm{\p_{\beta}^{\alpha}g}_{2,\omega}^2+\sum_{\abs{\beta}=3} \norm{\p_{\beta}g}_{2,\omega}^2+\sum_{\abs{\alpha}=3}\norm{\frac{\p^{\alpha}f}{\sqrt{\mu}}}_{2,\omega}^2\bigg\} \leq \chi^2.
\end{align}
Then we need to close the {\it a priori} assumptions. Indeed, we are able to prove the following {\it a priori} estimates.
\begin{Prop}[{\it a priori} estimates]\label{Thm-ape}
	Assume that $(\Vm,\vm,g,f)$ \eqref{2025-11-5-1} is the unique solution given in the local existence and satisfies the {\it a priori} assumptions \eqref{apa}, then the following estimates hold
	\begin{align*}
		&\norm{\Vm}_{L_x^\infty}^2+\sum_{i=0}^1(1+t)^{\frac12+i}\norm{\nabla_x^i \vm}_{L^2}^2 + \sum_{k=2}^3(1+t)^{\frac32}\norm{\nabla_x^k \vm}_{L^2}^2+(1+t)^{\frac12}\norm{g}_2^2+\sum_{\abs{\alpha}=1}^2(1+t)^{\frac32}\norm{\p^{\alpha}g}_2^2 \notag\\
		&\qquad\qquad+\sum_{\abs{\alpha}=3}(1+t)^{\frac32}\norm{\frac{\p^{\alpha}f}{\sqrt{\mu}}}_2^2+\sum_{\abs{\alpha}+\abs{\beta}=0}^2 \norm{\p_{\beta}^{\alpha}g}_{2,\omega}^2+\sum_{\abs{\beta}=3} \norm{\p_{\beta}g}_{2,\omega}^2+\sum_{\abs{\alpha}=3}\norm{\frac{\p^{\alpha}f}{\sqrt{\mu}}}_{2,\omega}^2 \le C(\delta+\varepsilon_0),
	\end{align*}
	where $C>0$ is a universal constant independent of any small parameters in this paper.
\end{Prop}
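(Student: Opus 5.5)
Under the a priori assumption \eqref{apa} with $\chi$ small, the plan is to derive the chain of differential inequalities announced in the strategy subsection. After integrating them in time with weights $(1+t)^k$, they give each piece of the left-hand side with a bound $C(\delta+\varepsilon_0)$ independent of $\chi$.

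\textbf{Step 1: coupled diffusion wave.} Using the diagonalization of \eqref{coupled-diffusion-wave-1} and the different propagation speeds of its components, I would first establish optimal $L^p$ decay of $\p_{x_1}^k\Xi$ (Theorem \ref{res-Wt}, Corollary \ref{R-m-1-1-1}). This gives $\tilde{\Rm}\approx\deltab\tilde D_{-\frac32}$, so the forcing in \eqref{New-Ansatz} is integrable against the anti-derivative energy.

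\textbf{Step 2: zero mode, lower order.} I would take the zero mode of the perturbation, pass to the integrated variables $(\Phi,\Psi,H)$, and apply the transformation $\Phic=\thetat\Phi$, $\Hc=H-\Phic$. For this system, $L^2$ and $H^1$ energy estimates in which both structural conditions \eqref{SC} hold give
\[
\frac{d}{dt}\mathcal{E}_1+\mathcal{D}_1\le C\deltac(1+t)^{-1}\mathcal{E}_1+C\deltab(1+t)^{-\frac32}.
\]
The error weight is only $(1+t)^{-1}e^{-cx_1^2/(1+t)}$, and it is absorbed using the heat-kernel weighted inequality. The microscopic part enters through $L_M^{-1}\Pi$ and is controlled by $\|g\|_\sigma$ terms in $\mathcal{D}_1$. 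Gronwall then gives $\mathcal{E}_1\lesssim(\delta+\varepsilon_0)$ up to a $(1+t)^{C\deltac}$ factor, which smallness of $\deltac$ makes harmless.

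\textbf{Step 3: higher order.} For $\p_{x_1}^2$ I would use the derivative-level variables $(\tfrac23\thetat\p_{x_1}^2\Phic,\p_{x_1}(\thetat\p_{x_1}\Psic_1),\p_{x_1}^2\Psic_{2,3},\thetat\p_{x_1}^2\Hc)$. For the non-zero modes and full $\nabla_x^k\vm$ I would use standard Navier–Stokes-type estimates, with Poincaré applied to $\vm_{\neq}$. For the microscopic part I would use the $\p^\alpha_\beta g$ estimates with weight $\omega$, as in Guo \cite{Guo-2002} and Strain–Guo \cite{Strain-Guo-2008}. Together these give the inequalities for $\mathcal{E}_2$, $\tilde{\mathcal{E}}_2$ and $\mathcal{E}_3$. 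I would then multiply by $(1+t)^{1/2}$, $(1+t)^{3/2}$ and similar weights and integrate. The time-weighted energy terms this produces are handled by the interpolation Lemma \ref{key-trans}:
\[
\|\p^\alpha g\|_2^2\le \upsilon(1+t)^\epsilon\|\p^\alpha g\|_\sigma^2+e^{-c(1+t)^{2\epsilon}}\|e^{\frac q8\langle\xi\rangle^2}\p^\alpha g\|_2^2,
\]
where the last term is bounded by the weighted part of \eqref{apa}, namely $\mathcal{E}_{2,\omega}\le C(\delta+\varepsilon_0)$, which I would prove first by a non-decaying weighted energy estimate.

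\textbf{Main obstacle.} The hardest step is Step 3, recovering exactly $(1+t)^{-3/2}$ for $\mathcal{E}_3$ despite the $(1+t)^\epsilon$ loss. Here I would use $\tilde{\mathcal{D}}_2$, which omits $\|g\|_\sigma$, to prove $\int_0^t(1+\tau)\tilde{\mathcal{D}}_2\le C\deltab(1+t)^{1/5}$. I would combine it with $\int_0^t(1+\tau)\mathcal{E}_3\le C(1+t)^{1/10}\int_0^t(1+\tau)\tilde{\mathcal{D}}_2+C\deltab$. Multiplying the $\mathcal{E}_3$ inequality by $(1+t)^{3/2}$ then closes with a sub-unit power of $(1+t)$ in the remainder. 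The bound on $\|\Vm\|_{L^\infty}$ follows from Sobolev embedding of $\Vmc$ in $H^1$. Summing everything yields the proposition with $C(\delta+\varepsilon_0)$, and choosing $\chi^2>C(\delta+\varepsilon_0)$ closes the continuity argument.
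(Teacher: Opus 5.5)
Your overall architecture is the paper's: the coupled diffusion wave, the transformation $\Phic=\thetat\Phi$, $\Hc=H-\Phic$ and its derivative-level analogue, the weighted bound $\mathcal{E}_{2,\omega}\le C(\delta+\varepsilon_0)$ first, and then Lemma \ref{key-trans} combined with $\tilde{\mathcal{D}}_2$. However, the closing time-weighted step, as you state it, does not give the rates in the proposition.

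You multiply the $\mathcal{E}_2$ and $\mathcal{E}_3$ inequalities by $(1+t)^{1/2}$ and $(1+t)^{3/2}$, which are exactly the target rates. The forcing terms $C\deltab(1+t)^{-3/2}$ in \eqref{decay-2-2} and $C\deltab(1+t)^{-5/2}$ in \eqref{decay-2-3} then become $C\deltab(1+t)^{-1}$ and integrate to $C\deltab\log(1+t)$. In addition, the weight-derivative terms $\frac12\int_0^t(1+\tau)^{-1/2}\mathcal{E}_2\,d\tau$ and $\frac32\int_0^t(1+\tau)^{1/2}\mathcal{E}_3\,d\tau$ can only be controlled through Lemma \ref{key-trans}, which costs a growing power of $(1+t)$. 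When the weight equals the target rate, any growing remainder, logarithmic or sub-unit, is fatal. So neither $(1+t)^{1/2}\mathcal{E}_2$ nor $(1+t)^{3/2}\mathcal{E}_3$ is shown to stay bounded. Note also that the bound you quote for $\int_0^t(1+\tau)\mathcal{E}_3\,d\tau$ is the term generated by the weight $(1+t)^2$, not $(1+t)^{3/2}$.

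The paper integrates $(1+\tau)$\eqref{decay-2-2}, $(1+\tau)$\eqref{I-1-1-1} and $(1+\tau)^2$\eqref{decay-2-3}, i.e.\ half a power above the target. The forcing then contributes $C\deltab(1+t)^{1/2}$. The interpolation losses $(1+t)^{1/10}\int_0^t\mathcal{D}_1\,d\tau\lesssim\deltab(1+t)^{1/5}$ and $(1+t)^{1/10}\int_0^t(1+\tau)\tilde{\mathcal{D}}_2\,d\tau\lesssim\deltab(1+t)^{3/10}$ are dominated by this $(1+t)^{1/2}$. Dividing by $(1+t)$ and $(1+t)^2$ gives exactly $\mathcal{E}_2\lesssim\deltab(1+t)^{-1/2}$ and $\mathcal{E}_3\lesssim\deltab(1+t)^{-3/2}$. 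This half-power margin is what makes the $(1+t)^{\epsilon}$ loss affordable, so the choice of weights is the crux of the closing argument, not a detail.

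There are two smaller points. First, Gronwall only gives $\mathcal{E}_1\lesssim\deltab(1+t)^{C\deltac}$; the paper records $\mathcal{E}_1+\int_0^t\mathcal{D}_1\,d\tau\le C\deltab(1+t)^{1/10}$. The embedding $H^1(\R)\subset L^\infty(\R)$ then gives only a growing bound on $\norm{\Vm}_{L^\infty_x}^2$, not the uniform one in the statement. You need the interpolation $\norm{\Vm}_{L^\infty}^2\le C\norm{\Vm}_{L^2}\norm{\p_{x_1}\Vm}_{L^2}$ together with the decay $\norm{\p_{x_1}\Vmc}_{L^2}^2\le\mathcal{E}_2\lesssim\deltab(1+t)^{-1/2}$. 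This yields $\norm{\Vm}_{L^\infty}^2\lesssim\deltab(1+t)^{C\deltac/2-1/4}\le C\deltab$, as in \eqref{W-final-1}.

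Second, in the $L^2$ estimate the $\p_t g$ and $\p_{x_1}g$ parts of $L_M^{-1}\Pi$ are paired with $\Psic,\Hc$, which are not dissipated. They therefore cannot simply be bounded by $\norm{g}_\sigma$-type terms. The derivatives must be moved onto $\Vmc$: by parts in $x_1$, and in $t$ via the correctors $\mathcal{X}^l$ built into $\mathcal{E}_1$. The resulting $\p_t\Vmc$ is then handled by Lemma \ref{s-t-s-tt-2}.
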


By the local existence combined with Proposition \ref{Thm-ape} and  Theorem \ref{lem999} (to be stated later), using a continuity argument, we are able to obtain Theorem \ref{mt-1}. In the following sections, we are devoted to the proof of Proposition \ref{Thm-ape} and  Theorem \ref{lem999}. And, the estimate of the coupled diffusion wave $\Xi_i$ will be given in Corollary \ref{R-m-1-1-1}. 

\section{Decay estimate for coupled diffusion waves}\label{sec3}
Combining \eqref{F}, \eqref{mathcal-B}, \eqref{U-bar}, \eqref{F-dev-1} and \eqref{higher-F-B-taylor}, the coupled diffusion wave system \eqref{coupled-diffusion-wave-1} can be written as 
\begin{align}\label{omega}
	\left\{\begin{aligned}
		&\p_t\Xi+\p_{x_1}\big(\mathcal{A}_1(\bar{\U})\Xi\big)=\p_{x_1}\big(\mathcal{B}_1(\bar{\U})\p_{x_1}\Xi\big)-\p_{x_1}\big( \mathcal{A}_1^{*}(\bar{\U})\Xi \big)+\p_{x_1}\big(\mathcal{B}_1^{*}(\bar{\U}){\p_{x_1}\Xi}\big) \\
		&\qquad\qquad\qquad\qquad\qquad-\frac12\p_{x_1}(\Xi^{t}F''(\bar{\U})\Xi)+\p_{x_1}\big(\Xi^{t}\mathcal{B}{'}(\bar{\U}){\p_{x_1}\bar{\U}}\big)  -\p_{x_1}\bar{\Rm}-\p_{x_1}\mathcal{R}_{G}, \\
		&\Xi(0,x_1)=0.
	\end{aligned}\right.
\end{align}
Since $\Xi|_{t=0} = 0$, we can define the anti-derivative of $\Xi_i$ as $  W_i=\int_{-\infty}^{x_1}\Xi_i(\cdot,t)dy$, and thus system \eqref{omega} becomes
\begin{align}\label{W}
	\p_t\W+\mathcal{A}_1(\bar{\U})\p_{x_1}\W=&\mathcal{B}_1(\bar{\U})\p_{x_1}^2\W- \mathcal{A}_1^{*}(\bar{\U})\p_{x_1}\W+\mathcal{B}_{1}^{*}(\bar{\U})\p_{x_1}^2\W \notag\\
	&-\frac{1}{2}\W_{x_1}^{t}F''(\bar{\U})\p_{x_1}\W+\p_{x_1}\W^{t}\mathcal{B}{'}(\bar{\U}){\p_{x_1}\bar{\U}}  -\bar{\Rm}-\mathcal{R}_{G},
\end{align}
for $\W=(W_1,\cdots,W_5)^{t}$. 
To ensure that \eqref{W} satisfies the left-right structural conditions,   we make the following transformations
\begin{align}\label{trans-W-aa}
	\bar{W}_1=\thetab W_1,\quad \bar{W}_2=W_2,\quad \bar{W}_3=W_3,\quad \bar{W}_4=W_4,\quad \bar{W}_5=W_5-\thetab W_1,
\end{align}
then \eqref{W} becomes
\begin{align}\label{Wt}
	\p_t\bar{\W}+\bar{\mathcal{A}}_1(\bar{\U})\p_{x_1}\bar{\W}=\bar{\mathcal{B}}_1(\bar{\U})\p_{x_1}^2\bar{\W}+\mathcal{S}_{\bar \W},
\end{align}
where $\bar{\W}=(\Wb_1,\cdots,\Wb_5)^{t}$,
\begin{align*}
	\bar{\mathcal{A}}_1(\bar{\U})=\left(\begin{array}{ccccc}
		0 &\thetab &0 &0&0 \\
		\frac{2}{3}& 0&0&0  &\frac{2}{3}\\
		0 &0&0&0&0\\
		0&0&0&0&0\\
		0&\frac{2}{3}\thetab&0&0&0
	\end{array}\right),\quad \bar{\mathcal{B}}_{1}(\bar{\U})=\left(\begin{array}{ccccc}
		0 &0 &0&0&0  \\
		-\frac{4}{3}\frac{\mu(\thetab)\mb_1}{\rhob^2\thetab} & \frac{4}{3}\frac{\mu(\thetab)}{\rhob}  &0&0&0\\
		0&0&\frac{\mu(\thetab)}{\rhob}&0&0\\
		0&0&0&\frac{\mu(\thetab)}{\rhob}&0\\
		0&0&0&0&\frac{\kappa(\thetab)}{\rhob}
	\end{array}\right),
\end{align*}
and 
\begin{align}
	&\mathcal{S}_{\bar \W}=\left( (\mathcal{S}_{\bar \W})_1, (\mathcal{S}_{\bar \W})_2, (\mathcal{S}_{\bar \W})_3,(\mathcal{S}_{\bar \W})_4,(\mathcal{S}_{\bar \W})_5 \right)^{t}, \notag \\
	&(\mathcal{S}_{\bar \W})_1= \thetab (\mathcal{S}_{\W})_1-\p_t\thetab W_1,\quad (\mathcal{S}_{\bar \W})_2=(\mathcal{S}_{\W})_2-\frac{4 \mu(\thetab) \mb_1}{3 \rhob^2} \left[ \p_{x_1}^2 \left( \frac{\Wb_1}{\thetab} \right)-\frac{\p_{x_1}^2\Wb_1}{\thetab} \right] ,\quad (\mathcal{S}_{\bar \W})_3=(\mathcal{S}_{\W})_3, \notag\\
	&(\mathcal{S}_{\bar \W})_4=(\mathcal{S}_{\W})_4,\quad (\mathcal{S}_{\bar \W})_5= \frac{\kappa(\thetab)}{\rhob}\p_{x_1}^2 \Wb_1-\frac{\thetab \kappa(\thetab)}{\rhob}\p_{x_1}^2 \left( \frac{\Wb_1}{\thetab} \right)+(\mathcal{S}_{\W})_5-(\mathcal{S}_{\bar{\W}})_1, \notag\\
	&\left((\mathcal{S}_{\W})_1,\cdots,(\mathcal{S}_{\W})_5 \right)^{t}=- \mathcal{A}_1^{*}(\bar{\U})\p_{x_1}\W+\mathcal{B}_{1}^{*}(\bar{\U})\p_{x_1}^2\W -\frac{1}{2}\W_{x_1}^{t}F''(\bar{\U})\p_{x_1}\W+\p_{x_1}\W^{t}\mathcal{B}{'}(\bar{\U}){\p_{x_1}\bar{\U}}  -\bar{\Rm}-\mathcal{R}_{G}. \notag
\end{align}
By the definition of $\mathcal{A}_1^{*},\mathcal{B}_{1}^{*},\Xi^{t}F''(\bar{\U}),\Xi^{t}\mathcal{B}{'}(\bar{\U})$ \eqref{F}-\eqref{higher-F-B-taylor} and $\mathcal{R}_{G}$ \eqref{2025-11-10-1}, one has
\begin{align}
	& \abs{(\mathcal{S}_{\bar \W})_1}\lesssim \deltab D_{-1} + \deltab D_{-1} \abs{\Wb_1},\quad \abs{(\mathcal{S}_{\bar \W})_2}\lesssim \deltab D_{-1}+\abs{\p_{x_1}\bar{\W}}^2+\deltab D_{-\frac12}\abs{\p_{x_1}\bar{\W}}+\deltab D_{-1}\abs{\bar{\W}},\label{Error-1-1}\\
	&\abs{(\mathcal{S}_{\bar \W})_3}\thickapprox\abs{(\mathcal{S}_{\bar \W})_4}\lesssim \deltab D_{-1}+\abs{\p_{x_1}\bar{\W}}^2+\deltab D_{-\frac12}\abs{\p_{x_1}^2\bar{\W}}+\deltab D_{-\frac12}\abs{\p_{x_1}\bar{\W}}+\deltab D_{-1}\abs{\bar{\W}},\label{Error-1-2}\\
	&\abs{(\mathcal{S}_{\bar \W})_5}\lesssim \deltab D_{-1}+\abs{\p_{x_1}\bar{\W}}^2+\deltab D_{-\frac12}\sum_{i=1}^4\abs{\p_{x_1}^2\Wb_i}+\deltab D_{-\frac12}\abs{\p_{x_1}\bar{\W}}+\deltab D_{-1}\abs{\bar{\W}}\label{Error-1-3},
\end{align}
where $\deltab:=\delta+\varepsilon_0$. Now, let us state the results on the coupled diffusion wave $\bar{\W}$ \eqref{Wt}.

\begin{Thm}\label{res-Wt}
	Under the same assumptions of Theorem \ref{mt}, one has
	\begin{align*}
		\norm{\bar{\W}}_{L^\infty}\le C(\varepsilon_0^{\frac12}+\delta^{\frac12}),\qquad \norm{\p_{x_1}^k \bar{\W}}_{L^2}^2 \leq C(\varepsilon_0 +\delta)(1+t)^{\frac{1}{2}-k}, \quad k \geq 1 \;\text{and}\; k\in \mathbb{N}.
	\end{align*}
\end{Thm}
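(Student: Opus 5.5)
We will prove Theorem \ref{res-Wt} by a weighted energy method on the transformed system \eqref{Wt}, closing a bootstrap on
\begin{align*}
N(T):=\sup_{0\le t\le T}\Big\{\norm{\bar{\W}}_{L^\infty}^2+\sum_{k=1}^{K}(1+t)^{k-\frac12}\norm{\p_{x_1}^k\bar{\W}}_{L^2}^2\Big\}\le \chi_1^2\ll1 ,
\end{align*}
for each fixed $K$. Since $\Xi|_{t=0}=0$, we have $\bar{\W}(0)=0$. Hence the only forcing comes from the sources $\bar{\Rm}$ and $\mathcal{R}_G$, whose size is $\deltab D_{-1}$, together with the coupling terms in \eqref{Error-1-1}--\eqref{Error-1-3}. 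The first step is the basic $L^2$ estimate. Note that $\bar{\mathcal{A}}_1$ is symmetrizable: with $A_0=\mathrm{diag}(\frac{2}{3\thetab},\thetab,1,1,\frac{3}{2}\cdot\frac{2}{3}\cdot\frac{1}{\cdot})$ suitably chosen, $A_0\bar{\mathcal{A}}_1$ becomes symmetric. Concretely, we pair the $\Wb_1$ equation with $\frac23\Wb_1$, the $\Wb_2$ equation with $\thetab\Wb_2$, and the $\Wb_5$ equation with $\Wb_5$; the flux terms then combine into $\p_{x_1}(\cdot)$ plus commutators containing $\p_{x_1}\thetab$. This is exactly where the transformation \eqref{trans-W-aa} pays off. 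The dangerous commutator $\p_{x_1}\thetab\,\Wb\,\p_{x_1}\Wb$ and the term $\p_t\thetab\,\Wb_1^2$ are either absent or carry the factor $D_{-1}$, so they cost at most $\deltab\int D_{-1}|\bar{\W}|^2$.

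That term is handled by the heat-kernel-type weighted inequality used in \cite{Huang-Xin-Yang}:
\begin{align*}
\int (1+t)^{-1}e^{-\frac{cx_1^2}{1+t}}|w|^2dx_1\lesssim \norm{w}_{L^2}^2(1+t)^{-1}+\norm{\p_{x_1}w}_{L^2}^2 .
\end{align*}
Alternatively, one can use the direct estimate $\deltab\norm{D_{-1}}_{L^1}\norm{\bar{\W}}_{L^\infty}^2\lesssim \deltab (1+t)^{-1/2}\chi_1^2$ and integrate. The source contributes $\deltab\int D_{-1}|\bar{\W}|\le \deltab\norm{D_{-1}}_{L^1}\norm{\bar{\W}}_{L^\infty}$. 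Since $\norm{D_{-1}}_{L^1}\approx(1+t)^{-1/2}$, this forces $\norm{\bar{\W}}_{L^2}^2\lesssim\deltab(1+t)^{1/2}$, the expected growth of an anti-derivative. The dissipation here gives only $\norm{\p_{x_1}\Wb_{2,\dots,5}}^2$, and the missing dissipation of $\p_{x_1}\Wb_1$ is recovered in the standard way. We test the $\Wb_2$ equation with $\p_{x_1}\Wb_1$, which is possible because $\frac23\p_{x_1}\Wb_1$ appears in it, and absorb $\p_t\Wb_2$ through the time derivative of $\int \Wb_2\p_{x_1}\Wb_1$, using the $\Wb_1$ equation.

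Next we differentiate \eqref{Wt} $k$ times and repeat the estimate, multiplied by $(1+t)^{k-1/2}$. The time weight produces $(1+t)^{k-3/2}\norm{\p_{x_1}^k\bar{\W}}^2$, which is controlled by the dissipation $\int(1+t)^{k-3/2}\norm{\p_{x_1}^{k}\bar{\W}}^2$ from the $(k-1)$-level estimate; this gives the gain of $(1+t)^{-1}$ per derivative. The source $\p_{x_1}^k(\deltab D_{-1})$ is paired with $\p_{x_1}^k\bar{\W}$ after one integration by parts, giving $\deltab\norm{\p_{x_1}^{k-1}D_{-1}}_{L^2}\norm{\p_{x_1}^{k+1}\bar{\W}}_{L^2}$. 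We have $\norm{\p_{x_1}^{k-1}D_{-1}}_{L^2}^2\approx(1+t)^{-1/2-2(k-1)-2}$, which after weighting is integrable up to a factor $(1+t)^{0}$, giving $\deltab$. Quadratic terms $|\p_{x_1}\bar{\W}|^2$ are bounded by $\norm{\p_{x_1}\bar{\W}}_{L^\infty}$ times dissipation, with $\norm{\p_{x_1}\bar{\W}}_{L^\infty}\lesssim \chi_1(1+t)^{-1/2}$ by Sobolev. The $L^\infty$ bound on $\bar{\W}$ then follows from Gagliardo--Nirenberg: $\norm{\bar{\W}}_{L^\infty}^2\le\norm{\bar{\W}}_{L^2}\norm{\p_{x_1}\bar{\W}}_{L^2}\lesssim\deltab(1+t)^{1/4}(1+t)^{-1/4}$.

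The main obstacle is the top-order coupling $\deltab D_{-1/2}|\p_{x_1}^2\bar{\W}|$ in the $\Wb_{3,4,5}$ equations, together with $\deltab D_{-1/2}|\p_{x_1}\bar{\W}|$. Bounding $\deltab\int D_{-1/2}|\p^{k}_{x_1}\bar{\W}||\p^{k+1}_{x_1}\bar{\W}|$ by $\deltab(1+t)^{-1/2}\norm{\p^k\bar{\W}}\norm{\p^{k+1}\bar{\W}}$ loses half a power of decay unless $\Wb_1$ has full dissipation. For this we plan the following. First, use the lower-triangular structure of $\bar{\mathcal B}_1$ together with the smallness of $\deltab$, so that the term is absorbed into the dissipation of the $\Wb_{2,\dots,5}$ components, provided its coefficient multiplies only $\Wb_{1,\dots,4}$ derivatives. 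Second, for the $\Wb_1$-related part, diagonalize the hyperbolic part $\bar{\mathcal A}_1$ into characteristic fields with speeds $0$ and $\pm\sqrt{10\thetab/9}$. Since the profile weight $D_{-1/2}$ concentrates along characteristics with a different speed, the cross terms gain extra decay: the non-coincident Gaussian overlap provides the dissipation mechanism announced in the introduction. We expect this diagonalization step to be the delicate part of the argument.
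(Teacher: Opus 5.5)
Your scaffolding (bootstrap, $L^2$ growth like $(1+t)^{\frac12}$, recovering $\p_{x_1}\Wb_1$ from the $\Wb_2$ equation, time weights, Gagliardo--Nirenberg for $\norm{\bar{\W}}_{L^\infty}$) matches the paper. The gap is the step where you differentiate \eqref{Wt} $k$ times and ``repeat the estimate'': it does not close, and the obstacle you single out is not the real one.

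Applying $\p_{x_1}^k$ produces the commutator $k\,\p_{x_1}\bar{\mathcal A}_1\,\p_{x_1}^{k}\bar{\W}$, namely $k\,\p_{x_1}\thetab\,\p_{x_1}^k\Wb_2$ in the $\Wb_1$ equation and $\frac{2k}{3}\p_{x_1}\thetab\,\p_{x_1}^k\Wb_2$ in the $\Wb_5$ equation. This term has three features:
\begin{itemize}
\item it is of zeroth order at level $k$;
\item its coefficient contains the contact part, of size $\delta(1+t)^{-\frac12}e^{-dx_1^2/(1+t)}$;
\item it is not a flux, so no symmetrizer removes it.
\end{itemize}
The transformation \eqref{trans-W-aa} restores the structural conditions only for the undifferentiated system, which is exactly the difficulty the paper stresses. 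Your level-$k$ identity therefore contains $\deltab\int D_{-\frac12}|\p_{x_1}^k\bar{\W}|^2dx_1$.

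Testing with $w(x_1)=w_0(x_1/\sqrt{1+t})$ shows that no inequality $\int D_{-\frac12}|w|^2dx_1\le\eta\norm{\p_{x_1}w}_{L^2}^2+C(1+t)^{-\gamma}\norm{w}_{L^2}^2$ with $\gamma>\frac12$ can hold. A remainder $\deltab(1+t)^{-\frac12}\norm{\p_{x_1}^k\bar{\W}}_{L^2}^2$, set against only $\int_0^t(1+s)^{k-1}\norm{\p_{x_1}^k\bar{\W}}_{L^2}^2ds\lesssim(1+t)^{\frac12}$ from level $k-1$, costs a factor $(1+t)^{\frac12}$. You then only get $\norm{\p_{x_1}^k\bar{\W}}_{L^2}^2\lesssim(1+t)^{1-k}$.

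The term you call the main obstacle, $\deltab\int D_{-\frac12}|\p_{x_1}^k\bar{\W}||\p_{x_1}^{k+1}\bar{\W}|dx_1$, is harmless. Cauchy--Schwarz bounds it by $\deltab\norm{\p_{x_1}^{k+1}\bar{\W}}_{L^2}^2+\deltab(1+t)^{-1}\norm{\p_{x_1}^k\bar{\W}}_{L^2}^2$, as in \eqref{I14}. Nor can ``non-coincident Gaussian overlap'' be the mechanism, since the unknowns are not localized on Gaussians.

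The missing idea is the paper's weighted energy method in characteristic variables.
\begin{itemize}
\item With $\B=\bar L\Wbj$, the zeroth-order commutators become $\p_{x_1}^j\Lambda\,\p_{x_1}^{k+1-j}\B$ with $\Lambda=\mathrm{diag}(\bar\lambda_1,0,\bar\lambda_3)$. Since the middle row of $\bar L$ is constant, the zero-speed field $b_2$, which travels with the contact wave, never receives a zeroth-order coefficient of size $\p_{x_1}\rhoc$; only $b_1$ and $b_3$ do.
\item Those terms are absorbed by the extra dissipation $G_k=C\delta^{-\frac12}\int\p_{x_1}\rhoc\,(|\p_{x_1}^kb_1|^2+|\p_{x_1}^kb_3|^2)dx_1$. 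It comes from the multiplier $(v_1^N\p_{x_1}^kb_1,\p_{x_1}^kb_2,v_1^{-N}\p_{x_1}^kb_3)$ with $v_1=\rhoc/\rho_+$ and $N\approx\delta^{-\frac12}$, and it uses that $\p_{x_1}\rhoc$ has a sign and $\bar\lambda_1<0<\bar\lambda_3$.
\item The terms with $\p_{x_1}^{j+1}\rhoc$, $j\ge1$, produce the weight $\Upsilon_{-\frac12}$. Its Gaussian is weaker and is not dominated by $\p_{x_1}\rhoc$, so it needs the additional multiplier $\tilde h\,\p_{x_1}^kb_{1,3}$ with $\tilde h=\int_{-\infty}^{x_1}\Upsilon_{-\frac12}dy$, again exploiting $\bar\lambda_{1,3}\neq0$.
\end{itemize}

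Two smaller corrections.
\begin{itemize}
\item $\norm{\p_{x_1}^{k-1}D_{-1}}_{L^2}^2\approx(1+t)^{-k-\frac12}$, not $(1+t)^{-2k-\frac12}$. With your weight $(1+t)^{k-\frac12}$ the source therefore produces a logarithmic loss. Weight by $(1+t)^k$ instead and allow $(1+t)^{\frac12}$ growth, as in \eqref{Ei}.
\item Your concrete level-$0$ multiplier $(\frac23\Wb_1,\thetab\Wb_2,\Wb_5)$ leaves $-\frac23\int\p_{x_1}\thetab\,\Wb_2(\Wb_1+\Wb_5)dx_1$, which is of $D_{-\frac12}$ type. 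The multiplier $(\frac{2}{3\thetab}\Wb_1,\Wb_2,\frac{1}{\thetab}\Wb_5)$ turns the flux into the exact derivative $\frac23\p_{x_1}(\Wb_1\Wb_2+\Wb_2\Wb_5)$, and that is what makes your level-$0$ claim true.
\end{itemize}
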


The proof of Theorem \ref{res-Wt} for small $t_0$ is standard and the details are omitted for simplicity of presentation.  Next, the  {\it a priori} estimates will be carried out under the following {\it a  priori} assumptions
\begin{align}\label{aps-Wt}
	\sup_{T_0 \leq t \leq T_0+ t_0}\left\{\norm{\bar{\W}}_{L^\infty}^2+(1+t)^{k-\frac{1}{2}}\norm{\p_{x_1}^k \bar{\W}}_{L^2}^2\right\}\leq \chi^2.
\end{align}
Under the {\it a priori} assumptions \eqref{aps-Wt}, we obtain the following {\it a priori} estimate.

\begin{Prop}[{\it a priori} estimates for $\bar{\W}$]\label{ape-Wt}
	Under the same assumptions of Theorem \ref{mt}, assuming further that $\bar{\W}$ is the unique solution for \eqref{Wt}  in the interval $[T_0,T_0+t_0]$, satisfying the  {\it a priori} assumptions \eqref{aps-Wt}, it holds that
	\begin{align*}
		&\norm{\bar{\W}}_{L^\infty}\leq C(\varepsilon_0^{\frac12}+\delta^{\frac12}),\qquad \norm{\p_{x_1}^k \bar{\W}}_{L^2}^2 \leq C(\varepsilon_0+\delta) (1+t)^{\frac{1}{2}-k}.
	\end{align*}
\end{Prop}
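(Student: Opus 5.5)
We will prove Proposition~\ref{ape-Wt} by weighted $L^2$ energy estimates on the transformed system \eqref{Wt}, using the a priori bound \eqref{aps-Wt} with $\chi$ small to absorb nonlinear terms. The plan is to derive a hierarchy of differential inequalities in $k$ and then integrate them with time weights.

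\textbf{Step 1: basic energy.} For $k=0$ we multiply \eqref{Wt} by a symmetrizer of $\bar{\mathcal{A}}_1$. A natural candidate is $\mathrm{diag}(\frac{2}{3\thetab},1,1,1,\frac{1}{\thetab})$. Then the hyperbolic part $\bar{\mathcal A}_1\p_{x_1}\bar\W$ becomes a perfect derivative, up to commutators with $\p_{x_1}\thetab,\p_t\thetab=O(\deltab D_{-1})$. The transformation \eqref{trans-W-aa} is designed so that the two structural conditions hold. Consequently these commutator terms carry at least one derivative of $\bar\W$, or a $D_{-1}$ weight, rather than a bare $D_{-1/2}|\bar\W|^2$.

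The parabolic part $\bar{\mathcal B}_1$ gives dissipation only in components $2$--$5$. Dissipation of $\p_{x_1}\Wb_1$ is recovered in the usual way from the second equation: multiply by $\p_{x_1}\Wb_1$ and use $\p_t\Wb_1=-\thetab\p_{x_1}\Wb_2+\dots$, which yields the cross term $\tilde c\int \Wb_2\p_{x_1}\Wb_1$. The sources are handled using \eqref{Error-1-1}--\eqref{Error-1-3}:
\begin{itemize}
\item $\deltab D_{-1}|\bar\W|$ against $|\bar\W|$: use $\|\bar\W\|_{L^\infty}\le\chi$ and $\|D_{-1}\|_{L^1}\lesssim (1+t)^{-1/2}$.
\item $\deltab D_{-1}|\bar\W|$ with a derivative: use the weighted Poincar\'e-type inequality $\int D_{-1}|\bar\W|^2\lesssim \|\bar\W\|_{L^\infty}^2(1+t)^{-1/2}$, or the heat-kernel inequality of Huang--Xin--Yang form $\int \Upsilon_{-1}w^2\lesssim \|w\|^2_{L^2}\cdots+\|\p_{x_1} w\|^2$.
\item $\deltab D_{-1}$ against $\bar\W$: bounded by $\deltab\chi(1+t)^{-1/2}$.
\item $|\p_{x_1}\bar\W|^2|\bar\W|$: absorbed by $\chi\|\p_{x_1}\bar\W\|^2$.
\end{itemize}
Integrating in time gives
\[
\|\bar\W\|_{L^2}^2+\int_0^t\|\p_{x_1}\bar\W\|^2\lesssim \deltab(1+t)^{1/2}.
\]
Since we only need $L^\infty$ for $\bar\W$, we also need $\|\p_{x_1}\bar\W\|_{L^2}^2\lesssim \deltab(1+t)^{-1/2}$. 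Gagliardo--Nirenberg then gives $\|\bar\W\|^2_{L^\infty}\le \|\bar\W\|\|\p_{x_1}\bar\W\|\lesssim\deltab$.

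\textbf{Step 2: derivatives and time weights.} We apply $\p_{x_1}^k$ to \eqref{Wt} and repeat the estimate, obtaining
\[
\frac{d}{dt}E_k+D_k\lesssim \deltab(1+t)^{-1}\big(E_k+D_{k-1}\big)+\deltab\|\p_{x_1}^{k}(D_{-1})\|^2_{L^2}(\cdots)+\chi D_k.
\]
The forcing $\p_{x_1}^{k-1}$ of $\deltab D_{-1}$ has $L^2$ norm squared of size $(1+t)^{-3/2-k+1}$; paired with $\p_{x_1}^{k+1}\bar\W$ it contributes $(1+t)^{-1/2-k}$ after Young. We multiply by $(1+t)^{k-1/2+\epsilon'}$ and integrate, inducting on $k$ via $\int(1+t)^{m}D_{k}\lesssim\int (1+t)^{m-1}E_{k}$, the standard time-weighted heat hierarchy. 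This yields $E_k\lesssim\deltab(1+t)^{1/2-k}$.

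\textbf{Main obstacle.} The hardest point is the diagonal coupling $\bar{\mathcal A}_1$ of the entropy mode, which has zero speed and only weak dissipation. The problematic terms are $\deltab D_{-1/2}|\p_{x_1}^2\bar\W|$ in \eqref{Error-1-2}--\eqref{Error-1-3} and the lack of direct dissipation in $\Wb_1$. Following the strategy in the introduction, we would diagonalize the hyperbolic part into three characteristic fields with distinct speeds $0,\pm\sqrt{10\E/9\rho}$. The wave-interaction terms $D_{-1/2}\times$(a different field) then decay faster because different fields travel along different rays. This gives the extra dissipation needed to control these terms at each derivative level. If that fails, the fallback is to accept an $\epsilon$-loss at intermediate orders and recover it using the explicit Gaussian decay of the $D_{-\alpha}$ profiles.
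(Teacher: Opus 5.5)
Your $k=0$ estimate is sound, and even slightly cleaner than the paper's. With $S=\mathrm{diag}(\frac{2}{3\thetab},1,1,1,\frac1{\thetab})$ the matrix $S\bar{\mathcal A}_1$ is constant, so the hyperbolic part drops out exactly and only $\p_t S=O(\deltab D_{-1})$ remains.

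The gap is Step 2: ``apply $\p_{x_1}^k$ and repeat'' fails for $k\ge 1$. The coefficient $\p_{x_1}\thetab$ is only $O(\deltab D_{-\frac12})$, not $O(\deltab D_{-1})$. The Leibniz commutator $k\,\p_{x_1}\bar{\mathcal A}_1\,\p_{x_1}^k\bar{\W}$ therefore puts into the $k$-th energy identity a term of the form
\[
k\int_{\R}\frac{2\p_{x_1}\thetab}{3\thetab}\,\p_{x_1}^k\Wb_2\,\p_{x_1}^k(\Wb_1+\Wb_5)\,dx_1,
\]
which is quadratic, carries a weight of size $\delta D_{-\frac12}$, and has no spare derivative.
\begin{itemize}
\item It is not bounded by $(1+t)^{-1}E_k$.
\item The heat-kernel inequality you quote controls $\int\Upsilon_{-1}w^2$, not $\int\Upsilon_{-\frac12}w^2$.
\item Any bound through $(1+t)^{-\frac12}\|\p_{x_1}^k\bar{\W}\|^2$ or $\|\p_{x_1}^k\bar{\W}\|\,\|\p_{x_1}^{k+1}\bar{\W}\|$ leaves a non-integrable coefficient in Gronwall, so your $\epsilon$-loss fallback cannot rescue it.
\end{itemize}
This is exactly the loss of the structural conditions at the derivative level. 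By contrast, the terms you single out, $\deltab D_{-\frac12}|\p_{x_1}^2\bar{\W}|$ in \eqref{Error-1-2}--\eqref{Error-1-3}, are harmless. After moving one derivative they pair with $\p_{x_1}^{k+1}\bar{\W}$ and are absorbed, cf.\ \eqref{d-k-M-k2}. The missing dissipation of $\Wb_1$ is handled by the cross term, as you say.

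Your closing remark about diagonalizing and using distinct speeds points in the paper's direction. However, ``the interaction terms decay faster'' is not an energy-method mechanism, and the actual construction is missing.
\begin{itemize}
\item The paper passes to $\B=\bar L\bar{\W}^{\#}$. Because $l_2$ and $r_2$ are constant (this is how the structural conditions enter), the $\delta D_{-\frac12}$-weighted terms without a spare derivative involve only the acoustic fields $b_1,b_3$, never the zero-speed field $b_2$.
\item It then uses the weighted energy $\int\frac{v_1^N}{2}|\p_{x_1}^kb_1|^2+\frac12|\p_{x_1}^kb_2|^2+\frac{v_1^{-N}}{2}|\p_{x_1}^kb_3|^2\,dx_1$, with $v_1=\rhoc/\rho_+$ and $N=4[\delta^{-\frac12}]+1$.
\item Since $\p_{x_1}\rhoc$ has a sign and $\bar\lambda_1<0<\bar\lambda_3$, transport across this weight yields the extra dissipation $G_k$ in \eqref{2025.6.07-1}. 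It has size $\delta^{-\frac12}\int\p_{x_1}\rhoc\,(|\p_{x_1}^kb_1|^2+|\p_{x_1}^kb_3|^2)$, so it beats the $O(1)$ interaction terms, while $v_1^{\pm N}$ stays bounded because $N\delta\ll1$.
\item Higher derivatives of $\rhoc$ generate wider Gaussians $\Upsilon_{-\frac12}$ that $G_k$ does not dominate. These need the separate multiplier $\tilde h\,\p_{x_1}^kb_{1}$ (resp.\ $b_3$) with $\tilde h=\int_{-\infty}^{x_1}\Upsilon_{-\frac12}$, giving \eqref{2025-10-4}--\eqref{2025-10-5}.
\end{itemize}
Only with these ingredients does your time-weighted hierarchy close.

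Within your symmetrizer framework, a different repair would be a derivative-level change of variables as in \eqref{T-A-D-2}. For example, replacing $\p_{x_1}^2\Wb_2$ by $\p_{x_1}(\thetab\p_{x_1}\Wb_2)$ at second order restores an exact cancellation of the hyperbolic part. But some such device must be supplied at every order $k$, and your proposal contains none.
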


In the rest of this section, we are devoted to the proof of Proposition \ref{ape-Wt} above. It is noted that in the linear part of \eqref{Wt}, $\Wbj:=(\Wb_1,\Wb_2,\Wb_5)$ and $(\Wb_3,\Wb_4)$
are decoupled. Therefore, we first estimate $\Wbj$. Indeed, $\Wbj$  satisfies the following equation
\begin{align}\label{W-t-1}
	\p_t\Wbj+\Abj_1(\bar{\U})\p_{x_1}\Wbj=\Bbj_1(\bar{\U})\p_{x_1}^2\Wbj+(\mathcal{S}_{\bar \W})^{\#},
\end{align}
where
\begin{align*}
	&\Abj_1(\bar{\U})=\left(\begin{array}{ccc}
		0 &\thetab &0  \\
		\frac{2}{3}& 0  &\frac{2}{3}\\
		0&\frac{2}{3}\thetab&0
	\end{array}\right),\qquad \Bbj_{1}(\bar{\U})=\left(\begin{array}{ccc}
		0 &0 &0  \\
		0 & \frac{4}{3}\frac{\mu(\thetab)}{\rhob}  &0\\
		0&0&\frac{\kappa(\thetab)}{\rhob}
	\end{array}\right), \\
	&(\mathcal{S}_{\bar \W})^{\#}=((\mathcal{S}_{\bar \W})_1,(\mathcal{S}_{\bar \W})_2-\frac{4\mu(\thetab)\mb_1}{3\rhob^2 \thetab}\p_{x_1}^2\Wb_1,(\mathcal{S}_{\bar \W})_5)^{t}. 
\end{align*}
Direct computation yields the eigenvalues of $\Abj_1(\bar{\U})$ as
\begin{align}\notag
	\bar{\lambda}_1=-\sqrt{\frac{10}{9}\Tb},\quad \lambda_2=0,\quad\bar{\lambda}_3=\sqrt{\frac{10}{9}\Tb},\qquad \Lambda:=\text{diag}\{\bar{\lambda}_1,0,\bar{\lambda}_3\},
\end{align}
and the corresponding eigenvectors as
\begin{align*}
	&\bar{L}:=\left(\begin{array}{ccc}
		\sqrt{\frac{3}{10}} & \sqrt{\frac{3}{10}}\frac{3\bar{\lambda}_1}{2} &\sqrt{\frac{3}{10}} \\
		\sqrt{\frac{2}{5}} & 0 & -\sqrt{\frac{9}{10}}\\
		\sqrt{\frac{3}{10}} & -\sqrt{\frac{3}{10}}\frac{3\bar{\lambda}_1}{2} &\sqrt{\frac{3}{10}}
	\end{array}\right),\quad
	&\bar{R}:=\left(\begin{array}{ccc}
		\sqrt{\frac{3}{10}} & \sqrt{\frac{2}{5}} &\sqrt{\frac{3}{10}} \\
		\sqrt{\frac{3}{10}}\frac{\bar{\lambda}_1}{\bar{\theta}} & 0 & -\sqrt{\frac{3}{10}}\frac{\bar{\lambda}_1}{\bar{\theta}}\\
		\frac{2}{3}\sqrt{\frac{3}{10}} & -\sqrt{\frac{2}{5}} &\frac{2}{3}\sqrt{\frac{3}{10}}
	\end{array}\right).
\end{align*}

\begin{Rem}
	Direct calculations yield that the two sides of structural conditions \eqref{SC} are satisfied for the system of $\bar{\W}^{\#}$ \eqref{W-t-1}.
\end{Rem}

Set $\mathbf{B}=\bar{L}\bar{\W}^{\#}=(b_1,b_2,b_3)^t$, then $\Wbj=\bar{R}\mathbf{B}$ and \eqref{W-t-1} can be written in a diagonalized form
\begin{align}\label{eq-diaB}
	\pt \B+\Lambda \p_{x_1} \B=\bar{L}\Bbj_1\bar{R}\p_{x_1} ^2\B+2\bar{L}\Bbj_1\p_{x_1} \bar{R}\p_{x_1} \B+\left[\left(\pt\bar{L}+\Lambda \p_{x_1} \bar{L}\right)\bar{R}+\bar{L}\Bbj_1 \p_{x_1} ^2\bar{R}\right]\B+\bar{L} (\mathcal{S}_{\bar \W})^{\#}.
\end{align}
Set $v_{1}=\frac{\rhoc}{\rho_{+}}$ , one has $\left|v_{1}-1\right| \leq C \delta .$
For the sake of convenience, we denote $\bar{\mathcal{A}}_4=\bar{L}\Bbj_1\bar{R}$, 
\begin{align}
	\bar{\mathcal{A}}_4=\left(\begin{array}{ccc}
		\frac{2\bar{\mu}}{3}+\frac{1}{5}\bar{\kappa} & -\frac{\sqrt{3}}{5}\bar{\kappa}& -\frac{2\bar{\mu}}{3}+\frac{1}{5}\bar{\kappa}\\
		-\frac{\sqrt{3}}{5}\bar{\kappa}& \frac{3}{5}\bar{\kappa} &-\frac{\sqrt{3}}{5}\bar{\kappa}\\
		-\frac{2\bar{\mu}}{3}+\frac{1}{5}\bar{\kappa}& -\frac{\sqrt{3}}{5}\bar{\kappa} &\frac{2\bar{\mu}}{3}+\frac{1}{5}\bar{\kappa}
	\end{array}\right),\label{A_3}
	\text{\quad where\quad}
	\bar{\mu}=\frac{\mu(\bar{\theta})}{\bar{\rho}} ,\quad\bar{\kappa}=\frac{\kappa(\thetab)}{\bar{\rho}} .
\end{align}
And we also denote
\begin{align}\notag
	E^{\#}_k:=&\int_{\R}\frac{v_1^N}{2} \abs{\p_{x_1}^kb_1}^2+\frac{1}{2} \abs{\p_{x_1}^kb_2}^2+\frac{v_1^{-N}}{2} \abs{\p_{x_1}^kb_3}^2dx_1, \qquad
	K^{\#}_k:=\int_{\R}\p_{x_1}^{k+1}B^t\bar{\mathcal{A}}_4\p_{x_1}^{k+1}Bdx_1,
\end{align}
where $N=4[\delta^{-\frac{1}{2}}]+1$ is a large positive integer. For $k=0$, $E^{\#}_0=C\norm{\bar{\W}^{\#}}_{L^2}^2$, and for $k\ge 1$,
\begin{align}\label{sec-n-1}
	\norm{\p_{x_1}^{k}\Wbj}_{L^2}^2-\bar{\delta}\sum_{j=0}^{k-1}(1+t)^{-(k-j)}\norm{\p_{x_1}^{j}\Wbj}_{L^2}^2\lesssim E^{\#}_{k}\lesssim  \norm{\p_{x_1}^{k}\Wbj}_{L^2}^2+\bar{\delta}\sum_{j=0}^{k-1}(1+t)^{-(k-j)}\norm{\p_{x_1}^{j}\Wbj}_{L^2}^2.
\end{align}
We further denote
\begin{equation}\label{sec-n-2}
	\left\{\begin{aligned}
		K_i:=&\int_{\R} \abs{\p_{x_1}^{i+1}\bar{\W}}^2dx_1,\\
		{E}_i:=&E^{\#}_i+\int_{\R}\abs{\p_{x_1}^{i}\Wb_3}^2+\abs{\p_{x_1}^{i}\Wb_4}^2 dx_1 +\bar{c} \int_{\R} \p_{x_1}^i\Wb_2\p_{x_1}^{i+1}\Wb_1 +\frac{2\mu(\thetab)}{3\thetab\rhob} \abs{\p_{x_1}^{i+1} \Wb_1}^2dx_1, \end{aligned} \right.
\end{equation}
where $\bar{c}$ is a sufficiently small constant  chosen to ensure $E_i$ is positive.
Then we have the following result.
\begin{Lem}\label{Lem-Wb-1}
	Under the same assumptions of Proposition \ref{ape-Wt}, it holds that
	\begin{align}
		&\frac{d}{dt}\big(\sum_{i=0}^n E_i\big)+\sum_{i=0}^n(K_i+G_{i})\leq C\deltac(1+t)^{-1}\big(\sum_{i=0}^n {E}_i\big)+C\bar{\delta}(1+t)^{-\frac{1}{2}},\label{11}\\
		&\frac{d}{dt}\big(\sum_{i=1}^n {E}_i\big)+\sum_{i=1}^n (K_i+G_{i})\leq C\deltac \left[(1+t)^{-1}\big(\sum_{i=1}^n{E}_i+G_0\big)+(1+t)^{-2}{E}_0\right]+C\deltab(1+t)^{-\frac{3}{2}},\label{22}\\
		&\qquad\qquad\qquad\qquad\qquad\qquad\qquad\qquad\qquad\dots\dots, \notag\\
		&\frac{d}{dt}\big(\sum_{i=n-1}^n {E}_i\big)+\sum_{i=n-1}^n (K_i+G_{i})\leq C\deltac\bigg[(1+t)^{-1}\sum_{i=n-1}^n E_i +\sum_{i=0}^{n-2}(1+t)^{i-n}E_i\notag\\
		&\qquad\qquad\qquad\qquad\qquad\qquad\qquad\qquad\quad\;+\sum_{i=0}^{n-2}(1+t)^{i+1-n}G_i\bigg]+C\deltab(1+t)^{\frac{1}{2}-n},\notag\\
		&\frac{d}{dt}{E}_n+{K}_n+G_{n}\leq C\deltac \left[\sum_{i=0}^{n}(1+t)^{i-n-1}E_i+\sum_{i=0}^{n-1}(1+t)^{i-n}G_i \right]+C\deltab(1+t)^{-\frac{1}{2}-n},\label{kk}
	\end{align}
	where $G_i$, $i=1,\dots,n$ is defined in \eqref{2025.6.07-1}, $\deltab:=\delta+\varepsilon_0$, and $\deltac:=\chi+\deltab^{\frac12}$.
\end{Lem}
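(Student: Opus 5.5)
The plan is a hierarchy of weighted $L^2$ energy estimates on the diagonalized system \eqref{eq-diaB} for $\B=\bar L\Wbj$, coupled with direct estimates for $(\Wb_3,\Wb_4)$ and a small cross term that recovers dissipation of $\p_{x_1}^{i+1}\Wb_1$.

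\textbf{Step 1: the basic estimate for each order $i$.} Apply $\p_{x_1}^i$ to \eqref{eq-diaB} and take the $L^2$ inner product with $(v_1^N\p_{x_1}^ib_1,\ \p_{x_1}^ib_2,\ v_1^{-N}\p_{x_1}^ib_3)$.

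The weights $v_1^{\pm N}$ handle the transport part. Since $\bar\lambda_1<0<\bar\lambda_3$, the commutator $-\frac12\int \p_{x_1}(v_1^{\pm N})\,\bar\lambda_{1,3}\,|\p_{x_1}^i b_{1,3}|^2$ has a good sign, because $\p_{x_1}\rhoc$ has a fixed sign. Because $N\sim\delta^{-1/2}$, this produces the extra dissipation
\[
G_i\gtrsim N\int|\p_{x_1}\rhoc|\,\big(|\p_{x_1}^ib_1|^2+|\p_{x_1}^ib_3|^2\big)dx_1 .
\]
This is the new mechanism \eqref{2025.6.07-1}, and it controls the entropy-wave-localized cross terms $\deltab D_{-1/2}|\p_{x_1}^i\B|^2$ in the $b_1,b_3$ components.

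The viscous term $\bar{\mathcal A}_4$ is only positive semidefinite on $\B$: it degenerates in the direction of the $\Wb_1$ component. It gives $K^\#_i$, i.e.\ dissipation of $\p_{x_1}^{i+1}\Wb_2$ and $\p_{x_1}^{i+1}\Wb_5$. To recover $\p_{x_1}^{i+1}\Wb_1$, I add the cross term $\bar c\int\p_{x_1}^i\Wb_2\,\p_{x_1}^{i+1}\Wb_1$ appearing in $E_i$. The $\Wb_2$-equation contains $\frac23\p_{x_1}\Wb_1$, and the term $\frac{2\mu}{3\thetab\rhob}|\p_{x_1}^{i+1}\Wb_1|^2$ in $E_i$ absorbs the viscous term $-\frac43\frac{\mu\mb_1}{\rhob^2\thetab}\p_{x_1}^2\Wb_1$, which is time-derivative-like. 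Together these give $K_i$.

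For $\Wb_3,\Wb_4$ the equations are parabolic with sources, so the standard estimate suffices.

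\textbf{Step 2: the remaining terms.} The terms with $(\pt\bar L+\Lambda\p_{x_1}\bar L)\bar R$ and $\p_{x_1}\bar R$ carry factors $\deltab D_{-1}$ and $\deltab D_{-1/2}$.

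For the slow-decaying localized terms I use the structure: the two-sided structural conditions hold for \eqref{W-t-1} (by the Remark). So in the $b_2$ equation the dangerous $D_{-1/2}$ couplings cancel, and only $D_{-1}|\B||\p_{x_1}\B|$ remains. Such terms are bounded by Cauchy--Schwarz plus $\|D_{-1}\|_{L^\infty}\|\B\|_{L^2}\le (1+t)^{-1}E_i$, or via $G_i$ and a Hardy-type interpolation against $K_i$.

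For the sources \eqref{Error-1-1}--\eqref{Error-1-3}:
\begin{itemize}
\item The pure sources $\deltab D_{-1}$ (from $\bar\Rm$ and $\mathcal R_G$) give $\|\p_{x_1}^iD_{-1}\|_{L^2}\|\p_{x_1}^i\B\|_{L^2}$. This is bounded by $\deltab(1+t)^{-1/2-i}$ plus a small multiple of $E_i$ weighted by $(1+t)^{-1}$, and it yields the rates $(1+t)^{1/2-n}$ in the hierarchy. For $i=0$ one uses $\|D_{-1}\|_{L^2}\|\bar\W\|\lesssim (1+t)^{-3/4}(1+t)^{1/4}$, which is the $(1+t)^{-1/2}$ in \eqref{11}.
\item The quadratic terms $|\p_{x_1}\bar\W|^2$ are controlled by $\chi K_i$ using \eqref{aps-Wt} and Gagliardo--Nirenberg.
\end{itemize}
Lower-order derivatives hitting coefficients produce $(1+t)^{j-i}$ factors, which explains the terms $\sum_{i<n}(1+t)^{i-n}E_i$ and $(1+t)^{i+1-n}G_i$ in \eqref{kk}. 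Summing over $i$ in the indicated ranges and using \eqref{sec-n-1} gives \eqref{11}--\eqref{kk}.

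\textbf{Main obstacle.} The hardest step is bounding the localized linear terms $\deltab D_{-1/2}|\p_{x_1}^i\B||\p_{x_1}^{i+1}\B|$ in the $b_2$-component, where $G_i$ gives no help. My first attempt is to verify, by explicit computation with $\bar L$ and $\bar R$, that the relevant entries of $\bar L\,\Bbj_1\,\p_{x_1}\bar R$ vanish in the $(2,2)$ position thanks to the structural conditions.

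If they do not vanish, I fall back on an interpolation. Write $\int D_{-1/2}f^2\le \|D_{-1/2}\|_{L^1}\|f\|_{L^\infty}^2$ and use $\|f\|_\infty^2\le\|f\|\,\|f_x\|$ to get $(1+t)^{0}\|f\|\,\|f_x\|$. This is absorbed by $K_i$ together with $\deltac(1+t)^{-1}E_i$, at the cost of the smallness $\deltac$.
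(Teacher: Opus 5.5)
Your Step 1 matches the paper: the weights $v_1^{\pm N}$ with $N\sim\delta^{-1/2}$ generate $G_k$, the cross term $\bar c\int\p_{x_1}^k\Wb_2\,\p_{x_1}^{k+1}\Wb_1$ recovers $\p_{x_1}^{k+1}\Wb_1$, and $\Wb_3,\Wb_4$ get parabolic estimates. The gap is in Step 2, and your fallback for localized terms does not close.

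For $k\ge1$, when $j\ge1$ derivatives fall on the $\rhoc$-dependent coefficients of the $b_1,b_3$ equations, you get $\int_{\R}|\p_{x_1}^{j+1}\rhoc|\,|\p_{x_1}^{k-j}b_l|\,|\p_{x_1}^kb_l|\,dx_1$ for $l=1,3$. To leave only the admissible lower-order piece $(1+t)^{-j}\int_{\R}\p_{x_1}\rhoc|\p_{x_1}^{k-j}b_l|^2dx_1\lesssim\delta^{\frac12}(1+t)^{-j}G_{k-j}$, you must keep the spatial localization. But $|\p_{x_1}^{j+1}\rhoc|$ is bounded by $\p_{x_1}\rhoc$ only up to polynomial factors in $x_1/\sqrt{1+t}$. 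So what remains is $\deltab\int_{\R}\Upsilon_{-\frac12}|\p_{x_1}^kb_l|^2dx_1$, with the wider Gaussian $e^{-\tilde d x_1^2/(1+t)}$, $\tilde d<d$.

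This term is controlled neither by $G_k$, whose weight $\p_{x_1}\rhoc$ decays like $e^{-dx_1^2/(1+t)}$, nor by $\deltac(1+t)^{-1}E_k$, since the weight is only $O((1+t)^{-\frac12})$ in $L^\infty$. Any splitting that discards the localization loses a factor $(1+t)^{\frac12}$ against the terms $(1+t)^{-1}E_k$ and $(1+t)^{-j-1}E_{k-j}$ allowed in \eqref{kk}. For example, $\|\p_{x_1}^{j+1}\rhoc\|_{L^\infty}\lesssim\delta(1+t)^{-\frac{j+1}{2}}$ plus Young does exactly this. That factor is what the time-weighted integration in the proof of Proposition \ref{ape-Wt} cannot absorb.

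Your interpolation fallback has the same defect. The quantity $\deltab\|f\|_{L^2}\|\p_{x_1}f\|_{L^2}$ cannot be bounded by $\eta\|\p_{x_1}f\|_{L^2}^2+C\deltac(1+t)^{-1}\|f\|_{L^2}^2$. So it fails at the lowest level of \eqref{22} and \eqref{kk}, where $\|f\|_{L^2}^2=\|\p_{x_1}^kb_l\|_{L^2}^2$ is not on the dissipation side.

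The missing ingredient is the second weighted estimate \eqref{2025-10-4}--\eqref{2025-10-5}. Multiply the $b_1$ equation in \eqref{equ-Bk} by $\tilde h\,\p_{x_1}^kb_1$, where $\tilde h=\int_{-\infty}^{x_1}\Upsilon_{-\frac12}(t,y)\,dy$. This $\tilde h$ is bounded and satisfies $4\tilde d\,\p_t\tilde h=\p_{x_1}\Upsilon_{-\frac12}$, so $|\p_t\tilde h|\lesssim(1+t)^{-1}$. The transport term $-\frac12\p_{x_1}(\tilde h\bar\lambda_1)|\p_{x_1}^kb_1|^2$ then yields $\int\Upsilon_{-\frac12}|\p_{x_1}^kb_1|^2$ with a good sign, because $\bar\lambda_1<0$. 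The same works for $b_3$ with the opposite sign, since $\bar\lambda_3>0$. A small multiple of $\int\tilde h|\p_{x_1}^kb_{1,3}|^2$ is then added to the energy.

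This step uses the nonzero characteristic speeds of $b_1,b_3$. The $b_2$ component needs nothing of the sort: $\lambda_2=0$ kills the second row of $\Lambda\p_{x_1}\bar L\bar R$, so only $O(D_{-1})$ zeroth-order couplings enter the $b_2$ equation.

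Conversely, the term you single out as the main obstacle, $\deltab D_{-\frac12}|\p_{x_1}^k\B|\,|\p_{x_1}^{k+1}\B|$, is harmless. From $\|D_{-\frac12}\|_{L^\infty}\lesssim(1+t)^{-\frac12}$ and Young you get $\deltab\|\p_{x_1}^{k+1}\B\|_{L^2}^2+\deltab(1+t)^{-1}\|\p_{x_1}^k\B\|_{L^2}^2$, as in \eqref{I14}. No entry of $\bar L\Bbj_1\p_{x_1}\bar R$ needs to vanish.
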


\begin{proof}
	\begin{flushleft}
		\textbf{Step 1. The diagonalized system.}
	\end{flushleft}
	Applying $\p_{x_1}^k$ with $k=0,1,2,\dots,n$ to \eqref{eq-diaB}, one has
	\begin{align}\label{equ-Bk}
		\begin{aligned}
			\p_t\p_{x_1}^k\B+\Lambda \p_{x_1}^{k+1}\B=\bar{\mathcal{A}}_4 \p_{x_1}^{k+2}\B+\mathcal{S}_{\B k},
		\end{aligned}
	\end{align}
	where
	\begin{align}
		\mathcal{S}_{\B k}:=&\sum_{j=1}^{k}\bigg[-\p_{x_1}^j\Lambda\p_{x_1}^{k-j+1}\B+\p_{x_1}^j\bar{\mathcal{A}}_4\p_{x_1}^{k-j+2}\B\bigg]+2\sum_{i=0}^{k}\p_{x_1}^i\big(\bar{L} \Bbj_1 \p_{x_1}\bar{R}\big) \p_{x_1}^{k-i+1}\B\notag\\
		&+\sum_{i=0}^{k}\bigg\{\p_{x_1}^i\left[\left(\p_t\bar{L}+\Lambda \p_{x_1}\bar{L}\right) \bar{R}+\bar{L} \Bbj_1 \p_{x_1}^2 \bar{R}\right] \p_{x_1}^{k-i}\B \bigg\}+\p_{x_1}^{k}\left(\bar{L} (\mathcal{S}_{\bar \W})^{\#}\right) \notag\\
		\leq&C\sum_{j=1}^{k+1}\left(\abs{\p_{x_1}^j \rhoc} \abs{\p_{x_1}^{k-j+1}b_1},0,\abs{\p_{x_1}^j\rhoc} \abs{\p_{x_1}^{k-j+1}b_3}\right)^{t}
		+C\bar{\delta} \sum_{j=1}^{k+2}\abs{D_{-\frac{j}{2}}\p_{x_1}^{k-j+2}\B}+\p_{x_1}^{k}\left(\bar{L} (\mathcal{S}_{\bar \W})^{\#}\right):=\sum_{i=1}^{3}\mathcal{S}_{\B k}^{(i)}.\notag
	\end{align}
	We shall use a weighted energy method to derive the intrinsic dissipation. Without loss of generality, we assume that $\p_{x_1}{\rhoc}>0 $ since the proof in the case $\p_{x_1}{\rhoc}<0$ is similar.
	
	Applying $\p_{x_1}^k$ to \eqref{eq-diaB} and then multiplying the resulting equations by $\bar{\B}^{(k)}=\left(v_{1}^{N} \p_{x_1}^kb_{1}, \p_{x_1}^kb_{2}, v_{1}^{-N} \p_{x_1}^kb_{3}\right)$, one has
	\begin{align}\label{d-k-b}
		&\int_\R \p_t\left(\frac{v_1^N}{2} \abs{\p_{x_1}^kb_1}^2+\frac{1}{2} \abs{\p_{x_1}^kb_2}^2+\frac{v_1^{-N}}{2} \abs{\p_{x_1}^kb_3}^2\right)+\p_{x_1}\bar{\B}^{(k)} \bar{\mathcal{A}}_4 \p_{x_1}^{k+1}\B dx_1
		+\int_{\R} a_1\abs{\p_{x_1}^kb_1}^2+ a_3\abs{\p_{x_1}^kb_3}^2dx_1\nonumber\\
		=&\int_{\R}\bar{\B}^{(k)}\p_{x_1} \bar{\mathcal{A}}_{4}\p_{x_1}^{k+1} \B+\bigg[\left(\frac{v_1^N}{2}\right)_t \abs{\p_{x_1}^kb_1}^2+\left(\frac{v_1^{-N}}{2}\right)_t \abs{\p_{x_1}^kb_3}^2\bigg]+\bar{\B}^{(k)}\mathcal{S}_{\B k}dx_1:=I_1+I_2+I_3,
	\end{align}
	where 
	\begin{align*}
		&a_1:=-\frac{v_1^{N-1}}{2}\left(N \bar\lambda_1 \p_{x_1} v_{1}+v_1\p_{x_1}  \bar\lambda_{1 }\right)\ge C\delta ^{-\frac{1}{2}}\p_{x_1} \rhoc-C\bar{\delta}D_{-1},\\ &a_3:=\frac{v_1^{-N-1}}{2}\left(N \bar\lambda_3 \p_{x_1}  v_{1 }-v_1 \p_{x_1} \bar\lambda_{3 }\right)\ge C\delta ^{-\frac{1}{2}}\p_{x_1} \rhoc-C\bar{\delta}D_{-1}.
	\end{align*} 
	Thus, we obtain an additional dissipative structure through diagonalization:
	\begin{align}\label{2025.6.07-1}
		G_k:=C\delta^{-\frac12}\int_{\R} \p_{x_1}\rhoc\abs{\p_{x_1}^kb_1}^2+ \p_{x_1}\rhoc\abs{\p_{x_1}^kb_3}^2dx_1.
	\end{align}
	Recalling the definition of $\bar{\kappa}$ and $\bar{\mu}$ \eqref{A_3}, we find that the dissipation matrix $\bar{\mathcal{A}}_4$ \eqref{A_3} is non-negative definite. Furthermore, it holds
	\begin{align}\label{q-z-x}
		\p_{x_1}^{k+1}{\B}^t\bar{\mathcal{A}}_4\p_{x_1}^{k+1}\B=&\frac{\bar{\kappa}}{5}\left[(\p_{x_1}^{k+1} b_{1}+\p_{x_1}^{k+1} b_{3})-\sqrt{3}\p_{x_1}^{k+1} b_{2}\right]^2+\frac{2\bar{\mu}}{3}\left[(\p_{x_1}^{k+1} b_{3}-\p_{x_1}^{k+1} b_{1})\right]^2 \notag\\
		\geq&C\big(|{\p_{x_1}^{k+1}}\bar{W}_2|^2+|{\p_{x_1}^{k+1}}\bar{W}_5|^2\big)-C\deltab\sum_{j=1}^{k+1}D_{-j}\abs{\p_{x_1}^{k+1-j}\Wb_2}^2.
	\end{align}
	And we also have
	\begin{align}\label{2025-11-11-1}
		\int_{\R} \p_{x_1} \left(\bar{\B}^{(k)}-\p_{x_1}^{k}\B^t\right)\bar{\mathcal{A}}_4\p_{x_1}^{k+1}\B dx_1 \leq C(\chi+\bar{\delta}^{\frac12})(1+t)^{-1}E_k+C(\chi+\bar{\delta}^{\frac12})K_k.
	\end{align}
	By \eqref{sec-n-1},  \eqref{sec-n-2} and the {\it a priori} assumptions \eqref{aps-Wt}, we obtain
	\begin{align}\label{I12}
		I_1+I_2\leq C \left(\deltab^{\frac12}+\chi\right)(1+t)^{-1}E_k+\left(\deltab^{\frac12}+\chi\right) K_k.
	\end{align}
	The estimate of $I_3$ is more intricate. We first present the estimate of $\int_{\R}\bar{\B}^{(k)}\mathcal{S}_{\B k}^{(2)}dx_1$ as 
	\begin{align}
		\int_{\R}\bar{\B}^{(k)}\mathcal{S}_{\B k}^{(2)}dx_1\leq C\deltab\sum_{i=0}^{k+1}(1+t)^{-i}\norm{\p_{x_1}^{k-i+1}\B}_{L^2}^2, \quad \text{for}\quad k\ge 0.\label{I14}
	\end{align}
	Then we consider the estimate of $\int_{\R}\bar{\B}^{(k)}\mathcal{S}_{\B k}^{(1)}dx_1$. For the case of $k=0$, it is easy to see
	\begin{align}\label{IL12}	
		\int_{\R}\bar{\B}^{(0)}\mathcal{S}_{\B 0}^{(1)}dx_1\le C\int_{\R} \p_{x_1} \rhoc \abs{b_1}^2+\p_{x_1} \rhoc \abs{b_3}^2 dx_1.
	\end{align}
	For $k \ge 1$, we have
	\begin{align}\label{2025-11-3-1}
		&\int_{\R}\bar{\B}^{(k)}\mathcal{S}_{\B k}^{(1)}dx_1\lesssim \sum_{l=1,3}\sum_{j=0}^k \int_{\R} \abs{\p_{x_1}^{j+1}\rhoc}\abs{\p_{x_1}^{k-j} b_{l}} \abs{\p_{x_1}^k b_l}dx_1.
	\end{align}
	Noticing 
	\begin{align*}
		&\abs{\p_{x_1}^{2m} \rhoc}\lesssim \sum_{i=0}^{m-1}(1+t)^{i-m+1}\abs{\frac{x_1}{1+t}}^{2i+1}\p_{x_1}\rhoc,\quad \abs{\p_{x_1}^{2m+1} \rhoc}\lesssim \sum_{i=0}^{m}(1+t)^{i-m}\abs{\frac{x_1}{1+t}}^{2i}\p_{x_1}\rhoc,  
	\end{align*}
	we have
	\begin{align}
		&\int_{\R} \abs{\p_{x_1}^{j+1}\rhoc}\abs{\p_{x_1}^{k-j} b_{l}} \abs{\p_{x_1}^k b_l}dx_1\notag\\
		&\lesssim \sum_{i=0}^{\frac{j}{2}}(1+t)^{i-\frac{j}{2}}\int_{\R} \abs{\frac{x_1}{1+t}}^{2i}\p_{x_1}\rhoc \abs{\p_{x_1}^{k-j}b_l}\abs{\p_{x_1}^k b_l} dx_1 \notag\\
		&\lesssim
		(1+t)^{-j} \int_{\R} \p_{x_1}\rhoc \abs{\p_{x_1}^{k-j}b_l}^2 dx_1 + \sum_{i=0}^{\frac{j}{2}} \int_{\R} \abs{\frac{x_1^2}{1+t}}^{2i} \p_{x_1}\rhoc\abs{\p_{x_1}^k b_l}^2 dx_1\notag\\
		&\lesssim
		(1+t)^{-j} \int_{\R} \p_{x_1}\rhoc \abs{\p_{x_1}^{k-j}b_l}^2 dx_1 + \deltab \int_{\R} \Upsilon_{-\frac12} \abs{\p_{x_1}^k b_l}^2 dx_1, \quad \text{for} \; j \; \text{is} \; \text{even},
		\label{2025-11-3-2}
	\end{align}
	and
	\begin{align}
		&\int_{\R} \abs{\p_{x_1}^{j+1}\rhoc}\abs{\p_{x_1}^{k-j} b_{l}} \abs{\p_{x_1}^k b_l}dx_1\notag\\
		&\lesssim \sum_{i=0}^{\frac{j}{2}-\frac12}(1+t)^{i-\frac{j}{2}+\frac12}\int_{\R} \abs{\frac{x_1}{1+t}}^{2i+1}\p_{x_1}\rhoc \abs{\p_{x_1}^{k-j}b_l}\abs{\p_{x_1}^k b_l} dx_1 \notag\\
		&\lesssim
		(1+t)^{-j} \int_{\R} \p_{x_1}\rhoc \abs{\p_{x_1}^{k-j}b_l}^2 dx_1 + \sum_{i=0}^{\frac{j}{2}-\frac12} \int_{\R} \abs{\frac{x_1^2}{1+t}}^{2i+1} \p_{x_1}\rhoc\abs{\p_{x_1}^k b_l}^2 dx_1\notag\\
		&\lesssim
		(1+t)^{-j} \int_{\R} \p_{x_1}\rhoc \abs{\p_{x_1}^{k-j}b_l}^2 dx_1 + \deltab \int_{\R} \Upsilon_{-\frac12} \abs{\p_{x_1}^k b_l}^2 dx_1, \quad \text{for} \; j \; \text{is} \; \text{odd},\label{2025-11-3-3}
	\end{align}
	where we have used the definition of $\Upsilon_{-\frac12}$ in
	\eqref{errors}. Combining \eqref{2025-11-3-1}, \eqref{2025-11-3-2} and \eqref{2025-11-3-3},  we then obtain
	\begin{align}\label{2025-11-3-4}
		\int_{\R}\bar{\B}^{(k)}\mathcal{S}_{\B k}^{(1)}dx_1\lesssim \sum_{l=1,3}\sum_{j=0}^k 
		(1+t)^{-j} \int_{\R} \p_{x_1}\rhoc \abs{\p_{x_1}^{k-j}b_l}^2 dx_1 +\deltab \sum_{l=1,3} \int_{\R} \Upsilon_{-\frac12} \abs{\p_{x_1}^k b_l}^2 dx_1.
	\end{align}
	Next, we estimate terms involving $\mathcal{S}_{\B k}^{(3)}$. For $k=0$
	\begin{align}\label{d-k-M-00}
		\norm{\bar{\B}^{(0)}\mathcal{S}_{\B 0}^{(3)}}_{L^1(\R)}\leq&C\int_{\R} \left\{\deltab\left[D_{-1}+D_{-1}\abs{\bar{\W}}+D_{-\frac{1}{2}}\left(\abs{\p_{x_1}\bar{\W}}+\abs{\p_{x_1}^2 \bar{\W}}\right)\right]+\abs{\p_{x_1} \bar{\W}}^2\right\}\abs{\bar{\B}^{(0)}}dx_1
		:=\sum_{j=1}^4 I_{4}^{(j)}.
	\end{align}
	Then we estimate the above $I_4$ term by term in the way that
	\begin{align}\notag
		&I_{4}^{(1)}\leq C \deltab (1+t)^{-\frac{1}{2}}+C\deltac(1+t)^{-1}\norm{\B}^2_{L^2},\notag\\
		&I_{4}^{(2)}+I_{4}^{(3)}+I_{4}^{(4)}\leq C\deltac(1+t)^{-1}\norm{\bar{\W}}^2_{L^2}+C\deltac\norm{\p_{x_1} \bar{\W}}_{H^1}^2,\notag
	\end{align}
	where we have used the {\it a priori} assumptions \eqref{aps-Wt}. For $k \geq 1, \; k \in \mathbb{N}$, we have
	\begin{align}\label{d-k-M-k11}
		&\norm{\bar{\B}^{(k)}\mathcal{S}_{\B k}^{(3)}}_{L^1(\R)}\le\abs{\int_{\R} \p_{x_1}^{k+1}b_2 \p_{x_1}^{k-1}\left( \sqrt{\frac25}(\mathcal{S}_{\bar \W})_1-\sqrt{\frac{9}{10}}(\mathcal{S}_{\bar \W})_5 \right) dx_1}
		\notag\\
		&\quad\;+\abs{\int_{\R} \p_{x_1}\left( v_1^N \p_{x_1}^k b_1 \right)\p_{x_1}^{k-1} \left[\sqrt{\frac{3}{10}} (\mathcal{S}_{\bar \W})_1+ 
			\sqrt{\frac{3}{10}}\frac{3\bar{\lambda}_1}{2}\left( (\mathcal{S}_{\bar \W})_2-\frac{4\mu(\thetab)\mb_1}{3\rhob^2\thetab}\p_{x_1}^2\Wb_1\right)+\sqrt{\frac{3}{10}} (\mathcal{S}_{\bar \W})_5 \right]dx_1    }\notag\\
		&\quad\;+\abs{\int_{\R} \p_{x_1}\left( v_1^{-N} \p_{x_1}^k b_3 \right)\p_{x_1}^{k-1} \left[\sqrt{\frac{3}{10}} (\mathcal{S}_{\bar \W})_1-\sqrt{\frac{3}{10}}\frac{3\bar{\lambda}_1}{2}\left( (\mathcal{S}_{\bar \W})_2-\frac{4\mu(\thetab)\mb_1}{3\rhob^2\thetab}\p_{x_1}^2\Wb_1\right)+\sqrt{\frac{3}{10}} (\mathcal{S}_{\bar \W})_5 \right]dx_1    }.
	\end{align}
	By \eqref{Error-1-1}, \eqref{Error-1-2} and \eqref{Error-1-3}, we  present the calculations of the error term $\p_{x_1}^{k-1}D_{-1}$ and the nonlinear terms $\p_{x_1}^{k-1} |\p_{x_1} \bar{\W}|^2$ among $\mathcal{S}_{\B k}^{(3)}$.  Applying the {\it a priori} assumptions \eqref{aps-Wt}, we have
	\begin{align*}
		&\deltab\int_{\R} \p_{x_1}^{k-1}D_{-1} \left[\p_{x_1}^{k+1}b_2+\p_{x_1} \left( v_1^N \p_{x_1}^kb_1\right)+\p_{x_1} \left( v_1^{-N} \p_{x_1}^kb_3\right) \right] dx_1 \notag\\
		\leq& C\deltab (1+t)^{-\frac{1}{2}-k}+C\deltac (1+t)^{-1}\norm{\p_{x_1}^k \B}_{L^2}^2 +C\deltab\norm{\p_{x_1}^{k+1}\B}_{L^2}^2 ,\\
		& \int_{\R}\p_{x_1}^{k-1}\abs{\p_{x_1}\bar{\W}}^2\left( \abs{\p_{x_1}^{k+1}b_2}+\abs{\p_{x_1}(v_1^N\p_{x_1}^kb_1)}+\abs{\p_{x_1}(v_1^{-N}\p_{x_1}^k b_3)}\right)dx_1 \notag\\
		\le& C\sum_{i=0}^{[\frac{k-1}{2}]} \norm{\p_{x_1}^{i+1} \bar{\W}}_{L^\infty}\norm{\p_{x_1}^{k-i} \bar{\W}}_{L^2}\norm{\p_{x_1}^{k+1} \bar{\W}}_{L^2}+C\sum_{i=[\frac{k-1}{2}]+1}^{k-1}\norm{\p_{x_1}^{k-i} \bar{\W}}_{L^\infty}\norm{\p_{x_1}^{i+1} \bar{\W}}_{L^2}\norm{\p_{x_1}^{k+1} \bar{\W}}_{L^2} \notag\\
		&+C \deltab(1+t)^{-\frac12} \norm{\p_{x_1}^k\bar{\W}}_{L^2}\left(\sum_{i=0}^{[\frac{k-1}{2}]} \norm{\p_{x_1}^{i+1} \bar{\W}}_{L^\infty}\norm{\p_{x_1}^{k-i} \bar{\W}}_{L^2}+ \sum_{i=[\frac{k-1}{2}]+1}^{k-1}\norm{\p_{x_1}^{k-i} \bar{\W}}_{L^\infty}\norm{\p_{x_1}^{i+1} \bar{\W}}_{L^2}\right)\notag\\
		\leq& C\deltac \sum_{i=0}^{k} (1+t)^{i-k-1}\norm{\p_{x_1}^i \bar{\W}}_{L^2}^2 +C\deltac \norm{\p_{x_1}^{k+1}\bar{\W}}_{L^2}^2  .
	\end{align*}
	The calculations of the remaining terms in \eqref{d-k-M-k11} are similar. Then we have
	\begin{align}\label{d-k-M-k2}
		\norm{\bar{\B}^{(k)}\mathcal{S}_{\B k}^{(3)}}_{L^1(\R)}\le C\deltac \sum_{i=0}^{k} (1+t)^{i-k-1}\norm{\p_{x_1}^i \bar{\W}}_{L^2}^2 +C\deltac \norm{\p_{x_1}^{k+1}\bar{\W}}_{L^2}^2 +C\deltab(1+t)^{-\frac12-k} .
	\end{align}
	\begin{flushleft}
		\textbf{Step 2.  Estimates on $\int_{\R} \Upsilon_{-\frac12} \abs{\p_{x_1}^k b_i}^2 dx_1$.}
	\end{flushleft}
	
	Notice that we still need to control  $\int_{\R} \Upsilon_{-\frac12} \abs{\p_{x_1}^k b_i}^2 dx_1$ in \eqref{2025-11-3-4}. From \eqref{eqs27} and \eqref{errors}, we have $$\deltab\Upsilon_{-1/2}\approx\deltab (1+t)^{-\frac12}e^{-\frac{\tilde{d} x_1^2}{1+t}},\qquad  \p_{x_1}\rhoc \approx\deltab (1+t)^{-\frac12}e^{-\frac{d x_1^2}{1+t}},$$ with $\tilde{d}<d$. Therefore $\int_{\R} \Upsilon_{-\frac12} \abs{\p_{x_1}^k b_i}^2 dx_1$ cannot be directly controlled by $G_k$ \eqref{2025.6.07-1}. 
	We need to further develop weighted heat kernel inequalities \eqref{2025-10-4} and \eqref{2025-10-5}  in order to obtain the control of $\int_{\R} \Upsilon_{-\frac12} \abs{\p_{x_1}^k b_i}^2 dx_1$ for $ k \ge 1$. To derive these estimates, let 
	$$
	\tilde{h}(t,x_1)=\int_{-\infty}^{x_1} \Upsilon_{-\frac12}(t,y)dy,
	$$
	then it follows that $\|\tilde{h}\|_{L^\infty}\leq C$ and $4\tilde{d} \p_t \tilde{h}= \p_{x_1}\Upsilon_{-\frac12}$. 
	Multiplying \eqref{equ-Bk}$_1$ by $\tilde{h}\p_{x_1}^k{b}_1$,  we thus have
	\begin{align}\label{2025-10-30-3}
		& \frac{1}{2} \frac{d}{dt}\left[ \tilde{h} (\p_{x_1}^kb_1)^2 \right]-\frac{1}{2}\p_t \tilde{h}\left(\p_{x_1}^k b_1 \right)^2 + \frac{1}{2}\p_{x_1} \left[ \tilde{h} \bar{\lambda}_1 \left(\p_{x_1}^k b_1 \right)^2 \right] - \frac{1}{2} \p_{x_1} \left( \tilde{h} \bar{\lambda}_1\right)\left( \p_{x_1}^k b_1\right)^2\notag\\
		=&\left(\mathcal{S}_{\B k}^{(1)} \right)_1\tilde{h}\p_{x_1}^k b_1+ \left[ \left( \bar{\mathcal{A}}_4  \p_{x_1}^{k+2} \B \right)_1 + \sum_{i=2}^3  \left(\mathcal{S}_{\B k}^{(i)} \right)_1 \right] \tilde{h} \p_{x_1}^k b_1.
	\end{align}
	Using the same method as for obtaining \eqref{2025-11-3-1}, \eqref{2025-11-3-2}, \eqref{2025-11-3-3} and \eqref{2025-11-3-4}, one has
	\begin{align*}
		\int_{\R}\left(\mathcal{S}_{\B k}^{(1)} \right)_1\tilde{h}\p_{x_1}^k b_1 dx_1 \lesssim \deltab \int_{\R} \Upsilon_{-\frac12} \abs{\p_{x_1}^k b_1}^2 dx_1 + \sum_{i=0}^k(1+t)^{-i} \int_{\R} \p_{x_1}\rhoc \abs{\p_{x_1}^{k-i}b_1}^2 dx_1. 
	\end{align*}
	Applying an argument similar to \eqref{d-k-M-k2}, it holds that
	\begin{align*}
		\int_{\R}\Big[ \left(\bar{\mathcal{A}}_4 \p_{x_1}^{k+2} \B \right)_1 + \sum_{i=2}^3  \left(\mathcal{S}_{\B k}^{(i)} \right)_1 \Big] \tilde{h} \p_{x_1}^k b_1 dx_1 \lesssim \sum_{i=0}^{k+1}(1+t)^{j-k-1}\norm{\p_{x_1}^j \bar{\W}}_{L^2}^2+(1+t)^{-\frac12-k}.
	\end{align*}
	Thus, integrating \eqref{2025-10-30-3} with respect to $x_1$, we obtain
	\begin{align}\label{2025-10-4}
		\int_{\R} \Upsilon_{-\frac12} \abs{\p_{x_1}^k b_1}^2 dx_1 +\p_t \int_{\R} \tilde{h} \left(\p_{x_1}^k b_1 \right)^2 dx_1 \lesssim &  \sum_{j=0}^{k+1} (1+t)^{j-k-1}\norm{\p_{x_1}^j \bar{\W}}_{L^2}^2 + (1+t)^{-\frac12-k} \notag \\
		&+ \sum_{j=0}^k (1+t)^{-j} \int_{\R} \p_{x_1}\rhoc \abs{\p_{x_1}^{k-j}b_1}^2 dx_1 .
	\end{align}
	Using the same argument as for obtaining \eqref{2025-10-4}, we have
	\begin{align}\label{2025-10-5}
		\int_{\R} \Upsilon_{-\frac12} \abs{\p_{x_1}^k b_3}^2 dx_1- \p_t \int_{\R} \tilde{h} \left(\p_{x_1}^k b_3 \right)^2 dx_1 \lesssim &\sum_{j=0}^{k+1} (1+t)^{j-k-1}\norm{\p_{x_1}^j \bar{\W}}_{L^2}^2 + (1+t)^{-\frac12-k} \notag \\
		&+ \sum_{j=0}^k (1+t)^{-j} \int_{\R} \p_{x_1}\rhoc \abs{\p_{x_1}^{k-j}b_3}^2 dx_1 .
	\end{align}
	Combining \eqref{d-k-b}-\eqref{IL12}, \eqref{2025-11-3-4}, \eqref{d-k-M-k2}, \eqref{2025-10-4} and \eqref{2025-10-5}, one has
	\begin{align}
		&\frac{d}{dt}\big(\sum_{i=0}^n E_i^{\#}\big)+\sum_{i=0}^n(K^{\#}_i+G_{i})\notag\\
		\leq& C\deltab(1+t)^{-\frac12}+ C\check{\delta}\left[(1+t)^{-1}\big(\sum_{i=0}^n {E}_i\big)+\norm{\p_{x_1}\Wb_1}_{H^n}^2+\norm{\p_{x_1}\Wb_3}_{H^n}^2+\norm{\p_{x_1}\Wb_4}_{H^n}^2\right],\label{d-0-11}\\
		&\frac{d}{dt}\big(\sum_{i=1}^n {E}^{\#}_i\big)+\sum_{i=1}^n (K^{\#}_i+G_{i})\notag 
		\leq C\bar{\delta}(1+t)^{-\frac{3}{2}}+C\check{\delta}\bigg[(1+t)^{-1}\big(\sum_{i=1}^n{E}_i+\sum_{i=0}^{n-1}G_i\big)+(1+t)^{-2}{E}_0\\
		&\qquad\qquad\qquad\qquad\qquad\qquad\qquad\quad\;+\norm{\p_{x_1}^2\Wb_1}_{H^{n-1}}^2+\norm{\p_{x_1}^2\Wb_3}_{H^{n-1}}^2+\norm{\p_{x_1}^2\Wb_4}_{H^{n-1}}^2\bigg],\label{d-0-21}\\
		&\qquad\qquad\qquad\qquad\qquad\qquad\qquad\qquad\qquad\dots\dots, \notag\\
		&\frac{d}{dt}\big(\sum_{i=n-1}^n {E}_i^{\#}\big)+\sum_{i=n-1}^n (K^{\#}_i+G_{i})\leq C\deltab (1+t)^{\frac{1}{2}-n}+C\deltac\bigg[(1+t)^{-1}\sum_{i=n-1}^n E_i +\sum_{i=0}^{n-2}(1+t)^{i-n}E_i\notag\\
		&\qquad\qquad\qquad\qquad\qquad\qquad+\sum_{i=0}^{n-2}(1+t)^{i+1-n}G_i+\norm{\p_{x_1}^{n}\Wb_1}_{H^1}^2+\norm{\p_{x_1}^{n}\Wb_3}_{H^1}^2+\norm{\p_{x_1}^{n}\Wb_4}_{H^1}^2\bigg],\label{d-0-31}\\
		&\frac{d}{dt}{E}^{\#}_n+K^{\#}_n+G_{n}\le C\deltab(1+t)^{-\frac{1}{2}-n}+ C\deltac \bigg[\sum_{i=0}^{n}(1+t)^{i-n-1}E_i\notag\\
		& \qquad\qquad\qquad\qquad\qquad\qquad+\sum_{i=0}^{n-1}(1+t)^{i-n}G_i+\norm{\p_{x_1}^{n+1}\Wb_1}_{L^2}^2+\norm{\p_{x_1}^{n+1}\Wb_3}_{L^2}^2+\norm{\p_{x_1}^{n+1}\Wb_4}_{L^2}^2  \bigg].\label{d-0-41}
	\end{align}
	\begin{flushleft}
		\textbf{Step 3. Estimates on $\norm{\p_{x_1}^{k+1}\Wb_1}_{L^2}$.}
	\end{flushleft}
	We still need to estimate $\norm{\p_{x_1}^{k+1}\bar{W}_1}_{L^2}$. Applying $\p_{x_1}^k$ to \eqref{Wt}$_2$ and then multiplying the resulting equation by $\p_{x_1}^{k+1}\bar{W}_1$, one has
	\begin{align}\label{eq-phi-x}
		&\frac{d}{dt}\int_{\R} \p_{x_1}^k\Wb_2\p_{x_1}^{k+1}\Wb_1 dx_1 + \frac{2}{3}\norm{\p_{x_1}^{k+1}\Wb_1}_{L^2}^2-\frac{4}{3}\int_{\R}\frac{\mu(\thetab)}{\rhob} \p_{x_1}^{k+2}\Wb_2\p_{x_1}^{k+1}\Wb_1 dx_1 + \int_{\R} \p_t \p_{x_1}^k \Wb_1 \p_{x_1}^{k+1}\Wb_2 dx_1\notag\\
		=&\frac{4}{3}\sum_{i=0}^{k-1}\int_{\R}\p_{x_1}^{k-i}\left( \frac{\mu(\thetab)}{\rhob} \right)\p_{x_1}^{i+2}\Wb_2 \p_{x_1}^{k+1}\Wb_1 dx_1 -\frac{2}{3}\int_{\R}\p_{x_1}^{k+1}\Wb_5 \p_{x_1}^{k+1}\Wb_1 dx_1 \notag\\
		&+\int_{\R} \p_{x_1}^k (\mathcal{S}_{\bar \W})_2\p_{x_1}^{k+1}\Wb_1 dx_1-\frac{4}{3}\int_{\R} \p_{x_1}^{k}\left[ \frac{\mu(\thetab)\mb_1}{\rhob^2 \thetab} \p_{x_1}^2\Wb_{1} \right] \p_{x_1}^{k+1} \Wb_1.
	\end{align}
	Then we use \eqref{Wt}$_1$ to deal with $-\frac{4}{3}\int_{\R}\frac{\mu(\thetab)}{\rhob} \p_{x_1}^{k+2}\Wb_2\p_{x_1}^{k+1}\Wb_1 dx_1$ as
	\begin{align}\label{px-k+1-W1}
		&-\frac{4}{3}\int_{\R}\frac{\mu(\thetab)}{\rhob} \p_{x_1}^{k+2}\Wb_2\p_{x_1}^{k+1}\Wb_1 dx_1 \notag\\
		=&\frac{4}{3}\int_{\R}\frac{\mu(\thetab)}{\rhob} \p_{x_1}^{k+1} \left( \frac{\p_t\Wb_1}{\thetab} \right) \p_{x_1}^{k+1}\Wb_1 dx_1 -\frac{4}{3}\int_{\R}\frac{\mu(\thetab)}{\rhob} \p_{x_1}^{k+1} \left( \frac{(\mathcal{S}_{\bar \W})_1}{\thetab} \right) \p_{x_1}^{k+1}\Wb_1 dx_1 \notag\\
		=&\frac{2}{3}\frac{d}{dt}\left(\int_{\R} \frac{\mu(\thetab)}{\thetab\rhob}  \abs{\p_{x_1}^{k+1} \Wb_1} ^2 dx_1\right)-\frac{2}{3}\int_{\R} \p_t\left(\frac{\mu(\thetab)}{\thetab\rhob}  \right)\abs{\p_{x_1}^{k+1} \Wb_1} ^2 dx_1 -\frac{4}{3}\int_{\R}\frac{\mu(\thetab)}{\rhob} \p_{x_1}^{k+1} \left( \frac{(\mathcal{S}_{\bar \W})_1}{\thetab} \right) \p_{x_1}^{k+1}\Wb_1 dx_1\notag\\
		& - \frac{4}{3}\sum_{i=0}^k \int_{\R}\frac{\mu(\thetab)}{\rhob} \p_{x_1}^{k+1-i}\left( \frac{1}{\thetab} \right) \left[\p_{x_1}^i \left( \thetab \Wb_{2x_1}\right) -\p_{x_1}^i  (\mathcal{S}_{\bar \W})_1 \right]\p_{x_1}^{k+1} \Wb_1 dx_1.
	\end{align}
	And using \eqref{Wt}$_1$ again, one has
	\begin{align}\label{px-k+1-W1-1}
		\int_{\R}\p_t \p_{x_1}^k \Wb_1 \p_{x_1}^{k+1}\Wb_2 dx_1 = -\int_{\R} \p_{x_1}^k\left(\thetat\Wb_{2x_1} \right) \p_{x_1}^{k+1}\Wb_2 dx_1 + \int_{\R}\p_{x_1}^k(\mathcal{S}_{\bar \W})_1 \p_{x_1}^{k+1}\Wb_2 dx_1.
	\end{align}
	Combining \eqref{eq-phi-x}, \eqref{px-k+1-W1}, and \eqref{px-k+1-W1-1}, we obtain
	\begin{align}\label{px-k+1-W1-2}
		&\frac{2}{3}\frac{d}{dt}\norm{ \sqrt{\frac{\mu(\thetab)}{\thetab\rhob}}  \p_{x_1}^{k+1} \Wb_1}_{L^2}^2 + \frac{d}{dt}\int_{\R} \p_{x_1}^k\Wb_2\p_{x_1}^{k+1}\Wb_1 dx_1 + \frac{2}{3}\norm{\p_{x_1}^{k+1}\Wb_1}_{L^2}^2=I_5+I_6,
	\end{align}
	where
	\begin{align*}
		I_5=&\frac{4}{3}\sum_{i=0}^{k-1}\int_{\R}\p_{x_1}^{k-i}\left( \frac{\mu(\thetab)}{\rhob} \right)\p_{x_1}^{i+2}\Wb_2 \p_{x_1}^{k+1}\Wb_1 dx_1 -\frac{2}{3}\int_{\R}\p_{x_1}^{k+1}\Wb_5\p_{x_1}^{k+1}\Wb_1 dx_1 +\int_{\R} \p_{x_1}^k\left(\thetab\Wb_{2x_1} \right) \p_{x_1}^{k+1}\Wb_2  dx_1   \\   
		&
		+\frac{2}{3} \int_{\R}\p_t \left( \frac{\mu(\thetab)}{\thetab \rhob} \right) \abs{\p_{x_1}^{k+1}\Wb_1}^2 dx_1 + \frac{4}{3}\sum_{i=0}^k \int_{\R}\frac{\mu(\thetab)}{\rhob} \p_{x_1}^{k+1-i}\left( \frac{1}{\thetab} \right) \p_{x_1}^i \left( \thetab \Wb_{2x_1}\right)  \p_{x_1}^{k+1} \Wb_1 dx_1,\\
		I_6=& \int_{\R} \p_{x_1}^k (\mathcal{S}_{\bar \W})_2\p_{x_1}^{k+1}\Wb_1 dx_1 +\frac{4}{3}\int_{\R}\frac{\mu(\thetab)}{\rhob} \p_{x_1}^{k+1} \left( \frac{(\mathcal{S}_{\bar \W})_1}{\thetab} \right) \p_{x_1}^{k+1}\Wb_1 dx_1 -\frac{4}{3}\int_{\R} \p_{x_1}^{k}\left[ \frac{\mu(\thetab)\mb_1}{\rhob^2 \thetab} \p_{x_1}^2\Wb_{1} \right] \p_{x_1}^{k+1} \Wb_1 dx_1 \notag\\
		&+ \frac{4}{3}\sum_{i=0}^k \int_{\R}\frac{\mu(\thetab)}{\rhob} \p_{x_1}^{k+1-i}\left( \frac{1}{\thetab} \right)\p_{x_1}^i (\mathcal{S}_{\bar \W})_1\p_{x_1}^{k+1} \Wb_1 dx_1 - \int_{\R}\p_{x_1}^k(\mathcal{S}_{\bar \W})_1 \p_{x_1}^{k+1}\Wb_2  dx_1.                
	\end{align*}
	By H{\"o}lder's inequality, we have
	\begin{align}\label{I-7}
		I_5 \le&C\deltac \sum_{i=1}^k (1+t)^{i-k-1}\norm{\p_{x_1}^i \bar{\W}}_{L^2}^2 + \frac{1}{16000} \norm{\p_{x_1}^{k+1}\Wb_1}_{L^2}^2  +C\sum_{i=2}^5 \norm{\p_{x_1}^{k+1} \Wb_i}_{L^2}^2.
	\end{align}
	For $I_6$, we apply integration by parts to  control  $-\frac{4}{3}\int_{\R} \p_{x_1}^{k}\left[ \frac{\mu(\thetab)\mb_1}{\rhob^2 \thetab} \p_{x_1}^2\Wb_{1} \right] \p_{x_1}^{k+1} \Wb_1 dx_1$ as
	\begin{align*}
		& -\frac{4}{3}\int_{\R} \p_{x_1}^{k}\left[ \frac{\mu(\thetab)\mb_1}{\rhob^2 \thetab} \p_{x_1}^2\Wb_{1} \right] \p_{x_1}^{k+1} \Wb_1 dx_1 \\
		=& \frac{2}{3}\int_{\R} \p_{x_1} \left(  \frac{\mu(\thetab)\mb_1}{\rhob^2 \thetab} \right) \abs{\p_{x_1}^{k+1} \Wb_1}^2 d{x_1} -\frac{4}{3} \sum_{i=0}^{k-1} \int_{\R} \p_{x_1}^{k-i} \left(  \frac{\mu(\thetab)\mb_1}{\rhob^2 \thetab} \right) \p_{x_1}^{i+2} \Wb_1 \p_{x_1}^{k+1} \Wb_1 dx_1 \\
		\le& C\deltac\sum_{i=0}^k (1+t)^{i-k-1}\norm{\p_{x_1}^i \Wb_1}_{L^2}^2 + C\deltac \norm{\p_{x_1}^{k+1}\Wb_1}_{L^2}^2.
	\end{align*}
	Subsequently, we address the control of $\deltab D_{-1}$ in terms of $I_6$ that involves $(\mathcal{S}_{\bar \W})_1$ and $(\mathcal{S}_{\bar \W})_2$ \eqref{Error-1-1}. It holds that
	\begin{align*}
		\deltab\int_{\R} \p_{x_1}^k D_{-1} \left( \p_{x_1}^{k+1} \Wb_1 +  \p_{x_1}^{k+1} \Wb_2\right) dx_1 \le C \deltab(1+t)^{-k-\frac{3}{2}}+C\deltab \norm{\p_{x_1}^{k+1}\bar{\W}}_{L^2}^2.
	\end{align*}
	Then, by the {\it a priori} assumptions \eqref{aps-Wt},	the nonlinear terms of $(\mathcal{S}_{\bar \W})_2$  \eqref{Error-1-1} in $I_6$ can be bounded as 
	\begin{align*}
		&\sum_{i=1}^5 \int_{\R} \p_{x_1}^k \left(\p_{x_1}\Wb_i \right)^2 \p_{x_1}^{k+1} \Wb_1 dx_1 \\
		=& \sum_{i=1}^5 \left( \sum_{j=1}^{[\frac{k-1}{2}]} + \sum_{j=[\frac{k-1}{2}]+1}^{k-1} \right) \int_{\R} \p_{x_1}^{j+1}\Wb_i \p_{x_1}^{k+1-j}\Wb_i \p_{x_1}^{k+1}\Wb_1dx_1+2\sum_{i=1}^5 \int_{\R} \p_{x_1} \Wb_i \p_{x_1}^{k+1}\Wb_i \p_{x_1}^{k+1} \Wb_1 dx_1\\
		\le& C\norm{\p_{x_1}^{k+1}\bar{\W}}_{L^2}\left( \sum_{j=1}^{[\frac{k-1}{2}]} \norm{\p_{x_1}^{j+1}\bar{\W}}_{L^\infty} \norm{\p_{x_1}^{k+1-j}\bar{\W}}_{L^2}+\sum_{j=[\frac{k-1}{2}]+1}^{k-1}\norm{\p_{x_1}^{k+1-j}\Wb}_{L^\infty} \norm{\p_{x_1}^{j+1}\bar{\W}}_{L^2}\right)\\
		\le& C\deltac \sum_{i=0}^k (1+t)^{i-k-1}\norm{\p_{x_1}^i \bar{\W}}_{L^2}^2+C\deltac\norm{\p_{x_1}^{k+1}\bar{\W}}_{L^2}^2.
	\end{align*}
	The other terms in $I_6$ can be controlled by the H{\"o}lder's inequality and the {\it a priori} assumptions \eqref{aps-Wt}. Thus we obtain
	\begin{align}\label{I-8}
		I_6 \le& \sum_{i=0}^k C\deltac(1+t)^{i-k-1}\norm{\p_{x_1}^i \bar{\W}}_{L^2}^2+C\deltac\norm{\p_{x_1}^{k+1}\bar{\W}}_{L^2}^2+C\deltab(1+t)^{-k-\frac{3}{2}}.
	\end{align}
	Combining \eqref{px-k+1-W1-2}, \eqref{I-7} and \eqref{I-8}, one has
	\begin{align}\label{px-k+1-W1-estimate}
		& \frac{d}{dt}\norm{ \sqrt{\frac{\mu(\thetab)}{\thetab\rhob}}  \p_{x_1}^{k+1} \Wb_1}_{L^2}^2 + \frac{d}{dt}\int_{\R} \p_{x_1}^k\Wb_2\p_{x_1}^{k+1}\Wb_1 dx_1 + c\norm{\p_{x_1}^{k+1}\Wb_1}_{L^2}^2 \notag\\
		\le  &C \deltac\sum_{i=0}^k(1+t)^{i-k-1}\norm{\p_{x_1}^i \bar{\W}}_{L^2}^2+C\deltab(1+t)^{-k-\frac{3}{2}}  +C\sum_{i=2}^5\norm{\p_{x_1}^{k+1}W_i}_{L^2}^2.
	\end{align}
	\begin{flushleft}
		\textbf{Step 4. Estimates on $\Wb_i$, $i=3,4$.}
	\end{flushleft}
	Applying $\p_{x_1}^k$ to \eqref{Wt}$_{3,4}$, multiplying them by $\p^k\Wb_i$, $i=3,4$ and then integrating the resulting equation over $\R$ with respect to $x_1$, one has
	\begin{align*}
		&\frac{d}{dt}\norm{\p_{x_1}^k\Wb_i}_{L^2}^2+\int_{\R}\frac{\mu(\thetab)}{\rhob}\abs{\p_{x_1}^{k+1}\Wb_i}^2dx_1\\
		\lesssim& \abs{\int_{\R}\p_{x_1}^k\left( \frac{\mu(\thetab)}{\rhob} \p_{x_1}^2 \bar{W}_i\right)\p_{x_1}^k \bar{W}_i+   \frac{\mu(\thetab)}{\rhob}\abs{\p_{x_1}^{k+1}\Wb_i}^2dx_1}+\int_{\R}\abs{\p_{x_1}^{k+1}\Wb_i \p_{x_1}^{k_1}(\mathcal{S}_{\bar \W})_i}dx_1\nonumber:=I_{W}^{k1}+I_{W}^{k2}.
	\end{align*}
	The direct calculation yields
	\begin{align}\notag
		I_{W}^{k1}\leq C\check{\delta}\sum_{j=0}^{k}(1+t)^{-j}\norm{\p_{x_1}^{k+1-j}\bar{\W}}_{L^2}^2,
	\end{align}
	and the estimate of $I_{W}^{k2}$ is similar to \eqref{d-k-M-00}-\eqref{d-k-M-k2}. Finally, we have
	\begin{align}
		& \sum_{i=3}^4\left[\frac{d}{dt}\norm{\p_{x_1}^k\Wb_i}_{L^2}^2+\int_{\R}\frac{\mu(\thetab)}{\rhob}\abs{\p_{x_1}^{k+1}\Wb_i}^2dx_1\right]\label{psi2}\\
		\leq& C\bar{\delta} (1+t)^{-\frac{1}{2}-k}+C\check{\delta}\left[\norm{\p_{x_1}^{k+1}\Wb_1}_{L^2}^2+\norm{\p_{x_1}^{k+1}\Wb_2}_{L^2}^2+\norm{\p_{x_1}^{k+1}\Wb_5}_{L^2}^2+\sum_{j=1}^{k}(1+t)^{-j}\norm{\p_{x_1}^{k+1-j}\bar{\W}}_{L^2}^2\right].\notag
	\end{align}
	Combining \eqref{d-0-11}-\eqref{d-0-41}, \eqref{px-k+1-W1-estimate} and \eqref{psi2}, we complete the proof of Lemma \ref{Lem-Wb-1}.
\end{proof}

We are now ready for the proof of Proposition \ref{ape-Wt}.

\begin{proof}[Proof of Proposition \ref{ape-Wt}]
	In the sequel, we use the following fact which comes from \eqref{sec-n-1} and \eqref{sec-n-2}:
	\begin{align}\notag
		E_n \le C K_{n-1}+C\deltab \sum_{j=0}^{n-2}(1+t)^{j+1-n}K_j+C\deltab(1+t)^{-n}E_0.
	\end{align}
	By Gr\"onwall's inequality and \eqref{11}, one has
	\begin{align}\label{basic-W-E}
		\sum_{i=0}^n E_i\leq C(\varepsilon^2_0+\bar{\delta})(1+t)^{\frac{1}{2}},\qquad\quad\sum_{i=0}^n\int_{\R}(K_i+G_{i})dx\leq C(\varepsilon^2_0+\bar{\delta})(1+t)^{\frac{1}{2}}.
	\end{align}
	Multiplying \eqref{22} by $(1+t)$ and then integrating on $[0,t]$, one has
	\begin{align}\notag
		(1+t)\sum_{i=1}^nE_i+\sum_{i=1}^n\int_{0}^t(1+\tau)(K_i+G_{i})d\tau\leq C(\varepsilon^2_0+\bar{\delta})(1+t)^{\frac{1}{2}}.
	\end{align}
	Then it follows that
	\begin{align}\label{E1}
		\sum_{i=1}^nE_i\leq C(\varepsilon^2_0+\bar{\delta})(1+t)^{-\frac{1}{2}},\qquad\quad\sum_{i=1}^n\int_{0}^t(1+\tau)(K_i+G_{i})d\tau\leq C(\varepsilon^2_0+\bar{\delta})(1+t)^{\frac{1}{2}}.
	\end{align}
	By the same argument, we obtain
	\begin{align}\label{Ei}
		\sum_{i=k}^nE_i\leq C(\varepsilon^2_0+\bar{\delta})(1+t)^{\frac{1}{2}-k},\qquad\sum_{i=k}^n\int_{0}^t(1+\tau)^k(K_i+G_{i})d\tau\leq C(\varepsilon^2_0+\bar{\delta})(1+t)^{\frac{1}{2}}.
	\end{align}
	Finally, multiplying \eqref{kk} by $(1+t)^n$, one has
	\begin{align*}
		&(1+t)^nE_n+\int_{0}^t(1+\tau)^n(K_n+G_{n})d\tau\\
		\leq&\sum_{i=0}^{n-1}\int_0^t (1+\tau)^{i}\left(K_{i}+G_i \right)d\tau + \int_0^t (1+\tau)^{-1} E_0 d\tau+ C\bar{\delta}(1+t)^{\frac{1}{2}},\\
		\leq& C(\bar{\delta}+\varepsilon^2_0)(1+t)^{\frac{1}{2}}.
	\end{align*}
	Then it holds that
	\begin{align}\label{E2}
		E_n\leq C(\varepsilon^2_0+\bar{\delta})(1+t)^{\frac{1}{2}-n}.
	\end{align}
	By \eqref{sec-n-1}, \eqref{basic-W-E} and \eqref{E1}, we have
	\begin{align}\label{W-final-1}
		\norm{\bar{\W}}_{L^\infty} \le C \norm{\bar{\W}}_{L^2}^{\frac{1}{2}}\norm{\p_{x_1} \bar{\W}}_{L^2}^{\frac{1}{2}} \le C E_0^{\frac{1}{2}}E_1^{\frac{1}{2}} \leq C\deltab^{\frac12}.
	\end{align}
	Combining \eqref{sec-n-1} and \eqref{E1}-\eqref{W-final-1}, we complete the proof of Proposition \ref{ape-Wt}.
\end{proof}

Therefore, Theorem \ref{res-Wt} is  proved with the help of Proposition \ref{ape-Wt}. Using Theorem \ref{res-Wt}, we further have the following result on the estimate of the coupled diffusion wave $\Xi$ \eqref{omega} and the error term $\tilde{\Rm}$ \eqref{new-error}.

\begin{Cor}\label{R-m-1-1-1}
	Under the same assumptions of Theorem \ref{mt}, for $k\geq0$, one has
	\begin{align*}
		\norm{\W}_{L^{\infty}}\leq C \deltab^{\frac12}, \qquad\quad \norm{\p_{x_1}^k \Xi}_{L^2}^2\leq C \deltab(1+t)^{-\frac{1+2k}{2}}, \qquad\quad \norm{\p_{x_1}^k\tilde{\Rm}}_{L^2}^2\leq C \deltab^{3} (1+t)^{-\frac{5+2k}{2}}.
	\end{align*}
\end{Cor}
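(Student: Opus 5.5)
The plan is to read off all three bounds from Theorem \ref{res-Wt} by inverting the algebraic transformation \eqref{trans-W-aa}. Then $\Xi=\p_{x_1}\W$ is handled directly, and $\tilde{\Rm}$ is treated by Taylor expansion in \eqref{new-error}.

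\textbf{Step 1: bounds on $\W$ and $\Xi$.} From \eqref{trans-W-aa}, with $\thetab$ bounded above and below,
\begin{align*}
W_1=\Wb_1/\thetab,\qquad W_i=\Wb_i\ (i=2,3,4),\qquad W_5=\Wb_5+\Wb_1 .
\end{align*}
Hence $\norm{\W}_{L^\infty}\le C\norm{\bar{\W}}_{L^\infty}\le C\deltab^{\frac12}$ by Theorem \ref{res-Wt}.

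Since $\Xi=\p_{x_1}\W$, the Leibniz rule gives
\begin{align*}
\norm{\p_{x_1}^k\Xi}_{L^2}\lesssim \sum_{j=0}^{k+1}\norm{\p_{x_1}^{j}(1/\thetab)}_{L^\infty}\norm{\p_{x_1}^{k+1-j}\bar{\W}}_{L^2}.
\end{align*}
We use $|\p_{x_1}^j\thetab|\lesssim\deltab(1+t)^{-j/2}$ for $j\ge1$, which follows from \eqref{eqs27}, \eqref{eqs19} and \eqref{new-ansatz-new}. For the top term ($j=0$), Theorem \ref{res-Wt} gives $\norm{\p_{x_1}^{k+1}\bar{\W}}_{L^2}^2\le C\deltab(1+t)^{-\frac12-k}$.

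The term with $j=k+1$ involves $\norm{\bar{\W}}_{L^2}$, which is not controlled. I will instead bound it as $\norm{\p_{x_1}^{k+1}\thetab\,\bar{\W}}_{L^2}\le\norm{\bar{\W}}_{L^\infty}\norm{\p_{x_1}^{k+1}\thetab}_{L^2}$. This is at most $C\deltab^{\frac12}\cdot\deltab(1+t)^{-\frac{k+1}{2}+\frac14}$, whose square is $\lesssim\deltab(1+t)^{-\frac12-k}$. The intermediate terms are handled in the same way and give the same rate. This yields the claimed $L^2$ bound on $\p_{x_1}^k\Xi$.

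\textbf{Step 2: the error $\tilde{\Rm}$.} The first bracket in \eqref{new-error} is the third-order Taylor remainder of $F$ about $\bar{\U}$. Its size is $O(|\Xi|^3)$, with smooth coefficients, because $\bar{\U}$ and $\tilde{\U}$ stay in a compact neighbourhood of $(1,0,0,0,\frac32)$. This uses $\norm{\Xi}_{L^\infty}\le\norm{\Xi}_{L^2}^{1/2}\norm{\p_{x_1}\Xi}_{L^2}^{1/2}\lesssim\deltab^{\frac12}(1+t)^{-\frac12}$.

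The second bracket rearranges to
\begin{align*}
\bigl(\mathcal{B}(\bar{\U})+\Xi^t\mathcal{B}'(\bar{\U})-\mathcal{B}(\tilde{\U})\bigr)\p_{x_1}\tilde{\U}-\Xi^t\mathcal{B}'(\bar{\U})\p_{x_1}\Xi .
\end{align*}
The first piece is $O(|\Xi|^2)(|\p_{x_1}\bar{\U}|+|\p_{x_1}\Xi|)$.

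\textbf{Main obstacle.} The expected difficulty is the leftover piece $\Xi^t\mathcal{B}'(\bar{\U})\p_{x_1}\Xi$, which is only quadratic. Its $L^2$ norm is bounded by $\norm{\Xi}_{L^\infty}\norm{\p_{x_1}\Xi}_{L^2}$, whose square is $\lesssim\deltab^2(1+t)^{-1}(1+t)^{-\frac32}=\deltab^2(1+t)^{-\frac52}$. This matches the required rate $(1+t)^{-\frac52}$. The amplitude power is $\deltab^2$ rather than $\deltab^3$, so I would check whether the stated $\deltab^{3}$ is meant loosely (recall $\deltab$ is small, so $\deltab^3\le\deltab^2$ is the stronger claim).

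\textbf{Cubic terms and higher derivatives.} The cubic terms give the better bound $\norm{\Xi}_{L^\infty}^2\norm{\Xi}_{L^2}\lesssim\deltab^{\frac32}(1+t)^{-\frac54}$; its square $\deltab^3(1+t)^{-\frac52}$ matches the claim. Terms containing $\p_{x_1}\bar{\U}=O(\deltab D_{-\frac12})$ are handled similarly. For $k\ge1$, I distribute $\p_{x_1}^k$ by Leibniz. Each derivative gains $(1+t)^{-\frac12}$, by Step 1 and the bounds on the ansatz. I place the highest-order factor in $L^2$ and the rest in $L^\infty$ via Gagliardo–Nirenberg. This gives $\norm{\p_{x_1}^k\tilde{\Rm}}_{L^2}^2\lesssim(1+t)^{-\frac{5+2k}{2}}$ with the same amplitude bookkeeping.
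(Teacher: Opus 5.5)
Your route is the paper's: invert \eqref{trans-W-aa}, read off $\W$ and $\Xi=\p_{x_1}\W$ from Theorem~\ref{res-Wt}, and Taylor-expand \eqref{new-error}. Your handling of the Leibniz term where all derivatives fall on $1/\thetab$ is correct; you put $\bar{\W}$ in $L^\infty$ and $\p_{x_1}^{k+1}\thetab$ in $L^2$, a detail the paper leaves implicit. The time rates you obtain for $\W$, $\Xi$ and $\tilde{\Rm}$ are all correct.

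The obstacle you flag is genuine, and the paper does not resolve it either. Its displayed bound for $k=0$ contains exactly the term $C\norm{\abs{\Xi}\abs{\Xi_{x_1}}}_{L^2}^2$ and then asserts $C\deltab^3(1+t)^{-\frac52}$. With only $\norm{\p_{x_1}^k\Xi}_{L^2}^2\le C\deltab(1+t)^{-\frac{1+2k}{2}}$, that term is $O(\deltab^2)(1+t)^{-\frac52}$, exactly as you computed.

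So, as written, both your argument and the paper's prove only $\norm{\p_{x_1}^k\tilde{\Rm}}_{L^2}^2\le C\deltab^{2}(1+t)^{-\frac{5+2k}{2}}$. This weaker bound is all that is used later: the paper only needs $\tilde{\Rm}\approx\deltab\tilde{D}_{-\frac32}$, e.g.\ in Lemma~\ref{nonlinear-error-estimate}.

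To get the stated $\deltab^3$, you need a sharper amplitude for $\Xi$ than Theorem~\ref{res-Wt} records, and it is available. $\bar{\W}$ vanishes at $t=0$, and its only inhomogeneous forcing is $O(\deltab)D_{-1}$, coming from $\bar{\Rm}$ and $\mathcal{R}_G$, which do not depend on $\Xi$. So in the proof of Lemma~\ref{Lem-Wb-1} you can split the forcing as
\[
\deltab\norm{D_{-1}}_{L^2}\norm{\B}_{L^2}\le C\deltab^{\frac32}(1+t)^{-\frac12}+\deltab^{\frac12}(1+t)^{-1}\norm{\B}_{L^2}^2 .
\]
At order $k$ do the same, absorbing the $\deltab^{\frac12}\norm{\p_{x_1}^{k+1}\B}_{L^2}^2$ part into $K_k$. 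The last term above is of the type $\deltac(1+t)^{-1}E_k$ already present, since $\deltab^{\frac12}\le\deltac$.

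Running the argument of Proposition~\ref{ape-Wt} otherwise unchanged then gives $\norm{\p_{x_1}^k\bar{\W}}_{L^2}^2\lesssim\deltab^{\frac32}(1+t)^{\frac12-k}$. Hence $\norm{\Xi}_{L^\infty}^2\norm{\p_{x_1}^{k+1}\Xi}_{L^2}^2\lesssim\deltab^{3}(1+t)^{-\frac{5+2k}{2}}$. All remaining terms of $\tilde{\Rm}$ (cubic, or carrying $\p_{x_1}\bar{\U}$) are then $O(\deltab^3)$ or smaller at the same rate.
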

\begin{proof}
	Applying Theorem \ref{res-Wt} and the relationship between $\Xi=\p_{x_1}\W$ and  $\bar{\W}$ \eqref{trans-W-aa}, one has $ \norm{\W}_{L^{\infty}}\leq C \deltab^{\frac12}$ and $\norm{\p_{x_1}^k \Xi}_{L^2}^2\leq C \deltab (1+t)^{-\frac{1+2k}{2}}$. From the expression of $\tilde\Rm$ \eqref{new-error}, for $k=0$, we have
	\begin{align*}
		\norm{\tilde{\Rm}}_{L^2}^2\le C\norm{\abs{\Xi}^3}_{L^2}^2+C\deltab\norm{D_{-\frac12} \abs{\Xi}^2}_{L^2}^2+C\norm{\abs{\Xi}\abs{\Xi_{x_1}}}_{L^2}^2\leq C \deltab^{3}(1+t)^{-\frac52}.
	\end{align*}
	Similarly, for $k\geq1$, we have $\norm{\p_{x_1}^k\tilde{\Rm}}_{L^2}^2\leq C \deltab^{3}(1+t)^{-\frac{5+2k}{2}}$.
	Then we have completed the proof of Corollary \ref{R-m-1-1-1}.
\end{proof}

\section{Stability analysis}\label{sec4}
In this section, we carry out the stability analysis in order to obtain the estimates for the perturbation \eqref{ori-perb-1} near the local Maxwellian $M_{[\rhot,\ut,\thetat]}$ with its macroscopic quantities constructed in \eqref{New-Ansatz} for the Landau equation \eqref{equ-landau}. It is divided into five parts consisting of estimates of zero modes for macroscopic equations in subsection \ref{sec4.1}, non-zero modes for macroscopic equations in subsection \ref{sec4.2}, higher-order derivatives for the macroscopic equations in subsection \ref{sec4.3}, the microscopic equations in subsection \ref{sec4.4}, and the highest-order derivatives in subsection \ref{sec4.5}.    

\subsection{Reformulated system and estimate of zero modes for macroscopic equations}\label{sec4.1}

Recall the definition of the perturbation $(\phi,\varphi,h,\psi,\zeta,\sqrt{\mu}g)$ \eqref{ori-perb-1},
by equation \eqref{landau-4} and profile \eqref{non-conserved quantities}, we have
\begin{align}\label{perturbation-1}
	\begin{cases}
		\p_t\phi+\rhot \operatorname{div}_x \psi=\mathcal{S}_{\phi}^{f}, \\
		\p_t\psi+\frac23\frac{\thetat }{\rhot} \nabla_x \phi+ \frac23 \nabla_x \zeta-\frac{\mu(\thetat)}{\rhot}\left(\Delta_x \psi+\frac{1}{3} \nabla_x \operatorname{div}_x \psi\right)=\mathcal{S}_{\psi}^{f1}+\mathcal{S}_{\psi}^{f2}+\mathcal{S}_{\psi}^{m},\\
		\p_t\zeta+\frac{2}{3} \thetat \operatorname{div}_x \psi-\frac{\kappa(\thetat)}{\rhot}\Delta_x \zeta=\mathcal{S}_{\zeta}^{f1}+\mathcal{S}_{\zeta}^{f2}+\mathcal{S}_{\zeta}^{m},
	\end{cases}
\end{align}
where 
\begin{align}
	&\mathcal{S}_{\phi}^{f}:=-u \cdot \nabla_x \phi-\phi \operatorname{div}_x \psi-\psi \cdot \nabla_x \tilde{\rho}-\phi \operatorname{div}_x \tilde{u}-\p_{x_1}\tilde{\Rm}_{0}, \label{S-m-0} \\	 
	&\mathcal{S}_{\psi}^{f1}:=-u \cdot \nabla_x \psi-\frac{\nabla_x(p-\tilde{p})}{\rhot}+\frac23\frac{\thetat}{\rhot}\nabla_x\phi + \frac23\nabla_x\zeta-
	\psi \cdot \nabla_x \tilde{u}+\frac{\phi}{\tilde{\rho}\rho}\nabla_x\tilde{p}- \left(\frac{1}{\rho}-\frac{1}{\rhot} \right) \nabla_x\left(p-\tilde{p}\right),\label{S-m} \\
	&\mathcal{S}_{\psi}^{f2}:=\frac{\p_{x_1}\tilde{\Rm}_0 \ut-\p_{x_1} (\tilde{\Rm}_1,\tilde{\Rm}_2,\tilde{\Rm}_3)^t}{\rhot}+\frac{\mu(\theta)-\mu(\thetat)}{\rho}\left(\Delta_x u+\frac{1}{3} \nabla_x \operatorname{div}_x u\right)\notag\\
	&\quad+\frac{\mu^{\prime}(\thetat)}{\rho} \nabla_x \thetat \cdot\left(\nabla_x \psi+(\nabla_x \psi)^{t}-\frac{2}{3}\mathbb{I} \operatorname{div}_x \psi\right)-\mu'(\thetat)\frac{\nabla_x\thetat\phi}{\rhot \rho}\cdot\left(\nabla_x \tilde{u}+(\nabla_x \tilde{u})^{t}-\frac{2}{3}\mathbb{I} \operatorname{div}_x \tilde{u}\right)\nonumber\\
	&\quad+\frac{\left({\mu^{\prime}(\theta)} \nabla_x \theta-\mu'(\thetat)\nabla_x\thetat\right)}{\rho} \cdot\left(\nabla_x {u}+(\nabla_x {u})^{t}-\frac{2}{3}\mathbb{I} \operatorname{div}_x {u}\right) +\mu(\thetat)\left(\frac{1}{\rho}-\frac{1}{\rhot}\right)\left(\Delta_x u+\frac{1}{3} \nabla_x \operatorname{div}_x u\right),  \label{Q-m}\\
	&\mathcal{S}_{\psi}^{m}	:=-\frac{1}{\rho} \int_{\R^3} \xi \otimes \xi \cdot \nabla_{x} \left(L_{M}^{-1}\Pi-L_{\Mb}^{-1}\bar{\Pi}_1\right) d \xi-\frac{\phi}{\rhot \rho} \int_{\R^3} \xi \otimes \xi \cdot \nabla_{x} L_{\Mb}^{-1}\bar{\Pi}_1 d \xi,\label{K-m}\\
	&\mathcal{S}_{\zeta}^{f1}:= -u \cdot \nabla_x \zeta - \frac{2}{3} \zeta \operatorname{div}_x \psi - \psi \cdot \nabla_x \tilde{\theta} - \frac{2}{3} \zeta \operatorname{div}_x \tilde{u}, \label{S-m-4} \\
	&\mathcal{S}_{\zeta}^{f2}\nonumber:=-\frac{1}{\rhot}\left[\p_{x_1} \tilde{\Rm}_4-\p_{x_1} \tilde{\Rm}\cdot \tilde{u} -\left(\tilde{\theta}+\frac{\abs{\ut}^2}{2}\right) \p_{x_1} \tilde{\Rm}_0\right]+\left(\kappa(\theta)-\kappa(\thetat)\right) \frac{\Delta_x \theta}{\rho}+\frac{{\kappa^{\prime}(\theta)}|\nabla_x \theta|^{2}-\kappa^{\prime}(\thetat)|\nabla_x \thetat|^{2}}{\rho} \notag \\
	&\quad+\frac{\mu(\theta)}{\rho}\left[\frac{\left(\nabla_x u+(\nabla_x u)^{t}\right)^{2}}{2}-\frac{2}{3}(\operatorname{div}_x u)^{2}\right]-\frac{\mu(\thetat)}{\rho}\left[\frac{\left(\nabla_x \tilde{u}+(\nabla_x \tilde{u})^{t}\right)^{2}}{2}-\frac{2}{3}(\operatorname{div}_x \tilde{u})^{2}\right] \nonumber\\
	&\quad+\mu(\thetat)\left(\frac{1}{\rho}-\frac{1}{\rhot}\right)\left[\frac{\left(\nabla_x \tilde{u}+(\nabla_x \tilde{u})^{t}\right)^{2}}{2}-\frac{2}{3}(\operatorname{div}_x \tilde{u})^{2}\right] + \kappa(\thetat) \left(\frac{1}{\rho}-\frac{1}{\rhot}\right) \Delta_x \theta+\kappa^{\prime}(\thetat)\left(\frac{1}{\rho}-\frac{1}{\rhot}\right)|\nabla_x \thetat|^{2}, \label{Q-m_4}\\
	&\mathcal{S}_{\zeta}^{m}:=\frac{1}{\rho}\left(-\int_{\R^3} \frac{1}{2}|\xi|^{2} \xi \cdot (\nabla_{x} L_{M}^{-1} \Pi-\nabla_{x}  L_{\Mb}^{-1}\bar{\Pi}_1 )d \xi+\tilde{u} \cdot \int_{\R^3} \xi \otimes \xi \cdot(\nabla_{x} L_{M}^{-1}\Pi-\nabla_{x}  L_{\Mb}^{-1}\bar{\Pi}_1)  d \xi \right) \notag\\
	&\quad+\frac{\psi}{\rho}\cdot \int_{\R^3} \xi \otimes \xi \cdot\nabla_{x} L_{M}^{-1}\Pi  d \xi+\frac{\phi}{\rhot\rho}\left(\int_{\R^3} \frac{1}{2}|\xi|^{2} \xi \cdot \nabla_{x}  L_{\Mb}^{-1}\bar{\Pi}_1 d \xi-\tilde{u} \cdot \int_{\R^3} \xi \otimes \xi \cdot \nabla_{x}  L_{\Mb}^{-1}\bar{\Pi}_1d \xi\right).  \label{K-m-4}
\end{align}
Since $G=\bar{G}_0+\sqrt{\mu}g$, by \eqref{equ-G} and \eqref{correction-G-2}, we derive the equation of the microscopic component $g$ as
\begin{align}\label{mic-perturbation}
	&\p_t g + \xi \cdot \nabla_x g -\mathcal{L} g \notag\\[1.4mm]
	=&-\frac{1}{\sqrt{\mu}}P_1 \left[\xi_1 \left(\frac{\abs{\xi-u}^2\p_{x_1} \zeta}{2 R \theta^2} +\frac{\left( \xi -u\right)\cdot \p_{x_1} \psi}{R \theta} \right)M \right] -\sum_{i=2}^3\frac{1}{\sqrt{\mu}}P_1 \left[\xi_i \left( \frac{\abs{\xi-u}^2\p_{x_i} \zeta}{2 R \theta^2} +\frac{\left( \xi -u\right)\cdot \p_{x_i} \psi}{R \theta} \right) M\right]
	\notag\\
	&+ \Gamma\left(g,\frac{M-\mu}{\sqrt{\mu}} \right)+\Gamma\left(\frac{M-\mu}{\sqrt{\mu}},g \right) -\frac{1}{\sqrt{\mu}}P_1 \left[\xi_1 \left(\frac{\abs{\xi-u}^2\p_{x_1} (\thetat-\thetab)}{2 R \theta^2} +\frac{\left( \xi -u\right)\cdot \p_{x_1} (\tilde u-\bar u)}{R \theta} \right)M \right]   \notag\\
	& +\Gamma(\frac{G}{\sqrt{\mu}},\frac{G}{\sqrt{\mu}})  +\frac{P_0 \left( \xi \cdot \sqrt{\mu}\nabla_x g \right)}{\sqrt{\mu}}  -\frac{P_1 \left( \xi_1 \p_{x_1} \bar{G}_0 \right)}{\sqrt{\mu}}-\frac{\p_t \bar{G}_0}{\sqrt{\mu}},
\end{align}
where $\Gamma$ and $\mathcal{L}$ are defined by
\begin{align}
	\label{2.5}
	\Gamma(f,g):=\frac{1}{\sqrt{\mu}}Q(\sqrt{\mu}f,\sqrt{\mu}g),
	\quad \mathcal{L}f:=\Gamma(\sqrt{\mu},f)+\Gamma(f,\sqrt{\mu}).
\end{align}
Here  we have used the fact that
\begin{equation*}
	\frac{1}{\sqrt{\mu}} L_{M}(\sqrt{\mu}f)=
	\frac{1}{\sqrt{\mu}}\{Q(M,\sqrt{\mu}f)+Q(\sqrt{\mu}f,M)\}
	=\mathcal{L}f+\Gamma(f,\frac{M-\mu}{\sqrt{\mu}})+
	\Gamma(\frac{M-\mu}{\sqrt{\mu}},f).
\end{equation*}
Note that  the linearized Landau operator $\mathcal{L}$ is self-adjoint and non-positive definite, and its null space $\ker\mathcal{L}$ is spanned by the five functions $\{\sqrt{\mu},\xi\sqrt{\mu},|\xi|^{2}\sqrt{\mu}\}$, cf. \cite{Guo-2002}.

In order to use the anti-derivative technique, it is convenient to study the perturbation system for $(\phi,\varphi,h)$ \eqref{ori-perb-1} since these quantities are conserved. According to zero mass \eqref{initial-zero-mass-2}, we define the anti-derivative by
\begin{align}\label{anti-11-5}
	(\Phi,\Psi,H)(t,x_1):=\int_{-\infty}^{x_1}\int_{\Torus^2}(\rho-\rhot,m-\mt,\E-\Et)(t,y)dy'dy_1.
\end{align}
From $\int_{\Torus^2}$\eqref{landau-3}$dx_2dx_3$ and \eqref{New-Ansatz},  we then derive the system for $(\Phi,\Psi,H)$ as 
\begin{align}\label{conserv-1-system}
	\left\{ \begin{aligned}
		&\p_t \Phi + \p_{x_1} \Psi_{1}=-\tilde{\Rm}_{0},\\
		&\p_t \Psi_1 + \frac{2}{3} \p_{x_1} H -\frac{4}{3}\mu(\thetat) \p_{x_1}\psim_{1} + \Do\int_{{\R}^{3}} \xi_1^2\left( L_M^{-1} \Pi  -L_{\bar{M}}^{-1}\bar{\Pi}_1\right)d \xi= \mathcal{S}^{f}_{\Psi1},     \\
		&\p_t \Psi_i - \mu(\thetat) \p_{x_1}\psim_{i} + \Do \int_{\R^3} \xi_1 \xi_i\left( L_M^{-1} \Pi -L_{\bar{M}}^{-1}\bar{\Pi}_1\right)  d\xi =  \mathcal{S}^{f}_{\Psi i}, \quad \text{for} \quad i=2,3,\\
		&\p_t H + \frac{5}{3}\thetat \p_{x_1} \Psi_{1} - \kappa(\thetat) \p_{x_1}\zetam + \frac12\Do \int_{\R^3} \xi_1 \abs{\xi}^2\left( L_M^{-1} \Pi-L_{\bar{M}}^{-1}\bar{\Pi}_1\right) d\xi= \mathcal{S}^{f}_{H}   ,
	\end{aligned}\right.
\end{align}
where
\begin{align*}
	&\mathcal{S}^{f}_{\Psi1}= \Do\left[\left( \frac{\mt_1^2}{\rhot}-\frac{1}{3}\frac{\abs{\mt}^2}{\rhot} \right) - \left( \frac{m_1^2}{\rho} - \frac{1}{3}\frac{\abs{m}^2}{\rho}  \right)\right] + \frac{4}{3} \Do\left[\left( \mu(\theta)-\mu(\thetat) \right) \p_{x_1} u_{1} \right]-\tilde{\Rm}_1,\\
	&\mathcal{S}^{f}_{\Psi i}= \Do\left[\frac{\mt_1\mt_i}{\rhot} - \frac{m_1 m_i}{\rho}  \right]+ \Do\left[ \left( \mu(\theta)-\mu(\thetat) \right)\p_{x_1} u_{i } \right] -\tilde{\Rm}_i,\\
	&\mathcal{S}^{f}_{H}= \Do\left[ \frac{5}{3}\thetat \p_{x_1} \Psi_{1} -\left( \frac{m_1 \E}{\rho}+\frac{m_1 p}{\rho}-\frac{\mt_1 \Et}{\rhot} - \frac{\mt_1 \tilde{p}}{\rhot} \right)\right] + \Do\left[\left( \kappa(\theta) - \kappa(\thetat) \right)\p_{x_1}\theta\right]\notag\\
	& \qquad\;\; +\Do\left( \frac{4 }{3}  \mu(\theta) u_1 \p_{x_1} u_{1}   - \frac{4 }{3} \mu(\thetat) \ut_1 \p_{x_1}\ut_{1} + \sum_{i=2}^3 \mu(\theta) u_i \p_{x_1}u_{i}-\sum_{i=2}^3 \mu(\thetat) \ut_i \p_{x_1}\ut_{i} \right) -\tilde{\Rm}_4.
\end{align*}
For the non-fluid part, it holds that
\begin{align}
	& \Do \left(\int_{\R^3} \xi_1 \xi_i L_{M}^{-1} \Pi d\xi- \int_{\R^3} \xi_1 \xi_i L_{\bar{M}}^{-1} \bar{\Pi}_1 d\xi\right)
	:=(\mathcal{S}^{m}_{\Psi})^{i}=\sum_{j=1}^3\sum_{l=a,b}(\mathcal{S}^{m}_{\Psi})_{jl}^i+(\mathcal{S}^{m}_{\Psi})_{4b}^i,\label{non-fluid-1-a}\\
	&\Do \left(\frac{1}{2}\int_{\R^3}\xi_1\abs{\xi}^{2}L_M^{-1} \Pi d\xi-\frac{1}{2} \int_{\R^3}\xi_1\abs{\xi}^{2} L_{\bar{M}}^{-1} \bar{\Pi}_1 d\xi \right)
	:=(\mathcal{S}^{m}_{H})=\sum_{j=1}^3\sum_{l=a,b}(\mathcal{S}^{m}_{H})_{jl}+(\mathcal{S}^{m}_{H})_{4b}+(\mathcal{S}^{m}_{H})_{4}.\label{non-fluid-1-b}
\end{align}
Using \eqref{5.1}, \eqref{5.2} and the self-adjoint property of $L^{-1}_{M}$, the terms in \eqref{non-fluid-1-a} and \eqref{non-fluid-1-b} are respectively given as
\begin{align}
	&(\mathcal{S}^{m}_{\Psi})_{1a}^i= R\Do \int_{\R^3} \theta B_{1i}\left(\frac{\xi-u}{\sqrt{R\theta}} \right)\frac{\sqrt{\mu}\p_t g}{M} d\xi ,\quad(\mathcal{S}^{m}_{\Psi})_{2a}^i=R\Do \int_{\R^3}\theta B_{1i}\left(\frac{\xi-u}{\sqrt{R\theta}} \right)\frac{ P_1(\xi \sqrt{\mu} \cdot \nabla_x g)}{M} d\xi, \label{M-1} \\
	&(\mathcal{S}^{m}_{\Psi})_{3a}^i= R\Do \int_{\R^3}
	\theta B_{1i}\left(\frac{\xi-u}{\sqrt{R\theta}} \right) \frac{\sqrt{\mu}}{M} \left[ \Gamma\left(\frac{\bar{G}_0}{\sqrt{\mu}},g \right)+\Gamma\left(g,\frac{\bar{G}_0}{\sqrt{\mu}} \right)+\Gamma\left(g,g \right)  \right]d\xi ,\label{M-2t}\\
	&(\mathcal{S}^{m}_{\Psi})_{1b}^i= R\Do \int_{\R^3}\theta B_{1i}\left(\frac{\xi-u}{\sqrt{R\theta}} \right)\frac{\p_t \bar{G}_0}{M} d\xi,\;\; (\mathcal{S}^{m}_{\Psi})_{2b}^i= \Do \int_{\R^3} \xi_1 \xi_i \left( L_M^{-1}Q(\bar{G}_0,\bar{G}_0)- L_{\Mb}^{-1}  Q(\bar{G},\bar{G}) \right)d\xi, \label{M-3t}\\
	&  (\mathcal{S}^{m}_{\Psi})_{3b}^i=\Do \int_{\R^3} \xi_1 \xi_iL_M^{-1} (P_1 \xi_1 \p_{x_1} \bar G_0)-\xi_1\xi_iL_{\Mb}^{-1} (\bar{P}_1 \xi_1 \p_{x_1}\bar G)  d\xi, \label{M-4t-1-1-1-1}\\
	& (\mathcal{S}^{m}_{\Psi})_{4b}^i= \sum_{l=2}^3 R\Do \int_{\R^3} \theta B_{1i}\left(\frac{\xi-u}{\sqrt{R\theta}} \right)\frac{P_1 \left( \xi_l  \p_{x_l} \bar G_0 \right)}{M} d\xi,\quad (\mathcal{S}^{m}_{H})_{1a}=\Do \int_{\R^3} \left( R \theta \right)^{\frac{3}{2}}A_1 \left( \frac{\xi-u}{\sqrt{R\theta}} \right)\frac{\sqrt{\mu}\p_t g}{M} d\xi,  \label{M-4t-1-1}\\
	& (\mathcal{S}^{m}_{H})_{2a}=\Do  \int_{\R^3}\left( R \theta \right)^{\frac{3}{2}} A_1 \left( \frac{\xi-u}{\sqrt{R\theta}} \right) \frac{P_1\left( \xi \cdot \sqrt{\mu}\nabla_x g \right)}{M}d\xi,\label{Q-1}\\ 
	&(\mathcal{S}^{m}_{H})_{3a}=\Do \int_{\R^3}\left( R \theta \right)^{\frac{3}{2}}A_1 \left( \frac{\xi-u}{\sqrt{R\theta}} \right)\frac{\sqrt{\mu}}{M} \left[ \Gamma\left(\frac{\bar{G}_0}{\sqrt{\mu}},g \right)+\Gamma\left(g,\frac{\bar{G}_0}{\sqrt{\mu}} \right)+\Gamma\left(g,g \right)  \right]d\xi  ,\label{Q-2t}\\
	& (\mathcal{S}^{m}_{H})_{4}=\Do \left( \int_{\R^3}u \cdot \xi \xi_1 L_M^{-1}\Pi- \bar u \cdot \xi \xi_1 L_{\Mb}^{-1} \bar{\Pi}_1 d\xi \right) ,\;\;(\mathcal{S}^{m}_{H})_{1b}=\Do\left[\left( R \theta \right)^{\frac{3}{2}} \int_{\R^3} A_1 \left( \frac{\xi-u}{\sqrt{R\theta}} \right)\frac{\p_t \bar{G}_0}{M} d\xi \right],\label{Q-4t}\\
	&(\mathcal{S}^{m}_{H})_{2b}=\Do \int_{\R^3}\left( \frac{1}{2}\xi_1\abs{\xi}^{2}-\xi_1\xi\cdot u \right) L_M^{-1} (P_1 \xi_1 \p_{x_1}\bar{G}_0)-\left( \frac{1}{2}\xi_1\abs{\xi}^{2}-\xi_1\xi\cdot \bar u \right)L_{\Mb}^{-1} (\bar{P}_1  \p_{x_1} \bar{G}) d\xi,\label{Q-5t}\\
	&(\mathcal{S}^{m}_{H})_{3b}=\Do \int_{\R^3}\left( \frac{1}{2}\xi_1\abs{\xi}^{2}-\xi_1\xi\cdot u \right) L_M^{-1}  Q\left( \bar{G}_0, \bar{G}_0\right) -\left( \frac{1}{2}\xi_1\abs{\xi}^{2}-\xi_1\xi\cdot \bar u \right)L_{\Mb}^{-1}    Q\left( \bar{G}, \bar{G}\right)d\xi,\label{Q-6t}\\
	& (\mathcal{S}^{m}_{H})_{4b}=\sum_{l=2}^3\Do  \int_{\R^3}\left( R \theta \right)^{\frac{3}{2}} A_1 \left( \frac{\xi-u}{\sqrt{R\theta}} \right) \frac{P_1\left( \xi_l  \p_{x_l} \bar{G}_0 \right)}{M}d\xi.\label{Q-7t}
\end{align}
For the same reason as for introducing the transformation \eqref{trans-W-aa}, we define the following transformation
\begin{align}\label{anti-trans-qwe}
	\Phic=\thetat \Phi, \quad \Psic=\Psi,\quad\Hc=H-\Phic.
\end{align}
By \eqref{conserv-1-system} and \eqref{anti-trans-qwe}, we have
\begin{align}\label{T-A-D-1}
	\left\{ \begin{aligned}
		&\p_t \Phic + \thetat\p_{x_1}\Psic_{1}=\mathcal{S}_{\check{\Phi}}^{f},\\
		&\p_t \Psic_1 + \frac{2}{3} \p_{x_1}\Hc+\frac{2}{3}\p_{x_1}\Phic -\frac{4\mu(\thetat)}{3\rhot} \p_{x_1}^2 \Psic_1 +\sum_{j=1}^3 (\mathcal{S}^{m}_{\Psi})_{ja}^1= \mathcal{S}_{\check{\Psi}1}^{f}+\mathcal{S}^{m}_{\check{\Psi}1},    \\
		&\p_t \Psic_i - \frac{\mu(\thetat)}{\rhot}\p_{x_1}^2 \Psic_i +\sum_{j=1}^3 (\mathcal{S}^{m}_{\Psi})_{ja}^i   =  \mathcal{S}_{\check{\Psi}i}^{f}+\mathcal{S}^{m}_{\check{\Psi}i}, \quad \text{for} \quad i=2,3,\\
		&\p_t \Hc + \frac{2}{3}\thetat \p_{x_1}\Psic_{1} - \frac{\kappa(\thetat)}{\rhot}\p_{x_1}^2 \Hc + \sum_{i=1}^3(\mathcal{S}^{m}_{H})_{ia}= \mathcal{S}_{\check{H}}^{f}+\mathcal{S}^{m}_{\check{H}}-(\mathcal{S}^{m}_{H})_{4},
	\end{aligned}\right.
\end{align}
where
\begin{align*}
	&\mathcal{S}_{\check{\Phi}}^{f}=-\thetat \tilde{\Rm}_0+\frac{\p_t\thetat}{\thetat}\Phic, \quad \mathcal{S}_{\check{\Psi}1}^{f}=\frac{4\mu(\thetat)}{3}\left( \p_{x_1}\psim_{1}-\frac{\p_{x_1}^2\Psic_1}{\rhot} \right)+ \mathcal{S}^{f}_{\Psi1},\quad \mathcal{S}^{m}_{\check{\Psi}1}=-\sum_{j=1}^4(\mathcal{S}^{m}_{\Psi})_{jb}^1, \\
	&\mathcal{S}_{\check{\Psi}i}^{f}=\mu(\thetat)\left( \p_{x_1}\psim_{i}-\frac{\p_{x_1}^2\Psic_i}{\rhot} \right)+ \mathcal{S}^{f}_{\Psi i}, \quad \mathcal{S}^{m}_{\check{\Psi}i}=-\sum_{j=1}^4(\mathcal{S}^{m}_{\Psi})_{jb}^i, \quad \text{for}\quad i=2,3, \\
	&\mathcal{S}_{\check{H}}^{f}=\kappa(\thetat)\left( \p_{x_1}\zetam-\frac{\p_{x_1}^2 \Hc}{\rhot} \right)+ \mathcal{S}^{f}_{H}+\thetat \tilde{\Rm}_0-\frac{\p_t\thetat}{\thetat}\Phic,\qquad \mathcal{S}^{m}_{\check{H}}=-\sum_{j=1}^4 (\mathcal{S}^{m}_{H})_{jb}.
\end{align*}

Recall the definition of $\Vmc$ \eqref{2025-11-5-1}, instant energy functionals $\mathcal{E}_i$ \eqref{2.10}-\eqref{2.10-5} and dissipation energy functionals $\mathcal{D}_i$ \eqref{2.11}-\eqref{2.11-5}. Then, we can present the $H^2$ energy estimate for the zero mode \eqref{T-A-D-1} as follows.

\begin{Thm}\label{H2-enenrgy-anti}
	Under the same assumptions of  Proposition \ref{Thm-ape}, it holds that
	\begin{align*}
		& {\frac{d}{dt}}\left[\norm{\Vmc}_{L^2}^2+\sum_{l=1}^4\mathcal{X}^l+ \tilde{c}\left(\norm{\p_{x_1}\Phic}_{L^2}^2+\int_{\R} \p_{x_1}\Phic\Psic_1 dx_1 \right)\right] +\tilde{c}\norm{\p_{x_1}\Vmc}_{L^2}\\
		&\leq C\bar{\delta}(1+t)^{-\frac32}+C\check{\delta}\left[(1+t)^{-1}\mathcal{E}_1 + \mathcal{D}_1 +\norm{\p_t\nabla_x \vm^{\ast}}_{L^2}^2\right]+C_\eta\sum_{\abs{\alpha}\le 2}\norm{\p^{\alpha} g}_{\sigma}^2,
	\end{align*}
	and
	\begin{align*}
		&{\frac{d}{dt}}\left[\norm{\p_{x_1}\Vmc}_{L^2}^2+ \tilde{c}\left(\norm{\p_{x_1}^2\Phic}_{L^2}^2+\int_{\R} \p_{x_1}^2\Phic\p_{x_1}\Psic_1 dx_1 \right) \right]+\tilde{c}\norm{\p_{x_1}^2\Vmc}_{L^2}\\
		&\leq C\bar{\delta}(1+t)^{-\frac52}+C\check{\delta}\left[(1+t)^{-2}\mathcal{E}_1 +(1+t)^{-1} \mathcal{D}_1+\mathcal{D}_2 +\norm{\p_t\nabla_x \vm^{\ast}}_{L^2}^2\right]+C_\eta\sum_{1\le\abs{\alpha}\le 2}\norm{\p^{\alpha} g}_{\sigma}^2,
	\end{align*}
	and
	\begin{align*}
		&{\frac{d}{dt}}\left(\norm{\p_{x_1}^2\left(\Phic,\sum_{i=2,3}\Psic_i,\Hc\right)}_{L^2}^2  + \norm{\p_{x_1}\left( \thetat \p_{x_1} \Psic_1\right)}_{L^2}^2 \right) + \norm{\p_{x_1}^3\left(\Psic,\Hc\right)}_{L^2}^2\\
		&\leq C\bar{\delta}(1+t)^{-\frac52}+C\check{\delta}\left[(1+t)^{-3}\mathcal{E}_1 +\sum_{j=1}^3(1+t)^{-3+j} \mathcal{D}_j+\sum_{\abs{\alpha}= 1}\norm{\p^{\alpha} g}_{\sigma}^2+\norm{\p_t\nabla_x \vm^{\ast}}_{L^2}^2\right]+C_\eta\sum_{\abs{\alpha}= 2}\norm{\p^{\alpha} g}_{\sigma}^2,
	\end{align*}
	where $ \deltab:=\delta+\varepsilon_0$, $\deltac:=\chi+\deltab^{\frac12}$ and for $i=1,2,3$,
	\begin{align}\label{definition-of-Xm}
		& \mathcal{X}^i= R\int_{\R} \Do \int_{\R^3} \theta B_{1i}\left( \frac{\xi-u}{\sqrt{R\theta}}\right) \frac{\sqrt{\mu}}{M} g d\xi \Psic_i dx_1 , \quad \mathcal{X}^4= R^{\frac{3}{2}}\int_{\R} \Do \int_{\R^3} \theta^{\frac{3}{2}} A_{1}\left( \frac{\xi-u}{\sqrt{R\theta}}\right) \frac{\sqrt{\mu}}{M} g d\xi \frac{\Hc}{\thetat}  dx_1.
	\end{align}
\end{Thm}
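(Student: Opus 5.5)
We will prove the three inequalities of Theorem \ref{H2-enenrgy-anti} by $L^2$ energy estimates on the transformed anti-derivative system \eqref{T-A-D-1}, mirroring the treatment of the coupled diffusion wave in Lemma \ref{Lem-Wb-1}. The decisive point is the symmetrizing structure produced by the transformation \eqref{anti-trans-qwe}. We multiply \eqref{T-A-D-1}$_1$ by $\Phic/\thetat$, \eqref{T-A-D-1}$_2$ by $\Psic_1$, \eqref{T-A-D-1}$_{3}$ by $\Psic_i$ and \eqref{T-A-D-1}$_4$ by $\Hc/\thetat$. With these multipliers the hyperbolic cross terms combine into exact $x_1$-derivatives. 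For example, $\Phic\p_{x_1}\Psic_1+\frac23\Psic_1\p_{x_1}\Phic$ is corrected by weighting the $\Phic$ equation with $\frac{2}{3\thetat}$; similarly $\frac23\Psic_1\p_{x_1}\Hc+\frac23\Hc\p_{x_1}\Psic_1$ is an exact derivative. What remains are only commutators with $\p_{x_1}\thetat$ and $\p_t\thetat$. Because both structural conditions \eqref{SC} hold for \eqref{T-A-D-1}, the same way as stated in the Remark after \eqref{W-t-1}, these commutators carry the factor $\p_{x_1}\thetat$ times products that are not of the bad form $(1+t)^{-1/2}e^{-cx_1^2/(1+t)}|\Vmc|^2$. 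Instead they are bounded by $C\deltac(1+t)^{-1}\mathcal{E}_1+C\deltac\mathcal{D}_1$. Here we use $\p_{x_1}\thetat,\p_t\thetat=O(\deltab)D_{-1/2}$, $O(\deltab)D_{-1}$ from Corollary \ref{R-m-1-1-1} and \eqref{eqs27}. We also use the weighted heat-kernel argument of Step 2 in Lemma \ref{Lem-Wb-1}, applying an antiderivative $\tilde h$ of $\Upsilon_{-1/2}$ to the residual $\Phic$-terms.

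The viscous terms give dissipation only for $\p_{x_1}(\Psic,\Hc)$, since the $\Phic$ equation is purely hyperbolic. To recover $\norm{\p_{x_1}\Phic}^2$ we add the cross term $\tilde c\int\p_{x_1}\Phic\Psic_1$. Its time derivative produces $\frac23\norm{\p_{x_1}\Phic}^2$ from \eqref{T-A-D-1}$_2$, and the term $-\frac{4\mu}{3\rhot}\p_{x_1}^2\Psic_1\p_{x_1}\Phic$ is rewritten, as in \eqref{px-k+1-W1}, through \eqref{T-A-D-1}$_1$ as $\frac{d}{dt}$ of $\norm{\p_{x_1}\Phic}^2$ plus errors; this is why $\tilde c\norm{\p_{x_1}\Phic}^2$ sits inside the time derivative. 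The source terms are handled as follows. The $\tilde{\Rm}$ terms give $C\deltab(1+t)^{-3/2}$ by Corollary \ref{R-m-1-1-1}. The fluid nonlinearities $\mathcal{S}^f$, which are quadratic in $\vm$ or of the form $\deltab D\cdot\vm$, are bounded using $\Do\psi-\p_{x_1}^2\Psic/\rhot$ type identities (zero-mode $\vm$ equals derivatives of $\Vmc$ up to quadratic and wave-induced corrections) together with the a priori assumptions \eqref{apa}. The micro terms $(\mathcal{S}^m)_{jb}$ are bounded via the Burnett function properties \eqref{5.1}, \eqref{5.2}; they involve $\bar G_0-\bar G$ and derivatives of the waves, hence give $\deltab$-small decaying contributions. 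The terms $(\mathcal{S}^m_{\Psi})_{ja}$ and $(\mathcal{S}^m_H)_{ja}$ with $j=2,3$ are bounded by $C_\eta\norm{\p^\alpha g}_\sigma^2+\eta\norm{\p_{x_1}\Vmc}^2$. The term $j=1$ contains $\p_t g$; we pull the time derivative out, which creates exactly the correctors $\mathcal{X}^l$ in \eqref{definition-of-Xm}. The leftover terms, in which $\p_t$ falls on $\theta,u$ or on $\Psic,\Hc$, produce $\norm{\p_t\nabla_x\vm^\ast}^2$ and $\norm{g}_\sigma^2$.

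For the first-derivative estimate we apply $\p_{x_1}$ and repeat the procedure with the cross term $\int\p_{x_1}^2\Phic\p_{x_1}\Psic_1$. Here the gain of one factor $(1+t)^{-1}$ on all lower-order coefficients, coming from differentiating the profiles, yields the weights $(1+t)^{-2}\mathcal{E}_1$ and $(1+t)^{-1}\mathcal{D}_1$. The $\p_t g$ terms are now handled with integration by parts in $x_1$ instead of the $\mathcal{X}$-correctors, because $\norm{\p^\alpha g}_\sigma$ with $|\alpha|\ge1$ is available.

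\textbf{Main obstacle.} The main obstacle is the second-derivative level, where \eqref{SC} fails for $\p_{x_1}^2\Vmc$. Following the strategy in item (4) of subsection \ref{sss}, we will use the multipliers $\frac23\thetat\p_{x_1}^2\Phic$, $\p_{x_1}(\thetat\p_{x_1}\Psic_1)$, $\p_{x_1}^2\Psic_i$ and $\thetat\p_{x_1}^2\Hc$. Concretely, we apply $\p_{x_1}$ to \eqref{T-A-D-1}$_2$ after multiplying by $\thetat$, and check that the cross terms involving $\p_{x_1}\thetat\,\p_{x_1}\Psic_1\,\p_{x_1}^2\Phic$ cancel between the $\Phic$ and $\Psic_1$ equations. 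We expect this cancellation to be the delicate computation. Any non-cancelling piece will be estimated by $\deltac(1+t)^{-1}\mathcal{D}_2$ using the improved decay in Corollary \ref{R-m-1-1-1}. At this level we do not seek dissipation of $\p_{x_1}^3\Phic$. The microscopic terms with two derivatives are kept as $C_\eta\sum_{|\alpha|=2}\norm{\p^\alpha g}_\sigma^2$, to be absorbed later by the micro estimates of subsection \ref{sec4.4}.
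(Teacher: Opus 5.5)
Your zeroth-order step matches the paper: the multipliers $(\frac{2\Phic}{3\thetat},\Psic_1,\Psic_i,\frac{\Hc}{\thetat})$, the correctors $\mathcal{X}^l$, and the cross term $\int\p_{x_1}\Phic\,\Psic_1$ with the viscous term rewritten through \eqref{T-A-D-1}$_1$. Your second-order step also matches (the quantities $\thetat\p_{x_1}^2\Phic$, $\p_{x_1}(\thetat\p_{x_1}\Psic_1)$, $\thetat\p_{x_1}^2\Hc$); see Lemmas \ref{s-t-s-tt-1}, \ref{H-D-Phi-lem} and Remark \ref{2026-5-14-1}. The gap is at first order. There you only say you ``apply $\p_{x_1}$ and repeat the procedure'', and you expect the structural condition to fail only at second order. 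If you test $\p_{x_1}$\eqref{T-A-D-1} with the differentiated zeroth-order weights, the hyperbolic coupling does not close:
\[
\frac{2}{3\thetat}\p_{x_1}\Phic\,\p_{x_1}\big(\thetat\p_{x_1}\Psic_1\big)+\frac23\p_{x_1}\Psic_1\,\p_{x_1}^2\Phic=\frac23\p_{x_1}\big(\p_{x_1}\Phic\,\p_{x_1}\Psic_1\big)+\frac{2\p_{x_1}\thetat}{3\thetat}\,\p_{x_1}\Phic\,\p_{x_1}\Psic_1 ,
\]
and the same residual appears with $\Hc$ in place of $\Phic$. Since $\p_{x_1}\thetat\sim\deltab D_{-\frac12}$, this residual is exactly the bad term $\deltab\int D_{-\frac12}\abs{\p_{x_1}\Vmc}^2dx_1$. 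It contains only one derivative of the profile, so it gains $(1+t)^{-\frac12}$, not the factor $(1+t)^{-1}$ you invoke. It is therefore of size $\deltab(1+t)^{-\frac12}\norm{\p_{x_1}\Vmc}_{L^2}^2$, which is not of the form $\deltac[(1+t)^{-2}\mathcal{E}_1+(1+t)^{-1}\mathcal{D}_1+\mathcal{D}_2]$ allowed on the right-hand side. In the time-weighted argument of Section \ref{sec5} it would leave $\int_0^t(1+\tau)^{\frac12}\mathcal{D}_1\,d\tau$ and lose the optimal rate.

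The paper avoids this by testing $\p_{x_1}$\eqref{T-A-D-1} with $(\frac23\p_{x_1}\Phic,\thetat\p_{x_1}\Psic_1,\thetat\p_{x_1}\Psic_2,\thetat\p_{x_1}\Psic_3,\p_{x_1}\Hc)$, i.e.\ $\thetat$ times your weights. The coupling then becomes the exact derivative $\frac23\p_{x_1}[\thetat\p_{x_1}\Psic_1(\p_{x_1}\Phic+\p_{x_1}\Hc)]$. Every remaining commutator either carries $\p_t\thetat=O(\deltab D_{-1})$ or contains a dissipated factor $\p_{x_1}^2(\Psic,\Hc)$. In general, the weight making the coupling exact changes with the derivative order: $(\frac{2}{3\thetat},1,\frac{1}{\thetat})$ on $(\Phic,\Psic_1,\Hc)$ at order $0$ and $(\frac23,\thetat,1)$ at order $1$. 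You must state and use the order-one choice.

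Two smaller points. First, the heat-kernel step with $\tilde h$ is unnecessary, because with the correct weights no residual $D_{-\frac12}\abs{\Vmc}^2$ or $D_{-\frac12}\abs{\p_{x_1}\Vmc}^2$ terms arise at orders $0$ and $1$. Using it would also add $\int\tilde h\,\Phic^2$ to the energy, which is not the functional in the statement. Second, besides $(\mathcal{S}^m_H)_{1a}$, the term $(\mathcal{S}^m_H)_4=\sum_i u_i(\mathcal{S}^m_\Psi)^i+\cdots$ in \eqref{T-A-D-1}$_4$ also contains $\p_t g$, and $\mathcal{X}^4$ does not remove it. The paper bounds it directly using the decay $\abs{u}\lesssim\deltac(1+t)^{-\frac12}$; see \eqref{lem-2-1-n-5-1-1} and \eqref{math-M-1-1-1-1}.
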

The proof of Theorem \ref{H2-enenrgy-anti} will be decomposed into three parts including Lemma \ref{s-t-s-tt-1}, Lemma \ref{s-t-s-tt-2} and Lemma \ref{H-D-Phi-lem}.
Before proving these three lemmas, we should study the source terms in \eqref{T-A-D-1}. Recall the definition of $\tilde{D}_{-\alpha}$ \eqref{tilde-d} and $(\Vm,\Vmc,\vm)$ \eqref{2025-11-5-1}. We use the following notations for the sake of convenience:
\begin{align}\label{useful-notation}
	\begin{aligned}
		&\D^{(k)}:=\deltab^{\frac12} \sum_{j=1}^{k+2} \Dt_{-\frac{j}{2}} \abs{\p_{x_1}^{k-j+2} \Vmc},\qquad 
		\T^{(0)}:=\abs{\p_{x_1} \Vmc}^2+ \deltab^{\frac12}  \Dt_{-\frac{1}{2}}\abs{\p_{x_1} \Vmc},\\
		&\T^{(1)}:=\abs{\p_{x_1}^2 \Vmc}\abs{\p_{x_1} \Vmc}+\deltab^{\frac12} \left(\Dt_{-\frac{1}{2}}\abs{\p_{x_1} \Vmc}^2+\Dt_{-1}\abs{\p_{x_1} \Vmc}+\Dt_{-\frac{1}{2}}\abs{\p_{x_1}^2 \Vmc}\right),\\
		&\T^{(2)}:=\abs{\p_{x_1}^3 \Vmc}\abs{\p_{x_1} \Vmc}+\abs{\p_{x_1}^2 \Vmc}^2+ \deltab^{\frac12} \Dt_{-\frac{3}{2}}\abs{\p_{x_1} \Vmc}\\
		&\qquad\qquad +\deltab^{\frac12} \Dt_{-\frac{1}{2}}\big(\abs{\p_{x_1} \Vmc}\abs{\p_{x_1}^2 \Vmc}+\abs{\p_{x_1}^3 \Vmc}\big)+ \deltab^{\frac12} \Dt_{-1}\big(\abs{\p_{x_1}^2 \Vmc}+\abs{\p_{x_1} \Vmc}^2\big),\\
				&\Z^{(0)}:=\abs{\vm_{\neq}}^2,\qquad\qquad\Z^{(1)}:=\abs{\vm_{\neq}}\abs{\nabla_x \vm_{\neq}},\qquad\qquad\Z^{(2)}:=\abs{\nabla_x \vm_{\neq}}^2+\abs{\nabla_x^2 \vm_{\neq}}\abs{\vm_{\neq}}.
			\end{aligned}
		\end{align}
		\begin{Lem}\label{nonlinear-error-estimate}
			Under the same assumptions of  Proposition \ref{Thm-ape}, for $k=0,1$, one has
			\begin{align}
				&\sum_{i=1}^3\abs{\mathcal{S}_{\check \Psi i}^{m}}+\abs{\mathcal{S}_{\check H}^m}\le C \deltab\tilde{D}_{-\frac{3}{2}}+C \deltab \tilde{D}_{-1}\abs{\vm}+C\deltab \tilde D_{-\frac12}(\abs{\nabla_x\vm^{\ast}}+\abs{\p_t\vm^{\ast}}),\label{lem-2-1-n-3-1-1-1}\\
				&\abs{(\mathcal{S}^{m}_{\Psi})^i}+\abs{(\mathcal{S}^{m}_{H})}\le C\deltab \Dt_{-1}+C\sum_{\abs{\alpha}=1}\abs{\p^{\alpha}g}_{\sigma}+C\left( \abs{g}_{2}+\deltab\Dt_{-\frac12}\right)\abs{g}_{\sigma}+C\deltab \tilde D_{-\frac12}(\abs{\nabla_x\vm^{\ast}}+\abs{\p_t\vm^{\ast}}),\label{lem-2-1-n-3}\\
				&\abs{\p_{x_1}(\mathcal{S}^{m}_{\Psi})^i}+\abs{\p_{x_1}(\mathcal{S}^{m}_{H})}\le C\sum_{\abs{\alpha}=2}\abs{\p^{\alpha}g}_{\sigma}+C\sum_{\abs{\gamma}=1}\bigg[\abs{g}_{\sigma}\abs{\p^{\gamma}g}_{\sigma}+\deltab \tilde D_{-\frac12}(\abs{\nabla_x^2\vm^{\ast}}+\abs{\p_t\nabla_x\vm^{\ast}})\notag\\
				&\qquad\qquad\qquad\qquad\quad+\left(\deltab^{\frac12} \Dt_{-\frac12}+\abs{\p_t \vm^{\ast}}+\abs{\nabla_x \vm}\right)\left( \deltab \Dt_{-1}+\abs{\p^{\gamma}g}_{\sigma}+\abs{g}_2\abs{g}_{\sigma}+\deltab \Dt_{-\frac12}\abs{g}_\sigma  \right)\bigg],\label{lem-2-1-n-4}\\
				&\abs{(\mathcal{S}^{m}_{H})_{4}}\le C (\deltab^{\frac12}+\chi) (1+t)^{-\frac12}\abs{(\mathcal{S}^{m}_{\Psi})^i}+C\deltab \left(\tilde D_{-1} \abs{\vm^{\ast}}+\tilde D_{-\frac32}\right),\label{lem-2-1-n-5-1-1}\\
				&\abs{\p_{x_1} (\mathcal{S}^{m}_{H})_{4}}\lesssim (\deltab^{\frac12}+\chi) \left[(1+t)^{-\frac34}\abs{(\mathcal{S}^{m}_{\Psi})^i} + (1+t)^{-\frac12}\abs{\p_{x_1}(\mathcal{S}^{m}_{\Psi})^i}\right]+\deltab \left(\tilde D_{-\frac32} \abs{\vm^{\ast}}+\tilde{D}_{-1}\abs{\nabla_x\vm^{\ast}}+\tilde D_{-\frac52}\right),\label{lem-2-1-n-5}\\
				&\abs{\p_{x_1}^k\mathcal{S}_{\check{\Phi}}^{f}}\le C\deltab \tilde{D}_{-\frac{3+k}{2}}+\D^{(k)}, \label{2026-6-8}\\
				&\sum_{i=1}^3\abs{\p_{x_1}^k\mathcal{S}_{\check{\Psi}i}^{f}}+\abs{\p_{x_1}^k\mathcal{S}_{\check H}^f}\leq C\deltab \tilde{D}_{-\frac{3+k}{2}} +\D^{(k)}+\T^{(k)}+\T^{(k+1)}+\Z^{(k)}+\Z^{(k+1)}\label{lem-2-1-n-2}.
			\end{align}
		\end{Lem}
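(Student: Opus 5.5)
The proof is a term-by-term estimate of the source terms in \eqref{T-A-D-1}. It rests on three ingredients. The first is the decay of the ansatz: \eqref{eqs27}, \eqref{tildeE}, Corollary \ref{R-m-1-1-1} and Theorem \ref{res-Wt} give $|\p_{x_1}^k(\tilde\rho-1,\tilde u,\tilde\theta-\tfrac32)|\lesssim \deltab^{\frac12}\Dt_{-\frac{k}{2}}$ for $k\ge1$ and $|\p_{x_1}^k\tilde\Rm|\lesssim\deltab\Dt_{-\frac{3+k}{2}}$. The second is the a priori bound \eqref{apa}, which gives $L^\infty$ smallness of $\vm$ and $g$. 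The third is the Burnett-function representation \eqref{5.1}, \eqref{5.2}, with the self-adjointness of $L_M^{-1}$.

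\textbf{Fluid terms.} I would begin with the purely fluid bounds \eqref{2026-6-8} and \eqref{lem-2-1-n-2}.

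For $\mathcal S^f_{\check\Phi}=-\thetat\tilde\Rm_0+\frac{\p_t\thetat}{\thetat}\Phic$, use $|\p_t\thetat|\lesssim\deltab^{\frac12}\Dt_{-1}$. Differentiating $k$ times yields $\deltab\Dt_{-\frac{3+k}{2}}$ plus terms $\Dt_{-\frac{j}{2}}|\p_{x_1}^{k+2-j}\Vmc|$, which are exactly $\D^{(k)}$. Here $\Phi$ itself is written through $\Phic/\thetat$.

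For $\mathcal S^f_{\check\Psi i}$ and $\mathcal S^f_{\check H}$, split each zero-mode average $\Do[F(\U)-F(\tilde\U)]$ into a zero-mode part and a non-zero-mode part. For the zero-mode part, Taylor expand around $\tilde \U$ in the variables $\Do(\rho-\rhot,\dots)=\p_{x_1}\Vm$. The linear part is cancelled exactly by $\frac23\p_{x_1}H$, $\frac53\thetat\p_{x_1}\Psi_1$, etc. The choice \eqref{anti-trans-qwe} is what removes the terms $\Dt_{-\frac12}|\Vmc|$ without a derivative: the remaining linear terms carry coefficients that are derivatives of the ansatz, hence of the form $\Dt_{-\frac12}|\p_{x_1}\Vmc|$. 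The quadratic parts give $|\p_{x_1}\Vmc|^2$. The non-zero-mode part enters only quadratically after averaging, since $\Do(ab)=\Do a\,\Do b+\Do(a_{\neq}b_{\neq})$; this gives $\Z^{(k)}$.

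The viscous corrections $\mu(\thetat)(\p_{x_1}\psim_1-\rhot^{-1}\p_{x_1}^2\Psic_1)$ are treated by writing $\psim=\Do(m/\rho)-\ut$ in terms of $\p_{x_1}\Psi$ and $\p_{x_1}\Phi$. This produces one extra derivative, which accounts for the presence of $\T^{(k+1)}$ and $\Z^{(k+1)}$.

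\textbf{Non-fluid terms.} For \eqref{lem-2-1-n-3} through \eqref{lem-2-1-n-4}, I would bound each integral \eqref{M-1}–\eqref{Q-7t} by Cauchy–Schwarz in $\xi$. The Burnett functions times $\sqrt\mu/M$ decay in $\xi$, since $M$ is close to $\mu$ in the sense of the Remark with $\frac12\sup\check\theta\le\frac32$. This gives $|\p_t g|_\sigma$ and $|\nabla_x g|_\sigma$; the time derivative is traded via the equation or directly kept in $\sum_{|\alpha|=1}$. The $\Gamma$ terms are handled by the standard trilinear estimate $|\langle\Gamma(f,g),h\rangle|\lesssim|f|_2|g|_\sigma|h|_\sigma$ (Guo), which produces $(|g|_2+\deltab\Dt_{-\frac12})|g|_\sigma$.

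The $b$-terms are differences between quantities built on $(M,\bar G_0)$ and on $(\bar M,\bar G)$. Their explicit $\bar\theta$, $\bar u$-dependence gives $\deltab\Dt_{-1}$. The difference $M-\bar M$ is controlled by $|\vm|+|\tilde\U-\bar\U|$, with $\Xi$ absorbed through Corollary \ref{R-m-1-1-1}. The term $\p_t\bar G_0$ contains $\p_t(u,\theta)$, hence $\deltab\Dt_{-\frac12}|\p_t\vm^*|$.

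For \eqref{lem-2-1-n-3-1-1-1}, the point is that $\mathcal R_G$ was designed in \eqref{coupled-diffusion-wave-1} precisely to cancel the $\deltab\Dt_{-1}$ part. What remains after subtracting is therefore of order $\Dt_{-\frac32}$, or linear in $\vm$. For $(\mathcal S^m_H)_4$, write $u\cdot\xi\xi_1L_M^{-1}\Pi-\bar u\cdot\xi\xi_1L_{\bar M}^{-1}\bar\Pi_1=u\cdot(\dots)^i+(u-\bar u)\cdot(\dots)$. Then use $|u|\lesssim(\deltab^{\frac12}+\chi)(1+t)^{-\frac12}$, which follows from the sup norm of the ansatz and \eqref{apa}. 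For \eqref{lem-2-1-n-5} one differentiates once more and uses $\|\p_{x_1}u\|_{L^\infty}\lesssim(1+t)^{-\frac34}$.

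\textbf{Main obstacle.} The hardest step is verifying in \eqref{lem-2-1-n-3-1-1-1} that no $\deltab\Dt_{-1}$ residue survives. This requires matching $L_M^{-1}Q(\bar G_0,\bar G_0)$, $P_1\xi_1\p_{x_1}\bar G_0$ and $\p_t\bar G_0$ with their barred counterparts in $\mathcal R_G$ exactly up to $\deltab\Dt_{-\frac32}$. My approach there is to expand $M-\bar M$ to first order in $(\rho-\bar\rho,u-\bar u,\theta-\bar\theta)$. The $\Xi$ contributions are $O(\deltab^{\frac12}\Dt_{-\frac12})\cdot\Dt_{-1}$, while the $\vm$ contributions produce the allowed $\Dt_{-1}|\vm|$.
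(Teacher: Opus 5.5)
Your proposal is correct and follows essentially the paper's own argument: the Burnett-function/self-adjointness representation of the moments, Cauchy--Schwarz in $\xi$ and the trilinear $\Gamma$ bound for the $a$-terms, the Lipschitz bound $|(\rho,u,\theta)-(\rhob,\ub,\thetab)|\lesssim\deltab\Dt_{-\frac12}+|\vm|$ for the $\bar G_0$-versus-$\bar G$ differences, the splitting $(\mathcal S^m_H)_4=\sum_i u_i(\mathcal S^m_\Psi)^i+\Do\int(u-\ub)\cdot\xi\xi_1L_{\Mb}^{-1}\bar\Pi_1\,d\xi$, and a zero/non-zero-mode splitting of the fluid sources.

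Two small corrections are needed.
\begin{itemize}
\item $\bar\Pi_1$ contains neither $\p_t\bar G$ nor $x_2,x_3$-derivatives. So $(\mathcal S^m_\Psi)^i_{1b}$, $(\mathcal S^m_\Psi)^i_{4b}$ and their $H$-analogues have no counterpart in $\mathcal R_G$, and they are simply bounded directly by Lemma \ref{2026-4-24-1}. Only the $2b$ and $3b$ differences need the cancellation you describe.
\item In this lemma, \eqref{anti-trans-qwe} is genuinely needed in the viscous residual $\kappa(\thetat)(\p_{x_1}\zetam-\rhot^{-1}\p_{x_1}^2\Hc)$, via $\p_{x_1}H-\thetat\p_{x_1}\Phi=\p_{x_1}\Hc+\p_{x_1}\thetat\,\Phi$. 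It is not needed in the flux terms, which is where you place its role.
\end{itemize}
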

		\begin{proof}
			We first present the calculation of the microscopic parts, that is, to prove \eqref{lem-2-1-n-3}-\eqref{lem-2-1-n-5}.
			For $\mathcal{S}_{\Psic_i}^m$ and $\mathcal{S}_{\Hc}^m$ in \eqref{T-A-D-1}, by the definition of $\bar{G}_0$ \eqref{correction-G-2}, estimate of $\bar{G}_0$ Lemma \ref{2026-4-24-1} and the definition of $(\mathcal{S}^{m}_{\Psi})_{1b}^i$ \eqref{M-3t}, $(\mathcal{S}^{m}_{\Psi})_{4b}^i$ \eqref{M-4t-1-1}, $(\mathcal{S}^{m}_{H})_{1b}$ \eqref{Q-4t} and $(\mathcal{S}^{m}_{H})_{4b}$ \eqref{Q-7t}, it holds that
			\begin{align}\notag
				\abs{(\mathcal{S}^{m}_{\Psi})_{1b}^i}+\abs{(\mathcal{S}^{m}_{\Psi})_{4b}^i}+\abs{(\mathcal{S}^{m}_{H})_{1b}}+\abs{(\mathcal{S}^{m}_{H})_{4b}}\le C \deltab\tilde{D}_{-\frac{3}{2}}+C\deltab \tilde D_{-\frac12} \abs{\p_t\vm^{\ast}}+C\deltab \tilde D_{-\frac12} \abs{\nabla_x\vm^{\ast}}.
			\end{align}
			Moreover, $(\mathcal{S}^{m}_{\Psi})_{2b}^{i}$ \eqref{M-3t}, $(\mathcal{S}^{m}_{\Psi})_{3b}^{i}$ \eqref{M-4t-1-1-1-1}, $(\mathcal{S}^{m}_{H})_{2b}$ \eqref{Q-5t} and $(\mathcal{S}^{m}_{H})_{3b}$ \eqref{Q-6t} can be treated in the same way, so we only calculate $(\mathcal{S}^{m}_{\Psi})_{3b}^{i}$ \eqref{M-4t-1-1-1-1}. By the self-adjoint properties of $L_M^{-1}$ and expansion of $\p_{x_1}\bar G_0$ \eqref{5.20}, we have
			\begin{align}\label{2026-4-24-2}
				&\Do \int_{\R^3} \xi_1\xi_i L_M^{-1} (P_1 \xi_1 \p_{x_1}\bar G_0) d\xi \notag\\
				=&\Do \int_{\R^3} R\theta B_{1i}\left(\frac{\xi-u}{\sqrt{R\theta}} \right) \frac{ \xi_1 \p_{x_1}\bar G_0}{M} d\xi-\sum_{j=0}^4\Do \int_{\R^3} R\theta B_{1i}\left(\frac{\xi-u}{\sqrt{R\theta}} \right) \frac{ \la\xi_1 \p_{x_1}\bar G_0,\chi_j\ra\chi_j}{M} d\xi\notag\\
				=&\sum_{i,j=1}^3 \Do\left( \mathcal{A}_1 \p_{x_1}^2\thetab+\mathcal{A}_2\p_{x_1}\theta\p_{x_1}\thetab+\mathcal{A}_3\p_{x_1}u_i\p_{x_1}\thetab+\mathcal{A}_4\p_{x_1}^2\bar u_i+\mathcal{A}_5 \p_{x_1}\bar u_i\p_{x_1} u_j+\mathcal{A}_6\p_{x_1}\bar u_i \p_{x_1}\theta\right).
			\end{align}
			Using the same method, we also have
			\begin{align}\label{2026-4-24-3}
				&\Do \int_{\R^3} \xi_1\xi_i L_{\bar M}^{-1} (\bar P_1 \xi_1 \p_{x_1}\bar G) d\xi \notag\\
				=&\sum_{i,j=1}^3 \Do\left( \bar{\mathcal{A}}_1 \p_{x_1}^2\thetab+\bar{\mathcal{A}}_2\p_{x_1}\thetab\p_{x_1}\thetab+\bar{\mathcal{A}}_3\p_{x_1}\ub_i\p_{x_1}\thetab+\bar{\mathcal{A}}_4\p_{x_1}^2\bar u_i+\bar{\mathcal{A}}_5 \p_{x_1}\bar u_i\p_{x_1} \ub_j+\bar{\mathcal{A}}_6\p_{x_1}\bar u_i \p_{x_1}\thetab\right),
			\end{align}
			where $\mathcal{A}_{1,\dots,6}$ and $\bar{\mathcal{A}}_{1,\dots,6}$ are the smooth function of $(\rho,u,\theta)$ and $(\bar\rho,\ub,\thetab)$, respectively. By \eqref{2026-4-24-2}, \eqref{2026-4-24-3} and $\abs{(\rho,u,\theta)-(\rhob,\ub,\thetab)}\lesssim \deltab\tilde{D}_{-\frac12}+\abs{\vm}$, we have
			\begin{align*}
				\abs{(\mathcal{S}^{m}_{\Psi})_{3b}^{i}}\le C\deltab\tilde{D}_{-\frac32}+C\deltab \tilde D_{-1} \abs{\vm}+C\deltab \tilde D_{-\frac12} \abs{\nabla_x \vm^{\ast}}.
			\end{align*}
			Based on the above estimates, we then obtain
			\begin{align}\notag
				\sum_{i=1}^3\abs{\mathcal{S}_{\check{\Psi}i}^{m}}+\abs{\mathcal{S}_{\check H}^m} \le  C \deltab\tilde{D}_{-\frac{3}{2}}+C \deltab \tilde{D}_{-1}\abs{\vm}+C\deltab \tilde D_{-\frac12}(\abs{\nabla_x\vm^{\ast}}+\abs{\p_t\vm^{\ast}}).
			\end{align}
			Then we have finished the proof of  \eqref{lem-2-1-n-3-1-1-1}.
			
			From the definition of $(\mathcal{S}^{m}_{\Psi})^i,\;(\mathcal{S}^{m}_{H})$ \eqref{non-fluid-1-a} and \eqref{non-fluid-1-b}, and by noting $\Pi=\p_t G + P_1(\xi \cdot \nabla_x G)-Q(G,G)$, $G=\bar{G}_0+\sqrt{\mu}g$, and $(\rho,u,\theta)=(\rhot,\ut,\thetat)+(\phi,\psi,\zeta)$, it can be directly calculated that
			\begin{align}\notag
				&\abs{(\mathcal{S}^{m}_{\Psi})^i}+\abs{(\mathcal{S}^{m}_{H})}\le C\deltab \Dt_{-1}+C\sum_{\abs{\alpha}=1}\abs{\p^{\alpha}g}_{\sigma}+C\left( \abs{g}_{2}+\deltab\Dt_{-\frac12}\right)\abs{g}_{\sigma}+C\deltab \tilde D_{-\frac12}(\abs{\nabla_x\vm^{\ast}}+\abs{\p_t\vm^{\ast}}).
			\end{align}
			To calculate $\p_{x_1}(\mathcal{S}^{m}_{\Psi})^i$ and $\p_{x_1}(\mathcal{S}^{m}_{H})$, we note that the following fact holds:
			\begin{align}
				&\abs{\p_{x_1} \int_{\R^3} \xi_i \xi_j L_M^{-1}\Pi d\xi}=R\abs{\p_{x_1} \int_{\R^3} \theta B_{ij}\left( \frac{\xi-u}{\sqrt{R\theta}} \right)\frac{1}{M} \left(\p_t G+P_1 \xi \cdot \nabla_x G -Q(G,G)\right) d\xi} \notag\\
				\le&C\sum_{\abs{\gamma}=1}\left[\abs{g}_{\sigma}\abs{\p^{\gamma}g}_{\sigma}+\left(\deltab^{\frac12} \Dt_{-\frac12}+\abs{\nabla_x \vm}\right)\left( \deltab \Dt_{-1}+\abs{\p^{\gamma}g}_{\sigma}+\abs{g}_2\abs{g}_{\sigma}+\deltab \Dt_{-\frac12}\abs{g}_\sigma  \right)\right]\notag\\
				&+C\deltab \tilde D_{-\frac12}(\abs{\nabla_x^2\vm^{\ast}}+\abs{\p_t\nabla_x\vm^{\ast}})+C\sum_{\abs{\alpha}=2}\abs{\p^{\alpha}g}_{\sigma}.\label{non-fluid-s-s-1}
			\end{align}
			Since the treatment of $\p_{x_i} \int_{\R^3} \xi_i \abs{\xi}^2 L_M^{-1}\Pi d\xi$ is similar, we obtain
			\begin{align*}
				&\abs{\p_{x_1}(\mathcal{S}^{m}_{\Psi})^i}+\abs{\p_{x_1}(\mathcal{S}^{m}_{H})}\le C\sum_{\abs{\alpha}=2}\abs{\p^{\alpha}g}_{\sigma}+C\sum_{\abs{\gamma}=1}\bigg[\abs{g}_{\sigma}\abs{\p^{\gamma}g}_{\sigma}+\deltab \tilde D_{-\frac12}(\abs{\nabla_x^2\vm^{\ast}}+\abs{\p_t\nabla_x\vm^{\ast}})\notag\\
				&\qquad\qquad\qquad\qquad\quad+\left(\deltab^{\frac12} \Dt_{-\frac12}+\abs{\nabla_x \vm}\right)\left( \deltab \Dt_{-1}+\abs{\p^{\gamma}g}_{\sigma}+\abs{g}_2\abs{g}_{\sigma}+\deltab \Dt_{-\frac12}\abs{g}_\sigma  \right)\bigg].
			\end{align*}
			Thus \eqref{lem-2-1-n-3} and \eqref{lem-2-1-n-4} have been proved. 
			
			For the calculation of $(\mathcal{S}^{m}_{H})_{4}$ and $\p_{x_1}(\mathcal{S}^{m}_{H})_{4}$, we have
			\begin{align*}
				(\mathcal{S}^{m}_{H})_{4}=\sum_{i=1}^3 u_i (\mathcal{S}^{m}_{\Psi})^i + \Do\int_{\R^3} (u-\bar u) \cdot \xi\xi_1L_{\bar M}^{-1}\bar \Pi_1 d\xi.
			\end{align*}
			Then, by applying the {\it a prior} assumption   \eqref{apa}, we obtain \eqref{lem-2-1-n-5-1-1} and \eqref{lem-2-1-n-5}.
			
			At this point, we then provide an estimate of the macroscopic part, namely \eqref{2026-6-8} and \eqref{lem-2-1-n-2}. By H{\"o}lder's inequality for  the definition of $\mathcal{S}_{\check \Phi}^{f}$ \eqref{T-A-D-1}, we have
			\begin{align}\notag
				\abs{\p_{x_1}^k\mathcal{S}_{\check{\Phi}}^{f}}\le C\deltab \tilde{D}_{-\frac{3+k}{2}}+\D^{(k)}.
			\end{align}
			Thus, \eqref{2026-6-8} has been proved.
			
			Below, we will mainly focus on the estimate of $\mathcal{S}_{\check{\Psi}i}^{f}$ and $\mathcal{S}_{\check{H}i}^{f}$, which consists of two parts via viscosity and flux. Taking
			\begin{equation}
				\label{add.s4p1}
				\kappa(\thetat)\left( \p_{x_1} \zetam-\frac{\p_{x_1}^2 \Hc}{\rhot} \right)
			\end{equation} 
			as an example, we present the calculation of the viscosity part. We remark that from the calculation of the term \eqref{add.s4p1} above, it can  be found that the transformation \eqref{anti-trans-qwe} plays an important role in the viscosity part. We present the following identities needed for calculating this term:
			\begin{align}
				&\zetam = \Do\left[\frac{\E}{\rho}-\frac{\Et}{\rhot}-\left( \frac{\abs{m}^2}{2\rho^2} - \frac{\abs{\mt}^2}{2\rhot^2}  \right)\right],\qquad \frac{\Em}{\rhom}-\frac{\Et}{\rhot}=\frac{\mathring{h}}{\rhom}-\frac{\phim\thetat}{\rhom}-\frac{\abs{\ut}^2\phim}{2\rhom},\label{con-noncon-1}\\
				&\frac{\mm_i}{\rhom}-\frac{\mt_i}{\rhot}=\frac{\varphim_i}{\rhom}-\frac{\mt_i\phim}{\rhot\rhom},\qquad \frac{\mm_i^2}{\rhom^2}-\frac{\mt_i^2}{\rhot^2}=\frac{\rhot\mm_i+\rhom\mt_i}{\rhom^2\rhot} \varphim_i-  \frac{\mt_i(\rhot\mm_i+\rhom\mt_i) }{\rhom^2 \rhot^2}\phim.\label{con-noncon-2}
			\end{align}
			Combining the definition of $\Phic$ \eqref{anti-trans-qwe} and \eqref{con-noncon-1}, the term \eqref{add.s4p1} can be written as
			\begin{align}\label{lem-2-1-3}
				&\eqref{add.s4p1}
				=\kappa(\thetat)\p_{x_1}\Do\left[ \frac{\E}{\rho}-\frac{\Em}{\rhom}+\frac{\abs{\mm}^2}{2\rhom^2}-\frac{\abs{m}^2}{2\rho^2} \right]-\kappa(\thetat)\p_{x_1}\left[\frac{\abs{\mm}^2}{2\rhom^2}-\frac{\abs{\mt}^2}{2\rhot^2}+\frac{\abs{\ut}^2\phim}{2\rhom} \right]\notag
				\\
				&\qquad\qquad\qquad\qquad\qquad+\kappa(\thetat)\left[ \p_{x_1}\left( \frac{\mathring{h}}{\rhom}  \right) - \frac{\p_{x_1}^2H}{\rhot}-\p_{x_1}\left( \frac{\phim \thetat}{\rhom} \right)+\frac{\p_{x_1}^2(\thetat \Phi)}{\rhot}\right]=\mathcal{L}_1+\mathcal{L}_2+\mathcal{L}_3.
			\end{align}
			To estimate \eqref{lem-2-1-3}, by \eqref{con-noncon-2}, it holds that
			\begin{align}
				&\abs{\mathcal{L}_1}\lesssim \abs{\vm_{\neq}}^2+\abs{\vm_{\neq}}\abs{\nabla_x \vm_{\neq}},\notag
				\\
				&\abs{\mathcal{L}_2}+\abs{\mathcal{L}_3}\le C\deltab^{\frac12}\Dt_{-1}\abs{\Vmc}+C\deltab^{\frac12}\Dt_{-\frac{1}{2}}\left(\abs{\p_{x_1}\Vmc}+ \abs{\p_{x_1}^2\Vmc}\right)+C\abs{\p_{x_1}^2\Vmc}\abs{\p_{x_1}\Vmc}+C\abs{\p_{x_1}\Vmc}^2.
				\notag
			\end{align}
			For the flux terms in $\mathcal{S}_{\check{\Psi}i}^{f}$ and $\mathcal{S}_{\check{H}}^{f}$, we take the most complex term 
			\begin{equation}
				\label{add.sec4.sphp}
				\Do\left[ \frac{5}{3}\thetat \p_{x_1} \Psi_{1} -\left( \frac{m_1 \E}{\rho}+\frac{m_1 p}{\rho}-\frac{\mt_1 \Et}{\rhot} - \frac{\mt_1 \tilde{p}}{\rhot} \right)\right]
			\end{equation}			
			as an example to present the calculation. We compute 
			\begin{align}
				&\mathring{p}-\tilde{p}=\frac{2}{3}\mathring{h} - \frac{2}{3} \Do \left[\left( \frac{\varphi_1^2}{2\rho} + \frac{\mt_1 \varphi_1}{\rho} - \frac{\mt_1^2\phi}{\rho \rhot} \right) +\frac{\varphi_2^2}{2 \rho} + \frac{\varphi_3^2}{2 \rho} \right],\label{lem-2-1-6}\\
				&\frac{\mm_1 \Em}{\rhom} - \frac{\mt_1 \tilde{\E}}{\rhot} = \frac{\Et \varphim_1}{\rhom}+\frac{\mt_1 \mathring{h}}{\rhom}   -\frac{\mt_1 \Et \phim}{\rhom \rhot}  + \frac{\varphim_1 \mathring{h}}{\rhom}   , \label{lem-2-1-7}\\
				&\frac{\mm_1 \mathring{p}}{\rhom} - \frac{\mt_1 \tilde{p}}{\rhot} = \frac{\mt_1 (\mathring{p}-\tilde{p})}{\rhom} + \frac{\tilde{p}\varphim_1}{\rhom} - \frac{\mt_1 \tilde{p} \phim}{\rhom \rhot} + \frac{\varphim_1 (\tilde{p}-\tilde{p})}{\rhom}.   \label{lem-2-1-8}
			\end{align}
			Using \eqref{lem-2-1-7} and \eqref{lem-2-1-8}, it follows that 		
			\begin{align}\notag
				\eqref{add.sec4.sphp}
				=\mathcal{L}_4+\mathcal{L}_5+\mathcal{L}_6, 
			\end{align}
			where
			\begin{align*}
				&\mathcal{L}_4=\frac{5}{3}\thetat\p_{x_1}\Psi_1-\frac{\Et \varphim_1}{\rhom}-\frac{\tilde{p}\varphim_1}{\rhom},\quad\mathcal{L}_5=\Do\left( \frac{\mm_1 \Em}{\rhom} -\frac{m_1\E}{\rho}\right)+\Do\left( \frac{\mm_1 \mathring{p}}{\rhom} - \frac{m_1p}{\rho}\right),\\
				&\mathcal{L}_6=\frac{\mt_1 \Et \phim}{\rhom \rhot}-\frac{\mt_1 \mathring{h}}{\rhom}-\frac{\varphim_1\mathring{h}}{\rhom}-\frac{\mt_1(\mathring{p}-\tilde{p})}{\rhom}-\frac{\varphim_1(\mathring{p}-\tilde{p})}{\rhom}+\frac{\mt_1 \tilde{p}\phim}{\rhom\rhot}.
			\end{align*}
			By \eqref{lem-2-1-6}, one has
			\begin{align}\notag
				\abs{\mathcal{L}_5}+\abs{\mathcal{L}_6}  \le C\deltab^{\frac12}\Dt_{-1}\abs{\Vmc}+C\deltab^{\frac12}\Dt_{-\frac{1}{2}}\abs{\p_{x_1}\Vmc}+C\abs{\p_{x_1}\Vmc}^2+C\abs{\vm_{\neq}}^2.
			\end{align}
			From the relationships among $\thetat$, $\Et$ and $\tilde{p}$ \eqref{non-conserved quantities}, we have
			\begin{align}
				\abs{\mathcal{L}_4}&=\abs{\frac{5}{3}\Et \p_{x_1}\Psi_1\left( \frac{1}{\rhot}-\frac{1}{\rhom} \right)-\frac{5}{3}\frac{\abs{\mt}^2}{2\rhot^2}\p_{x_1}\Psi_1+\frac{\abs{\mt}^2}{3 \rhot} \frac{\varphim_1}{\rhom}} \notag\\
				&\le C\deltab^{\frac12} \tilde{D}_{-1}\abs{\p_{x_1}\Vmc}+\abs{\p_{x_1}\Vmc}^2.\notag
			\end{align}
			For $i=1,2,3,4$, the remaining terms in $\mathcal{S}_{\check{\Psi}i}^{f}$ can be calculated using a method similarly for obtaining those estimates as above.
			We thus have
			\begin{align}\label{lem-2-1-12}
				\sum_{i=1}^3\abs{\mathcal{S}_{\check{\Psi}i}^{f}}+\abs{\mathcal{S}_{\check H}^f}\leq C\deltab^{\frac12} \tilde{D}_{-\frac{3}{2}} +\D^{(0)}+\T^{(0)}+\T^{(1)}+\Z^{(0)}+\Z^{(1)}.
			\end{align}
			Using the same argument as for obtaining \eqref{lem-2-1-12}, 
			one has
			\begin{align}\notag
				\sum_{i=1}^3\abs{\p_{x_1}\mathcal{S}_{\check{\Psi}i}^{f}}+\abs{\p_{x_1}\mathcal{S}_{\check H}^f}\le C\deltab^{\frac12} \Dt_{-2}+\D^{(1)}+\T^{(1)}+\T^{(2)}+\Z^{(1)}+\Z^{(2)}.
			\end{align}
			We have finished the proof of \eqref{lem-2-1-n-2}. Thus, we have completed the proof of Lemma \ref{nonlinear-error-estimate}. 
		\end{proof}
		
		Now we can give the $H^2$ estimates for system \eqref{T-A-D-1}.
		
		\begin{Lem}\label{s-t-s-tt-1}
			Recall the definition of $\Vmc$ \eqref{2025-11-5-1}. Under the same assumptions of  Proposition \ref{Thm-ape}, it holds that
			\begin{align}
				&{\frac{d}{dt}}\left(\norm{\Vmc}_{L^2}^2+\sum_{l=1}^4\mathcal{X}^l\right) +\left( \norm{\p_{x_1}\Psic}_{L^2}^2 + \norm{\p_{x_1}\Hc}_{L^2}^2 \right)
				\le C\deltab(1+t)^{-\frac32}+C\deltac\norm{\p_{x_1}\Phic}_{L^2}^2+\eta\norm{\p_t \Vmc}_{L^2}^2 \notag\\
				&\quad\qquad+C\check{\delta}\left[ (1+t)^{-1}\norm{\Vmc}_{L^2}^2+\norm{\p_{x_1}^2\Vmc}_{L^2}^2+\norm{\p_t \vm}_{L^2}^2+\norm{\nabla_x \vm}_{L^2}^2\right]+ C_\eta\sum_{\abs{\gamma}\leq 1} \norm{\p^{\gamma}g}_{\sigma}^2,\label{estimate-of-V-1}\\
				&{\frac{d}{dt}}\norm{\p_{x_1}\Vmc}_{L^2}^2 +\left( \norm{\p_{x_1}^2\Psic}_{L^2}^2 + \norm{\p_{x_1}^2\Hc}_{L^2}^2 \right)
				\le C\deltab(1+t)^{-\frac52}+C\check{\delta}\norm{\p_{x_1}^2\Phi}_{L^2}^2 + C_\eta\sum_{\abs{\gamma}=1} \norm{\p^{\gamma}g}_{\sigma}^2\notag\\
				&\quad\qquad+C\check{\delta}\left[\sum_{i=0}^1(1+t)^{-i-1}\norm{\p_{x_1}^{1-i}\Vmc}_{L^2}^2+(1+t)^{-1}\norm{g}_{\sigma}^2+\norm{\nabla_x \vm}_{L^2}^2+\norm{\p_t\vm^{\ast}}_{L^2}^2\right],\label{estimate-of-V-2}\\
				&{\frac{d}{dt}}\left(\norm{\p_{x_1}^2\left(\Phic,\sum_{i=2,3}\Psic_i,\Hc\right)}_{L^2}^2  + \norm{\p_{x_1}\left( \thetat \p_{x_1} \Psic_1\right)}_{L^2}^2 \right) + \norm{\p_{x_1}^3\left(\Psic,\Hc\right)}_{L^2}^2 \notag\\
				\le& C_\eta\sum_{\abs{\alpha}=2} \norm{\p^{\alpha}g}_{\sigma}^2+C\check{\delta}\left[\norm{\p_{x_1}^3\Phic}_{L^2}^2+\sum_{i=0}^2(1+t)^{-i-1}\norm{\p_{x_1}^{2-i}\Vmc}_{L^2}^2+(1+t)^{-2}\norm{g}_{\sigma}^2+\sum_{\abs{\gamma}=1} \norm{\p^{\gamma}g}_{\sigma}^2\right] \notag\\
				&+C\deltab\left((1+t)^{-2}\norm{\nabla_x \vm}_{L^2}^2+\norm{\partial_t \vm}_{L^2}^2\right)+C\deltab(\norm{\nabla_x^2\vm}_{L^2}^2+\norm{\p_t\nabla_x \vm^{\ast}}_{L^2}^2)+C\deltab (1+t)^{-\frac52}.\label{estimate-of-V-3}
			\end{align}    
		\end{Lem}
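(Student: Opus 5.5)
Proof proposal. The plan is a standard $L^2$ energy method on the transformed anti-derivative system \eqref{T-A-D-1}, carried out at the levels $k=0,1,2$. The structure is symmetrizable, so the hyperbolic cross terms cancel.

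\textbf{Estimate \eqref{estimate-of-V-1}.} Multiply \eqref{T-A-D-1}$_1$ by $\frac{2}{3\thetat}\Phic$, \eqref{T-A-D-1}$_2$ by $\Psic_1$, \eqref{T-A-D-1}$_3$ by $\Psic_i$, and \eqref{T-A-D-1}$_4$ by $\frac{\Hc}{\thetat}$ (up to fixed constant weights), then integrate over $\R$. With these weights the first-order terms cancel after integration by parts:
\[
\tfrac23\Phic\p_{x_1}\Psic_1+\tfrac23\Psic_1\p_{x_1}\Phic+\tfrac23\Psic_1\p_{x_1}\Hc+\tfrac23\Hc\p_{x_1}\Psic_1 ,
\]
and only commutators with $\p_{x_1}\thetat=O(\deltab)\Dt_{-1/2}$ and $\p_t\thetat$ are left over. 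This is exactly the point of the transformation \eqref{anti-trans-qwe}: by the Remark following \eqref{A_3}, both structural conditions hold. Consequently the commutator terms come with the improved weight $\deltab\Dt_{-1}|\Vmc|^2$. That weight is bounded by $C\deltac(1+t)^{-1}\norm{\Vmc}^2$ when we only want that, or, when needed, by the heat-kernel inequality of the type used in Step 2 of Lemma \ref{Lem-Wb-1}, which gives a control by $\norm{\p_{x_1}\Vmc}^2$ plus lower order terms. The viscous terms produce the dissipation $\norm{\p_{x_1}(\Psic,\Hc)}^2$, up to errors bounded by $\deltac\norm{\p_{x_1}\Vmc}^2$.

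\textbf{Source terms at level $0$.} The fluid sources $\mathcal{S}^f$ are handled by \eqref{2026-6-8} and \eqref{lem-2-1-n-2}. The error part $\deltab\Dt_{-3/2}$ contributes $\deltab\norm{\Dt_{-3/2}}_{L^2}\norm{\Vmc}_{L^2}\lesssim\deltab(1+t)^{-3/2}+\deltab(1+t)^{-1}\norm{\Vmc}^2$. The $\D,\T,\Z$ terms are absorbed by the a priori bound \eqref{apa} together with the Poincar\'e-type control of the non-zero modes $\vm_{\neq}$.

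The micro terms $(\mathcal{S}^m_\Psi)^i_{1a}$ and $(\mathcal{S}^m_H)_{1a}$ contain $\p_t g$, which is not dissipated directly. Write them as $\frac{d}{dt}$ of the pairing of $g$ with $\Psic_i$ (resp.\ with $\Hc/\thetat$) minus the term where $\p_t$ falls on $\theta$ and on the test function. This is precisely how the correctors $\mathcal{X}^l$ of \eqref{definition-of-Xm} arise. The term $\p_t\Vmc$ is then bounded by $\eta\norm{\p_t\Vmc}^2+C_\eta\norm{g}_\sigma^2$. For this we use that the Burnett functions decay like Gaussians in $\xi$, so $|\cdot|_2$ of $g$ against them is bounded by $|g|_\sigma$. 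The terms $(\cdot)_{2a}$ and $(\cdot)_{3a}$ follow from \eqref{lem-2-1-n-3} and Cauchy--Schwarz, giving $C_\eta\sum_{|\gamma|\le1}\norm{\p^\gamma g}_\sigma^2$. The term $(\mathcal{S}^m_H)_4$ is handled by \eqref{lem-2-1-n-5-1-1}.

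\textbf{Estimate \eqref{estimate-of-V-2}.} Apply $\p_{x_1}$ to the system and repeat the computation with the same weights. Now the $\p_t g$ terms are kept as $\p_{x_1}$-derivatives: integrate by parts once in $x_1$ and use \eqref{lem-2-1-n-3} to get $C_\eta\norm{\p^\gamma g}_\sigma^2$ with $|\gamma|=1$, paired with $\eta\norm{\p_{x_1}^2\Vmc}^2$. This explains why no corrector appears at this level. The sources are controlled by \eqref{lem-2-1-n-2} with $k=1$ and give $\deltab(1+t)^{-5/2}$.

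\textbf{Estimate \eqref{estimate-of-V-3} (main obstacle).} At the second derivative level the structural conditions fail again; the commutator $\p_{x_1}\thetat\,\p_{x_1}^2\Psic_1\,\p_{x_1}^2\Phic$ would only carry the weight $\Dt_{-1/2}$. Here I would use the derivative-level variables $(\frac23\thetat\p_{x_1}^2\Phic,\p_{x_1}(\thetat\p_{x_1}\Psic_1),\p_{x_1}^2\Psic_{2,3},\thetat\p_{x_1}^2\Hc)$ from strategy (4). Concretely:
\begin{itemize}
\item apply $\p_{x_1}$ to $\thetat\,\times$\eqref{T-A-D-1}$_2$ differentiated once;
\item pair the result with $\p_{x_1}(\thetat\p_{x_1}\Psic_1)$, and pair the $\p_{x_1}^2$ equations for $\Phic,\Hc$ with suitably $\thetat$-weighted test functions;
\item check that the bad products $\p_{x_1}\thetat\,\p_{x_1}^2\Psic_1\,\p_{x_1}^2(\Phic,\Hc)$ cancel exactly.
\end{itemize}
Verifying this algebraic cancellation is the step I expect to be hardest. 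The remaining commutators are of order $\deltab\Dt_{-1}$ or better and land in the $(1+t)^{-i-1}\norm{\p_{x_1}^{2-i}\Vmc}^2$ terms. The micro terms are bounded using \eqref{lem-2-1-n-4} and \eqref{lem-2-1-n-5} after one integration by parts, which produces $C_\eta\sum_{|\alpha|=2}\norm{\p^\alpha g}_\sigma^2$ together with the stated $\deltab$-small $\vm$ terms.
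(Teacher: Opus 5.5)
Your overall architecture matches the paper's. It consists of energy estimates on \eqref{T-A-D-1} at the three derivative levels, the time correctors $\mathcal{X}^l$ for the $\p_t g$ terms, and the derivative-level variables at second order. However, two steps fail as you wrote them.

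First, at level one you cannot reuse the level-zero multipliers. After applying $\p_{x_1}$, the first and fourth equations contain $\p_{x_1}(\thetat\p_{x_1}\Psic_1)$. Testing with $\frac{2}{3\thetat}\p_{x_1}\Phic$, $\p_{x_1}\Psic_1$, $\frac{1}{\thetat}\p_{x_1}\Hc$ then leaves, besides exact derivatives, the residual $\frac{2\p_{x_1}\thetat}{3\thetat}\p_{x_1}\Psic_1(\p_{x_1}\Phic+\p_{x_1}\Hc)$. Since $\p_{x_1}\thetat=O(\deltab)\Dt_{-1/2}$, this term is only $O(\deltab(1+t)^{-1/2})\norm{\p_{x_1}\Vmc}_{L^2}^2$. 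It is not dissipated, and it is not admissible in \eqref{estimate-of-V-2}, where $\p_{x_1}\Vmc$ may appear only with the factor $(1+t)^{-1}$. It is exactly the loss the structural condition is meant to exclude, and it would cost the optimal rate in Section \ref{sec5}.

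The paper instead tests with $(\frac23\p_{x_1}\Phic,\thetat\p_{x_1}\Psic_1,\thetat\p_{x_1}\Psic_2,\thetat\p_{x_1}\Psic_3,\p_{x_1}\Hc)$. Then $\p_{x_1}\Phic\,\p_{x_1}(\thetat\p_{x_1}\Psic_1)+\thetat\p_{x_1}\Psic_1\,\p_{x_1}^2\Phic=\p_{x_1}(\thetat\p_{x_1}\Psic_1\,\p_{x_1}\Phic)$, and likewise with $\Hc$. The only commutators left come from $\p_t\thetat$ and from the viscous terms after two integrations by parts, all $O(\deltab\Dt_{-1})$. So at level one the structural condition depends on the choice of multiplier, not only on the transformation \eqref{anti-trans-qwe}.

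Second, at level zero the micro terms $(\mathcal{S}^m_\Psi)^i_{2a}$ and $(\mathcal{S}^m_H)_{2a}$ do not follow from \eqref{lem-2-1-n-3} and Cauchy--Schwarz. That bound contains the linear term $\sum_{\abs{\alpha}=1}\abs{\p^\alpha g}_\sigma$ with an $O(1)$ coefficient and no decaying factor, and you pair it with the undifferentiated $\Psic_i$ and $\Hc/\thetat$. Cauchy--Schwarz then leaves $\eta\norm{\Vmc}_{L^2}^2$. This quantity has no dissipation and is admissible only in the form $C\deltac(1+t)^{-1}\norm{\Vmc}_{L^2}^2$; later in the argument $\norm{\Vmc}_{L^2}^2$ is even allowed to grow.

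This is the same obstruction you correctly recognized for $\p_t g$, and it needs the spatial analogue of your corrector. Because $B_{1i}$ is orthogonal to the kernel, $P_1$ can be dropped. Integrating by parts in $x$ (the $\p_{x_2},\p_{x_3}$ divergences vanish under $\Do$) then puts the derivative either on $\Psic_i$, where it is absorbed by $\eta\norm{\p_{x_1}\Psic}_{L^2}^2$, or on the $(\rho,u,\theta)$-dependent coefficients, giving small factors $\abs{\nabla_x(\rhot,\ut,\thetat)}+\abs{\nabla_x\vm}$. This yields only $C_\eta\norm{g}_\sigma^2$ plus $\deltac$-small terms, as in \eqref{math-M-4a-1}. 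The $\abs{\gamma}=1$ terms in \eqref{estimate-of-V-1} enter only through $(\mathcal{S}^m_H)_4$, where \eqref{lem-2-1-n-5-1-1} supplies the extra factor $(1+t)^{-1/2}$. At level two, the cancellation you flag as hardest is just the integration-by-parts identity of Remark \ref{2026-5-14-1}.
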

		\begin{proof}
			We first prove \eqref{estimate-of-V-1}. Multiplying \eqref{T-A-D-1} by $\left(\frac{2\Phic}{3\thetat},\Psic_1,\Psic_2,\Psic_3,\frac{\Hc}{\thetat}\right)$, one has
			\begin{align*}
				&\frac{d}{dt}\left(\frac{1}{3}\norm{\frac{\Phic}{\thetat^{\frac{1}{2}}}}_{L^2}^2+\frac{1}{2}\norm{\Psic}_{L^2}^2+\frac{1}{2}\norm{\frac{\Hc}{\thetat^{\frac{1}{2}}}}_{L^2}^2 \right)+\norm{\sqrt{\frac{4\mu(\thetat)}{3\rhot}} \p_{x_1}\Psic_1}_{L^2}^2\\
				&\quad+\sum_{i=2}^3\norm{\sqrt{\frac{\mu(\thetat)}{\rhot}} \p_{x_1}\Psic_i}_{L^2}^2+\norm{\sqrt{\frac{\kappa(\thetat)}{\thetat\rhot}}\p_{x_1}\Hc}_{L^2}^2=J_{a1}+J_{a2}+J_{a3},
			\end{align*}
			where
			\begin{align*}
				J_{a1}=&-\sum_{i=1}^3\sum_{j=1}^3\int_{\R}(\mathcal{S}^{m}_{\Psi})_{ja}^i \Psic_i dx - \sum_{i=1}^3\int_{\R} \frac{\left((\mathcal{S}^{m}_{H})_{ia}+(\mathcal{S}^{m}_{H})_{4}\right)\Hc}{\thetat} dx_1, \\
				J_{a2}=&\sum_{i=1}^3\int_{\R}\left(\mathcal{S}_{\check{\Psi}i}^{f} +\mathcal{S}^{m}_{\check{\Psi}i}\right) \Psic_i dx_1+\frac{2}{3}\int_{\R} \frac{\mathcal{S}_{\check{\Phi}}^{f}}{\thetat} \Phic dx_1+\int_{\R} \frac{\left(\mathcal{S}_{\check{H}}^{f}+\mathcal{S}^{m}_{\check{H}}\right) \Hc}{\thetat} dx_1 ,\\
				J_{a3}=&\frac{1}{2}\int_{\R} \p_{x_1}^2\left( \frac{\kappa(\thetat)}{\thetat\rhot}\right)\Hc^2 dx_1+\int_{\R}\p_{x_1}^2\left( \frac{2\mu(\thetat)}{3\rhot}\right)\Psic_{1}^2 dx_1 + \frac{1}{2}\sum_{i=2}^3 \int_{\R}\p_{x_1}^2\left( \frac{\mu(\thetat)}{\rhot}\right)\Psic_{i}^2 dx_1\\
				&-\frac{1}{3}\int_{\R} \frac{\p_t \thetat}{\thetat^2} \Phic^2 dx_1  - \frac{1}{2} \int_{\R} \frac{\p_t \thetat}{\thetat^2} \Hc^2dx_1.
			\end{align*}
			Directly, one has
			\begin{align}
				\abs{J_{a3}}&\leq C\deltab \norm{\tilde{D}_{-1}}_{L^\infty} \norm{\Vmc}_{L^2}^2\leq C \deltab(1+t)^{-1}\norm{\Vmc}_{L^2}^2.  \label{ja13-a}
			\end{align}
			We now give a more refined estimate of $J_{a1}$. By the definition of $(\mathcal{S}^{m}_{\Psi})_{1a}^i$ \eqref{M-1} and applying integration by parts, one has
			\begin{align}\label{math-M-1a}
				\int_{\R} (\mathcal{S}^{m}_{\Psi})_{1a}^i \Psic_i d{x_1}=&R\underbrace{\p_t\int_{\R} \Do \int_{\R^3} \theta B_{1i}\left( \frac{\xi-u}{\sqrt{R\theta}}\right) \frac{\sqrt{\mu}}{M} g d\xi \Psic_i dx_1 }_{\mathcal{X}^i}-R\underbrace{\int_{\R} \Do \int_{\R^3} \p_t\left(\theta B_{1i}\left( \frac{\xi-u}{\sqrt{R\theta}}\right) \frac{\sqrt{\mu}}{M}\right) g d\xi \Psic_i dx_1}_{\mathcal{X}_1} \notag \\
				&-R\underbrace{\int_{\R} \Do \int_{\R^3} \theta B_{1i}\left( \frac{\xi-u}{\sqrt{R\theta}}\right) \frac{\sqrt{\mu}}{M} g d\xi \p_t\Psic_i dx_1 }_{\mathcal{X}_2}.
			\end{align}
			Applying H{\"o}lder's inequality and the {\it a priori} assumptions \eqref{apa}, we obtain
			\begin{align}
				\mathcal{X}_1 \leq &\int_{\Omega} \abs{\p_t (\rhot,\ut,\thetat)} \abs{g}_{\sigma} \abs{\Psic_i} dx + \int_{\Omega}  \abs{\p_t(\phi,\psi,\zeta)} \abs{g}_{\sigma}  \abs{\Psic_i} dx \notag \\
				\leq&C (\deltab^{\frac12}+\chi) \left[  (1+t)^{-1} \norm{\Psic_i}_{L^2}^2 +\norm{\p_t(\phi,\psi,\zeta)}_{L^2}^2 + \norm{g}_{\sigma}^2   \right],\label{A-T-g-1} \\
				\mathcal{X}_2 \leq & C_\eta \norm{g}_{\sigma}^2+\eta\norm{\p_t \Psic_i}_{L^2}^2. \label{A-T-g-2}
			\end{align}
			Combining \eqref{math-M-1a}, \eqref{A-T-g-1} and \eqref{A-T-g-2}, one has
			\begin{align}\label{math-M-1a-1}
				\int_{\R} (\mathcal{S}^{m}_{\Psi})_{1a}^i \Psic_i dx_1-\mathcal{X}^i 
				\leq C\deltac \left[  (1+t)^{-1} \norm{\Psic}_{L^2}^2 +\norm{\p_t(\phi,\psi,\zeta)}_{L^2}^2 \right] + \eta \norm{\p_t \Psic}_{L^2}^2+C_\eta \norm{g}_{\sigma}^2 . 
			\end{align}
			Using the same method, one has
			\begin{align}\label{math-M-4a-1}
				\int_{\R} (\mathcal{S}^{m}_{\Psi})_{2a}^i\Psic_i dx_1 
				\leq C \deltac \left[  (1+t)^{-1} \norm{\Psic}_{L^2}^2 +\norm{\p_{x_1}(\phi,\psi,\zeta)}_{L^2}^2\right] + \eta \norm{\p_{x_1}  \Psic}_{L^2}^2+C_\eta \norm{g}_{\sigma}^2. 
			\end{align}
			And we also have
			\begin{align}
				\int_{\R}(\mathcal{S}^{m}_{\Psi})_{3a}^i\Psic_idx_1&=R\int_{\R} \Do\int_{\R^3} \theta B_{1i}\left( \frac{\xi-u}{\sqrt{R\theta}}\right) \frac{\sqrt{\mu}}{M} \left[\Gamma\left(g,g \right)+\Gamma\left(\frac{\bar{G}_0}{\sqrt{\mu}},g \right)+\Gamma\left(g,\frac{\bar{G}_0}{\sqrt{\mu}}\right)  \right]d\xi \Psic_1 dx_i\notag\\
				&\leq C\deltab(1+t)^{-1}\norm{\Psic}_{L^2}^2+ C(\deltab+\chi)\norm{g}_{\sigma}^2,\label{math-M-5a-1}
			\end{align}
			where we have used the {\it a priori} assumptions \eqref{apa}. Combining \eqref{math-M-1a-1}, \eqref{math-M-4a-1} and \eqref{math-M-5a-1}, one has
			\begin{align}\label{math-M-1-1-1}
				& \sum_{i=1}^3  \sum_{j=1}^3 \int_{\R}(\mathcal{S}^{m}_{\Psi})_{ja}^i  \Psic_i dx_1-\sum_{i=1}^3\mathcal{X}^i \notag\\
				\leq &C\deltac \left[  (1+t)^{-1} \norm{\Vmc}_{L^2}^2 +\norm{\p_t \vm}_{L^2}^2+\norm{\nabla_x \vm}_{L^2}^2\right] +\eta\left( \norm{\p_t \Vmc}_{L^2}^2+\norm{\p_{x_1} \Psic}_{L^2}^2\right)+ C_\eta \norm{g}_{\sigma}^2 .
			\end{align}
			For the terms containing $(\mathcal{S}^{m}_{H})_{4}$, by \eqref{lem-2-1-n-3}, \eqref{lem-2-1-n-5-1-1} and the {\it a priori} assumptions \eqref{apa}, we have
			\begin{align}\label{math-M-1-1-1-1}
				\int_{\R} \abs{\frac{(\mathcal{S}^{m}_{H})_{4}\Hc}{\thetat}}dx_1 \leq C\deltab(1+t)^{-\frac32}+ C\check{\delta}\left[(1+t)^{-1}\norm{\Hc}_{L^2}^2+\sum_{\abs{\alpha}\le1}\norm{\p^{\alpha}g}_{\sigma}^2 +\norm{\vm}_{H^1}^2+\norm{\p_t\vm}_{L^2}^2\right].
			\end{align}
			The remaining terms in $J_{a1}$ can be treated  as for deriving \eqref{math-M-1a}-\eqref{math-M-1-1-1-1}. Then we obtain
			\begin{align}\label{math-M-final}
				J_{a1} + \sum_{l=1}^4 \mathcal{X}^l\leq& C\deltab(1+t)^{-\frac32}+C\check{\delta} \left[  (1+t)^{-1} \norm{\Vmc}_{L^2}^2 +\norm{\p_t \vm}_{L^2}^2+\norm{\nabla_x \vm}_{L^2}^2\right]\notag \\
				&+\eta\left( \norm{\p_t \Vmc}_{L^2}^2+\norm{\p_{x_1} (\Psic,\Hc)}_{L^2}^2\right)+ C_\eta \sum_{\abs{\alpha}\le 1}\norm{\p^{\alpha}g}_{\sigma}^2 .
			\end{align}
			By \eqref{useful-notation}, \eqref{lem-2-1-n-3}, \eqref{lem-2-1-n-2} and the {\it a priori} assumptions \eqref{apa}, one has
			\begin{align}\label{ja12}
				J_{a2}&\le C\int_{\R} \left[\deltab\Dt_{-\frac{3}{2}}+\deltab\Dt_{-1}\abs{\vm}+\deltab \tilde D_{-\frac12}(\abs{\nabla_x\vm^{\ast}}+\abs{\p_t\vm^{\ast}})+\D^{(0)}+\T^{(0)}+\T^{(1)}+\Z^{(0)}+\Z^{(1)}\right] \abs{\Vmc} dx_1\notag\\
				&\leq C\deltab(1+t)^{-\frac32}+C\check{\delta}\left[ (1+t)^{-1}\norm{\Vmc}_{L^2}^2+\norm{\p_{x_1}\Vmc}_{H^1}^2+\norm{\nabla_x \vm}_{L^2}^2 +\norm{\p_t \vm^{\ast}}_{L^2}^2\right] .
			\end{align}
			Combining \eqref{ja13-a}, \eqref{math-M-final} and \eqref{ja12}, we have completed the proof of \eqref{estimate-of-V-1}.
			
			To prove \eqref{estimate-of-V-2}, multiplying $\p_{x_1}$\eqref{T-A-D-1} by $\left(\frac{2}{3}\p_{x_1}\Phic,\thetat\p_{x_1}\Psic_{1},\thetat\p_{x_1}\Psic_{2},\thetat\p_{x_1}\Psic_{3},\p_{x_1}\Hc \right)$, one has
			\begin{align*}
				&\frac{d}{dt}\left(\frac{1}{3}\norm{\p_{x_1}\Phic}_{L^2}^2+\frac{1}{2}\norm{\p_{x_1}\Psic}_{L^2}^2+\frac{1}{2}\norm{\p_{x_1}\Hc}_{L^2}^2 \right)+\norm{\sqrt{\frac{4\mu(\thetat)\thetat}{3\rhot}} \p_{x_1}^2\Psic_1}_{L^2}^2\\
				&\quad+\sum_{i=2}^3\norm{\sqrt{\frac{\mu(\thetat)\thetat}{\rhot}} \p_{x_1}^2\Psic_i}_{L^2}^2+\norm{\sqrt{\frac{\kappa(\thetat)}{\rhot}}\p_{x_1}^2\Hc}_{L^2}^2=J_{b1}+J_{b2}+J_{b3},
			\end{align*}
			where
			\begin{align*}
				J_{b1}=&-\sum_{i=1}^3\sum_{j=1}^3\int_{\R}\thetat\p_{x_1}(\mathcal{S}^{m}_{\Psi})_{ja}^i \p_{x_1}\Psic_i dx_1-\sum_{i=1}^3\int_{\R}\left( \p_{x_1}(\mathcal{S}^{m}_{H})_{ia}+ \p_{x_1}(\mathcal{S}^{m}_{H})_{4}\right)\p_{x_1}\Hc dx_1 , \\
				J_{b2}=&\sum_{i=1}^3\int_{\R}\thetat \left(\p_{x_1}\mathcal{S}_{\check{\Psi}i}^{f}+\p_{x_1} \mathcal{S}^{m}_{\check{\Psi}i} \right) \p_{x_1}\Psic_i dx_1+\frac23\int_{\R} \p_{x_1}\mathcal{S}_{\check{\Phi}}^{f} \p_{x_1}\Phic dx_1+\int_{\R} \left(\p_{x_1}\mathcal{S}_{\check{H}}^{f} + \p_{x_1}\mathcal{S}^{m}_{\check{H}} \right)\p_{x_1}\Hc dx_1,\\
				J_{b3}=&\frac{1}{2}\sum_{i=1}^3\int_{\R} \p_t \thetat \abs{\p_{x_1}\Psic_{i}}^2 dx_1+\int_{\R}\frac{2}{3}\p_{x_1}\left( \frac{\p_{x_1}\thetat\mu(\thetat)}{\rhot}\right)\abs{\p_{x_1}\Psic_1}^2 + \sum_{i=2}^1\frac{1}{2}\p_{x_1}\left( \frac{\p_{x_1}\thetat\mu(\thetat)}{\rhot}\right)\abs{\p_{x_1}\Psi_i}^2 dx_1.
			\end{align*}
			Using \eqref{M-1}, \eqref{M-2t}, \eqref{M-4t-1-1}-\eqref{Q-4t}, \eqref{lem-2-1-n-3}, \eqref{lem-2-1-n-5}, the {\it a priori} assumptions \eqref{apa} and integration by parts, one has
			\begin{align}
				&\abs{J_{b1}} \le \sum_{i=1}^3\sum_{j=1}^3\int_{\R}\left(\deltab\tilde{D}_{-\frac12}\abs{\p_{x_1}\Vmc}+\abs{\p_{x_1}^2 \Vmc^{\ast}}\right)\abs{(\mathcal{S}^{m}_{\Psi})_{ja}^i}  dx_1+\sum_{i=1}^3\int_{\R}\left( \abs{(\mathcal{S}^{m}_{H})_{ia}}+ \abs{(\mathcal{S}^{m}_{H})_{4}}\right)\abs{\p_{x_1}^2\Vmc^{\ast}} dx_1 \notag\\
				&\qquad\le C\deltab(1+t)^{-\frac52}+C\check{\delta}(1+t)^{-1}\left(\norm{\p_{x_1}\Vmc}_{L^2}^2 +\norm{g}_{\sigma}^2\right) +\eta\norm{\p_{x_1}^2 (\Psic,\Hc)}_{L^2}^2\notag\\
				&\qquad\quad+C\deltab\norm{\nabla_x \vm}_{L^2}^2 + C\deltab\norm{\p_t\vm^{\ast}}_{L^2}^2+C_\eta \sum_{\abs{\alpha}=1}\norm{\p^{\alpha}g}_{\sigma}^2,\label{jb11}\\
				&\abs{J_{b3}}\leq C\deltab(1+t)^{-1} \norm{\p_{x_1}\Vmc}_{L^2}^2. \label{jb11-a}
			\end{align}
			Then employing \eqref{lem-2-1-n-3}, \eqref{lem-2-1-n-2}, the {\it a priori} assumptions \eqref{apa} and integration by parts, we obtain
			\begin{align}\label{jb12}
				J_{b2}&\le C\int_{\R} \left[\deltab\Dt_{-\frac{3}{2}}+\deltab\Dt_{-1}\abs{\vm}+\deltab \tilde D_{-\frac12}(\abs{\nabla_x\vm^{\ast}}+\abs{\p_t\vm^{\ast}})+\D^{(0)}+\T^{(0)}+\T^{(1)}+\Z^{(0)}+\Z^{(1)}\right] \notag\\
				&\qquad\qquad\qquad \times\left(\deltab^{\frac12}\Dt_{-\frac12}\abs{\p_{x_1}\Vmc}+\abs{\p_{x_1}^2 \Vmc}\right) dx_1\notag\\
				&\leq C\deltab(1+t)^{-\frac52}+C\check{\delta}\left[ \sum_{i=0}^2(1+t)^{-2+i}\norm{\p_{x_1}^i\Vmc}_{L^2}^2+\norm{\nabla_x \vm}_{L^2}^2+\norm{\p_t \vm^{\ast}}_{L^2}^2 \right] .
			\end{align}
			Combining \eqref{jb11}, \eqref{jb11-a} and \eqref{jb12}, we have completed the proof of \eqref{estimate-of-V-2}.
			
			To prove \eqref{estimate-of-V-3}, we will rewrite the equation $\p_{x_1}^2$\eqref{T-A-D-1} in the following form to avoid the limitations of the structural conditions
			\begin{align}\label{T-A-D-2}
				\left\{ \begin{aligned}
					&\p_t \p_{x_1}^2\Phic +\p_{x_1}^2\left( \thetat\p_{x_1}\Psic_{1}\right)=\p_{x_1}^2\mathcal{S}_{\check{\Phi}}^{f},\\
					&\p_t\p_{x_1}\left( \thetat \p_{x_1}\Psic_{1} \right) + \frac{2}{3} \p_{x_1}\left(\thetat\p_{x_1}^2\Hc\right)+\frac{2}{3}\p_{x_1}\left(\thetat\p_{x_1}^2\Phic\right) -\p_{x_1}\left[\thetat\p_{x_1}\left(\frac{4\mu(\thetat)}{3\rhot} \p_{x_1}^2\Psic_{1} \right) \right]\\
					&\;+\p_{x_1}\left(\thetat\p_{x_1}(\mathcal{S}^{m}_{\Psi})^1\right)= \p_{x_1}\left( \thetat \p_{x_1}\mathcal{S}_{\check{\Psi}1}^{f}\right)     + \p_{x_1}\left( \p_t \thetat \p_{x_1} \Psic_1 \right),\\
					&\p_t \p_{x_1}^2\Psic_i - \p_{x_1}^2\left(\frac{\mu(\thetat)}{\rhot}\p_{x_1}^2 \Psic_i\right) + \p_{x_1}^2 (\mathcal{S}^{m}_{\Psi})^i  =  \p_{x_1}^2 \mathcal{S}_{\check{\Psi}i}^{f} ,\quad \text{for} \quad i=2,3,\\
					&\p_t \p_{x_1}^2\Hc + \frac{2}{3}\p_{x_1}^2\left(\thetat \p_{x_1}\Psic_{1}\right) - \p_{x_1}^2\left(\frac{\kappa(\thetat)}{\rhot}\p_{x_1}^2 \Hc\right) + \p_{x_1}^2 (\mathcal{S}^{m}_{H})= \p_{x_1}^2\mathcal{S}_{\check{H}}^{f},
				\end{aligned}\right.
			\end{align}
			where we have used $(\mathcal{S}^{m}_{\Psi})^{i}=\sum_{j=1}^3\sum_{l=a,b}(\mathcal{S}^{m}_{\Psi})_{jl}^i+(\mathcal{S}^{m}_{\Psi})_{4b}^i$ \eqref{non-fluid-1-a} and $(\mathcal{S}^{m}_{H})=\sum_{j=1}^3\Big((\mathcal{S}^{m}_{H})_{ja}+(\mathcal{S}^{m}_{H})_{jb}\Big)+(\mathcal{S}^{m}_{H})_{4b}+(\mathcal{S}^{m}_{H})_{4}$ \eqref{non-fluid-1-b}. Multiplying \eqref{T-A-D-2} by $\left(\frac{2}{3}\thetat\p_{x_1}^2\Phic,\p_{x_1}\left(\thetat\p_{x_1}\Psic_{1}\right), \p_{x_1}^2\Psic_{2},\p_{x_1}^2\Psic_{3}, \; \thetat \p_{x_1}^2\Hc \right)$, one has
			\begin{align}\label{2025-11-12-2}
				&\frac{d}{dt}\left[\frac{1}{3}\norm{\thetat^{\frac{1}{2}}\p_{x_1}^2\Phic}_{L^2}^2+\frac{1}{2}\left(\norm{\p_{x_1}\left(\thetat\p_{x_1}\Psic_{1}\right)}_{L^2}^2+
				\sum_{i=2}^{3}\norm{\p_{x_1}^2\Psic_i}_{L^2}^2 +\norm{\thetat^{\frac{1}{2}}\p_{x_1}^2\Hc}_{L^2}^2\right)\right]\notag\\
				&+\frac{4}{3}\norm{\thetat\sqrt{\frac{\mu(\thetat)}{\rhot}} \p_{x_1}^3\Psic_1}_{L^2}^2
				+\sum_{i=2}^3\norm{\sqrt{\frac{\mu(\thetat)}{\rhot}} \p_{x_1}^3\Psic_i}_{L^2}^2+\norm{\sqrt{\frac{\thetat\kappa(\thetat)}{\rhot}}\p_{x_1}^3\Hc}_{L^2}^2=J_{c1}+J_{c2}+J_{c3},
			\end{align}
			where
			\begin{align*}
				J_{c1}=&-\int_{\R}\p_{x_1}\left(\thetat\p_{x_1} (\mathcal{S}^{m}_{\Psi})^1\right) \p_{x_1}\left(\thetat \p_{x_1}\Psic_1\right) dx_1-\sum_{i=2}^3\int_{\R}\p_{x_1}^2(\mathcal{S}^{m}_{\Psi})^i \p_{x_1}^2\Psic_i dx_1-\int_{\R} \thetat\p_{x_1}^2(\mathcal{S}^{m}_{H}) \p_{x_1}^2\Hc dx_1 , \\
				J_{c2}=&\frac23\int_{\R} \thetat\p_{x_1}^2\mathcal{S}_{\check{\Phi}}^{f} \p_{x_1}^2\Phic dx_1+\int_{\R} \p_{x_1}\left(\thetat \p_{x_1}\Psic_1\right)\p_{x_1}\left( \thetat\p_{x_1}\mathcal{S}_{\check{\Psi}1}^{f} \right) +\sum_{i=2}^3\int_{\R} \p_{x_1}^2\mathcal{S}_{\check{\Psi}i}^{f} \p_{x_1}^2\Psic_i dx_1+\int_{\R}\thetat\p_{x_1}^2\mathcal{S}_{\check{H}}^{f}\p_{x_1}^2\Hc dx_1,\\
				J_{c3}=&\frac{1}{3}\int_{\R}\p_t\thetat\abs{\p_{x_1}^2 \Phic}^2dx_1+\frac{1}{2}\int_{\R}\p_t\thetat\abs{\p_{x_1}^2 \Hc}^2dx_1+\int_{\R} \left(\thetat \p_{x_1}\Psic_1\right)_{x_1}\left(\p_t\thetat \p_{x_1}\Psic_1 \right)_{x_1}dx_1\\
				&+\frac12\sum_{i=2}^3\int_{\R} \p_{x_1}^2 \left( \frac{\mu(\thetat)}{\rhot}\right)\abs{\p_{x_1}^2\Psic_i}^2dx_1+    \frac{1}{2}\int_{\R} \left[ \p_{x_1}^2\left( \frac{\kappa(\thetat)}{\rhot}\right)\thetat+\frac{\kappa(\thetat)}{\rhot}\p_{x_1}^2\thetat   \right]\abs{\p_{x_1}^2\Hc}^2dx_1\\
				&-\frac{4}{3} \int_{\R}\thetat \p_{x_1} \left(\frac{\mu(\thetat)}{\rhot} \right)\p_{x_1}^{2}\Psic_1\p_{x_1}^2\left(\thetat\p_{x_1}\Psic_1\right)dx_1-\frac{4}{3} \int_{\R} \frac{\thetat\mu(\thetat)}{\rhot} \p_{x_1}^3 \Psic_1 \left( 2\p_{x_1}\thetat \p_{x_1}^2\Psic_1+\p_{x_1}^2\thetat\p_{x_1}\Psic_1\right)dx_1.
			\end{align*}
			Employing \eqref{lem-2-1-n-4} and the {\it a priori} assumptions \eqref{apa}, one has
			\begin{align}
				\abs{J_{c1}}\leq&C \sum_{i=1}^3\int_{\R} \left( \abs{\p_{x_1}(\mathcal{S}^{m}_{\Psi})^i}+\abs{\p_{x_1}(\mathcal{S}^{m}_{H})} \right)\left( \bar{\delta}^{\frac12}\Dt_{-1}\abs{\p_{x_1}\Vmc} +\bar{\delta}^{\frac12}\Dt_{-\frac12}\abs{\p_{x_1}^2\Vmc}+\abs{\p_{x_1}^3(\Psic,\Hc)}\right) dx_1 \notag\\
				\leq& C\check{\delta}\left[\sum_{i=0}^1(1+t)^{-2+i}\norm{\p_{x_1}^{1+i}\Vmc}_{L^2}^2+(1+t)^{-2}(\norm{\nabla_x\vm}_{L^2}^2+\norm{\p_t\vm}_{L^2}^2)+(1+t)^{-2}\norm{g}_{\sigma}^2+\sum_{\abs{\gamma}=1}\norm{\p^{\gamma}g}_{\sigma}^2\right]\notag\\
				&+\eta\norm{\p_{x_1}^3(\Psic,\Hc)}_{L^2}^2+C_\eta \sum_{\abs{\alpha}=2}\norm{\p^{\alpha}g}_{\sigma}^2+C\deltab\Big(\norm{\nabla_x^2 \vm}_{L^2}^2+\norm{\p_t\nabla\vm^{\ast}}_{L^2}^2+(1+t)^{-\frac52}\Big),\label{jc11}\\
				\abs{J_{c3}}\leq&C\check{\delta}\left[(1+t)^{-3}\norm{\Vmc}_{L^2}^2+ (1+t)^{-2}\norm{\p_{x_1}\Vmc}_{L^2}^2+ (1+t)^{-1}\norm{\p_{x_1}^2\Vmc}_{L^2}^2+\norm{\p_{x_1}^3(\Psic,\Hc)}_{L^2}^2\right].\label{jc11-1}
			\end{align}
			Then employing \eqref{useful-notation}, \eqref{lem-2-1-n-2} and integration by parts, we obtain
			\begin{align}\label{jc12}
				J_{c2}&\le C  \int_{\R} \left[\deltab\Dt_{-2}+\D^{(1)}+\T^{(1)}+\T^{(2)}+\Z^{(1)}+\Z^{(2)}\right] \left(\abs{\p_{x_1}^3\Vmc}+\deltab^{\frac12}\Dt_{-\frac12}\abs{\p_{x_1}^2\Vmc}+\deltab^{\frac12} \Dt_{-1}\abs{\p_{x_1}\Vmc}\right) dx_1\notag \\[1.5mm]
				&\leq C\deltab(1+t)^{-\frac72}+C\check{\delta}\left[ \sum_{i=0}^3 (1+t)^{-i}\norm{\p_{x_1}^{3-i}\Vmc}_{L^2}^2+\norm{\nabla_x^2\vm}_{L^2}^2 \right],
			\end{align}
			where we have used the a $prioti$ assumptions \eqref{apa} and 
			\begin{align*}
				&\int_{\R} \abs{\p_{x_1}^2\Vmc}^2 \abs{\p_{x_1}^3\Vmc} dx_1 \le C 
				\norm{\p_{x_1}^2 \Vmc}_{L^4}^2 \norm{\p_{x_1}^3\Vmc}_{L^2} \le C \norm{\p_{x_1}^2 \Vmc}_{L^2}^{\frac{3}{2}} \norm{\p_{x_1}^3 \Vmc}_{L^2}^\frac{3}{2}\\
				&\qquad\le C \chi^{-3}\norm{\p_{x_1}^2\Vmc}_{L^2}^6+C\chi \norm{\p_{x_1}^3 \Vmc}_{L^2}^2 \le  C \chi(1+t)^{-1}\norm{\p_{x_1}^2\Vmc}_{L^2}^2+C \chi\norm{\p_{x_1}^3 \Vmc}_{L^2}^2,  \\
				&\int_{\R} \abs{\p_{x_1}\Vmc}\abs{\p_{x_1}^2\Vmc} \abs{\p_{x_1}^3\Vmc} dx_1 \le C
				\norm{\p_{x_1} \Vmc}_{L^{\infty}}\norm{\p_{x_1}^2 \Vmc}_{L^2} \norm{\p_{x_1}^3\Vmc}_{L^2} \\
				&\qquad\le C\chi\norm{\p_{x_1}^3 \Vmc}_{L^2}^2+ C\chi(1+t)^{-1}\norm{\p_{x_1}^2\Vmc}_{L^2}^2.
			\end{align*}
			Combining \fullrangefour{2025-11-12-2}{jc11}{jc11-1}{jc12}, 
			we have completed the proof of \eqref{estimate-of-V-3}. Hence we have finished the proof of Lemma \ref{s-t-s-tt-1}.
		\end{proof}
		
		\begin{Rem}\label{2026-5-14-1}
			In deriving the energy estimate \eqref{2025-11-12-2}, we have utilized the fact that 
			\begin{align*}
				&\int_{\R} \p_{x_1}^2\left( \thetat\p_{x_1}\Psic_{1}\right) \thetat \p_{x_1}^2 \Phic dx_1 + \int_{\R} \p_{x_1} \left(\thetat \p_{x_1}\Psic_1 \right) \p_{x_1}\left(\thetat \p_{x_1}^2\Phic \right)dx_1=0,\\ 
				& \int_{\R} \p_{x_1}^2\left( \thetat\p_{x_1}\Psic_{1}\right) \thetat \p_{x_1}^2 \Hc dx_1 + \int_{\R} \p_{x_1} \left(\thetat \p_{x_1}\Psic_1 \right) \p_{x_1}\left(\thetat \p_{x_1}^2\Hc \right)dx_1=0.
			\end{align*}
			As a result, the term 
			$
			\deltab \int_{\R} \tilde{D}_{-\frac12} \left[(\p_{x_1}^2\Phic)^2+\abs{\p_{x_1}^2 \Psic}^2 + (\p_{x_1}^2\Hc)^2 \right] dx_1
			$
			does not appear in the energy estimate for system \eqref{T-A-D-2}, which is the main reason why system \eqref{T-A-D-2} is used for the second-order derivative estimate.
		\end{Rem}
		
		Recall the definition of $(\Vmc,\vm,\vm^{\ast})$ \eqref{2025-11-5-1}. Next, we estimate $\norm{\p_t \Vmc}_{L^2}^2$ and $\norm{\p_t \vm}_{L^2}^2$.
		\begin{Lem}\label{s-t-s-tt-2}
			Under the same assumptions of  Proposition \ref{Thm-ape}, it holds that
			\begin{align*}
				&\norm{\p_t\vm}_{L^2}\leq C\bigg[\norm{\nabla_x \vm}_{L^2}+\norm{\nabla_x^2\vm^{\ast}}_{L^2}+\deltab\norm{\p_t\nabla_x\vm^{\ast}}_{L^2}^2+\sum_{1\leq\abs{\alpha}\leq2}\norm{\p^{\alpha}g}_{\sigma} \\
				&\qquad\qquad\qquad\quad+\check{\delta}(1+t)^{-\frac{1}{2}}\left(\norm{g}_{\sigma}+\norm{\vm}_{L^2} \right) + \deltab (1+t)^{-\frac{5}{4}}\bigg],\\
				&\norm{\p_t\Vmc}_{L^2}\leq C\left(\norm{\p_{x_1}\Vmc}_{H^1}^2+\sum_{\abs{\gamma}\leq1}\norm{\p^{\gamma}g}_{\sigma}\right)+C\check{\delta}\left[(1+t)^{-1}\norm{\Vmc}_{L^2}+\norm{\nabla_x \vm}_{L^2}+\norm{\p_t\vm^{\ast}}_{L^2}^2\right]+ C\deltab (1+t)^{-\frac{5}{4}}. 
			\end{align*}
		\end{Lem}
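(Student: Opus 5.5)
The plan is to read both bounds directly off the evolution equations, solving each for the time derivative and estimating every remaining term in $L^2$. No energy argument is needed. For $\p_t\vm=(\p_t\phi,\p_t\psi,\p_t\zeta)$ we use the perturbation system \eqref{perturbation-1}. For $\p_t\Vmc$ we use the transformed anti-derivative system \eqref{T-A-D-1}.

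\textbf{Bound for $\p_t\vm$.} From \eqref{perturbation-1} we isolate the time derivatives:
\begin{align*}
\p_t\phi&=-\rhot\operatorname{div}_x\psi+\mathcal{S}^f_\phi,\\
\p_t\psi&=-\tfrac23\tfrac{\thetat}{\rhot}\nabla_x\phi-\tfrac23\nabla_x\zeta+\tfrac{\mu(\thetat)}{\rhot}\big(\Delta_x\psi+\tfrac13\nabla_x\operatorname{div}_x\psi\big)+\mathcal{S}_\psi^{f1}+\mathcal{S}_\psi^{f2}+\mathcal{S}_\psi^m,
\end{align*}
and similarly for $\p_t\zeta$. The linear terms are bounded by $\norm{\nabla_x\vm}_{L^2}+\norm{\nabla_x^2\vm^\ast}_{L^2}$. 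Only $\psi$ and $\zeta$ carry second derivatives, which is why $\vm^\ast$ appears there.

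The fluid sources $\mathcal{S}^f_\phi$, $\mathcal{S}^{f1}$, $\mathcal{S}^{f2}$ are handled using \eqref{S-m-0}, \eqref{S-m}, \eqref{Q-m}, \eqref{S-m-4} and \eqref{Q-m_4}. Their contributions split as follows.
\begin{itemize}
\item Terms of the form $\vm\cdot\nabla_x\tilde{\U}$ are bounded by $\deltab\Dt_{-\frac12}\abs{\vm}$. Their $L^2$ norm is at most $\check\delta(1+t)^{-\frac12}\norm{\vm}_{L^2}$.
\item Quadratic terms such as $u\cdot\nabla_x\phi$, $\vm\nabla_x^2\vm$ and $\abs{\nabla_x\vm}^2$ are absorbed into $C\norm{\nabla_x\vm}_{H^1}$ using $\norm{u}_{L^\infty}\le C$ and the smallness of $\chi$.
\item The error terms $\p_{x_1}\tilde\Rm$ are bounded via Corollary \ref{R-m-1-1-1}. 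They contribute $\deltab(1+t)^{-\frac54}$, because $\norm{\p_{x_1}\tilde\Rm}_{L^2}^2\le C\deltab^3(1+t)^{-\frac72}$.
\end{itemize}

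The microscopic sources $\mathcal{S}_\psi^m$ and $\mathcal{S}_\zeta^m$ involve $\nabla_x L_M^{-1}\Pi-\nabla_x L_{\Mb}^{-1}\bar\Pi_1$. We estimate them as in \eqref{non-fluid-s-s-1}, using the Burnett-function representation \eqref{5.1}--\eqref{5.2}. This yields $\sum_{1\le|\alpha|\le2}\norm{\p^\alpha g}_\sigma$, plus $\deltab\norm{\p_t\nabla_x\vm^\ast}_{L^2}$ coming from $\p_t\bar G_0$, plus the cross terms $\check\delta(1+t)^{-\frac12}\norm{g}_\sigma$. The difference $L_M^{-1}-L_{\Mb}^{-1}$ acting on $\bar\Pi_1$ gives $\deltab\Dt_{-1}\abs{\vm}$ and $\deltab\Dt_{-\frac32}$. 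Both fit into the stated right-hand side.

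\textbf{Bound for $\p_t\Vmc$.} We use \eqref{T-A-D-1} together with Lemma \ref{nonlinear-error-estimate}.
\begin{itemize}
\item The linear parts $\thetat\p_{x_1}\Psic_1$, $\p_{x_1}\Hc$, $\p_{x_1}\Phic$ and $\p_{x_1}^2\Psic$, $\p_{x_1}^2\Hc$ are bounded by $\norm{\p_{x_1}\Vmc}_{H^1}$.
\item The sources $\mathcal{S}^f$ are bounded by \eqref{2026-6-8} and \eqref{lem-2-1-n-2} with $k=0$. For the notation in \eqref{useful-notation}: $\D^{(0)}$ gives $\check\delta(1+t)^{-1}\norm{\Vmc}_{L^2}$ together with derivative terms, $\T^{(0)}$ and $\T^{(1)}$ are bounded by $\norm{\p_{x_1}\Vmc}_{H^1}$ times a small factor, and $\Z^{(0)}$, $\Z^{(1)}$ are bounded by $\check\delta\norm{\nabla_x\vm}_{L^2}$ after applying Poincaré on $\Torus^2$ to $\vm_{\neq}$.
\item The terms $\sum_j(\mathcal{S}^m_\Psi)^i_{ja}$, $(\mathcal{S}^m_H)_{ia}$ and $(\mathcal{S}^m_H)_4$ are bounded by \eqref{lem-2-1-n-3} and \eqref{lem-2-1-n-5-1-1}. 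They yield $\sum_{|\gamma|\le1}\norm{\p^\gamma g}_\sigma$ and $\check\delta\norm{\p_t\vm^\ast}_{L^2}$.
\end{itemize}

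The main obstacle is the term $(\mathcal{S}^m_\Psi)_{1a}$, which contains $\p_t g$ inside an integral. The plan is to control it by $\norm{\p_t g}_\sigma$, since $\p_t$ is among the derivatives in $\p^\gamma$. The key point is that the Burnett weights decay fast enough for $\sqrt\mu/M$ times $B_{1i}$ to pair with the $\sigma$-norm, using \eqref{1.35}.

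A second difficulty is that $\p_t\vm^\ast$ appears on the right of both bounds. This is harmless because it always comes with a small coefficient $\check\delta$. This is where the smallness of $\chi$ and $\deltab$ is used, and it is also why Theorem \ref{H2-enenrgy-anti} carries $\norm{\p_t\nabla_x\vm^\ast}$ on its right-hand side.
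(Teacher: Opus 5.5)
Your plan is the paper's own argument (the paper merely states that both bounds follow directly from \eqref{perturbation-1} and \eqref{T-A-D-1} under \eqref{apa}), and the unsquared quantities it produces are the right ones, namely $\norm{\p_{x_1}\Vmc}_{H^1}$, $\deltab\norm{\p_t\nabla_x\vm^{\ast}}_{L^2}$ and $\check{\delta}\norm{\p_t\vm^{\ast}}_{L^2}$: the squares in the displayed statement should be read as typos, since the linear term $\thetat\p_{x_1}\Psic_1$ in $\p_t\Phic$ only gives $\norm{\p_{x_1}\Vmc}_{H^1}$, and it is this unsquared form, squared afterwards, that is needed to absorb $\eta\norm{\p_t\Vmc}_{L^2}^2$ in Theorem \ref{H2-enenrgy-anti}. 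Two bookkeeping repairs are needed in the second bound: first, estimate the $a$-terms $(\mathcal{S}^{m}_{\Psi})^i_{ja}$, $(\mathcal{S}^{m}_{H})_{ja}$ directly from their definitions, which contain no pure source, rather than through \eqref{lem-2-1-n-3}, whose $\deltab\Dt_{-1}$ has $L^2$ norm $\deltab(1+t)^{-\frac34}$ and would spoil the rate $(1+t)^{-\frac54}$; second, do not drop $\mathcal{S}^{m}_{\check{\Psi}i}$, $\mathcal{S}^{m}_{\check{H}}$, which go through \eqref{lem-2-1-n-3-1-1-1} and, like \eqref{lem-2-1-n-5-1-1}, leave a term $\deltab(1+t)^{-1}\norm{\vm}_{L^2}$ that is not on that right-hand side and must be converted using \eqref{2025.6.08-1} for $\vm_{\neq}$ and $\norm{\Do\vm}_{L^2}\lesssim\norm{\p_{x_1}\Vmc}_{L^2}+\deltab^{\frac12}(1+t)^{-\frac12}\norm{\Vmc}_{L^2}+\cdots$ (recall $\Do\phi=\p_{x_1}(\Phic/\thetat)$ and $\Do\varphi=\p_{x_1}\Psic$).
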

		\begin{proof}
			The desired estimates above can be directly obtained from equations \eqref{perturbation-1} and \eqref{T-A-D-1} and using the {\it a priori} assumptions \eqref{apa}. For brevity of presentation, details of the proof are omitted.
		\end{proof}

		Finally, we estimate $\norm{\p_{x_1}^{k+1}\Phi}_{L^2}^2$.
		\begin{Lem}\label{H-D-Phi-lem}
			Under the same assumptions of  Proposition \ref{Thm-ape}, for $k=0,1$, it holds that
			\begin{align}\label{H-D-Phi-lem.pp}
				&\frac{d}{dt}\left( \norm{\p_{x_1}^{k+1}\Phic}_{L^2}^2+\int_{\R} \p_{x_1}^{k+1}\Phic\p_{x_1}^k\Psic_1 dx_1   \right) + c\norm{\p_{x_1}^{k+1}\Phic}_{L^2}^2\notag \\
				\le& C\norm{\p_{x_1}^{k+1}(\Psic,\Hc)}_{L^2}^2+ C\deltab (1+t)^{-\frac{3+2k}{2}}+C\sum_{k\le\abs{\alpha}\le k+1}\norm{\p^{\alpha}g}_{\sigma}^2+C\deltab\abs{k}(1+t)^{-1}(\norm{\nabla_x\vm}_{L^2}^2+\norm{\p_t\vm^{\ast}}_{L^2}^2)\notag\\
				&+C\check{\delta}\left[ \sum_{i=0}^k (1+t)^{i-k-1}\norm{\p_{x_1}^i \Vmc}_{L^2}^2+(1+t)^{-k-1}\norm{g}_{\sigma}^2+\norm{\nabla_x^{k+1}\vm}_{L^2}^2+\norm{\p_t\nabla_x^k\vm^{\ast}}_{L^2}^2+\norm{\p_{x_1}^{k+2}\Vmc}_{L^2}^2    \right].
			\end{align}
		\end{Lem}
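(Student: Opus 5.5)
This is the standard hyperbolic--parabolic dissipation estimate for the density-type variable, applied to the transformed system \eqref{T-A-D-1}. Apply $\p_{x_1}^k$ to \eqref{T-A-D-1}$_2$ and multiply by $\p_{x_1}^{k+1}\Phic$. The term $\frac23\p_{x_1}^{k+1}\Phic$ in the $\Psic_1$-equation gives the good quantity $\frac23\norm{\p_{x_1}^{k+1}\Phic}_{L^2}^2$. The time derivative term is handled by writing
\[
\int_{\R}\p_t\p_{x_1}^k\Psic_1\,\p_{x_1}^{k+1}\Phic\,dx_1
=\frac{d}{dt}\int_{\R}\p_{x_1}^k\Psic_1\,\p_{x_1}^{k+1}\Phic\,dx_1+\int_{\R}\p_{x_1}^{k+1}\Psic_1\,\p_t\p_{x_1}^{k}\Phic\,dx_1 .
\]
We then substitute $\p_t\Phic=-\thetat\p_{x_1}\Psic_1+\mathcal{S}^f_{\Phic}$ from \eqref{T-A-D-1}$_1$. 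This produces $\norm{\p_{x_1}^{k+1}\Psic_1}^2_{L^2}$ plus commutator terms with $\p_{x_1}\thetat$, which are bounded by $\deltab\tilde D_{-\frac12}$ times lower derivatives. The $\mathcal{S}^f_{\Phic}$ contribution is controlled by \eqref{2026-6-8}, yielding $C\deltab(1+t)^{-\frac{3+2k}{2}}$ and $\D^{(k)}$-terms.

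The viscous term is $-\frac{4}{3}\p_{x_1}^k(\frac{\mu(\thetat)}{\rhot}\p_{x_1}^2\Psic_1)\,\p_{x_1}^{k+1}\Phic$. We do not integrate it by parts onto $\Phic$. Instead we bound it directly by $\eta\norm{\p_{x_1}^{k+1}\Phic}^2+C\norm{\p_{x_1}^{k+2}\Psic_1}^2$ plus lower-order terms. For $k=0$ this is absorbed into the $C\norm{\p_{x_1}^{k+1}(\Psic,\Hc)}^2$ and $\norm{\p_{x_1}^{k+2}\Vmc}^2$ terms on the right-hand side.

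\textbf{Main obstacle.} A small constant $\check\delta$ on $\norm{\p_{x_1}^{k+2}\Vmc}^2$ does not come from a plain Young inequality, since the viscous coefficient is $O(1)$. Exactly as in Step 3 of Lemma \ref{Lem-Wb-1}, I would therefore rewrite $\p_{x_1}^{k+2}\Psic_1=-\p_{x_1}^{k+1}\big((\p_t\Phic-\mathcal{S}^f_{\Phic})/\thetat\big)$ using \eqref{T-A-D-1}$_1$. This converts the viscous term into $\frac{d}{dt}\int\frac{2\mu(\thetat)}{3\rhot\thetat}|\p_{x_1}^{k+1}\Phic|^2dx_1$ plus terms with a $\deltab$ or $\check\delta$ factor. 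The resulting time-derivative term is what appears as $\norm{\p_{x_1}^{k+1}\Phic}^2$ inside $\frac{d}{dt}$ in the statement, up to a weight comparable to one. Any leftover $\p_{x_1}^{k+2}\Vmc$ contributions come only from the source and commutator terms, and so carry $\check\delta$.

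The remaining pieces are estimated as follows.
\begin{itemize}
\item The $\frac23\p_{x_1}^{k+1}\Hc$ term is bounded by $\eta\norm{\p_{x_1}^{k+1}\Phic}^2+C\norm{\p_{x_1}^{k+1}\Hc}^2$.
\item The fluid source $\p_{x_1}^k\mathcal{S}^f_{\Psic1}$ is bounded via \eqref{lem-2-1-n-2}. Its terms $\D^{(k)},\T^{(k)},\T^{(k+1)},\Z^{(k)},\Z^{(k+1)}$ are estimated using the a priori assumptions \eqref{apa}, Sobolev inequalities in $x_1$, and Poincar\'e's inequality for the non-zero modes $\vm_{\neq}$. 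This yields $\check\delta$ times $\sum_i(1+t)^{i-k-1}\norm{\p^i_{x_1}\Vmc}^2$, $\norm{\nabla_x^{k+1}\vm}^2$, and $\norm{\p_{x_1}^{k+2}\Vmc}^2$.
\item The microscopic terms $\p_{x_1}^k\sum_j(\mathcal{S}^m_\Psi)^1_{ja}$ and $\mathcal{S}^m_{\Psic1}$ are bounded by \eqref{lem-2-1-n-3-1-1-1}, \eqref{lem-2-1-n-3} and \eqref{lem-2-1-n-4}. This gives $C\sum_{k\le|\alpha|\le k+1}\norm{\p^\alpha g}_\sigma^2$, $\check\delta(1+t)^{-k-1}\norm{g}_\sigma^2$, and, for $k=1$, $\deltab(1+t)^{-1}(\norm{\nabla_x\vm}^2+\norm{\p_t\vm^\ast}^2)$ together with $\check\delta\norm{\p_t\nabla_x^k\vm^\ast}^2$.
\end{itemize}
Absorbing the $\eta$-terms and summing these bounds gives \eqref{H-D-Phi-lem.pp}.
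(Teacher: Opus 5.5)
Your proposal is correct and follows the paper's proof essentially step for step. You multiply $\p_{x_1}^k$\eqref{T-A-D-1}$_2$ by $\p_{x_1}^{k+1}\Phic$. You then use \eqref{T-A-D-1}$_1$ once to turn the viscous term into $\frac23\frac{d}{dt}\norm{\sqrt{\mu(\thetat)/(\thetat\rhot)}\,\p_{x_1}^{k+1}\Phic}_{L^2}^2$, exactly as in Step 3 of Lemma \ref{Lem-Wb-1}, and once more for $\int_{\R}\p_t\p_{x_1}^k\Phic\,\p_{x_1}^{k+1}\Psic_1\,dx_1$, before bounding the sources via Lemma \ref{nonlinear-error-estimate}.

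Delete your earlier ``bound it directly'' paragraph. As you note yourself, it would leave an $O(1)$ multiple of $\norm{\p_{x_1}^{k+2}\Psic_1}_{L^2}^2$, which does not appear on the right-hand side of \eqref{H-D-Phi-lem.pp}.
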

		\begin{proof}
			Multiplying $\p_{x_1}^k$\eqref{T-A-D-1}$_2$ by $\p_{x_1}^{k+1}\Phic$, one has
			\begin{align}\label{H-D-Phi-1}
				&\frac{d}{dt}\int_{\R} \p_{x_1}^k\Psic_1\p_{x_1}^{k+1}\Phic dx_1 + \frac{2}{3}\norm{\p_{x_1}^{k+1}\Phic}_{L^2}^2-\frac{4}{3}\int_{\R}\frac{\mu(\thetat)}{\rhot} \p_{x_1}^{k+2}\Psic_1\p_{x_1}^{k+1}\Phic dx_1 + \int_{\R} \p_t \p_{x_1}^k \Phic \p_{x_1}^{k+1}\Psic_1 dx_1\notag\\
				=&\frac{4}{3}\sum_{i=0}^{k-1}\int_{\R}\p_{x_1}^{k-i}\left( \frac{\mu(\thetat)}{\rhot} \right)\p_{x_1}^{i+2}\Psic_1 \p_{x_1}^{k+1}\Phic dx_1 -\frac{2}{3}\int_{\R}\p_{x_1}^{k+1}\Hc \p_{x_1}^{k+1}\Phic dx_1 \notag\\
				&-\int_{\R}\p_{x_1}^k (\mathcal{S}^{m}_{\Psi})^1\p_{x_1}^{k+1}\Phic dx_1 +\int_{\R} \p_{x_1}^k \mathcal{S}_{\check{\Psi}1}^{f} \p_{x_1}^{k+1}\Phic dx_1.
			\end{align}
			Using  \eqref{T-A-D-1}$_1$, we then obtain
			\begin{align}\label{H-D-Phi-2}
				&-\frac{4}{3}\int_{\R}\frac{\mu(\thetat)}{\rhot} \p_{x_1}^{k+2}\Psic_1\p_{x_1}^{k+1}\Phic dx_1 \notag\\
				=&\frac{4}{3}\int_{\R}\frac{\mu(\thetat)}{\rhot} \p_{x_1}^{k+1} \left( \frac{\p_t\Phic}{\thetat} \right) \p_{x_1}^{k+1}\Phic dx_1 -\frac{4}{3}\int_{\R}\frac{\mu(\thetat)}{\rhot} \p_{x_1}^{k+1} \left( \frac{\mathcal{S}_{\check{\Phi}}^{f}}{\thetat} \right) \p_{x_1}^{k+1}\Phic dx_1 \notag\\
				=&\frac{2}{3}\frac{d}{dt}\left(\int_{\R} \frac{\mu(\thetat)}{\thetat\rhot}  \abs{\p_{x_1}^{k+1} \Phic} ^2 dx_1\right)-\frac{2}{3}\int_{\R} \frac{d}{dt}\left(\frac{\mu(\thetat)}{\thetat\rhot}  \right)\abs{\p_{x_1}^{k+1} \Phic} ^2 dx_1 -\frac{4}{3}\int_{\R}\frac{\mu(\thetat)}{\rhot} \p_{x_1}^{k+1} \left( \frac{\mathcal{S}_{\check{\Phi}}^{f}}{\thetat} \right) \p_{x_1}^{k+1}\Phic dx_1 \notag\\
				&+ \frac{4}{3}\sum_{i=0}^k \int_{\R}\frac{\mu(\thetat)}{\rhot} \p_{x_1}^{k+1-i}\left( \frac{1}{\thetat} \right) \left[\p_{x_1}^i \mathcal{S}_{\check{\Phi}}^{f} -\p_{x_1}^i \left( \thetat \p_{x_1}\Psic_{1}\right) \right]\p_{x_1}^{k+1} \Phic dx_1.
			\end{align}
			And using \eqref{T-A-D-1}$_1$ again, one has
			\begin{align}\label{H-D-Phi-3}
				\int_{\R}\p_t \p_{x_1}^k \Phic \p_{x_1}^{k+1}\Psic_1 dx_1 = -\int_{\R} \p_{x_1}^k\left(\thetat\p_{x_1}\Psic_{1} \right) \p_{x_1}^{k+1}\Psic_1 dx_1 + \int_{\R}\p_{x_1}^k\mathcal{S}_{\check{\Phi}}^{f} \p_{x_1}^{k+1}\Psic_1 dx_1.
			\end{align}
			Combining \eqref{H-D-Phi-1}, \eqref{H-D-Phi-2} and \eqref{H-D-Phi-3}, we obtain
			\begin{align}\label{H-D-Phi-4}
				&\frac{2}{3}\frac{d}{dt}\norm{ \sqrt{\frac{\mu(\thetat)}{\thetat\rhot}}  \p_{x_1}^{k+1} \Phic}_{L^2}^2 + \frac{d}{dt}\int_{\R} \p_{x_1}^k\Psic_1\p_{x_1}^{k+1}\Phic dx_1 + \frac{2}{3}\norm{\p_{x_1}^{k+1}\Phic}_{L^2}^2=J^{(k)}_{d1}+J^{(k)}_{d2}+J^{(k)}_{d3},
			\end{align}
			where
			\begin{align*}
				J_{d1}^{(0)}=& -\int_{\R} (\mathcal{S}^{m}_{\Psi})^1\p_{x_1}\Phic dx_1,        \qquad \qquad  \qquad       J_{d1}^{(1)}= -\int_{\R} \p_{x_1} (\mathcal{S}^{m}_{\Psi})^1\p_{x_1}^2\Phic dx_1           ,\\
				J_{d2}^{(0)}=& \int_{\R}  \mathcal{S}_{\check{\Psi}1}^{f}  \p_{x_1}\Phic dx_1 +\frac{4}{3}\int_{\R}\frac{\mu(\thetat)}{\rhot} \p_{x_1} \left( \frac{\mathcal{S}_{\check{\Phi}}^{f}}{\thetat} \right) \p_{x_1}\Phic dx_1 - \frac{4}{3} \int_{\R}\frac{\mu(\thetat)}{\rhot} \p_{x_1}\left( \frac{1}{\thetat} \right)\mathcal{S}_{\check{\Phi}}^{f}\p_{x_1} \Phic dx_1 - \int_{\R}\mathcal{S}_{\check{\Phi}}^{f} \p_{x_1}\Psic_1  dx_1                              ,\\
				J_{d2}^{(1)}=& \int_{\R} \p_{x_1}\mathcal{S}_{\check{\Psi}1}^{f}  \p_{x_1}^{2}\Phic dx_1  - \int_{\R}\p_{x_1}\mathcal{S}_{\check{\Phi}}^{f} \p_{x_1}^{2}\Psic_1  dx_1                              \\
				&+\frac{4}{3}\int_{\R}\frac{\mu(\thetat)}{\rhot} \p_{x_1}^{2} \left( \frac{\mathcal{S}_{\check{\Phi}}^{f}}{\thetat} \right) \p_{x_1}^{2}\Phic dx_1 - \frac{4}{3}\sum_{i=0}^1 \int_{\R}\frac{\mu(\thetat)}{\rhot} \p_{x_1}^{2-i}\left( \frac{1}{\thetat} \right)\p_{x_1}^i \mathcal{S}_{\check{\Phi}}^{f}\p_{x_1}^{2} \Phic dx_1,\\
				J_{d3}^{(0)}=& -\frac{2}{3}\int_{\R}\p_{x_1}\Hc \p_{x_1}\Phic dx_1  +\frac{2}{3} \int_{\R}\p_t \left( \frac{\mu(\thetat)}{\thetat \rhot} \right) \abs{\p_{x_1}\Phic}^2 dx_1   +\int_{\R} \thetat\abs{\p_{x_1}\Psic_{1}}^2  dx_1 \\
				&+ \frac{4}{3}\int_{\R}\frac{\mu(\thetat)}{\rhot} \p_{x_1}\left( \frac{1}{\thetat} \right)   \thetat\p_{x_1} \Psic_{1}  \p_{x_1} \Phic dx_1,\\
				J_{d3}^{(1)}=&\frac{4}{3}\int_{\R}\p_{x_1}\left( \frac{\mu(\thetat)}{\rhot} \right)\p_{x_1}^{2}\Psic_1 \p_{x_1}^{2}\Phic dx_1 -\frac{2}{3}\int_{\R}\p_{x_1}^{2}\Hc \p_{x_1}^{2}\Phic dx_1  +\frac{2}{3} \int_{\R}\p_t \left( \frac{\mu(\thetat)}{\thetat \rhot} \right) \abs{\p_{x_1}^{2}\Phic}^2 dx_1   \\   
				&+\int_{\R} \p_{x_1}\left(\thetat\p_{x_1}\Psic_{1} \right) \p_{x_1}^{2}\Psic_1  dx_1 + \frac{4}{3}\sum_{i=0}^1 \int_{\R}\frac{\mu(\thetat)}{\rhot} \p_{x_1}^{2-i}\left( \frac{1}{\thetat} \right) \p_{x_1}^i \left( \thetat \p_{x_1}\Psic_{1}\right)  \p_{x_1}^{2} \Phic dx_1.
			\end{align*}
			By \eqref{M-1}, \eqref{M-2t}, \eqref{M-3t}, \eqref{lem-2-1-n-3}, \eqref{lem-2-1-n-4} and the {\it a priori} assumptions \eqref{apa}, we have
			\begin{align}
				J_{d1}^{(0)}+J_{d3}^{(0)} \leq& \frac{1}{1600}\norm{\p_{x_1}\Phic}_{L^2}^2+ C \deltab(1+t)^{-\frac32}+C\norm{\p_{x_1}\left( \Psic,\Hc \right)}_{L^2}^2 + C \sum_{\abs{\alpha}\le 1}\norm{\p^{\alpha}g}_{\sigma}^2\notag\\
				&+C\deltab\norm{\p_t\vm^{\ast}}_{L^2}^2+C\deltab\norm{\nabla_x\vm}_{L^2}^2,\notag
				\\
				J_{d1}^{(1)}+J_{d3}^{(1)} \leq&
				\frac{1}{1600}\norm{\p_{x_1}^2\Phic}_{L^2}^2 + C \deltab(1+t)^{-\frac52}+C\norm{\p_{x_1}^2\left( \Psic,\Hc \right)}_{L^2}^2+C\deltab(\norm{\p_t\nabla_x\vm^{\ast}}_{L^2}^2+\norm{\nabla_x^2\vm}_{L^2}^2)\notag
				\\
				&+C\check{\delta}(1+t)^{-1}\left[  \norm{\p_{x_1}\Vmc}_{L^2}^2 +\norm{\nabla_x \vm}_{L^2}^2+\norm{\p_t\vm^{\ast}}_{L^2}^2+(1+t)^{-1} \norm{g}_{\sigma}^2\right]+C\sum_{1\le \abs{\alpha}\le 2}\norm{\p^{\alpha}g}_{\sigma}^2.\notag
			\end{align}
			Employing \eqref{useful-notation}, \eqref{lem-2-1-n-2} and the {\it a priori} assumptions \eqref{apa}, one has
			\begin{align}
				J_{d2}^{(0)}& \leq C \int_{\R} \abs{\p_{x_1}\Vmc}\left( \deltab\Dt_{-\frac32}+\deltab^{\frac12}\Dt_{-1}\abs{\vm}+\D^{(0)}+\D^{(1)}+\T^{(0)}+\T^{(1)}+\Z^{(0)}+\Z^{(1)}\right) dx_1 \notag\\
				& \le C\deltab(1+t)^{-\frac52}+C\check{\delta}\left[\norm{\p_{x_1}\Vmc}_{H^1}^2 + \norm{\nabla_x \vm}_{L^2}^2+(1+t)^{-2}\norm{\Vmc}_{L^2}^2 \right], \notag
				\\
				J_{d2}^{(1)}& \leq C \int_{\R} \abs{\p_{x_1}^2\Vmc}\left( \deltab\Dt_{-2}+\D^{(1)}+\D^{(2)}+\T^{(1)}+\T^{(2)}+\Z^{(1)}+\Z^{(2)}\right) dx_1 \notag\\
				& \le C\deltab(1+t)^{-\frac72}+C\check{\delta}\left[\norm{\p_{x_1}^2\Vmc}_{H^1}^2 + \norm{\nabla_x^2 \vm}_{L^2}^2+ \sum_{i=0}^1(1+t)^{-2-i}\norm{\p_{x_1}^{1-i}\Vmc}_{L^2}^2 \right]. \notag
			\end{align}
			Plugging all the above estimates to \eqref{H-D-Phi-4} gives \eqref{H-D-Phi-lem.pp}. Then we have completed the proof of Lemma \ref{H-D-Phi-lem}.
		\end{proof}
		
		Combining Lemma \ref{s-t-s-tt-1}, Lemma \ref{s-t-s-tt-2}, and Lemma \ref{H-D-Phi-lem}, we have completed the proof of Theorem \ref{H2-enenrgy-anti}.
		
		\subsection{Estimates of non-zero modes for macroscopic equations}\label{sec4.2}
		We first notice 
		$$
		\int_{\mathbb T^2}(\phi_{\neq},\psi_{\neq},\zeta_{\neq})dx_2dx_3=0.
		$$
		Thus, it can be seen that the Poincar\'e's inequality holds for the non-zero modes:
		\begin{align}\label{2025.6.08-1}
			\|(\phi_{\neq},\psi_{\neq},\zeta_{\neq})\|_{L^2}
			\leq C \| (\nabla^k\phi_{\neq},\nabla^k\psi_{\neq},\nabla^k\zeta_{\neq})\|_{L^2}.
		\end{align}
		Taking $\Dn$ \eqref{2025-11-10-4} for the perturbation system \eqref{perturbation-1}, one has
		\begin{align}\label{eqs201}
			\begin{cases}
				\p_t\phi_{\neq}+\um \cdot \nabla_x \phi_{\neq}+\rhom \operatorname{div}_x \psi_{\neq}=\mathcal {S}_{\phi_{\neq}}, \\
				\p_t\psi_{\neq}+ \um \cdot \nabla_x \psi_{\neq}+\frac{2}{3\rhot}\left(\thetam \nabla_x \phi_{\neq} + \rhom \nabla_x \zeta_{\neq}  \right)=
				\frac{\mu(\tilde\theta)}{\rhot}\left(\Delta_x\psi_{\neq}+\frac{1}{3}\nabla_x\dv_x\psi_{\neq}\right)
				+\mathcal {S}_{\psi_{\neq}},\\
				\p_t \zeta_{\neq}+ \um \cdot \nabla_x \zeta_{\neq}+\frac{2}{3} \thetam \operatorname{div} \psi_{\neq}=\frac{\kappa(\tilde\theta)}{\rhot}\Delta_x\zeta_{\neq}
				+\mathcal {S}_{\zeta_{\neq}},
			\end{cases}
		\end{align}
		where 
		\begin{align*}
			&\mathcal {S}_{\phi_{\neq}}= \um \cdot\nabla_x \phi_{\neq} - \Dn\left( u \cdot \nabla_x\phi\right) + \rhom \dv_x\psi_{\neq}- \Dn \left(\rho\dv_x \psi \right) -\psi_{\neq} \cdot \nabla_x \rhot - \phi_{\neq} \dv_x \ut,\\
			&\mathcal {S}_{\psi_{\neq}}= \um \cdot \nabla_x \psi_{\neq}-\Dn \left( u \cdot \nabla_x\psi \right) + \frac{2}{3\rhot}\left(\thetam \nabla_x \phi_{\neq} + \rhom \nabla_x \zeta_{\neq}  \right) \\
			&\qquad\quad- \frac{\nabla_x p_{\neq}}{\rhot} - \psi_{\neq} \cdot \nabla \ut + \frac{1}{\rhot}\Dn\left(\frac{\phi \nabla_x p}{\rho} \right) + (\mathcal {S}_{\psi}^{f2})_{\neq}+(\mathcal {S}_{\psi}^{m})_{\neq},\\
			&\mathcal {S}_{\zeta_{\neq}}=  \um \cdot \nabla_x \zeta_{\neq}-\Dn \left( u \cdot \nabla_x\zeta \right) + \frac{2}{3} \thetam \dv_x\psi_{\neq} - \frac{2}{3} \Dn \left( \theta \dv_x \psi \right) - \psi_{\neq} \cdot \nabla_x\thetat - \frac{2}{3}\zeta_{\neq}\dv_x \ut + (\mathcal {S}_{\zeta}^{f2})_{\neq}+(\mathcal {S}_{\zeta}^{m})_{\neq}.
		\end{align*}
		Recall the definition of $(\vm,{\vm}^{\ast})$ \eqref{2025-11-5-1}, instant energy functionals $\mathcal{E}_i$ \eqref{2.10}-\eqref{2.10-5} and dissipation energy functionals $\mathcal{D}_i$ \eqref{2.11}-\eqref{2.11-5}. Then, we present the $H^1$ energy estimate for the non-zero mode for macroscopic part $(\phi_{\neq},\psi_{\neq},\zeta_{\neq})$.
		\begin{Thm}\label{non-zero-111-111}
			Under the same assumptions of  Proposition \ref{Thm-ape}, it holds that
			\begin{align*}
				\frac{d}{dt}\|\vm_{\neq}\|_{H^1}^2+c\|(\nabla_x \vm_{\neq},\nabla_x^2{\vm}^{\ast}_{\neq})\|_{L^2}^2 \leq C \check{\delta}\left[(1+t)^{-2}\mathcal{D}_2+\mathcal{D}_3 +(1+t)^{-1}\norm{\p_t\nabla_x\vm^{\ast}}_{L^2}^2 \right]+C_\eta\sum_{\abs{\alpha}=2}\norm{\p^{\alpha}g}_{\sigma}^2.
			\end{align*}
		\end{Thm}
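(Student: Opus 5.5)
I will run a standard $H^1$ energy estimate on the non-zero mode system \eqref{eqs201}, using the Poincar\'e inequality \eqref{2025.6.08-1} to convert every lower-order quantity into dissipation. The linear part of \eqref{eqs201} has the usual symmetrizable structure of the linearized compressible Navier--Stokes system. Concretely, I will multiply \eqref{eqs201}$_1$ by $\frac{2\thetam}{3\rhom^2}\phi_{\neq}$, \eqref{eqs201}$_2$ by $\rhot\psi_{\neq}/\rhom$ (or a similar weight making the pressure coupling antisymmetric), and \eqref{eqs201}$_3$ by $\zeta_{\neq}/\thetam$, then integrate over $\Omega$. The cross terms $\thetam\nabla\phi_{\neq}\cdot\psi_{\neq}$ against $\rhom\,\mathrm{div}\psi_{\neq}\,\phi_{\neq}$, and likewise for $\zeta$, cancel up to commutators with derivatives of the weights. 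These commutators involve $\nabla_x(\rhom,\thetam,\um)$, which are $O(\deltab^{\frac12})\Dt_{-\frac12}$ plus perturbations. This gives the dissipation $\|\nabla_x(\psi_{\neq},\zeta_{\neq})\|^2$.

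Next I will apply $\nabla_x$ to \eqref{eqs201} and repeat the computation, which produces $\|\nabla_x^2(\psi_{\neq},\zeta_{\neq})\|^2$. The missing density dissipation $\|\nabla_x\phi_{\neq}\|^2$ (and, via Poincar\'e, $\|\phi_{\neq}\|$) comes from the usual interactive functional $\int\psi_{\neq}\cdot\nabla\phi_{\neq}\,dx$. I will test \eqref{eqs201}$_2$ with $\nabla\phi_{\neq}$ and use \eqref{eqs201}$_1$ to trade $\partial_t\phi_{\neq}$ for $\mathrm{div}\psi_{\neq}$; this is exactly the cross term $\tilde c\int\nabla^k\psi\cdot\nabla^{k+1}\phi$ already built into $\mathcal E_2$. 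To pass from these quantities to $\|\vm_{\neq}\|_{H^1}^2$ and $c\|(\nabla\vm_{\neq},\nabla^2\vm^\ast_{\neq})\|^2$, I take a small multiple of the interactive functional and use Poincar\'e to absorb all $L^2$-level terms of $\vm_{\neq}$.

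The remaining work is bounding the sources $\mathcal S_{\phi_{\neq}},\mathcal S_{\psi_{\neq}},\mathcal S_{\zeta_{\neq}}$. Terms of the form $\psi_{\neq}\cdot\nabla\rhot$ and $\phi_{\neq}\,\mathrm{div}\ut$ are only $O(\deltab^{\frac12}(1+t)^{-\frac12})|\vm_{\neq}|$, so they have no time decay. I will control them by Poincar\'e, which bounds them by $\deltab^{\frac12}\|\nabla\vm_{\neq}\|^2$, and absorb them into the dissipation by smallness of $\delta$; this is the point where the non-zero modes gain over the zero mode. The differences $\um\cdot\nabla\phi_{\neq}-\Dn(u\cdot\nabla\phi)$ and similar are genuinely quadratic. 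Each splits into zero-mode times non-zero-mode products and non-zero times non-zero products, and I will estimate them by $\|\vm\|_{W^{1,\infty}}$ times dissipation, with \eqref{apa} supplying the smallness $\chi$. The factor $(1+t)^{-2}\mathcal D_2$ arises where lower-order zero-mode quantities are paired with non-zero modes, and $\mathcal D_3$ collects the top-order ones. The transport term $\um\cdot\nabla\vm_{\neq}$ is handled by integration by parts, giving $\mathrm{div}\,\um$, which is small.

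I expect the main obstacle to be the microscopic sources $(\mathcal S^m_\psi)_{\neq}$ and $(\mathcal S^m_\zeta)_{\neq}$ from \eqref{K-m}, \eqref{K-m-4}, which contain $\nabla_x L_M^{-1}\Pi$ with $\Pi=\partial_tG+P_1(\xi\cdot\nabla_xG)-Q(G,G)$. At the $H^1$ level these put second derivatives of $g$ into the equations. I plan to integrate by parts in $x$ to move one derivative onto the test function $\nabla\psi_{\neq}$ or $\nabla\zeta_{\neq}$, which is available as dissipation. I will then use the Burnett-function representation, as in \eqref{non-fluid-s-s-1}, to bound the result by $\eta\|\nabla^2\vm^\ast_{\neq}\|^2+C_\eta\sum_{|\alpha|=2}\|\partial^\alpha g\|_\sigma^2$. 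Because these moments integrate against fast-decaying functions in $\xi$, the weak $\sigma$-norm suffices. The $\partial_tG$ contribution yields $\partial_t\nabla\vm^\ast$ terms multiplied by $\deltab\Dt_{-\frac12}$, which account for the $(1+t)^{-1}\|\partial_t\nabla_x\vm^\ast\|^2$ term. Combining all of this with small $\eta$ closes the estimate.
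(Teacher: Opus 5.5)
Your proposal is correct and is essentially the paper's proof (Lemmas \ref{lem99} and \ref{lem9}). The shared steps are a symmetrized $L^2$ estimate, its $\nabla_x$-differentiated version, and a cross term for $\nabla_x\phi_{\neq}$. They also share the use of Poincar\'e's inequality \eqref{2025.6.08-1} to turn every lower-order non-zero-mode quantity into dissipation, and source bounds via the Burnett-function representation after moving one $x$-derivative onto the test function.

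The only variation is the cross term. The paper uses $\int_\Omega\big(\frac{2}{3\rhom}|\nabla_x\phi_{\neq}|^2+\frac{\rhot}{\mu(\thetat)}\psi_{\neq}\cdot\nabla_x\phi_{\neq}\big)dx$, built so that the viscous term cancels exactly and only first derivatives of $(\psi_{\neq},\zeta_{\neq})$, besides the sources, remain on the right. Your plain $\int_\Omega\psi_{\neq}\cdot\nabla_x\phi_{\neq}\,dx$ leaves $\|\nabla_x^2\psi_{\neq}\|_{L^2}^2$ on the right, which is harmless because your differentiated estimate dissipates it. Note also that this $k=0$ non-zero-mode cross term is not one of the $k=1,2$ cross terms appearing in $\mathcal{E}_2$, so it is not "already built in" as you say.
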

		
		The proof of  Theorem \ref{non-zero-111-111} will be decomposed into two parts including  Lemma \ref{lem99} and  Lemma \ref{lem9} below.
		
		\begin{Lem}\label{lem99} 
			Under the same assumptions of   Proposition \ref{Thm-ape}, it holds that
			\begin{align}\notag
				&\frac{d}{dt}\|\vm_{\neq}\|_{H^1}^2+c\|(\nabla_x \vm_{\neq},\nabla_x^2{\vm}^{\ast}_{\neq})\|_{L^2_x}^2 \leq  C_\eta (\norm{\nabla_x \mathcal{S}_{\phi_{\neq}}}_{L^2}^2+\norm{\mathcal{S}_{\psi_{\neq}}}_{L^2}^2+\norm{\mathcal{S}_{\zeta_{\neq}}}_{L^2}^2).
			\end{align}
		\end{Lem}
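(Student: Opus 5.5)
The plan is a standard $H^1$ energy estimate for the linear parabolic–hyperbolic system \eqref{eqs201}. All interaction with the background and all nonlinear terms are treated as sources. The missing dissipation of $\nabla_x\phi_{\neq}$ is recovered by an interactive functional between $\psi_{\neq}$ and $\nabla_x\phi_{\neq}$. The Poincar\'e inequality \eqref{2025.6.08-1} on $\Torus^2$ then upgrades gradient dissipation to control of the full $H^1$ norm of the non-zero modes.

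\textbf{Zeroth order.} Multiply \eqref{eqs201}$_1$ by $\frac{2\thetam}{3\rhom^2}\phi_{\neq}$, \eqref{eqs201}$_2$ by $\psi_{\neq}$ and \eqref{eqs201}$_3$ by $\frac{1}{\thetam}\zeta_{\neq}$, using the symmetrizer adapted to the coefficients $(\rhom,\thetam)$. Integrate over $\Omega$. The cross terms $\rhom\,\dv_x\psi_{\neq}$, $\frac{2}{3\rhot}(\thetam\nabla_x\phi_{\neq}+\rhom\nabla_x\zeta_{\neq})$ and $\frac23\thetam\,\dv_x\psi_{\neq}$ cancel after integration by parts. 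What remains is commutators involving $\nabla_x(\rhom,\thetam)$, $\p_t(\rhom,\thetam)$, $\dv_x\um$ and $\rhot-\rhom$; these are small and are absorbed, or estimated by $\check\delta\|\nabla_x\vm_{\neq}\|^2$ via Poincar\'e. Viscosity and heat conduction give $c\|\nabla_x(\psi_{\neq},\zeta_{\neq})\|_{L^2}^2$. The source terms are bounded by $\eta\|\vm_{\neq}\|^2+C_\eta\|(\mathcal S_{\phi_{\neq}},\mathcal S_{\psi_{\neq}},\mathcal S_{\zeta_{\neq}})\|^2$. I would write $\mathcal S_{\phi_{\neq}}$ as a term controllable by $\|\nabla_x\mathcal S_{\phi_{\neq}}\|$ through Poincar\'e, since it has zero $\Torus^2$-mean.

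\textbf{First order.} Apply $\nabla_x$ to \eqref{eqs201} and use the same multipliers. The dangerous term is the convection $\um\cdot\nabla_x\nabla_x\phi_{\neq}$ in the hyperbolic $\phi$ equation, since it loses a derivative. It is handled by integrating by parts to $-\frac12\dv_x\um\,|\nabla_x\phi_{\neq}|^2$, which is small. This step yields $c\|\nabla_x^2(\psi_{\neq},\zeta_{\neq})\|^2$. Sources again appear as $\|\nabla_x\mathcal S_{\phi_{\neq}}\|^2$ together with $\|(\mathcal S_{\psi_{\neq}},\mathcal S_{\zeta_{\neq}})\|^2$, after moving one derivative from the source onto the test function for $\psi,\zeta$.

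\textbf{Density dissipation.} Add $\tilde c\int_\Omega\psi_{\neq}\cdot\nabla_x\phi_{\neq}\,dx$. Taking its time derivative and using \eqref{eqs201}$_2$ gives the good term $\frac{2\thetam}{3\rhot}|\nabla_x\phi_{\neq}|^2$. The term $\p_t\nabla_x\phi_{\neq}$ is replaced via \eqref{eqs201}$_1$ and integrated by parts onto $\dv_x\psi_{\neq}$. The viscous term is bounded by $\|\nabla_x^2\psi_{\neq}\|\,\|\nabla_x\phi_{\neq}\|$. Choosing $\tilde c$ small, the combined functional is equivalent to $\|\vm_{\neq}\|_{H^1}^2$ and dissipates $\|(\nabla_x\vm_{\neq},\nabla_x^2\vm_{\neq}^{\ast})\|^2$.

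\textbf{Main obstacle.} I expect the main difficulty to be the bookkeeping of the commutator terms coming from non-constant coefficients. It is essential that only $\rhom,\thetam,\um$ appear in front of the hyperbolic coupling, so that the symmetrizer cancellation is exact. Every leftover term must carry a factor $\check\delta$ or gradients of the background, so that it can be absorbed into the dissipation using Poincar\'e. Terms of the type $\frac{1}{\rhot}$ versus $\frac1{\rhom}$ have been put into $\mathcal S_{\psi_{\neq}}$ by design, which makes this manageable.
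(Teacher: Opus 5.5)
Your argument is correct. It follows the same skeleton as the paper: a symmetrized $L^2$ estimate, a symmetrized estimate for $\nabla_x$\eqref{eqs201}, an interactive functional for $\nabla_x\phi_{\neq}$, and Poincar\'e \eqref{2025.6.08-1} on $\Torus^2$. Two choices differ.

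\textbf{Weights.} The paper uses $\frac{2\thetam}{3\rhom\rhot}\phi_{\neq}$ and $\frac{\rhom}{\thetam\rhot}\zeta_{\neq}$ as multipliers. These make the hyperbolic cancellation exact even though the coupling in \eqref{eqs201}$_2$ carries the factor $\frac{1}{\rhot}$. So your remark that only $\rhom,\thetam,\um$ multiply the coupling, with the $\frac1\rhot$ versus $\frac1\rhom$ discrepancy placed in $\mathcal{S}_{\psi_{\neq}}$, is not accurate. With your weights a term $\frac23\thetam\big(\frac1\rhot-\frac1\rhom\big)\psi_{\neq}\cdot\nabla_x\phi_{\neq}$ survives, together with its analogues for $\zeta_{\neq}$ and at first order. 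Since $\rhom-\rhot=\Do\phi=O(\chi)$, these are bounded by $C\chi\norm{\nabla_x\vm_{\neq}}_{L^2}^2$ and absorbed, as you anticipate. Nothing breaks.

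\textbf{Density dissipation.} This is the more substantive difference. The paper does not use a small cross term. It takes \eqref{eqs201}$_2\times\frac{\rhot}{\mu(\thetat)}\nabla_x\phi_{\neq}$ plus $\nabla_x$\eqref{eqs201}$_1\times\frac{4}{3\rhom}\nabla_x\phi_{\neq}$, i.e.\ the $O(1)$ functional $\int_\Omega\big(\frac{2}{3\rhom}|\nabla_x\phi_{\neq}|^2+\frac{\rhot}{\mu(\thetat)}\psi_{\neq}\cdot\nabla_x\phi_{\neq}\big)dx$.
\begin{itemize}
\item The weight $\frac{\rhot}{\mu(\thetat)}$ makes the viscous coefficient identically $1$. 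Hence $\int_\Omega(\Delta_x\psi_{\neq}+\frac13\nabla_x\dv_x\psi_{\neq})\cdot\nabla_x\phi_{\neq}\,dx=\frac43\int_\Omega\nabla_x\dv_x\psi_{\neq}\cdot\nabla_x\phi_{\neq}\,dx$ with no commutators.
\item This cancels exactly against the term produced by $\nabla_x(\rhom\dv_x\psi_{\neq})$, an effective-viscous-flux type cancellation.
\item The resulting inequality \eqref{eqs223} involves only $\norm{\nabla_x(\psi_{\neq},\zeta_{\neq})}_{L^2}$, so it closes with the zeroth-order estimate alone.
\end{itemize}
Your Matsumura--Nishida type term $\tilde c\int_\Omega\psi_{\neq}\cdot\nabla_x\phi_{\neq}\,dx$ is more elementary. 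However, it leaves $\norm{\nabla_x^2\psi_{\neq}}_{L^2}\norm{\nabla_x\phi_{\neq}}_{L^2}$, which you must pay with the first-order dissipation and a small $\tilde c$. Because the lemma is an $H^1$ statement, that dissipation is available and your route closes. The cost is that your control of $\nabla_x\phi_{\neq}$ depends on the higher-order estimate, rather than being obtained at the $L^2$ level of $\psi_{\neq}$ as in the paper.
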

		\begin{proof}
			
			{\bf Step 1.} Multiplying \eqref{eqs201}$_1$ by $\frac{2}{3}\frac{\thetam}{\rhom \rhot}\phi_{\neq}$, \eqref{eqs201}$_2$ by $\psi_{\neq}$, \eqref{eqs201}$_3$ by $\frac{\rhom}{\thetam \rhot}\zeta_{\neq}$, and then summing all of them, one has
			\begin{align}\label{eqs210}
				\p_t\left(\frac{\thetam}{3\rhom \rhot}\phi_{\neq}^2+\frac{|\psi_{\neq}|^2}{2}+\frac{\rhom}{2\thetam \rhot}\zeta_{\neq}^2\right)+\frac{\mu(\tilde\theta)}{\rhot}|\nabla_x \psi_{\neq}|^2+\frac{\mu(\tilde\theta)}{3\rhot}|\dv_x\psi_{\neq}|^2+\frac{\rhom \kappa(\tilde\theta)}{\thetam \rhot^2}|\nabla_x \zeta_{\neq}|^2=\mathcal J_0+\dv_x(\cdots),
			\end{align}
			where
			\begin{align*}
				\mathcal J_0:=&\left[\left(\frac{\thetam}{\rhom \rhot}\right)_t+\nabla_x\cdot\left(\frac{\thetam\um}{\rhom \rhot}\right)\right]\frac{\phi_{\neq}^2}{3}+\left[\left(\frac{\rhom}{\thetam \rhot}\right)_t+\nabla_x\cdot\left(\frac{\rhom\um}{\thetam \rhot}\right)\right]\frac{\zeta_{\neq}^2}{2}+\frac{\p_{x_1} \um_1}{2} \abs{\psi_{\neq}}^2  \notag\\
				&-\p_{x_1}\left(\frac{\mu(\tilde\theta)}{\rhot} \right)\left(\sum_{j=1}^3\p_{x_1}\psi_{j\neq}\psi_{j\neq}+\frac{1}{3}\dv_x\psi_{\neq}\psi_{1\neq}\right)+\p_{x_1}^2 \left(\frac{\kappa(\tilde\theta)\rhom}{2\rhot^2\thetam}\right)\abs{\zeta_{\neq}}^2 \notag \\
				&+\frac{2\thetam}{3\rhom \rhot}\phi_{\neq} \mathcal {S}_{\phi_{\neq}}+ \frac{\rhom}{\thetam \rhot} \zeta_{\neq} \mathcal {S}_{\zeta_{\neq}} + \psi_{\neq} \cdot \mathcal {S}_{\psi_{\neq}}+\frac{2\psi_{1\neq}}{3}\left[ \p_{x_1}\left(\frac{\thetam}{\rhot}\right)\phi_{\neq}+\p_{x_1}\left(\frac{\rhom}{\rhot}\right)\zeta_{\neq}    \right].
			\end{align*}
			According to the {\it a priori} assumptions \eqref{apa}, Poincar\'e's inequality \eqref{2025.6.08-1} and  H{\"o}lder's inequality, one has
			\begin{align}\label{eqs212}
				\int_{\R\times \Torus^2} \mathcal{J}_0 dx \leq \eta \norm{(\nabla_x \phi_{\neq},\nabla_x \psi_{\neq} ,\nabla_x \zeta_{\neq})}_{L^2}^2 + C_\eta (\norm{\nabla_x \mathcal{S}_{\phi_{\neq}}}_{L^2}^2+\norm{\mathcal{S}_{\psi_{\neq}}}_{L^2}^2+\norm{\mathcal{S}_{\zeta_{\neq}}}_{L^2}^2).
			\end{align}
			Integrating \eqref{eqs210} on $\Omega$ and combining \eqref{eqs212}, it yields
			\begin{align}\label{eqs213}
				\frac{d}{dt} \norm{\vm_{\neq}}_{L^2}^2+\norm{\nabla_x {\vm}^{\ast}_{\neq}}_{L^2}^2 \leq  \eta\norm{\nabla_x\phi_{\neq}}^2_{L^2}+C_\eta(\norm{\nabla_x \mathcal{S}_{\phi_{\neq}}}_{L^2}^2+\norm{\mathcal{S}_{\psi_{\neq}}}_{L^2}^2+\norm{\mathcal{S}_{\zeta_{\neq}}}_{L^2}^2) .
			\end{align}
			
			{\bf Step 2.} We still need to estimate $\|\nabla_x\phi_{\neq}\|_{L^2}^2$. Taking \eqref{eqs201}$_2\times\frac{\rhot}{\mu(\thetat)}\nabla_x\phi_{\neq}+\nabla_x{\eqref{eqs201}_1}\times\frac{4}{3\rhom}\nabla_x\phi_{\neq},$ one has
			\begin{align}\label{eqs220}
				&\p_t\left(\frac{2}{3\rhom}|\nabla_x\phi_{\neq}|^2+\frac{\rhot}{\mu(\thetat)}\psi_{\neq}\cdot\nabla_x\phi_{\neq}\right)+\frac{2\thetam}{3\mu(\thetat)}|\nabla_x\phi_{\neq}|^2=\mathcal J_1+\dv_x(\cdots),
			\end{align}
			where 
			\begin{align*}
				\mathcal J_1=& \p_t \left( \frac{\rhot}{\mu(\thetat)} \right) \psi_\neq \cdot \nabla_x\phi_{\neq} + \p_{x_1} \left( \frac{\rhot}{\mu(\thetat)}  \right)\psi_{1\neq}\um\cdot\nabla_x \phi_{\neq}+ \p_{x_1} \left( \frac{\rhot}{\mu(\thetat)}  \right)\psi_{1\neq} \rhom \dv_x \psi_{\neq} \notag\\
				&+\frac{\rhot}{\mu(\thetat)} \dv_x \psi_{\neq } \um \cdot\nabla_x\phi_{\neq} +\frac{\rhot \rhom}{\mu(\thetat)} \abs{\dv_x\psi_{\neq}}^2-\frac{2}{3\rhom} \p_{x_1} \phi_{\neq} \p_{x_1} \um \cdot \nabla_x \phi_{\neq}-\frac{4 \p_{x_1} \rhom}{3 \rhom} \p_{x_1} \phi_{\neq}\dv_x \psi_{\neq} \notag\\
				&-\frac{\rhot}{\mu(\thetat)} \um \cdot \nabla_x \psi_{\neq} \cdot\nabla_x\phi_{\neq} - \frac{2\rhom}{3\mu(\thetat)}\nabla_x\zeta_{\neq} \cdot \nabla_x\phi_{\neq} + \frac{\rhot}{\mu(\thetat)} \mathcal {S}_{\psi_{\neq}} \cdot \nabla_x \phi_{\neq} + \frac{4}{3\rhom} \nabla_x \phi_{\neq} \cdot \nabla_x \mathcal {S}_{\phi_{\neq}} \notag\\
				&-\frac{\rhot}{\mu(\thetat)} \dv_x \psi_{\neq} \mathcal {S}_{\phi_{\neq}}-\p_{x_1} \left( \frac{\rhot}{\mu(\thetat)}  \right) \psi_{1\neq} \mathcal {S}_{\phi_{\neq}}.
			\end{align*}
			In order to obtain \eqref{eqs220}, we need to use \eqref{eqs201}$_1$ to calculate $ \frac{\rhot}{\mu(\thetat)} \p_t \psi_{\neq} \nabla_x \phi_{\neq}$ by
			\begin{align*}
				&\frac{\rhot}{\mu(\thetat)} \p_t \psi_{\neq} \cdot \nabla_x \phi_{\neq}=\p_t\left(\frac{\rhot}{\mu(\thetat)}  \psi_{\neq} \cdot \nabla_x \phi_{\neq} \right) - \p_t\left(\frac{\rhot}{\mu(\thetat)}\right) \psi_{\neq} \cdot \nabla_x \phi_{\neq} - \frac{\rhot}{\mu(\thetat)}  \psi_{\neq} \cdot \nabla_x \p_t \phi_{\neq} \\
				=&\p_t\left(\frac{\rhot}{\mu(\thetat)}  \psi_{\neq} \cdot \nabla_x \phi_{\neq} \right) - \p_t\left(\frac{\rhot}{\mu(\thetat)}\right) \psi_{\neq} \cdot \nabla_x \phi_{\neq} +  \p_{x_1}\left(\frac{\rhot}{\mu(\thetat)}\right) \psi_{1\neq}  \p_t \phi_{\neq}+ \frac{\rhot}{\mu(\thetat)} \dv_x\psi_{\neq}  \p_t \phi_{\neq} + \dv_x(\cdots) \\
				=&\p_t\left(\frac{\rhot}{\mu(\thetat)}  \psi_{\neq} \cdot \nabla_x \phi_{\neq} \right) - \p_t\left(\frac{\rhot}{\mu(\thetat)}\right) \psi_{\neq} \cdot \nabla_x \phi_{\neq} - \p_{x_1}\left(\frac{\rhot}{\mu(\thetat)}\right) \psi_{1\neq} \um\cdot \nabla_x \phi_{\neq} - \p_{x_1}\left(\frac{\rhot}{\mu(\thetat)}\right) \psi_{1\neq} \rhom \dv_x \phi_{\neq}\\
				&- \frac{\rhot}{\mu(\thetat)} \dv_x\psi_{\neq} \um\cdot \nabla_x \phi_{\neq} -\frac{\rhot \rhom}{\mu(\thetat)}\abs{\dv_x\psi_{\neq}}^2 + \p_{x_1}\left(\frac{\rhot}{\mu(\thetat)}\right) \psi_{1\neq}\mathcal {S}_{\phi_{\neq}} + \frac{\rhot}{\mu(\thetat)} \dv_x\psi_{\neq}\mathcal {S}_{\phi_{\neq}}+ \dv_x(\cdots).
			\end{align*}
			According to the {\it a priori} assumptions \eqref{apa}, Poincar\'e's inequality \eqref{2025.6.08-1} and applying H{\"o}lder's inequality, one has
			\begin{align}\label{eqs222}
				\int_{\Omega}\mathcal J_1dx\le \eta\|\nabla_x\phi_{\neq}\|_{L^2}^2 + C_\eta \left(\norm{\nabla_x \psi_{\neq}}_{L^2}^2+\norm{ \nabla_x\zeta_{\neq}}_{L^2}^2 + \norm{\nabla_x \mathcal {S}_{\phi_{\neq}}}_{L^2}^2 + \norm{\mathcal {S}_{\psi_{\neq}}}_{L^2}^2\right).
			\end{align}  
			Integrating \eqref{eqs220} on $\Omega$ and combining \eqref{eqs222}, it yields
			\begin{align}\label{eqs223}
				\frac{d}{dt}\int_{\Omega}&\left(\frac{2}{3\rhom}|\nabla_x\phi_{\neq}|^2+\frac{\rhot}{\mu(\thetat)}\psi_{\neq}\cdot\nabla_x\phi_{\neq}\right)dx+\int_{\Omega}\frac{\thetam}{3\mu(\thetat)}|\nabla_x\phi_{\neq}|^2dx \notag\\
				&\le   C_\eta \left(\norm{\nabla_x \psi_{\neq}}_{L^2}^2+\norm{ \nabla_x\zeta_{\neq}}_{L^2}^2 + \norm{\nabla_x \mathcal {S}_{\phi_{\neq}}}_{L^2}^2 + \norm{\mathcal {S}_{\psi_{\neq}}}_{L^2}^2\right).
			\end{align}
			
			{\bf Step 3.} We estimate $\norm{\nabla_x^2\psi_{\neq}}_{L^2}^2$ and $\norm{\nabla_x^2\zeta_{\neq}}_{L^2}^2$. Taking $\nabla_x\eqref{eqs201}_1\times\frac{2\thetam}{3\rhom \rhot}\nabla_x\phi_{\neq}+\nabla_x\eqref{eqs201}_2\times\nabla_x\psi_{\neq}+\nabla_x\eqref{eqs201}_3\times\frac{\rhom}{\thetam\rhot}\nabla_x\zeta_{\neq},$ one has
			\begin{align}\label{eqs230-1}
				\p_t\Big(\frac{\thetam}{3\rhom \rhot}\abs{\nabla_x\phi_{\neq}}&^2+\frac{|\nabla_x\psi_{\neq}|^2}{2}+\frac{\rhom}{2\thetam \rhot}\abs{\nabla_x\zeta_{\neq}}^2\Big)+\frac{\mu(\tilde\theta)}{\rhot}|\Delta_x\psi_{\neq}|^2\notag\\
				&+\frac{\mu(\tilde\theta)}{3\rhot}|\nabla_x\dv_x\psi_{\neq}|^2
				+\frac{\rhom \kappa(\tilde\theta)}{\thetam \rhot^2}|\Delta_x\zeta_{\neq}|^2=\mathcal J_2+\dv_x(\cdots),
			\end{align}
			where 
			\begin{align*}
				\mathcal{J}_{2}=&\left[\left(\frac{\thetam}{\rhom \rhot}\right)_t+\nabla_x\cdot\left(\frac{\thetam\um}{\rhom \rhot}\right)\right]\frac{\abs{\nabla_x\phi_{\neq}}^2}{3}+\left[\left(\frac{\rhom}{\thetam \rhot}\right)_t+\nabla_x\cdot\left(\frac{\rhom\um}{\thetam \rhot}\right)\right]\frac{\abs{\nabla_x\zeta_{\neq}}^2}{2}+\frac{\p_{x_1} \um_1}{2} \abs{\nabla_x\psi_{\neq}}^2 \notag\\
				&-\frac{2\thetam\p_{x_1}\rhom}{3\rhom \rhot} \dv_x \psi_{\neq} \p_{x_1} \phi_{\neq}-\frac{2\rhom \p_{x_1} \thetam}{3\thetam \rhot} \dv_x \psi_{\neq} \p_{x_1} \zeta_{\neq} - \frac{2\thetam}{3\rhom \rhot} \p_{x_1} \um \cdot \nabla_x \phi_{\neq} \p_{x_1} \phi_{\neq} - \p_{x_1} \um \cdot \nabla_x \psi_{\neq} \cdot\p_{x_1} \psi_{\neq}  \notag\\
				&-\frac{\rhom}{\thetam \rhot} \p_{x_1} \um \cdot \nabla_x \zeta_{\neq} \p_{x_1} \zeta_{\neq} - \frac23 \p_{x_1}\left( \frac{\thetam}{\rhot} \right)\p_{x_1}\psi_{\neq} \cdot \nabla_x \phi_{\neq} + \frac23 \p_{x_1}\left( \frac{\thetam}{\rhot} \right) \nabla_x \psi_{1\neq} \cdot \nabla_x \phi_{\neq}+\frac{2\thetam}{3\rhom \rhot}\nabla_x\phi_{\neq}\cdot \nabla_x \mathcal {S}_{\phi_{\neq}} \notag\\
				&-\frac23 \p_{x_1}\left( \frac{\rhom}{\rhot} \right) \p_{x_1} \psi_{\neq} \cdot \nabla_x \zeta_{\neq} + \frac23 \p_{x_1}\left( \frac{\rhom}{\rhot} \right) \nabla_x \psi_{1\neq} \cdot \nabla_x \zeta_{\neq} - \frac{1}{3}\p_{x_1}\left(\frac{\mu(\tilde\theta)}{\rhot} \right) \dv_x \psi_{\neq} \Delta \psi_{1\neq} + \nabla_x\psi_{\neq} \cdot \nabla_x\mathcal {S}_{\psi_{\neq}} \notag\\
				&- \frac{1}{3}\p_{x_1}\left(\frac{\mu(\tilde\theta)}{\rhot} \right) \dv_x \psi_{\neq} \p_{x_1} \dv_x \psi_{\neq} - \frac{\kappa(\thetat)}{\rhot} \p_{x_1} \left(\frac{\rhom}{\thetam \rhot} \right) \p_{x_1} \zeta_{\neq} \Delta_x \zeta_{\neq} + \frac{\rhom}{\thetam \rhot} \nabla_x\zeta_{\neq} \cdot \nabla_x\mathcal {S}_{\zeta_{\neq}}. 
			\end{align*}
			According to the {\it a priori} assumptions \eqref{apa}, Poincar\'e's inequality \eqref{2025.6.08-1} and applying H{\"o}lder's inequality, one has
			\begin{align}\label{eqs232}
				\int_{\Omega}\mathcal J_2dx\le \eta \left(\|\nabla_x\phi_{\neq}\|_{L^2}^2 + \norm{\nabla_x^2 \psi_{\neq}}_{L^2}^2+\norm{ \nabla_x^2\zeta_{\neq}}_{L^2}^2 \right)+ C_\eta (\norm{\nabla_x \mathcal{S}_{\phi_{\neq}}}_{L^2}^2+\norm{\mathcal{S}_{\psi_{\neq}}}_{L^2}^2+\norm{\mathcal{S}_{\zeta_{\neq}}}_{L^2}^2).
			\end{align}  
			Integrating \eqref{eqs230-1} on $\Omega$ and combining \eqref{eqs232}, it yields
			\begin{align}\label{eqs233}
				\frac{d}{dt}& \norm{\nabla_x \vm_{\neq}}_{L^2}^2 + \norm{\nabla_x^2 {\vm}^{\ast}_{\neq}}_{L^2}^2
				\leq\eta \|\nabla_x\phi_{\neq}\|_{L^2}^2 + C_\eta (\norm{\nabla_x \mathcal{S}_{\phi_{\neq}}}_{L^2}^2+\norm{\mathcal{S}_{\psi_{\neq}}}_{L^2}^2+\norm{\mathcal{S}_{\zeta_{\neq}}}_{L^2}^2).
			\end{align}
			Combining \eqref{eqs213}, \eqref{eqs223} and  \eqref{eqs233}, we have completed the proof of  Lemma \ref{lem99}.
		\end{proof}
		\begin{Lem}\label{lem9}
			Under the same assumptions of   Proposition \ref{Thm-ape}, it holds that
			\begin{align*}
				&\norm{\nabla_x \mathcal{S}_{\phi_{\neq}}}_{L^2}^2\le C \check{\delta}\| (\nabla_x\phi_{\neq},\nabla_x^2\psi_{\neq},\nabla_x^2\zeta_{\neq}
				)\|_{L^2}^2,
				\\
				&\norm{\mathcal{S}_{\psi_{\neq}}}_{L^2}^2+\norm{\mathcal{S}_{\zeta_{\neq}}}_{L^2}^2\leq C \sum_{\abs{\alpha}= 2}
				\|\p^{\alpha} g
				\|_{\sigma}^2+ C \bar{\delta} \left( \|\nabla_x\phi_{\neq}
				\|_{L^2}^2+(1+t)^{-2}\left( \norm{\nabla_x \vm}_{L^2}^2+\norm{g}_{\sigma}^2 \right)+\sum_{\abs{\gamma}=1}\norm{\p^{\gamma}g}_{\sigma}^2  \right)\notag\\
				&\qquad\qquad\qquad\qquad\qquad\;\;+C\deltab(1+t)^{-1}(\norm{\nabla_x^2\vm^{\ast}}_{L^2}^2+\norm{\p_t\nabla_x \vm^{\ast}}_{L^2}^2).
			\end{align*}
		\end{Lem}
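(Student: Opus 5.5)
The plan is to bound the source terms $\mathcal{S}_{\phi_{\neq}},\mathcal{S}_{\psi_{\neq}},\mathcal{S}_{\zeta_{\neq}}$ term by term. The tools are: the \emph{a priori} assumptions \eqref{apa}, the ansatz bounds from \eqref{eqs27}, \eqref{tildeE} and Corollary \ref{R-m-1-1-1}, the Poincar\'e inequality \eqref{2025.6.08-1} for non-zero modes, and Sobolev embeddings on $\Omega=\R\times\Torus^2$. The basic observation is that every term in these sources is one of three kinds:
\begin{itemize}
\item a commutator $\mathring{a}\cdot\nabla_x b_{\neq}-\Dn(a\cdot\nabla_x b)=-\Dn(a_{\neq}\cdot\nabla_x b)$, which is quadratic and contains a non-zero mode factor;
\item a product of an ansatz derivative (size $\deltab\tilde D_{-1/2}$ or smaller) with a non-zero mode;
\item a microscopic or ansatz-error term.
\end{itemize}
The ansatz $(\rhot,\ut,\thetat)$ depends only on $(t,x_1)$. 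Hence $\Dn$ annihilates pure ansatz quantities, and the slowly decaying errors $\p_{x_1}\tilde\Rm_i$ do not appear in non-zero modes at all. This is what makes the bounds free of inhomogeneous $(1+t)^{-\alpha}$ terms.

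For $\nabla_x\mathcal{S}_{\phi_{\neq}}$, I would write $\mathcal{S}_{\phi_{\neq}}=-\Dn(u_{\neq}\cdot\nabla_x\phi)-\Dn(\phi_{\neq}\dv_x\psi)-\psi_{\neq}\cdot\nabla_x\rhot-\phi_{\neq}\dv_x\ut$. After taking $\nabla_x$:
\begin{itemize}
\item The linear pieces are bounded by $\deltab(|\nabla_x\vm_{\neq}|+|\vm_{\neq}|)$, using $|\nabla\rhot|,|\nabla^2\ut|\lesssim\deltab$.
\item The quadratic pieces such as $\psi_{\neq}\cdot\nabla^2\phi$ and $\nabla\psi_{\neq}\nabla\phi$ are handled with $\|\vm\|_{W^{1,\infty}}\lesssim\chi$, which follows from the $H^3$-level control in \eqref{apa}. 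The top-order factor is placed on the non-zero mode whenever possible, since $\nabla\phi=\nabla\phi_{\neq}+\p_{x_1}\phim$.
\end{itemize}
The only delicate term is $\Dn(\rho\,\dv_x\psi)$ differentiated, which produces $\rho\nabla_x\dv_x\psi_{\neq}$. This term is already subtracted through $\rhom\dv_x\psi_{\neq}$, so only $\rho_{\neq}\nabla\dv\psi$ and $\nabla\rho\,\dv\psi_{\neq}$ survive. Both are bounded by $\chi(|\nabla^2\psi_{\neq}|+|\nabla\phi_{\neq}|)$ after applying $\|\phi_{\neq}\|_{L^\infty}\lesssim\|\nabla\phi_{\neq}\|_{H^1}$ and Poincar\'e. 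Together these give the first estimate with $\deltac=\chi+\deltab^{1/2}$.

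For $\mathcal{S}_{\psi_{\neq}}$ and $\mathcal{S}_{\zeta_{\neq}}$, the fluid parts are treated the same way. One point to check is the pressure commutator
\[
\frac{2}{3\rhot}(\thetam\nabla\phi_{\neq}+\rhom\nabla\zeta_{\neq})-\frac{\nabla p_{\neq}}{\rhot}=-\frac{2}{3\rhot}\Dn\big(\theta_{\neq}\nabla\rho+\rho_{\neq}\nabla\theta\big),
\]
which is again quadratic. Every resulting term involves at least one of $\nabla\phi_{\neq}$, $\nabla^2\psi_{\neq}$, $\nabla^2\zeta_{\neq}$ times a small coefficient. The terms $(\mathcal{S}^{f2}_\psi)_{\neq}$ and $(\mathcal{S}^{f2}_\zeta)_{\neq}$ contain viscous differences such as $(\mu(\theta)-\mu(\thetat))\Delta u$, whose non-zero part is bounded by $|\vm_{\neq}||\nabla^2\vm|+|\vm||\nabla^2\vm_{\neq}|$. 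These are absorbed into $\deltab\|\nabla\phi_{\neq}\|^2+\deltab(1+t)^{-1}\|\nabla^2\vm^\ast\|^2$ using the decay rate $(1+t)^{-1/2}$ of $\|\vm\|_{L^\infty}$ supplied by \eqref{apa}.

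The main obstacle is the microscopic terms $(\mathcal{S}^m_\psi)_{\neq}$ and $(\mathcal{S}^m_\zeta)_{\neq}$. I would proceed as in the proof of \eqref{lem-2-1-n-4}:
\begin{itemize}
\item Expand $\Pi=\p_tG+P_1(\xi\cdot\nabla_xG)-Q(G,G)$ with $G=\bar G_0+\sqrt\mu g$.
\item Use the Burnett-function representation \eqref{5.1}--\eqref{5.2} and self-adjointness of $L_M^{-1}$ to move the velocity weights onto $B_{ij},A_j$, so that $|\cdot|_\sigma$ norms suffice.
\item The terms $\nabla_x\p^\gamma g$ with $|\gamma|=1$ give $\sum_{|\alpha|=2}\|\p^\alpha g\|_\sigma^2$.
\item The pieces involving $\bar G_0$ depend on $x_1$ only at the background level. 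Their non-zero parts come from $(\rho,u,\theta)_{\neq}$ entering $M$ and $P_1$, and from $\nabla_x\bar G_0$ evaluated at $u,\theta$. This yields $\deltab\tilde D_{-1/2}(|\nabla^2\vm^\ast|+|\p_t\nabla\vm^\ast|)$, hence the $(1+t)^{-1}$ term after squaring.
\item Cross terms such as $\nabla_x\vm\cdot\p^\gamma g$ and $\Gamma(g,g)$ are bounded by $\deltab(1+t)^{-2}(\|\nabla\vm\|^2+\|g\|_\sigma^2)+\deltab\sum_{|\gamma|=1}\|\p^\gamma g\|_\sigma^2$, using \eqref{apa} for the $L^\infty_x$ factors.
\end{itemize}
The care needed is to ensure that no term of the form $\deltab\tilde D_{-1}$ without a perturbation factor survives the projection $\Dn$. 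I would verify this by checking that each purely background contribution is $x'$-independent.
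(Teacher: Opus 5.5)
Your proposal is correct and follows essentially the same route as the paper: you split each product into zero and non-zero modes so that $\Dn$ annihilates every $x'$-independent background piece, including the errors $\p_{x_1}\tilde{\Rm}_i$, and then bound term by term using \eqref{apa}, Poincar\'e, Sobolev embedding, and, for the microscopic moments, the Burnett-function bound \eqref{non-fluid-s-s-1}. One small fix: in $\nabla_x\mathcal{S}_{\phi_{\neq}}$ the surviving terms are $\Dn(\nabla_x\phi_{\neq}\,\dv_x\psi)$ and $\Dn(\phi_{\neq}\nabla_x\dv_x\psi)$ (not $\nabla_x\rho\,\dv_x\psi_{\neq}$), and for the second one you should use $\norm{\phi_{\neq}}_{L^6}\lesssim\norm{\nabla_x\phi_{\neq}}_{L^2}$ with $\norm{\nabla_x^2\psi}_{L^3}\lesssim\chi$ rather than $\norm{\phi_{\neq}}_{L^\infty}\lesssim\norm{\nabla_x\phi_{\neq}}_{H^1}$, because neither $\nabla_x^2\phi_{\neq}$ nor the zero mode $\nabla_x^2\mathring{\psi}$ appears on the right-hand side of the first estimate.
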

		
		\begin{proof}
			We first make an estimation for $\mathcal {S}_{\psi_{\neq}}$ \eqref{eqs201}. With respect to $\mathcal {S}_{\psi_{\neq}}$ \eqref{eqs201}, we first calculate the part excluding $(\mathcal {S}_{\psi}^{f2})_{\neq}$ and $(\mathcal {S}_{\psi}^{m})_{\neq}$. Among the parts other than $(\mathcal {S}_{\psi}^{f2})_{\neq}$ and $(\mathcal {S}_{\psi}^{m})_{\neq}$, the term 
			\begin{equation}
				\label{add.lem9.p1}
				\frac{2}{3\rhot}\left(\thetam \nabla_x \phi_{\neq} + \rhom \nabla_x \zeta_{\neq}  \right) - \frac{\nabla_x p_{\neq}}{\rhot}
			\end{equation}			
			is the most difficult one to calculate. Therefore, for this part, we only provide an estimate of \eqref{add.lem9.p1}, and the calculations for the remaining terms are similar. In fact, it holds
			\begin{align}\label{fluid-part-1}
				\norm{\eqref{add.lem9.p1}}_{L^2} 
				&\leq\norm{ \frac{2}{3\rhot}  \left(   \p_{x_1} \phim \zeta_{\neq} + \p_{x_1} \thetam \phi_{\neq}  \right) }_{L^2}+\sum_{i=1}^3\norm{\frac{2}{3\rhot} \left( \p_{xi}\phi_{\neq} \zeta_{\neq} + \p_{xi}\zeta_{\neq}\phi_{\neq} \right)}_{L^2} \notag\\
				&\leq C\chi \norm{(\nabla_x\phi_{\neq},\nabla_x^2 \psi_{\neq},\nabla_x^2 \zeta_{\neq}
					)}_{L^2},
			\end{align}
			where we have used the {\it a priori} assumptions \eqref{apa}. For the fluid part in $(\mathcal{S}_\psi^{f2})_\neq$ \eqref{Q-m}, we only carry out the calculation for 
			\begin{equation}
				\label{add.lem9.p2}
				\Dn \left[\mu(\thetat) \left( \frac{1}{\rho}-\frac{1}{\rhot}\right) \left(\Delta_x u + \frac13 \nabla_x \dv_x u \right)\right],
			\end{equation}
			and the estimations for the remaining parts can be obtained in the same way. By the {\it a priori} assumptions \eqref{apa}, one has
			\begin{align}\label{fluid-part-2}
				\norm{\eqref{add.lem9.p2}}_{L^2} 
				\leq&\norm{\mu(\thetat) \Dn\left( \frac{1}{\rho}-\frac{1}{\rhot}\right) \left(\Delta_x \um + \frac13 \nabla_x \dv_x \um \right)}_{L^2} + \norm{\mu(\thetat) \Do \left( \frac{\phi}{\rho\rhot}\right) \left(\Delta_x \psi_{\neq} + \frac13 \nabla_x \dv_x \psi_{\neq} \right)}_{L^2} \notag\\
				&+\norm{[\mu(\thetat)\Dn \left[\left( \frac{1}{\rho}-\frac{1}{\rhot}\right)_{\neq} \left(\Delta_x \psi_{\neq} + \frac13 \nabla_x \dv_x \psi_{\neq} \right)\right]}_{L^2} \notag\\
				\leq& C\chi \norm{(\nabla_x\phi_{\neq},\nabla_x^2 \psi_{\neq},\nabla_x^2 \zeta_{\neq})}_{L^2}.
			\end{align}
			For the non-fluid part $(\mathcal {S}_{\psi}^{m})_{\neq}$ \eqref{K-m}, we only deal with 
			\begin{equation}
				\label{add.lem9.p3}
				\Dn \left[ \frac{1}{\rho} \int \xi \otimes \xi \cdot \nabla_{x} \left(L_{M}^{-1}\Pi-L_{\Mb}^{-1}\bar{\Pi}_1\right) d \xi  \right].
			\end{equation}
			The calculation of the other term is much simpler. It holds 
			\begin{align}\label{non-fluid-1}
				\eqref{add.lem9.p3} 
				=& \Do\left(\frac{1}{\rho}\right) \Dn  \int \xi \otimes \xi \cdot \nabla_{x} L_{M}^{-1}\Pi d \xi + \Dn\left(\frac{1}{\rho}\right) \Do \int \xi \otimes \xi \cdot \nabla_{x} \left(L_{M}^{-1}\Pi-L_{\Mb}^{-1}\bar{\Pi}_1\right) d \xi \notag \\
				&+\Dn\left[\left(\frac{1}{\rho}\right)_{\neq} \Dn \int \xi \otimes \xi \cdot \nabla_{x} L_{M}^{-1}\Pi d \xi\right].
			\end{align}
			From \eqref{non-fluid-s-s-1} and the a {\it priori} assumptions \eqref{apa}, one has
			\begin{align}
				&\norm{\Dn\left(\frac{1}{\rho}\right) \Do \int \xi \otimes \xi \cdot \nabla_{x} \left(L_{M}^{-1}\Pi-L_{\Mb}^{-1}\bar{\Pi}_1\right) d \xi}_{L^2}\leq C \check{\delta}  \left(\norm{\nabla_x \phi_{\neq}}_{L^2}+\sum_{1\leq\abs{\alpha}\leq2}\norm{\p^\alpha g}_{\sigma}\right)\label{non-fluid-2},\\
				&\norm{\Dn \int_{\R^3} \xi \otimes \xi \cdot \nabla_x L_M^{-1} \Pi d\xi}_{L^2}^2 
				\leq C \sum_{\abs{\alpha}=2}\norm{\p^{\alpha}g}_{\sigma}^2+C\check{\delta}\left[ (1+t)^{-2}\left( \norm{\nabla_x \vm}_{L^2}^2+\norm{g}_{\sigma}^2 \right)+\sum_{\abs{\gamma}=1}\norm{\p^{\gamma}g}_{\sigma}^2  \right] \notag\\
				&\qquad\qquad\qquad\qquad\qquad\qquad\qquad +C\deltab(1+t)^{-1}(\norm{\nabla_x^2\vm^{\ast}}_{L^2}^2+\norm{\p_t\nabla_x \vm^{\ast}}_{L^2}^2)\label{non-fluid-s-s-2}.
			\end{align}
			Combining \eqref{fluid-part-1}-\eqref{non-fluid-s-s-2}, one has
			\begin{align}\label{F-m-111}
				\norm{\mathcal {S}_{\psi_{\neq}}}_{L^2}^2\leq& C \sum_{\abs{\alpha}= 2}
				\|\p^{\alpha} g
				\|_{\sigma}^2+ C \bar{\delta} \left( \|\nabla_x\phi_{\neq}
				\|_{L^2}^2+(1+t)^{-2}\left( \norm{\nabla_x \vm}_{L^2}^2+\norm{g}_{\sigma}^2 \right)+\sum_{\abs{\gamma}=1}\norm{\p^{\gamma}g}_{\sigma}^2  \right)\notag\\
				&+C\deltab(1+t)^{-1}(\norm{\nabla_x^2\vm^{\ast}}_{L^2}^2+\norm{\p_t\nabla_x \vm^{\ast}}_{L^2}^2).
			\end{align}
			The estimates of $\mathcal {S}_{\phi_{\neq}}$  and $\mathcal {S}_{\zeta_{\neq}}$ \eqref{eqs201} are the same as those of $\mathcal {S}_{\psi_{\neq}}$. Thus, using the same argument as   \eqref{F-m-111}, we obtain
			\begin{align}
				&\norm{\nabla_x \mathcal{S}_{\phi_{\neq}}}_{L^2}^2\le C \check{\delta}\| (\nabla_x\phi_{\neq},\nabla_x^2\psi_{\neq},\nabla_x^2\zeta_{\neq})
				\|_{L^2}^2,\notag
				\\
				&\norm{\mathcal {S}_{\psi_{\neq}}}_{L^2}^2+\|\mathcal {S}_{\zeta_{\neq}}\|_{L^2_x}^2\leq C \sum_{\abs{\alpha}= 2}
				\|\p^{\alpha} g
				\|_{\sigma}^2+ C \bar{\delta} \left( \|\nabla_x\phi_{\neq}
				\|_{L^2}^2+(1+t)^{-2}\left( \norm{\nabla_x \vm}_{L^2}^2+\norm{g}_{\sigma}^2 \right)+\sum_{\abs{\gamma}=1}\norm{\p^{\gamma}g}_{\sigma}^2  \right)\notag\\
				&\qquad\qquad\quad\qquad\qquad\quad+C\deltab(1+t)^{-1}(\norm{\nabla_x^2\vm^{\ast}}_{L^2}^2+\norm{\p_t\nabla_x \vm^{\ast}}_{L^2}^2).\label{fluid-and-non-fluid-2}
			\end{align}
			Combining \eqref{F-m-111} and \eqref{fluid-and-non-fluid-2}, we have completed the proof of  Lemma \ref{lem9}.
		\end{proof}
		Combining Lemma \ref{s-t-s-tt-2},  Lemma \ref{lem99} and  Lemma \ref{lem9}, we have completed the proof of Theorem \ref{non-zero-111-111}.
		
		\subsection{Estimates of higher-order derivatives for the macroscopic equations}\label{sec4.3}
		To close the energy estimate, we need to provide the estimates of higher-order derivatives for the fluid part. Recall the definition of $(\vm,{\vm}^{\ast})$ \eqref{2025-11-5-1} and dissipation energy functionals $\mathcal{D}_i$ \eqref{2.11}-\eqref{2.11-5}. Then we have the following result.
		
		\begin{Thm}\label{high-order-111-111}
			Under the same assumptions of  Proposition \ref{Thm-ape}, it holds that
			\begin{align*}
				& \frac{d}{dt} \int_{\Omega} \nabla_x \psi \cdot\nabla_x^2 \phi dx+ \tilde{c}\norm{  \nabla_x^2 \phi}_{L^2}^2\leq C\bar{\delta}(1+t)^{-\frac{5}{2}}+C\check{\delta}\sum_{i=1}^3(1+t)^{-3+i}\mathcal{D}_i+ C_\eta \left( \norm{\nabla_x^2{\vm}^{\ast}}_{L^2}^2 + \sum_{\abs{\alpha}=2}\norm{\p^{\alpha}g}_{\sigma}^2 \right) ,\\
				&  \frac{d}{dt} \left(\norm{\nabla_x^2\vm}_{L^2}^2+\tilde{c}\int_{\Omega} \nabla_x^2 \psi \cdot\nabla_x^3 \phi dx\right)+ \tilde{c}\norm{  \nabla_x^3 \vm}_{L^2}^2\leq C\bar{\delta}(1+t)^{-\frac{5}{2}}+C\check{\delta}\sum_{i=1}^3(1+t)^{-3+i}\mathcal{D}_i+C_\eta \sum_{\abs{\alpha}=3}\norm{\p^{\alpha}g}_{\sigma}^2.
			\end{align*}
		\end{Thm}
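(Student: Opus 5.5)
We prove both inequalities by working directly on the full perturbation system \eqref{perturbation-1} rather than on the anti-derivative system \eqref{T-A-D-1}. The structural cancellations there are not needed here, since every term is at least second order in $x$ and therefore already carries enough decay. Write \eqref{perturbation-1} as a linearized symmetric-hyperbolic-parabolic system with the sources $\mathcal{S}_{\phi}^{f}$, $\mathcal{S}_{\psi}^{f1}$, $\mathcal{S}_{\psi}^{f2}$, $\mathcal{S}_{\psi}^{m}$, $\mathcal{S}_{\zeta}^{f1}$, $\mathcal{S}_{\zeta}^{f2}$, $\mathcal{S}_{\zeta}^{m}$. The argument has three steps, mirroring Steps 1--3 of Lemma \ref{lem99} but at higher order and on the full $\vm$.

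\textbf{Step 1: the cross term for $\nabla_x^2\phi$.} This gives the first inequality. Apply $\nabla_x$ to \eqref{perturbation-1}$_2$ and take the inner product with $\nabla_x^2\phi$. This produces $\frac23\frac{\thetat}{\rhot}|\nabla_x^2\phi|^2$, up to commutators that carry a factor $\nabla_x\thetat$ or $\nabla_x\rhot$ and hence $\deltab\tilde D_{-\frac12}$. The time derivative is handled as in Lemma \ref{lem99}:
\[
\nabla_x\p_t\psi\cdot\nabla_x^2\phi=\frac{d}{dt}\big(\nabla_x\psi\cdot\nabla_x^2\phi\big)-\nabla_x\psi\cdot\nabla_x^2\p_t\phi .
\]
We then integrate by parts once more and substitute \eqref{perturbation-1}$_1$, $\p_t\phi=-\rhot\dv_x\psi+\mathcal{S}_{\phi}^{f}$. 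This turns the last term into $\|\nabla_x\dv_x\psi\|^2$-type quantities, which are bounded by $C_\eta\|\nabla_x^2\vm^{\ast}\|_{L^2}^2$. The viscous terms $\nabla_x\Delta_x\psi\cdot\nabla_x^2\phi$ are absorbed by Young's inequality, using $\eta\|\nabla_x^2\phi\|^2$ against $C_\eta\|\nabla_x^3\psi\|^2$. To keep the right-hand side free of $\|\nabla_x^3\psi\|^2$, we instead integrate by parts to $\nabla_x^2\psi\cdot\nabla_x^3\phi$ only in the second estimate. For the first estimate we accept $\|\nabla_x^3\vm^{\ast}\|^2\le \mathcal{D}_1$ multiplied by a non-small constant; if that does not fit the claimed right-hand side, we place $\nabla_x^2$ on the continuity equation and use the viscous term to generate dissipation of $\nabla_x^2\phi$ in the Matsumura--Nishida way, exactly as in \eqref{eqs220}.

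\textbf{Step 2: the $\nabla_x^2$ energy.} Apply $\nabla_x^2$ to \eqref{perturbation-1} and test with
\[
\Big(\frac{2\thetat}{3\rhot^2}\nabla_x^2\phi,\ \nabla_x^2\psi,\ \frac{1}{\thetat}\nabla_x^2\zeta\Big).
\]
With these weights the hyperbolic coupling terms cancel up to $\deltab\tilde D_{-\frac12}$ commutators. We obtain $\frac{d}{dt}\|\nabla_x^2\vm\|^2+c\|\nabla_x^3\vm^{\ast}\|^2$. Adding $\tilde c$ times the analogue of Step 1 at one order higher, namely $\frac{d}{dt}\int_\Omega\nabla_x^2\psi\cdot\nabla_x^3\phi\,dx$, recovers $\|\nabla_x^3\phi\|^2$.

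\textbf{Step 3: the error terms.} These must be bounded by the stated right-hand sides. There are three kinds.

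\emph{Background-derivative terms.} Terms such as $\nabla_x^2\tilde{\U}\,\nabla_x\vm\,\nabla_x^2\vm$ are controlled via Corollary \ref{R-m-1-1-1} and \eqref{eqs27}. Since $\|\p_{x_1}^k(\rhot,\ut,\thetat)\|_{L^\infty}\lesssim\deltab(1+t)^{-\frac k2}$ up to a $\delta$-factor, they give $C\deltac\sum_i(1+t)^{-3+i}\mathcal{D}_i$. The zero-mode parts of $\nabla_x\vm$ are expressed through $\p_{x_1}^2\Vmc$, $\p_{x_1}^3\Vmc$, since $\phim=\p_{x_1}\Phi$, etc.

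\emph{The $\tilde{\Rm}$ terms.} By Corollary \ref{R-m-1-1-1}, $\|\p_{x_1}^{3}\tilde{\Rm}\|_{L^2}^2\lesssim\deltab^3(1+t)^{-\frac{11}{2}}$. After Young's inequality this contributes the $C\deltab(1+t)^{-\frac52}$ term.

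\emph{Nonlinear fluid terms.} These are cubic and are bounded via \eqref{apa}, Sobolev embedding on $\R\times\Torus^2$ and Poincaré's inequality \eqref{2025.6.08-1} for the non-zero modes.

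\textbf{Main obstacle: the non-fluid terms $\mathcal{S}_{\psi}^{m}$, $\mathcal{S}_{\zeta}^{m}$.} In the top-order estimate these involve $\nabla_x^{3}$ of $\int\xi\otimes\xi\, L_M^{-1}\Pi\,d\xi$, and hence $\p_t\nabla_x^2 G$ and $\nabla_x^3G$. That is one derivative too many for $\sum_{|\alpha|\le3}\|\p^\alpha g\|_\sigma$. We will integrate by parts once in $x$ so that only $\nabla_x^2$ of the moment hits $\nabla_x^3\vm$. Using the Burnett-function representation \eqref{5.1}--\eqref{5.2} as in \eqref{non-fluid-s-s-1}, the moment is then bounded by $C\sum_{|\alpha|=3}\|\p^\alpha g\|_\sigma$ plus lower-order products. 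The $\p_t\bar G_0$ and $\bar G_0$ contributions decay like $\deltab\tilde D$ and go into the $(1+t)^{-\frac52}$ term. Terms containing $\p_t\nabla_x\vm^{\ast}$ are replaced using Lemma \ref{s-t-s-tt-2}. The remaining piece is $\eta\|\nabla_x^3\vm\|^2+C_\eta\sum_{|\alpha|=3}\|\p^\alpha g\|_\sigma^2$, which is exactly the form claimed. The weak $\sigma$-norm suffices because only the velocity moments of $g$ against the rapidly decaying Burnett functions enter.
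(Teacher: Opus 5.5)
\textbf{There is a genuine gap in the two cross-term estimates.} These are your Step~1 and the $\tilde c\int_\Omega\nabla_x^2\psi\cdot\nabla_x^3\phi\,dx$ part of Step~2.

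You run both on the Chapman--Enskog system \eqref{perturbation-1}. There the momentum equation carries the explicit viscous term $\frac{\mu(\thetat)}{\rhot}(\Delta_x\psi+\frac13\nabla_x\dv_x\psi)$ and the source $\mathcal{S}^m_\psi\sim\frac1\rho\int\xi\otimes\xi\cdot\nabla_x L_M^{-1}\Pi\,d\xi$, with $\Pi\ni\p_tG+P_1(\xi\cdot\nabla_xG)$. Both carry one more $x$-derivative than a pairing with $\nabla_x^{k+1}\phi$ can afford. You also cannot integrate by parts onto $\nabla_x^{k+1}\phi$.

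\emph{Case $k=1$.} You get $C\norm{\nabla_x^3\psi}_{L^2}^2$ and $C\norm{\nabla_x\mathcal{S}^m_\psi}_{L^2}^2\sim C\sum_{\abs{\alpha}=3}\norm{\p^\alpha g}_\sigma^2$, both with $O(1)$ constants. The claimed right-hand side allows only $\norm{\nabla_x^2\vm^{\ast}}_{L^2}^2+\sum_{\abs{\alpha}=2}\norm{\p^\alpha g}_\sigma^2$ with non-small constants; the $\mathcal{D}_i$ all come with the small factor $\check\delta$. Your fallback with the weights of \eqref{eqs220} turns the viscous term into $\frac{d}{dt}\norm{\nabla_x^2\phi}_{L^2}^2$. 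That changes the functional, and it does nothing for the $\mathcal{S}^m_\psi$ term.

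\emph{Case $k=2$.} Here the route breaks down entirely. It needs $\nabla_x^4\psi$ and $\nabla_x^2\mathcal{S}^m_\psi$, i.e.\ fourth-order derivatives of $g$, which nothing in the scheme controls. Your ``main obstacle'' paragraph only treats the pairing $\nabla_x^2\mathcal{S}^m_\psi\cdot\nabla_x^2\psi$ from the $\nabla_x^2$-energy, where integrating by parts onto $\psi$ is legitimate.

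\textbf{The missing idea is to do the cross terms in the moment form \eqref{landau-1}.} By \eqref{G}, the viscous stress and $\mathcal{S}^m_\psi$ recombine into $-\frac1\rho\int\xi\otimes\xi\cdot\nabla_xG\,d\xi$, which has only one derivative on $G=\bar G_0+\sqrt\mu g$. This is the paper's system \eqref{perturbation-ori}: $\p_t\phi=\mathcal{S}_0$, $\p_t\psi+\frac23\frac{\thetat}{\rhot}\nabla_x\phi=\mathcal{S}$, $\p_t\zeta=\mathcal{S}_4$. Its sources satisfy
$\abs{\mathcal{S}_i}\lesssim\bar\delta^{\frac12}(\tilde D_{-1}+\tilde D_{-\frac12}\abs{\vm})+\abs{\vm}\abs{\nabla_x\vm}+\abs{\nabla_x\vm^{\ast}}+\sum_{\abs{\alpha}=1}(1+\abs{\vm})\abs{\p^\alpha g}_\sigma$.

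Apply $\p_i$ (resp.\ $\p_{ij}$) to the $\psi$-equation and test with $\p_{il}\phi$ (resp.\ $\p_{ijl}\phi$), trading the time derivative via $\p_t\phi=\mathcal{S}_0$. What remains is $C\norm{\nabla_x\mathcal{S}_i}_{L^2}^2$ (resp.\ $C\norm{\nabla_x^2\mathcal{S}_i}_{L^2}^2$). By \eqref{high-der-ori-9}--\eqref{high-der-ori-10} these are bounded by exactly the stated right-hand sides: $\norm{\nabla_x^2\vm^{\ast}}^2+\sum_{\abs{\alpha}=2}\norm{\p^\alpha g}_\sigma^2$, resp.\ $\norm{\nabla_x^3\vm^{\ast}}^2+\sum_{\abs{\alpha}=3}\norm{\p^\alpha g}_\sigma^2$, plus the admissible small terms.

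The Chapman--Enskog form is needed only for the $\nabla_x^2$-energy itself. There your Step~2 estimate, with one $x$-integration by parts on the non-fluid term, matches the paper's Lemma~\ref{high-der-ori-star}. The $\eta\norm{\nabla_x^3\phi}_{L^2}^2$ it leaves is then absorbed by the correctly derived top-order cross term.
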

		
		The proof of  Theorem \ref{high-order-111-111} will be decomposed into two parts via Lemma \ref{high-der-ori} and  Lemma \ref{high-der-ori-star} below.
		To derive estimates for the high-order derivatives (involving time derivatives) of macroscopic quantities and the high-order derivatives of density, we rewrite the equation \eqref{landau-1} as
		\begin{align}\label{perturbation-ori}
			\begin{cases}
				\p_t \phi = \mathcal{S}_{0},\\
				\p_t \psi + \frac23 \frac{\thetat}{\rhot} \nabla_x\phi=\mathcal{S},\\
				\p_t \zeta=\mathcal{S}_4,
			\end{cases}
		\end{align}
		where
		\begin{align*}
			&\mathcal{S}_{0}:=-\p_t\rhot+\operatorname{div}_x(\rho u), 
			\\
			&\mathcal{S}:=(\mathcal{S}_1,\mathcal{S}_2,\mathcal{S}_3)^{t}=-	 \p_t\ut- u \cdot \nabla_x u -\frac{\nabla_x p}{\rho}+ \frac23 \frac{\thetat}{\rhot} \nabla_x\phi -\frac{1}{\rho}\int \xi \otimes \xi \cdot \nabla_x G d \xi, 
			\\
			&\mathcal{S}_4:=-\p_t\thetat-u \cdot \nabla_x \theta - \theta \dv_x u	-\frac{1}{2\rho}\int_{\R^3} |\xi|^{2} \xi \cdot \nabla_x G d \xi +\frac{u}{\rho}\cdot\int \xi \otimes \xi \cdot \nabla_x G d \xi. 
		\end{align*}
		
		\begin{Lem}\label{high-der-ori}
			Under the same assumptions of  Proposition \ref{Thm-ape},  it holds that
			\begin{align}
				&\norm{ \p_t \nabla_x \vm}_{L^2}^2 \lesssim\left[\deltab(1+t)^{-\frac52}+ \norm{\nabla_x^2 \vm}_{L^2}^2 + \sum_{\abs{\alpha}=2}\norm{\p^{\alpha}g}_{\sigma}^2+\check{\delta} \left(  \sum_{\abs{\gamma}=1} \norm{\p^{\gamma }g}_{\sigma}^2 +\sum_{i=1}^2 (1+t)^{-i} \norm{\nabla_x^{2-i}\vm}_{L^2}^2 \right)\right],\label{high-der-ori-1} \\
				&\norm{ \p_t^2 \vm}_{L^2}^2 \lesssim\left[\deltab(1+t)^{-\frac52}+ \norm{\p_t\nabla_x \vm}_{L^2}^2 + \sum_{\abs{\alpha}=2}\norm{\p^{\alpha}g}_{\sigma}^2+\check{\delta} \left(  \sum_{\abs{\gamma}=1} \norm{\p^{\gamma }g}_{\sigma}^2 +\sum_{i=1}^2 (1+t)^{-i} \norm{\nabla_x^{2-i}\vm}_{L^2}^2 \right)\right],\label{high-der-ori-1-s} \\
				&\norm{ \p_t \nabla_x^2 \vm}_{L^2}^2 \lesssim\left[\deltab(1+t)^{-\frac52}+ \norm{\nabla_x^3 \vm}_{L^2}^2 + \sum_{\abs{\alpha}=3}\norm{\p^{\alpha}g}_{\sigma}^2  + \check{\delta} \left(  \sum_{1\leq\abs{\gamma}\leq 2} \norm{\p^{\gamma }g}_{\sigma}^2 +\sum_{i=1}^3 (1+t)^{-i} \norm{\nabla_x^{3-i}\vm}_{L^2}^2 \right)\right],\label{high-der-ori-2} \\
				&\norm{ \p_t^2 \nabla_x \vm}_{L^2}^2 \lesssim\left[\deltab(1+t)^{-\frac52}+ \norm{\p_t\nabla_x^2 \vm}_{L^2}^2 + \sum_{\abs{\alpha}=3}\norm{\p^{\alpha}g}_{\sigma}^2  + \check{\delta} \left(  \sum_{1\leq\abs{\gamma}\leq 2} \norm{\p^{\gamma }g}_{\sigma}^2 +\sum_{i=1}^3 (1+t)^{-i} \norm{\nabla_x^{3-i}\vm}_{L^2}^2 \right)\right],\label{high-der-ori-2-s} \\
				&\norm{ \p_t^3 \vm}_{L^2}^2 \lesssim\left[\deltab(1+t)^{-\frac52}+ \norm{\p_t^2\nabla_x \vm}_{L^2}^2 + \sum_{\abs{\alpha}=3}\norm{\p^{\alpha}g}_{\sigma}^2  + \check{\delta} \left(  \sum_{1\leq\abs{\gamma}\leq 2} \norm{\p^{\gamma }g}_{\sigma}^2 +\sum_{i=1}^3 (1+t)^{-i} \norm{\nabla_x^{3-i}\vm}_{L^2}^2 \right)\right],\label{high-der-ori-2-ss} \\
				&\p_t \int_{\Omega} \nabla_x \psi \cdot\nabla_x^2 \phi dx+ \tilde c\norm{  \nabla_x^2 \phi}_{L^2}^2 \notag\\
				&\leq C \left( \norm{\nabla_x^2{\vm}^{\ast}}_{L^2}^2 + \sum_{\abs{\alpha}=2}\norm{\p^{\alpha}g}_{\sigma}^2 \right) +C\deltab(1+t)^{-\frac{5}{2}}+ C\check{\delta} \left(  \sum_{\abs{\gamma}=1} \norm{\p^{\gamma }g}_{\sigma}^2 +\sum_{i=1}^2 (1+t)^{-i} \norm{\nabla_x^{2-i}\vm}_{L^2}^2 \right),\label{high-der-ori-3} \\
				&\p_t \int_{\Omega} \nabla_x^2 \psi \cdot\nabla_x^3 \phi dx+ \tilde c \norm{  \nabla_x^3 \phi}_{L^2}^2\notag  \\
				&\leq C \left( \norm{\nabla_x^3{\vm}^{\ast}}_{L^2}^2 + \sum_{\abs{\alpha}=3}\norm{\p^{\alpha}g}_{\sigma}^2 \right) +C\deltab(1+t)^{-\frac{5}{2}}+ C\check{\delta} \left(  \sum_{1\leq\abs{\gamma}\leq2} \norm{\p^{\gamma }g}_{\sigma}^2 +\sum_{i=1}^3 (1+t)^{-i} \norm{\nabla_x^{3-i}\vm}_{L^2}^2 \right).\label{high-der-ori-4}
			\end{align}
		\end{Lem}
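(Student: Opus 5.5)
The estimates \eqref{high-der-ori-1}--\eqref{high-der-ori-2-ss} are algebraic consequences of the reformulated system \eqref{perturbation-ori}. The dissipative estimates \eqref{high-der-ori-3}--\eqref{high-der-ori-4} come from the usual hyperbolic–parabolic cross term between $\psi$ and $\nabla_x\phi$, handled as in Lemma \ref{H-D-Phi-lem} and in Step 2 of Lemma \ref{lem99}, but at the level of the original (not integrated) perturbation.

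For \eqref{high-der-ori-1}, I apply $\nabla_x$ to \eqref{perturbation-ori} and take $L^2$ norms. Each $\mathcal{S}_j$ splits into four parts.
\begin{itemize}
\item The linear fluid terms are first order in $\vm$: $\rho\,\dv_x\psi$, $\theta\,\dv_x\psi$, and $\nabla_x\zeta$. The explicit $\frac23\frac{\thetat}{\rhot}\nabla_x\phi$ is subtracted in $\mathcal{S}$.
\item The profile terms come from the Navier–Stokes-type equation \eqref{New-Ansatz} for $\tilde{\U}$. After cancellation with $\p_t\rhot,\p_t\ut,\p_t\thetat$, only the viscous terms of the ansatz and $\p_{x_1}\tilde{\Rm}$, $\p_{x_1}\mathcal{R}_G$ remain. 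These are bounded in $L^2$ by $\deltab(1+t)^{-5/4}$ using Corollary \ref{R-m-1-1-1}, \eqref{tildeE}, and \eqref{eqs27}.
\item The variable-coefficient commutators, e.g. $\psi\cdot\nabla_x\rhot$ and $\phi\,\dv_x\ut$, are bounded by $\check\delta(1+t)^{-i/2}|\nabla_x^{2-i}\vm|$.
\item The kinetic terms $\int\xi\otimes\xi\cdot\nabla_x G\,d\xi$ and $\int|\xi|^2\xi\cdot\nabla_x G\,d\xi$ are written via $G=\bar G_0+\sqrt\mu g$. The $g$-part is bounded by $|\p^\alpha g|_\sigma$ with $|\alpha|=2$, since polynomial velocity moments against $\sqrt\mu$ are controlled by the weak norm $|\cdot|_\sigma$ by \eqref{1.35}. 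The $\bar G_0$-part is bounded by $\deltab\Dt_{-1}+\deltab\Dt_{-1/2}|\nabla_x\vm|$.
\end{itemize}
Nonlinear products are closed with the a priori assumptions \eqref{apa} and the Sobolev embedding in $\R\times\Torus^2$: the lower-order factor goes in $L^\infty$ and the top-order factor in $L^2$, which gives the $\check\delta$ coefficients.

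The remaining bounds follow the same pattern. For \eqref{high-der-ori-1-s}, \eqref{high-der-ori-2-s} and \eqref{high-der-ori-2-ss}, I differentiate \eqref{perturbation-ori} in $t$ and run the same bookkeeping, now measuring the output by $\p_t\nabla_x\vm$, $\p_t\nabla_x^2\vm$ and $\p_t^2\nabla_x\vm$ respectively. Each time derivative of a source term costs at most $(1+t)^{-1/2}$ on profile factors. For \eqref{high-der-ori-2}, one more spatial derivative brings in $\p^\alpha g$ with $|\alpha|=3$ and $\nabla_x^3\vm$.

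For \eqref{high-der-ori-3}, I take $\nabla_x\eqref{perturbation-ori}_2\cdot\nabla_x^2\phi$ and integrate over $\Omega$. The term $\frac23\frac{\thetat}{\rhot}|\nabla_x^2\phi|^2$ gives the dissipation $\tilde c\|\nabla_x^2\phi\|^2$, up to commutators of size $\check\delta$. The time-derivative part is rewritten as
\[
\p_t\nabla_x\psi\cdot\nabla_x^2\phi=\p_t(\nabla_x\psi\cdot\nabla_x^2\phi)-\nabla_x\psi\cdot\nabla_x^2\p_t\phi .
\]
I then integrate by parts once in $x$ and insert $\p_t\phi=\mathcal{S}_0$, whose principal part is $-\rho\,\dv_x\psi$; this yields $C\|\nabla_x^2\vm^\ast\|^2$. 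The source $\nabla_x\mathcal{S}$ is handled by Cauchy–Schwarz with a small $\eta$ on $\|\nabla_x^2\phi\|^2$, using the bounds from the first step. The term $-\frac{1}{\rho}\nabla_x p$ in $\mathcal{S}$ must be expanded so that its $\nabla_x\phi$ part cancels exactly against the subtracted linear term. Only $\nabla_x\zeta$ and genuinely nonlinear pieces should remain, which go into the $\nabla_x^2\vm^\ast$ and $\check\delta$ terms. \eqref{high-der-ori-4} follows the same way with one more derivative, using $\nabla_x^2\psi\cdot\nabla_x^3\phi$.

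I expect the main obstacle to be the decay of the profile error in \eqref{high-der-ori-1}, i.e. obtaining the rate $\deltab(1+t)^{-5/2}$ rather than something weaker. The terms $\nabla_x\p_t\tilde{\U}$ and the viscous terms of the ansatz only decay like $\|\p_{x_1}^2\tilde{\U}\|_{L^2}^2\sim\deltab(1+t)^{-3/2}$. To reach the stated rate I would exploit that $\tilde{\U}$ satisfies \eqref{New-Ansatz} exactly, so that only $\p_{x_1}^3$-level profile terms and $\p_{x_1}^2\tilde{\Rm}$ survive, and then use Corollary \ref{R-m-1-1-1} and Theorem \ref{res-Wt} at third order. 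If a residual $(1+t)^{-3/2}$ piece survives, I would instead keep it attached to $\vm$ (as $\deltab\Dt_{-1}|\vm|$) and absorb it into the $(1+t)^{-i}\|\nabla_x^{2-i}\vm\|^2$ terms.
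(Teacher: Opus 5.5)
Your proposal is correct and follows the paper's own argument. You bound $\nabla_x\mathcal S_i$, $\nabla_x^2\mathcal S_i$ and their $t$-derivatives term by term (profile residuals, commutators with the wave, nonlinear products, velocity moments of $\bar G_0$ and $\sqrt{\mu}g$) and read off \eqref{high-der-ori-1}--\eqref{high-der-ori-2-ss} from \eqref{perturbation-ori}; you then obtain \eqref{high-der-ori-3}--\eqref{high-der-ori-4} from the cross term $\nabla_x\psi\cdot\nabla_x^2\phi$ (resp.\ $\nabla_x^2\psi\cdot\nabla_x^3\phi$), using $\p_t\phi=\mathcal S_0$ and the cancellation of the $\nabla_x\phi$ part of $-\frac{1}{\rho}\nabla_x p$.

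The obstacle in your last paragraph is not real. $\nabla_x\p_t\tilde{\U}$ and $\nabla_x$ of the viscous terms are already of size $\deltab\Dt_{-3/2}$, whose squared $L^2$ norm is $O(\deltab^2)(1+t)^{-5/2}$. The only slow profile gradients, $\p_{x_1}\rhoc$ and $\p_{x_1}\check\theta\sim\delta\Dt_{-1/2}$, enter the pure profile part of $\mathcal S$ only through $\nabla_x\tilde p$, which \eqref{New-Ansatz} neutralizes exactly as in your bullet list. So the fallback of attaching a residual to $\vm$ is unnecessary, and it would not be legitimate for a pure forcing term anyway.
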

		\begin{proof}
			Recall $(\rho,u,\theta)=(\rhot,\ut,\thetat)+(\phi,\psi,\zeta)$ and $G=\bar{G}_0+\sqrt{\mu}g$, then we have
			\begin{align}\notag
				&\sum_{i=0}^4 \abs{\mathcal{S}_i}\le C\deltab^{\frac12}\left( \Dt_{-1}+\Dt_{-\frac12}\abs{\vm}\right)+C\abs{\vm}\abs{\nabla_x \vm}+C\abs{\nabla_x {\vm}^{\ast}}+C\sum_{\abs{\alpha}=1}\left(1+\abs{\vm}\right)\abs{\p^{\alpha}g}_{\sigma}. 
			\end{align}
			Then employing the a {\it priori} assumptions \eqref{apa}, one has
			\begin{align}
				&\sum_{i=0}^4 \norm{\nabla_x \mathcal{S}_i}_{L^2}^2 \leq C\deltab(1+t)^{-\frac52}+C \left( \norm{\nabla_x^2{\vm}^{\ast}}_{L^2}^2 + \sum_{\abs{\alpha}=2}\norm{\p^{\alpha}g}_{\sigma}^2 \right)\notag \\
				&\qquad\qquad+C\check{\delta} \left(\norm{\nabla_x^2 \phi}_{L^2}^2+  \sum_{\abs{\gamma}=1} \norm{\p^{\gamma }g}_{\sigma}^2 +\sum_{i=1}^2 (1+t)^{-i} \norm{\nabla_x^{2-i}\vm}_{L^2}^2 \right) ,\label{high-der-ori-9}
				\\
				&\sum_{i=0}^4 \norm{\nabla_x^2 \mathcal{S}_i}_{L^2}^2  \leq C\deltab(1+t)^{-\frac52} +C \left( \norm{\nabla_x^3 {\vm}^{\ast}}_{L^2}^2 + \sum_{\abs{\alpha}=3}\norm{\p^{\alpha}g}_{\sigma}^2 \right) \notag\\
				&\qquad\qquad+C\check{\delta} \left(\norm{\nabla_x^3 \phi}_{L^2}^2+  \sum_{1\leq\abs{\gamma}\leq2} \norm{\p^{\gamma }g}_{\sigma}^2 +\sum_{i=1}^3 (1+t)^{-i} \norm{\nabla_x^{3-i}\vm}_{L^2}^2 \right) . 
				\label{high-der-ori-10}
			\end{align}
			By system \eqref{perturbation-ori},  \eqref{high-der-ori-9} and \eqref{high-der-ori-10}, one has
			\begin{align*}
				&\norm{\p_t \nabla_x \vm}_{L^2}^2 \leq C\left( \sum_{i=0}^4\norm{\nabla_x \mathcal{S}_i}_{L^2}^2 + \norm{\nabla_x\left( \frac{\thetat}{\rhot} \nabla_x \phi \right)}_{L^2}^2 \right) \notag\\
				\leq& C\deltab(1+t)^{-\frac52}+C \left( \norm{\nabla_x^2\vm}_{L^2}^2 + \sum_{\abs{\alpha}=2}\norm{\p^{\alpha}g}_{\sigma}^2 \right)+C\check{\delta} \left(  \sum_{\abs{\gamma}=1} \norm{\p^{\gamma }g}_{\sigma}^2 +\sum_{i=1}^2 (1+t)^{-i} \norm{\nabla_x^{2-i}\vm}_{L^2}^2 \right),
				\\
				&\norm{\p_t \nabla_x^2 \vm}_{L^2}^2 \leq C\left( \sum_{i=1}^4\norm{\nabla_x^2 \mathcal{S}_i}_{L^2}^2 + \norm{\nabla_x^2\left( \frac{\thetat}{\rhot} \nabla_x \phi \right)}_{L^2}^2 \right)\notag\\
				\leq& C\deltab(1+t)^{-\frac52} +C \left( \norm{\nabla_x^3 \vm}_{L^2}^2 + \sum_{\abs{\alpha}=3}\norm{\p^{\alpha}g}_{\sigma}^2 \right) +C\check{\delta} \left( \sum_{1\leq\abs{\gamma}\leq2} \norm{\p^{\gamma }g}_{\sigma}^2 +\sum_{i=1}^3 (1+t)^{-i} \norm{\nabla_x^{3-i}\vm}_{L^2}^2 \right) .
			\end{align*}
			Thus, we have proven \eqref{high-der-ori-1} and \eqref{high-der-ori-2}. Using the same method, we can prove \eqref{high-der-ori-1-s}, \eqref{high-der-ori-2-s} and \eqref{high-der-ori-2-ss}.  
			
			Next, we will provide the proofs of \eqref{high-der-ori-3} and \eqref{high-der-ori-4}. Applying $\p_{i}$ to \eqref{perturbation-ori}$_2$ and $\p_{ij}$ to \eqref{perturbation-ori}$_2$, $i,j=1,2,3$, respectively, we can obtain
			\begin{align}
				&\p_t \p_{i} \psi_l+\frac23 \frac{\thetat}{\rhot} \p_{il} \phi=\p_{x_i}\mathcal{S}_{l}- \frac23\p_{i}\left( \frac{\thetat}{\rhot} \right)  \p_{j} \phi, \label{high-der-ori-13} \\
				&\p_t \p_{ij} \psi_l+\frac23 \frac{\thetat}{\rhot} \p_{ijl}\phi=\p_{ij}\mathcal{S}_{l}- \frac23\p_{ij}\left( \frac{\thetat}{\rhot}   \p_{j} \phi\right)+ \frac23 \frac{\thetat}{\rhot} \p_{ijl}\phi \label{high-der-ori-14},
			\end{align}
			where $l=1,2,3$.
			Multiplying \eqref{high-der-ori-13} and \eqref{high-der-ori-14} by $\p_{il}\phi$ and $\p_{ijl}\phi$ respectively, then integrating with respect to $\Omega$, and by taking note of 
			\begin{align*}
				&\int_{\Omega} \p_t  \p_{i} \psi_l \p_{il} \phi dx = \p_t\int_{\Omega}  \p_{i} \psi_l \p_{il} \phi dx +  \int_{\Omega} \p_{i l}\psi_l \p_t\p_{x_i}\phi dx =\p_t\int_{\Omega}  \p_{i} \psi_l \p_{il} \phi dx +  \int_{\Omega} \p_{i l}\psi_l \p_{i}\mathcal{S}_{0} dx ,\\
				&\int_{\Omega} \p_t  \p_{ij} \psi_l \p_{ijl}\phi dx = \p_t\int_{\Omega}  \p_{ij} \psi_l \p_{ijl}\phi dx +  \int_{\Omega} \p_{i j l}\psi_l \p_t\p_{ij}\phi dx =\p_t\int_{\Omega}  \p_{ij} \psi_l \p_{ijl}\phi dx +  \int_{\Omega} \p_{i j l}\psi_l \p_{i j}\mathcal{S}_{0} dx,
			\end{align*}
			we obtain 
			\begin{align}
				& \p_t\int_{\Omega} \nabla_x \psi \cdot \nabla_x^2 \phi dx + \tilde c\norm{\nabla_x^2 \phi}_{L^2}^2 \le C \sum_{i=0}^3 \norm{\nabla_x \mathcal{S}_i}_{L^2}^2 + C\norm{\p_{x_1} \left(\frac{\thetat}{\rhot}\right) \nabla_x \phi}_{L^2}^2, \label{high-der-ori-15} \\
				&  \p_t\int_{\Omega} \nabla_x^2 \psi \cdot \nabla_x^3 \phi dx + \tilde c\norm{\nabla_x^3 \phi}_{L^2}^2 \le C\sum_{i=0}^3 \norm{\nabla_x^2 \mathcal{S}_i}_{L^2}^2 + C\deltab\left(\norm{\Dt_{-1}\nabla_x\vm}_{L^2}^2+\norm{\Dt_{-\frac12}\nabla_x^2\vm}_{L^2}^2\right). \label{high-der-ori-16}
			\end{align}
			Combining \eqref{high-der-ori-15} and \eqref{high-der-ori-9}, we can obtain \eqref{high-der-ori-3}. And combining with \eqref{high-der-ori-16} and \eqref{high-der-ori-10}, we have \eqref{high-der-ori-4}. 
			Then we have completed the proof of  Lemma \ref{high-der-ori}.
		\end{proof}
		
		Applying $\p_{ij}$ to system \eqref{perturbation-1}, one has
		\begin{align}\label{perturbation-2}
			\begin{cases}
				\p_t\p_{ij}\phi+\rhot \operatorname{div}_x \p_{ij} \psi=\p_{ij}\mathcal{S}_{\phi}^{f}+\tilde{\mathcal{S}}_0, \\
				\p_t\p_{ij}\psi+\frac{2\thetat}{3\rhot} \nabla_x\p_{ij} \phi + \frac23  \nabla_x \p_{ij}\zeta-\frac{\mu(\thetat)}{\rhot}\p_{ij}\left(\Delta_x \psi+\frac{1}{3} \nabla_x \operatorname{div}_x \psi\right)=\p_{ij}\left(\mathcal{S}_{\psi}^{f1}+\mathcal{S}_{\psi}^{f2}+\mathcal{S}_{\psi}^{m}\right)+\tilde{\mathcal{S}},\\
				\p_t\p_{ij}\zeta+\frac{2}{3} \thetat \operatorname{div}_x \p_{ij}\psi-\frac{\kappa(\thetat)}{\rhot}\Delta_x \p_{ij} \zeta=\p_{ij}\left(\mathcal{S}_{\zeta}^{f1}+\mathcal{S}_{\zeta}^{f2}+\mathcal{S}_{\zeta}^{m}\right)+\tilde{\mathcal{S}}_4,
			\end{cases}
		\end{align}
		where
		\begin{align}
			&\tilde{\mathcal{S}}_0=\rhot \operatorname{div}_x \p_{ij} \psi-\p_{ij} \left( \rhot \operatorname{div}_x  \psi \right), \label{S-m-t-0} \\
			& \tilde{\mathcal{S}}=\frac{2\thetat}{3\rhot} \nabla_x\p_{ij} \phi -\frac23\p_{ij}\left(\frac{\thetat}{\rhot} \nabla_x\phi\right) +\p_{ij}\left[\frac{\mu(\thetat)}{\rhot}\left(\Delta_x \psi+\frac{1}{3} \nabla_x \operatorname{div}_x \psi\right)\right]-\frac{\mu(\thetat)}{\rhot}\p_{i j}\left(\Delta_x \psi+\frac{1}{3} \nabla_x \operatorname{div}_x \psi\right),  \label{S-m-t} \\
			&\tilde{\mathcal{S}}_4=\frac{2}{3} \thetat \operatorname{div}_x \p_{ij}\psi-\frac{2}{3}\p_{ij}\left( \thetat \operatorname{div}_x \psi \right)-\frac{\kappa(\thetat)}{\rhot}\Delta_x \p_{ij} \zeta+\p_{ij}\left(\frac{\kappa(\thetat)}{\rhot}\Delta_x  \zeta\right).\label{S-m-t-4} 
		\end{align}
		\begin{Lem}\label{high-der-ori-star}
			Under the same assumptions of  Proposition \ref{Thm-ape}, it holds that
			\begin{align*}
				\frac{d}{dt}\norm{\nabla_x^2 \vm}_{L^2}^2 + \norm{\nabla_x^3 {\vm}^{\ast}}_{L^2}^2 \leq &C_\eta \sum_{\abs{\alpha}=3} \norm{\p^{\alpha}g}_{\sigma}^2+C\check{\delta} \left[(1+t)^{-3}\norm{\vm}_{L^2}^2+(1+t)^{-1}\norm{\nabla_x \vm}_{L^2}^2+\norm{\nabla_x^2 \vm}_{L^2}^2  \right] \\
				&+C\check{\delta}\left(  (1+t)^{-2} \norm{g}_{\sigma}^2+\sum_{1\leq \abs{\gamma}\leq2} \norm{\p^{\gamma}g}_{\sigma}^2 \right)+C\deltab(1+t)^{-\frac{5}{2}}+\eta\norm{\nabla_x^3\phi}_{L^2}.
			\end{align*}
		\end{Lem}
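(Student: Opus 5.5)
We take the standard symmetrized second-order energy estimate on the differentiated system \eqref{perturbation-2}. Multiply \eqref{perturbation-2}$_1$ by $\frac{2\thetat}{3\rhot^2}\p_{ij}\phi$, \eqref{perturbation-2}$_2$ by $\p_{ij}\psi$, and \eqref{perturbation-2}$_3$ by $\frac{1}{\thetat}\p_{ij}\zeta$. Then sum over $i,j$ and integrate over $\Omega$. These weights are chosen so that the first-order coupling cancels after integration by parts, leaving only commutator terms carrying a derivative of $(\rhot,\thetat)$:
\[
\frac{2\thetat}{3\rhot}\,\nabla_x\p_{ij}\phi\cdot\p_{ij}\psi+\frac{2\thetat}{3\rhot}\,\operatorname{div}_x\p_{ij}\psi\,\p_{ij}\phi,\qquad
\frac23\nabla_x\p_{ij}\zeta\cdot\p_{ij}\psi+\frac23\operatorname{div}_x\p_{ij}\psi\,\p_{ij}\zeta.
\]
The viscous and heat terms give the dissipation $\norm{\nabla_x^3\psi}_{L^2}^2+\norm{\nabla_x^3\zeta}_{L^2}^2$, i.e.\ $\norm{\nabla_x^3\vm^{\ast}}_{L^2}^2$. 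The commutators, the time derivatives of the weights, and the derivatives of the viscosity coefficients all carry a factor $\p_{x_1}(\rhot,\ut,\thetat)\lesssim \deltab^{1/2}\Dt_{-1/2}$. Hence they are bounded by $C\check\delta[(1+t)^{-1}\norm{\nabla_x^2\vm}^2_{L^2}+\dots]$ plus $\eta\norm{\nabla_x^3\vm}^2_{L^2}$.

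Next we estimate the commutator sources \eqref{S-m-t-0}--\eqref{S-m-t-4}. Each term in $\tilde{\mathcal S}_0,\tilde{\mathcal S},\tilde{\mathcal S}_4$ contains at least one $x_1$-derivative of the background, so pointwise it is bounded by
\[
\deltab^{\frac12}\bigl(\Dt_{-\frac12}|\nabla_x^2\vm|+\Dt_{-1}|\nabla_x\vm|+\Dt_{-\frac32}|\vm|\bigr)
\]
together with the same with one extra derivative on $\psi,\zeta$. Pairing this against $\nabla_x^2\vm$ gives the listed terms $(1+t)^{-3}\norm{\vm}^2$, $(1+t)^{-1}\norm{\nabla_x\vm}^2$ and $\norm{\nabla_x^2\vm}^2$, each with prefactor $C\check\delta$.

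For the fluid sources $\p_{ij}(\mathcal S^f_\phi,\mathcal S^{f1},\mathcal S^{f2},\mathcal S^{f1}_\zeta,\mathcal S^{f2}_\zeta)$, we estimate the nonlinear products using the a priori assumptions \eqref{apa}. We apply Sobolev embedding on $\Omega$ ($\norm{\cdot}_{L^\infty}\lesssim\norm{\cdot}_{H^2}$, $\norm{\cdot}_{L^4}\lesssim\norm{\cdot}_{H^1}$), so the top-order factor is $\nabla_x^3\vm$ and the remaining factors are small by $\chi$. The terms $\p_{ij}\p_{x_1}\tilde{\mathcal R}$ are controlled by Corollary \ref{R-m-1-1-1}, contributing $C\deltab(1+t)^{-5/2}$.

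The one dangerous quantity is $\p_{ij}\mathcal S^f_\phi$, which contains $u\cdot\nabla_x\p_{ij}\phi$. We integrate it by parts to $\frac12\operatorname{div}u\,|\p_{ij}\phi|^2$. The leftover $\phi\,\operatorname{div}\p_{ij}\psi$ is bounded by $\chi$ times dissipation, and the remaining third-order $\phi$ terms go into $\eta\norm{\nabla_x^3\phi}$. This $\eta\norm{\nabla^3_x\phi}$ is closed later by \eqref{high-der-ori-4}.

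The main obstacle is the microscopic sources $\p_{ij}\mathcal S^m_\psi$ and $\p_{ij}\mathcal S^m_\zeta$, which contain $\nabla_x^3$ of $L_M^{-1}\Pi$ and hence third derivatives of $g$ (including $\p_t$). We handle them by moving one derivative onto the test function $\p_{ij}\psi$ (respectively $\p_{ij}\zeta$), producing $\nabla_x^3\vm^{\ast}$. We then use the bound \eqref{non-fluid-s-s-1} differentiated once more, together with the self-adjointness representation through $B_{1i}$ and $A_1$. This gives
\[
C_\eta\sum_{|\alpha|=3}\norm{\p^\alpha g}_\sigma^2+C\check\delta\Bigl((1+t)^{-2}\norm{g}^2_\sigma+\sum_{1\le|\gamma|\le2}\norm{\p^\gamma g}^2_\sigma\Bigr)+\eta\norm{\nabla^3_x\vm^{\ast}}^2.
\]
Here the $\p_t$-derivatives of the macroscopic quantities that arise are converted by Lemma \ref{high-der-ori}. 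The $\bar G_0$-parts are handled with Lemma \ref{2026-4-24-1}. Finally, choosing $\eta$ small and absorbing the $\eta$ terms into the dissipation yields the stated inequality.
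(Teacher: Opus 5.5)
Your plan follows the paper's proof of this lemma. It uses the symmetrized $\p_{ij}$-energy estimate on \eqref{perturbation-2}, pointwise commutator bounds for $\tilde{\mathcal S}_0,\tilde{\mathcal S},\tilde{\mathcal S}_4$ as in \eqref{high-der-s-1}, one integration by parts for the microscopic sources via \eqref{high-der-s-4}--\eqref{high-der-s-5} with Lemma \ref{high-der-ori} to convert $\p_t\nabla_x^2\vm^{\ast}$, and a leftover $\eta\norm{\nabla_x^3\phi}$ closed later by \eqref{high-der-ori-4}. Your weights $\frac{2\thetat}{3\rhot^2}$ and $\frac{1}{\thetat}$ are the ones that actually cancel the first-order coupling; the weights printed in the paper's proof appear to be a misprint. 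Integrating $u\cdot\nabla_x\p_{ij}\phi$ by parts is a cleaner alternative to the paper's splitting $\deltab^{-\frac12}\norm{\nabla_x^2\mathcal S^f_\phi}_{L^2}^2+\deltab^{\frac12}\norm{\nabla_x^2\vm}_{L^2}^2$.

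One step, however, fails as written. You treat $\p_{ij}\mathcal S^{f2}_\psi$ and $\p_{ij}\mathcal S^{f2}_\zeta$ like the first-order fluid sources, and claim that after Sobolev embedding the top-order factor is $\nabla_x^3\vm$. But $\mathcal S^{f2}_\psi$ and $\mathcal S^{f2}_\zeta$ contain quasilinear viscous corrections such as
\[
\frac{\mu(\theta)-\mu(\thetat)}{\rho}\Big(\Delta_x u+\frac{1}{3}\nabla_x\operatorname{div}_x u\Big),\qquad \kappa(\thetat)\Big(\frac{1}{\rho}-\frac{1}{\rhot}\Big)\Delta_x\theta,
\]
and similar terms. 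Applying $\p_{ij}$ therefore produces $\nabla_x^4\psi$ and $\nabla_x^4\zeta$. These are controlled neither by the energy $\norm{\nabla_x^2\vm}_{L^2}^2$, nor by the dissipation $\norm{\nabla_x^3\vm^{\ast}}_{L^2}^2$, nor by \eqref{apa}.

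The fix is the device you already use for $\mathcal S^m$. Move one derivative onto the test functions $\p_{ij}\psi$ and $\p_{ij}\zeta$, so that only $\nabla_x\mathcal S^{f2}$ is needed. This is exactly what the paper does through \eqref{high-der-s-3}, which bounds $\abs{\nabla_x\mathcal S^{f2}_\psi}+\abs{\nabla_x\mathcal S^{f2}_\zeta}$ by $\abs{\vm}\abs{\nabla_x^3\vm}+\cdots$. This leads to $C_\eta\big(\norm{\nabla_x\mathcal S^{f2}_\psi}_{L^2}^2+\norm{\nabla_x\mathcal S^{f2}_\zeta}_{L^2}^2\big)+\eta\norm{\nabla_x^3\vm^{\ast}}_{L^2}^2$. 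The first part is then handled using the smallness of $\norm{\vm}_{L^\infty}$ and of the background. With this correction your argument closes as you describe.
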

		\begin{proof}
			By the definition of $\tilde{\mathcal{S}}_0$, $\tilde{\mathcal{S}}=(\tilde{\mathcal{S}}_1,\tilde{\mathcal{S}}_2,\tilde{\mathcal{S}}_3)^{t}$ and  $\tilde{\mathcal{S}}_4$ as in \eqref{S-m-t-0}, \eqref{S-m-t} and \eqref{S-m-t-4}, $\mathcal{S}_{\phi}^f,\mathcal{S}_{\psi}^{f1},\mathcal{S}_{\psi}^{f2},\mathcal{S}_{\zeta}^{f1}$, $\mathcal{S}_{\zeta}^{f2}$ \eqref{S-m-0}-\eqref{Q-m}, \eqref{S-m-4} and \eqref{Q-m_4}, one has
			\begin{align}
				&\sum_{i=0}^4 \abs{\tilde{\mathcal{S}}_i}\le C \deltab^{\frac12} \left[\Dt_{-1}\abs{\nabla_x \vm} + \Dt_{-\frac12}\abs{\nabla_x^2 \vm}+\Dt_{-\frac{1}{2}}\abs{\nabla_x^3 \vm}\right], \label{high-der-s-1}\\
				&\abs{\nabla_x^2\mathcal{S}_{\phi}^f}+\abs{\nabla_x^2\mathcal{S}_{\psi}^{f1}}+\abs{\nabla_x^2 \mathcal{S}_{\zeta}^{f1}}\le C\bigg[ \abs{\vm}\abs{\nabla_x^3 \vm}+\abs{\nabla_x \vm}\abs{\nabla_x^2 \vm}+\abs{\nabla_x \vm}^3+\abs{\vm}\abs{\nabla_x \vm}\abs{\nabla_x^2 \vm} \notag\\
				&\qquad\qquad\quad\qquad\qquad\qquad\qquad+ \deltab^{\frac12}\left(\Dt_{-1}\abs{\nabla_x \vm} + \Dt_{-\frac12}\abs{\nabla_x^2 \vm}+\Dt_{-\frac12}\abs{\nabla_x^3 \vm}+\Dt_{-\frac{3}{2}}\abs{ \vm}\right) +\deltab\Dt_{-\frac52}\bigg],\label{high-der-s-2}\\
				&\abs{\nabla_x\mathcal{S}_{\psi}^{f2}}+\abs{\nabla_x \mathcal{S}_{\zeta}^{f2}}\le C\bigg[ \abs{\vm}\abs{\nabla_x^3 \vm}+\abs{\nabla_x \vm}\abs{\nabla_x^2 \vm}+\abs{\nabla_x \vm}^3+\abs{\vm}\abs{\nabla_x \vm}\abs{\nabla_x^2 \vm} \notag\\
				&\qquad\qquad\qquad\qquad\quad\;+ \deltab^{\frac12}\left(\Dt_{-1}\abs{\nabla_x \vm} + \Dt_{-\frac12}\abs{\nabla_x^2 \vm}+\Dt_{-\frac{3}{2}}\abs{ \vm}\right) +\deltab\Dt_{-2}\bigg].\label{high-der-s-3}
			\end{align}
			For the non-fluid part, we take  $\p_l\left[\frac{1}{\rho}\p_i \int_{\R^3} \xi_i \xi_j L_M^{-1} \Pi d\xi\right]$ in $\p_l\mathcal{S}_{\psi}^{m}$ \eqref{K-m} as an example and perform the calculation. It holds
			\begin{align}
				& \p_{l}\left[ \frac{1}{\rho}\p_i \int_{\R^3} \xi_i \xi_j L_M^{-1} \Pi d\xi \right]=\frac{R}{\rho} \p_{il} \int_{\R^3} \theta B_{ij} \left( \frac{\xi-u}{\sqrt{R \theta}} \right) \frac{1}{M} \left[\p_t G+P_1 \left( \xi \cdot \nabla_x G \right) -Q(G,G)\right]d\xi\notag\\
				&\qquad\qquad\qquad+\p_l\left(\frac{R}{\rho}\right) \p_{i} \int_{\R^3} \theta B_{ij} \left( \frac{\xi-u}{\sqrt{R \theta}} \right) \frac{1}{M} \left[\p_t G+P_1 \left( \xi \cdot \nabla_x G \right) -Q(G,G)\right]d\xi\label{high-der-s-4}.
			\end{align}
			Substituting $(\rho,u,\theta)=(\rhot,\ut,\thetat)+(\phi,\psi,\zeta)$ and $G=\bar{G}_0+\sqrt{\mu}g$ into \eqref{high-der-s-4}, one has
			\begin{align*}
				&\abs{ \p_{l}\left[ \frac{1}{\rho}\p_i \int_{\R^3} \xi_i \xi_j L_M^{-1} \Pi d\xi \right] }\le C\left(\sum_{\abs{\alpha}=3} \abs{\p^{\alpha}g}_{\sigma} + \sum_{\abs{\gamma}=1}\abs{\p^{\gamma} g}_{\sigma}\abs{\p^{\gamma} g}_{2}+\sum_{\abs{\beta}=2}\abs{g}_{\sigma} \abs{\p^{\beta}g}_{\sigma}\right)\\
				&\qquad+C\left(\deltab^{\frac12} \Dt_{-1}+\abs{\nabla_x \vm}^2 + \abs{\nabla_x^2 \vm} + \deltab^{\frac12} \Dt_{-\frac{1}{2}} \abs{\nabla_x \vm} \right) \sum_{\abs{\gamma}=1} \left(\abs{\p^\gamma g}_{\sigma}+\deltab^{\frac12} \Dt_{-\frac12}\abs{g}_{\sigma}+ \abs{ g}_{\sigma}\abs{ g}_{2} + \deltab^{\frac12} \Dt_{-1}\right) \\
				&\qquad+C\left(\deltab^{\frac12}\Dt_{-\frac12}+\abs{\nabla_x \vm}\right) \sum_{\abs{\gamma}=1}\sum_{\abs{\beta}=2} \left(\abs{\p^\beta g}_{\sigma}+\deltab^{\frac12} \Dt_{-\frac12}\abs{\p^\gamma g}_{\sigma} + \deltab^{\frac12}\Dt_{-1}\abs{g}_{\sigma}+ \abs{\p^{\gamma} g}_{\sigma}\abs{ g}_{\sigma} \right)\\
				&\qquad + C \deltab D_{-\frac12}\left(\abs{\nabla_x^3 \vm^{\ast}}+\abs{\p_t \nabla_x^2 \vm^{\ast}} \right)+C\deltab D_{-1}\abs{\p_t\nabla_x\vm^{\ast}}+C\deltab D_{-\frac32}\abs{\p_t \vm^{\ast}}.
			\end{align*}
			The calculation of the other terms in $\nabla_x\mathcal{S}_{\psi}^{m},\nabla_x\mathcal{S}_{\zeta}^{m}$ \eqref{K-m} and \eqref{K-m-4} is similar to the above calculation, and thus we can obtain
			\begin{align}
				&\abs{\nabla_x\mathcal{S}_{\psi}^{m}}+\abs{\nabla_x\mathcal{S}_{\zeta}^{m}}\le C\left(\sum_{\abs{\alpha}=3} \abs{\p^{\alpha}g}_{\sigma} + \sum_{\abs{\gamma}=1}\abs{\p^{\gamma} g}_{\sigma}\abs{\p^{\gamma} g}_{2}+\sum_{\abs{\beta}=2}\abs{g}_{\sigma} \abs{\p^{\beta}g}_{\sigma}\right)\notag\\
				&\qquad+C\left(\deltab^{\frac12} \Dt_{-1}+\abs{\nabla_x \vm}^2 + \abs{\nabla_x^2 \vm} + \deltab^{\frac12} \Dt_{-\frac{1}{2}} \abs{\nabla_x \vm} \right) \sum_{\abs{\gamma}=1} \left(\abs{\p^\gamma g}_{\sigma}+\deltab^{\frac12} \Dt_{-\frac12}\abs{g}_{\sigma}+ \abs{ g}_{\sigma}\abs{ g}_{2} + \deltab^{\frac12} \Dt_{-1}\right)\notag \\
				&\qquad+C\left(\deltab^{\frac12}\Dt_{-\frac12}+\abs{\nabla_x \vm}\right) \sum_{\abs{\gamma}=1}\sum_{\abs{\beta}=2} \left(\abs{\p^\beta g}_{\sigma}+\deltab^{\frac12} \Dt_{-\frac12}\abs{\p^\gamma g}_{\sigma} + \deltab^{\frac12}\Dt_{-1}\abs{g}_{\sigma}+ \abs{\p^{\gamma} g}_{\sigma}\abs{ g}_{\sigma} \right) \notag\\
				&\qquad + C \deltab D_{-\frac12}\left(\abs{\nabla_x^3 \vm^{\ast}}+\abs{\p_t \nabla_x^2 \vm^{\ast}} \right)+C\deltab D_{-1}\abs{\p_t\nabla_x\vm^{\ast}}+C\deltab D_{-\frac32}\abs{\p_t \vm^{\ast}}.\label{high-der-s-5}
			\end{align}
			Multiply  \eqref{perturbation-2}$_1$, \eqref{perturbation-2}$_2$, and \eqref{perturbation-2}$_3$ by $\frac{5\thetat}{3\rhot^2}\p_{ij}\phi$, $\p_{ij} \psi$, and $\frac{5}{2\thetat}\p_{ij} \zeta$, respectively, and then integrate them in region $\Omega$. By the a {\it priori} assumption \eqref{apa}, applying integration by parts and using Lemma \ref{s-t-s-tt-2}, Lemma \ref{high-der-ori}, \eqref{high-der-s-1}-\eqref{high-der-s-3}, \eqref{high-der-s-5}, we can obtain the following
			\begin{align*}
				&\frac{d}{dt}\norm{\nabla_x^2 \vm}_{L^2}^2 + \tilde c\norm{\nabla_x^3 {\vm}^{\ast}}_{L^2}^2\\
				\leq&  C_\eta\sum_{i=0}^4\norm{\tilde{\mathcal{S}}_i}_{L^2}^2+\deltab^{-\frac12}\left(\norm{\nabla_x^2 \mathcal{S}_{\phi}^f}_{L^2}^2+\norm{\nabla_x^2 \mathcal{S}_{\psi}^{f1}}_{L^2}^2+\norm{\nabla_x^2 \mathcal{S}_{\zeta}^{f1}}_{L^2}^2 \right)  +\deltab^{\frac12}\norm{\nabla_x^2\vm}_{L^2}^2+\eta\norm{\nabla_x^3 \vm}_{L^2}^2\\
				&+C_\eta\left( \norm{\nabla_x \mathcal{S}_{\psi}^{f2}}_{L^2}^2+\norm{\nabla_x \mathcal{S}_{\zeta}^{f2}}_{L^2}^2+\norm{\nabla_x \mathcal{S}_{\psi}^{m}}_{L^2}^2+\norm{\nabla_x \mathcal{S}_{\zeta}^{m}}_{L^2}^2  \right) \\
				\leq &C_\eta \sum_{\abs{\alpha}=3} \norm{\p^{\alpha}g}_{\sigma}^2+C\check{\delta} \left[(1+t)^{-3}\norm{\vm}_{L^2}^2+(1+t)^{-1}\norm{\nabla_x \vm}_{L^2}^2  \right] +\deltab^{\frac12}\norm{\nabla_x^2\vm}_{L^2}^2+\eta\norm{\nabla_x^3 \vm}_{L^2}^2\\
				&+C\check{\delta}\left(  (1+t)^{-2} \norm{g}_{\sigma}^2+\sum_{1\leq \abs{\gamma}\leq2} \norm{\p^{\gamma}g}_{\sigma}^2 \right)+C\deltab(1+t)^{-\frac{5}{2}}.
			\end{align*}
			Then we have completed the proof of  Lemma \ref{high-der-ori-star}.
		\end{proof}
		Combining  Lemma \ref{high-der-ori} and  Lemma \ref{high-der-ori-star}, we have completed the proof of  Theorem \ref{high-order-111-111}.
		
		\subsection{Estimate for the microscopic equations}\label{sec4.4} 
		Recall the definition of $\vm$ \eqref{2025-11-5-1} and  dissipation energy functionals $\mathcal{D}_{i,\omega}$ and $\mathcal{D}_i$ \eqref{2.11}-\eqref{2.11-5}. Then we have the following result.
		
		\begin{Lem}\label{mic-energy-t123}
			Under the same assumptions of  Proposition \ref{Thm-ape}, let $|\alpha|+|\beta|\leq 3$,  then for $|\beta|\geq 1$ and $\abs{\beta'}=1$, one has
			\begin{align}
				\frac{d}{dt}\norm{\omega(\beta)\p^{\alpha}_{\beta}g}^2_2+&\norm{\omega(\beta)\p^{\alpha}_{\beta}g}_{\sigma}^2\leq C\bar{\delta}(1+t)^{-\frac32-\abs{\alpha}}+ C\check{\delta}\left[\mathcal{D}_{2,\omega}(t)+(1+t)^{-1}\mathcal{D}_{1,\omega}(t)\right]+\eta\sum_{\abs{\beta_1}=\abs{\beta}}\norm{\p_{\beta_1}^{\alpha}g}_{\sigma,\omega}^2\notag\\
				&\qquad+C_\eta \left(\sum_{|\beta_1|<|\beta|}\norm{\partial^\alpha_{\beta_1} g}_{\sigma,\omega}^2+\norm{\p^\alpha_{\beta-\beta'}\nabla_xg}^2_{\sigma,\omega}+\norm{\p^{\alpha}\nabla_xg}_{\sigma,\omega}^2+\norm{\nabla_x^{\abs{\alpha}+1}\vm}_{L^2}^2\right),\label{5.30}
			\end{align}
			and for $\beta=0$, $|\alpha|\leq 2$, one has
			\begin{align}
				&\frac{d}{dt}\norm{\p^{\alpha}g}^2_{2,\omega}+\norm{\p^{\alpha}g}_{\sigma,\omega}^2\le C \bar{\delta}(1+t)^{-\frac32-\abs{\alpha}}+ C\check{\delta}\left[\mathcal{D}_{2,\omega}(t)+(1+t)^{-1}\mathcal{D}_{1,\omega}(t)\right]\notag\\
				&\qquad\qquad\qquad\qquad\qquad\qquad+C_\eta\sum_{ \abs{\gamma_1}=\abs{\alpha}}\norm{\partial^{\gamma_1} g}_{\sigma}^2+C_\eta\norm{\partial^{\alpha}\nabla_x g}_{\sigma,\omega}^2+C_\eta\norm{\nabla_x^{\abs{\alpha}+1}\vm}_{L^2}^2,\label{5.30-1}\\
				&\frac{d}{dt}\norm{\p^{\alpha}g}^2_2+\norm{\p^{\alpha}g}_{\sigma}^2
				\leq C\bar{\delta}(1+t)^{-\frac32-\abs{\alpha}}+ C\check{\delta}\left[\mathcal{D}_{2}(t)+(1+t)^{-1}\mathcal{D}_{1}(t)\right]+C_\eta\left(\norm{\p^{\alpha}\nabla_xg}_{\sigma}^2+\norm{\nabla_x^{\abs{\alpha}+1} \vm}_{L^2}^2\right).\label{5.30-2}
			\end{align}
			Moreover, for $\abs{\alpha}+\abs{\beta}\le 3$, $\abs{\alpha}\geq1,\ \abs{\beta}\geq1$, $\abs{\beta'}=1$, one has 
			\begin{align}\label{5.30-4}
				\frac{d}{dt}\norm{\p^{\alpha}_{\beta}g}^2_{2,\omega}+&\norm{\p^{\alpha}_{\beta}g}_{\sigma,\omega}^2
				\leq C\bar{\delta}(1+t)^{-\frac32-\abs{\alpha}}+C\check{\delta} \sum_{i=1}^{3}(1+t)^{-3+i}\mathcal{D}_{i,\omega}(t)+\eta\sum_{\abs{\beta_1}=\abs{\beta}}\norm{\p_{\beta_1}^{\alpha}g}_{\sigma,\omega}^2\nonumber\\
				&\qquad+C_\eta\left(\sum_{|\beta_1|<|\beta|}\norm{\partial^\alpha_{\beta_1} g}_{\sigma,\omega}^2+\norm{\p^\alpha_{\beta-\beta'}\nabla_xg}^2_{\sigma,\omega}+\norm{\p^{\alpha}\nabla_xg}_{\sigma,\omega}^2+\norm{\nabla_x^{\abs{\alpha}+1}\vm}_{L^2}^2\right),
			\end{align}
			and for $1\leq \abs{\alpha}\leq 2,\ \abs{\beta}=0$, one has
			\begin{align}
				&\frac{d}{dt}\norm{\p^{\alpha}g}^{2}_{2,\omega}+\norm{\p^{\alpha}g}_{\sigma,\omega}^2
				\leq C\left[\bar{\delta}(1+t)^{-\frac32-\abs{\alpha}}+\check{\delta} \sum_{i=1}^{3}(1+t)^{-3+i}\mathcal{D}_{i,\omega}(t) \right]\notag\\
				&\qquad\qquad\qquad\qquad\qquad\qquad+C_\eta\left[\sum_{ \abs{\gamma_1}=\abs{\alpha}}\norm{\partial^{\gamma_1} g}_{\sigma}^2+\norm{\partial^{\alpha}\nabla_x g}_{\sigma,\omega}^2+\norm{\nabla_x^{\abs{\alpha}+1}\vm}_{L_x^2}^2\right],\label{5.30-5}\\  
				&\frac{d}{dt}\norm{\p^{\alpha}g}^2_2+\norm{\p^{\alpha}g}_{\sigma}^2
				\leq C\bar{\delta}(1+t)^{-\frac32-\abs{\alpha}}+C\check{\delta} \sum_{i=1}^{3}(1+t)^{-3+i}\mathcal{D}_{i}(t)+C_\eta\norm{\p^{\alpha}\nabla_xg}_{\sigma}^2+C_\eta\norm{\nabla_x^{\abs{\alpha}+1}\vm}_{L^2}^2,\label{5.30-6}
			\end{align}
			where $\deltab=\delta+\varepsilon_0$ and $\deltac=\chi+\deltab^{\frac12}$.
		\end{Lem}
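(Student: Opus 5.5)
We obtain all six estimates from weighted $L^2$ energy identities for the microscopic equation \eqref{mic-perturbation}, following the Guo / Strain--Guo framework for the linearized Landau operator $\mathcal{L}$ with the weight $\omega(\beta)=\langle\xi\rangle^{l-|\beta|}e^{q\langle\xi\rangle^2}$. For a given multi-index pair $(\alpha,\beta)$ we apply $\p^\alpha_\beta$ to \eqref{mic-perturbation} and take the $L^2_{x,\xi}$ inner product with $\omega^2(\beta)\p^\alpha_\beta g$ (or with $\p^\alpha g$ in the unweighted cases \eqref{5.30-2}, \eqref{5.30-6}).

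The linear part is handled as follows.
\begin{itemize}
\item The transport term $\xi\cdot\nabla_x$ integrates to zero when $\beta=0$.
\item When $|\beta|\ge1$, the commutator $[\p_\beta,\xi\cdot\nabla_x]$ produces $\p^\alpha_{\beta-\beta'}\nabla_x g$. Since the weight loses one power of $\langle\xi\rangle$ per $\xi$-derivative, this term is bounded by $\eta\|\p^\alpha_\beta g\|_{\sigma,\omega}^2+C_\eta\|\p^\alpha_{\beta-\beta'}\nabla_x g\|_{\sigma,\omega}^2$, which accounts for that term on the right-hand sides.
\item For $-\mathcal{L}$ we use the standard weighted coercivity estimates from \cite{Guo-2002,Strain-Guo-2008}. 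Their lower bound is $\|\p^\alpha_\beta g\|_{\sigma,\omega}^2$, minus $\eta\sum_{|\beta_1|=|\beta|}\|\p^\alpha_{\beta_1}g\|^2_{\sigma,\omega}$, minus $C_\eta\sum_{|\beta_1|<|\beta|}\|\p^\alpha_{\beta_1}g\|^2_{\sigma,\omega}$.
\item When $\beta=0$ the weighted estimate leaves a compact remainder $C_\eta\|\p^\alpha g\|_\sigma^2$ in the unweighted norm. This is the reason for the extra $\sum_{|\gamma_1|=|\alpha|}\|\p^{\gamma_1}g\|_\sigma^2$ in \eqref{5.30-1} and \eqref{5.30-5}.
\item In the unweighted case, $g$ is microscopic, so the coercivity $-\langle\mathcal{L}g,g\rangle\gtrsim|g|_\sigma^2$ holds with no remainder.
\end{itemize}

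The source terms are estimated next.
\begin{itemize}
\item The first line of \eqref{mic-perturbation} is linear in $\nabla_x(\psi,\zeta)$ times polynomial$\,\times M/\sqrt\mu$. Because $\theta$ stays near $3/2$ with $\theta\ge 3/2$ up to $O(\deltac)$ and $q<1$ is chosen suitably, $\omega M/\sqrt{\mu}$ is integrable, so these terms give $C_\eta\|\nabla_x^{|\alpha|+1}\vm\|_{L^2}^2$ plus $\eta$ times the dissipation.
\item The terms $\Gamma(g,(M-\mu)/\sqrt\mu)$ and $\Gamma((M-\mu)/\sqrt\mu,g)$ are handled with the trilinear estimate for $\Gamma$ in the $\sigma$-norm. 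They carry the small factor $|M-\mu|\lesssim \delta+|\vm|+|\Xi|+\ldots$, so they are absorbed into $C\deltac[\mathcal D_{2,\omega}+(1+t)^{-1}\mathcal D_{1,\omega}]$.
\item $\Gamma(G/\sqrt\mu,G/\sqrt\mu)$ with $G=\bar G_0+\sqrt\mu g$ is handled in the same way, using Sobolev embedding in $x$ together with the a priori assumption \eqref{apa}.
\item The ansatz errors $P_1[\xi_1(\ldots)\p_{x_1}(\thetat-\thetab,\ut-\ub)M]$, $P_1(\xi_1\p_{x_1}\bar G_0)$ and $\p_t\bar G_0$ involve only $\Xi$ and derivatives of the profile. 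By Corollary~\ref{R-m-1-1-1}, \eqref{eqs27} and Lemma~\ref{2026-4-24-1}, after $|\alpha|$ derivatives they are of size $\deltab\tilde D_{-1-|\alpha|/2}$ in $L^2_x$, up to terms carrying $\p_t\vm$ and $\nabla_x\vm$. Cauchy--Schwarz then gives $C\deltab(1+t)^{-3/2-|\alpha|}$ plus $\eta$ times the dissipation.
\item The term $P_0(\xi\cdot\sqrt\mu\nabla_x g)/\sqrt\mu$ is bounded by $C_\eta\|\p^\alpha\nabla_x g\|_{\sigma,\omega}^2$, since $P_0$ projects onto a rapidly decaying finite-dimensional space.
\end{itemize}
Collecting these bounds yields \eqref{5.30}--\eqref{5.30-2}.

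For $|\alpha|\ge1$ in \eqref{5.30-4}--\eqref{5.30-6}, the procedure is the same, except that the Leibniz expansions of the coefficients (which depend on $(\rhot,\ut,\thetat)$ and on $\vm$) distribute derivatives among factors. Lower-order factors are paired with $(1+t)^{-1/2}$-type decay of profile derivatives. This produces the combination $\sum_i(1+t)^{-3+i}\mathcal D_{i,\omega}$, and for time derivatives we also invoke Lemma~\ref{high-der-ori} to convert $\p_t$ of macroscopic quantities.

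The main obstacle will be the velocity-weighted nonlinear and commutator estimates under Coulomb interactions. In particular, the terms where $\p_\beta$ falls on $M/\sqrt\mu$ against the exponential weight $e^{2q\langle\xi\rangle^2}$ must be kept integrable, and the highest-order terms $\Gamma(\p^{\alpha}g,\cdot)$ must be absorbed without any spectral gap. We would first try to bound everything by $|\cdot|_{\sigma,\omega}$ using Strain--Guo's weighted $\Gamma$ estimate, with a factor $\|\cdot\|_{L^\infty_x}$ taken from \eqref{apa}.
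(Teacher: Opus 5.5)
Your proposal is correct and follows essentially the same route as the paper's proof. The shared steps are: apply $\p^\alpha_\beta$ to \eqref{mic-perturbation} and test against $\omega^2(\beta)\p^\alpha_\beta g$; use the weighted coercivity of Lemma \ref{lem5.3} (with the compact remainder \eqref{2025.6.06-1} when $\beta=0$, and exact coercivity for the purely microscopic $g$ in the unweighted case); bound the $\p_{\beta'}\xi\cdot\nabla_x$ commutator via the $\langle\xi\rangle^{-1}$ gap between $\omega(\beta)$ and $\omega(\beta-\beta')$; and split the sources into the $\bar G_0$/$\Xi$ errors, the $\nabla_x(\psi,\zeta)$ term, the $P_0$ term and the $\Gamma$ terms, the last handled by the trilinear bounds behind Lemmas \ref{lem5.7}--\ref{lem5.8}. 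One small correction: integrability of $\omega M/\sqrt{\mu}$ comes from $\theta$ staying close to $\frac32$ (in effect an upper bound on $\theta$) together with $q$ small, not from the lower bound $\theta\geq\frac32$.
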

		\begin{proof}
			We focus on proving \eqref{5.30} and \eqref{5.30-4}, and the estimates for the remaining parts in the  Lemma \ref{mic-energy-t123} can be obtained by similar calculations.
			Applying $\p^{\alpha}_{\beta}$, $|\alpha|+|\beta|\leq 3,\;\;\abs{\beta}\geq 1$ to \eqref{mic-perturbation}, one has
			\begin{align}\label{mic-perturbation-ab}
				&\p^{\alpha}_{\beta}\p_t g + \xi \cdot \nabla_x\p^{\alpha}_{\beta} g +\sum_{\abs{\beta'}=1}\p_{\beta'}\xi \cdot\nabla_x\p_{\beta-\beta'}^{\alpha}g-\p^{\alpha}_{\beta}\mathcal{L} g =\p_{\beta}^{\alpha}\Bigg(\frac{P_0 \left( \xi \cdot \sqrt{\mu}\nabla_x g \right)}{\sqrt{\mu}}\Bigg)+\p_{\beta}^{\alpha}(\mathcal{S}_{g1}+\mathcal{S}_{g2}+\mathcal{S}_{g3}),
			\end{align}
			where
			\begin{align*}
				\p^{\alpha}_{\beta}\mathcal{S}_{g1}:=&-\p^{\alpha}_{\beta}\Bigg(\frac{P_1 \left( \xi_1 \p_{x_1} \bar{G}_0 \right)}{\sqrt{\mu}}+\frac{\p_t \bar{G}_0}{\sqrt{\mu}}\Bigg),\\
				\p^{\alpha}_{\beta}\mathcal{S}_{g2}:=&-\p_{\beta}^{\alpha}\left\{\frac{1}{\sqrt{\mu}}P_1 \left[\xi_1 \left(\frac{\abs{\xi-u}^2\p_{x_1} (\thetat-\thetab)}{2 R \theta^2} +\frac{\left( \xi -u\right)\cdot \p_{x_1} (\ut-\ub)}{R \theta} \right)M \right]\right\}\notag\\
				& -\p_{\beta}^{\alpha}\left\{\frac{1}{\sqrt{\mu}}P_1 \left[\xi \cdot \left( \frac{\abs{\xi-u}^2\nabla_x \zeta}{2 R \theta^2} +\frac{\left( \xi -u\right)\cdot \nabla_x \psi}{R \theta} \right) M\right] \right\} := \p^{\alpha}_{\beta}\mathcal{S}_{g21}+\p^{\alpha}_{\beta}\mathcal{S}_{g22}, \\
				\p^{\alpha}_{\beta}\mathcal{S}_{g3}:=&\p^{\alpha}_{\beta}\left\{\Gamma\left(g,\frac{M-\mu}{\sqrt{\mu}} \right)+\Gamma\left(\frac{M-\mu}{\sqrt{\mu}},g \right)+\Gamma\left(G,G\right)\right\}.
			\end{align*}
			Since $|\thetat-\thetab|+\abs{\ut-\ub}\lesssim\abs{\Xi}$, by Corollary  \ref{R-m-1-1-1} and Lemma \ref{2026-4-24-1}, we obtain
			\begin{align}\label{2025.6.05-1}
				&\abs{\p_{\beta}\mathcal{S}_{g1}}_{2,\omega}+\abs{\p_{\beta}\mathcal{S}_{g21}}_{2,\omega}\leq C \bar{\delta}\left(\tilde{D}_{-1}+\tilde D_{-\frac12}(\abs{\nabla_x\vm}+\abs{\p_t\vm^{\ast}}) \right),\notag\\ &\abs{\p_{\beta}\mathcal{S}_{g22}}_{2,\omega}\leq C \abs{\vm}\abs{\nabla_x \vm}+C\abs{\nabla_x \vm}+ C\bar{\delta}^{\frac12}\tilde{D}_{-\frac12}\abs{\vm}.
			\end{align}
			Multiplying \eqref{mic-perturbation-ab} by $\omega \p^{\alpha}_{\beta}g$ and then integrating the resulting equation over $\Omega\times\R^3$, one has
			\begin{align}
				&\frac{d}{dt}\norm{\omega \p^{\alpha}_{\beta}g}_2^{2}-\iint_{\Omega\times\R^3}\omega^2\p^{\alpha}_{\beta}g\p^{\alpha}_{\beta}\mathcal{L}gdxd\xi+\sum_{\abs{\beta'}=1}\iint_{\Omega\times\R^3}\omega^2\p^{\alpha}_{\beta}g\p_{\beta'}\xi \cdot\nabla_x\p_{\beta-\beta'}^{\alpha}gdxd\xi \nonumber\\
				=&\iint_{\Omega\times\R^3}\omega^2\Big(\p_{\beta}^{\alpha}\mathcal{S}_{g1}+\p_{\beta}^{\alpha}\mathcal{S}_{g2}+\p_{\beta}^{\alpha}\mathcal{S}_{g3}\Big)\p^{\alpha}_{\beta}g dxd\xi+\iint_{\Omega\times\R^3}\omega^2\p^{\alpha}_{\beta}\Big\{\frac{P_0 \left( \xi \cdot \sqrt{\mu}\nabla_x g \right)}{\sqrt{\mu}}\Big\}\p^{\alpha}_{\beta}g dxd\xi
				.\label{gab}
			\end{align}
			For the second term in the left-hand side of \eqref{gab}, by  Lemma \ref{lem5.3}, we have
			\begin{equation*}
				-\langle\partial^\alpha_\beta\mathcal{L}g,\omega^2(\beta)\partial^\alpha_\beta g\rangle\geq |\omega(\beta)\partial^\alpha_\beta g|_\sigma^2-\eta\sum_{|\beta_1|=|\beta|}|\omega(\beta_1)\partial^\alpha_{\beta_1} g|_\sigma^2
				-C_\eta\sum_{|\beta_1|<|\beta|}|\omega(\beta_1)\partial^\alpha_{\beta_1} g|_\sigma^2.
			\end{equation*}
			Next, we estimate the other terms in \eqref{gab}. Note that terms involving $\mathcal{S}_{g3}$ have been estimated in  Lemma \ref{lem5.7} and  Lemma \ref{lem5.8}. For $\alpha=0$, we have
			\begin{align*}
				\iint_{\Omega\times\R^3}\omega^2\p_{\beta}^{\alpha}\mathcal{S}_{g3}\p^{\alpha}_{\beta}g dxd\xi\leq C\deltac\left[\|\omega(\beta)\p_\beta^\alpha g\|_{\sigma}^2+\mathcal{D}_{2,\omega}(t)+  (1+t)^{-1} \mathcal{D}_{1,\omega} \right]+C\deltab(1+t)^{-\frac32}.
			\end{align*}
			For $\abs{\alpha}\geq1$, by \eqref{high-der-ori-1}, \eqref{high-der-ori-2}, Lemma \ref{lem5.7}, and Lemma \ref{lem5.8}, we obtain
			\begin{align}
				\iint_{\Omega\times\R^3}\omega^2\p_{\beta}^{\alpha}\mathcal{S}_{g3}\p^{\alpha}_{\beta}g dxd\xi\leq &C\deltac\Big[\|\omega(\beta)\p_\beta^\alpha g\|_{\sigma}^2+\sum_{i=1}^3(1+t)^{-3+i}\mathcal{D}_{i,\omega}(t)\Big]+C\deltab(1+t)^{-\frac32-\abs{\alpha}}.\label{2025.6.05-4}
			\end{align}
			Next, we focus on the $\mathcal{S}_{g22}$. For $|\alpha|=0$, by \eqref{2025.6.05-1}, Lemma \ref{s-t-s-tt-2}, Lemma \ref{high-der-ori} and the {\it a priori} assumptions \eqref{apa}, one has
			\begin{align*}
				&\iint_{\Omega\times\R^3}\omega^2\p_{\beta}^{\alpha}\mathcal{S}_{g22}\p^{\alpha}_{\beta}g dxd\xi\leq C_\eta\left[\norm{\nabla_x \vm}_{L^2}^2+\check{\delta}(1+t)^{-1}\norm{\vm}_{L^2}^2 \right]+\eta\norm{\omega(\beta)\p_{\beta}g}_{\sigma}^2.
			\end{align*}
			Then, for $|\alpha|=1$, by Gagliardo-Nirenberg inequality, \eqref{high-der-ori-1}, \eqref{2025.6.05-1}, Lemma \ref{high-der-ori} and the {\it a priori} assumptions \eqref{apa}, one has
			\begin{align*}
				&\iint_{\Omega\times\R^3}\omega^2\p_{\beta}^{\alpha}\mathcal{S}_{g22}\p^{\alpha}_{\beta}g dxd\xi \notag\\
				\leq&C_\eta\norm{\p^{|\alpha|}\nabla_x\vm}_{L_x^2}^2+C_\eta\norm{\p^{\abs{\alpha}}\vm}_{L^4}^4+C\bar{\delta}\norm{\tilde{D}_{-\frac12}}_{L^{\infty}}^2\norm{\p^{\abs{\alpha}}\vm}_{L^2}^2+C_\eta\bar{\delta}\norm{\tilde{D}_{-1}}_{L^{\infty}}^2\norm{\vm}_{L^2}^2+\eta\norm{\omega(\beta)\p_{\beta}^{\alpha}g}_{\sigma}^2\nonumber\\
				\leq &\eta\norm{\omega(\beta)\p^{\alpha}_{\beta}g}_{\sigma}^2+C\check{\delta}\left[(1+t)^{-1}\norm{\nabla_x\vm}_{L^2}^2+(1+t)^{-2}\norm{\vm}_{L^2}^2\right]+C_\eta\norm{\nabla_x^{2}\vm}_{L^2}^2.
			\end{align*}
			Similarly, for $|\alpha|=2$,  by Gagliardo-Nirenberg inequality, \eqref{high-der-ori-2}, \eqref{2025.6.05-1},  Lemma \ref{high-der-ori} and the {\it a priori} assumptions \eqref{apa}, it holds that
			\begin{align*}
				&\iint_{\Omega\times\R^3}\omega^2\p_{\beta}^{\alpha}\mathcal{S}_{g22}\p^{\alpha}_{\beta}g dxd\xi 
				\\
				\leq &\eta\norm{\omega(\beta)\p^{\alpha}_{\beta}g}_{\sigma}^2+C\check{\delta}\left[\norm{\nabla_x^2\vm}_{L^2}^2+(1+t)^{-1}\norm{\nabla_x\vm}_{L^2}^2+(1+t)^{-2}\norm{\vm}_{L^2}^2\right]+C_\eta\norm{\nabla_x^{3}\vm}_{L^2}^2.\notag
			\end{align*}
			For $\mathcal{S}_{g1}$ and $\mathcal{S}_{g21}$, by \eqref{2025.6.05-1}, we only need to treat $\deltab\p^{\alpha}\tilde D_{-1}$ and other terms can be estimated as $\mathcal{S}_{g22}$. It then holds
			\begin{align*}
				\deltab \int_{\R^3}\p^{\alpha} \tilde D_{-1} \abs{\omega \p_\beta^\alpha g}_{\sigma} dx \lesssim C\bar{\delta}\norm{\omega\p^\alpha_{\beta}g}_{\sigma}^2+C\bar{\delta}(1+t)^{-\frac{3}{2}-|\alpha|}.
			\end{align*}
			Then we have
			\begin{align*}
				\iint_{\Omega\times\R^3}\omega^2\p_{\beta}^{\alpha}(\mathcal{S}_{g1}+\mathcal{S}_{g21})\p^{\alpha}_{\beta}g d\xi dx\leq C\bar{\delta}\norm{\omega\p^\alpha_{\beta}g}_{\sigma}^2+C\bar{\delta}(1+t)^{-\frac{3}{2}-|\alpha|}+\iint_{\Omega\times\R^3}\abs{\omega^2\p_\beta^\alpha \mathcal{S}_{g22}\p_{\beta}^\alpha g}d\xi dx.
			\end{align*}
			And for the last term in the first  line in \eqref{gab}, direct calculation yields
			\begin{align*}
				&\sum_{\abs{\beta'}=1}\iint_{\Omega\times\R^3}\omega^2\p^{\alpha}_{\beta}g\p_{\beta'}\xi \cdot\nabla_x\p_{\beta-\beta'}^{\alpha}gdxd\xi\le \eta\norm{\omega\p^\alpha_{\beta}g}^2_{\sigma}+C_\eta\norm{\omega(\beta-\beta')\p^\alpha_{\beta-\beta'}\nabla_xg}^2_{\sigma}.
			\end{align*}
			Finally, for the last term in the second line in \eqref{gab}, for $\alpha=0$, one has
			\begin{align*}
				&\iint_{\Omega\times\R^3}\omega^2\p_{\beta}\Big\{\frac{P_0 \left( \xi \cdot \sqrt{\mu}\nabla_x g \right)}{\sqrt{\mu}}\Big\}\p_{\beta}g dxd\xi\leq\eta\norm{\omega(\beta)\p_{\beta}g}^2_{\sigma}+C_\eta\norm{\omega(0)\nabla_xg}_{\sigma}^2.
			\end{align*}
			For $\abs{\alpha}\ge 1$, one has
			\begin{align*}
				&\iint_{\Omega\times\R^3}\omega^2\p_{\beta}^{\alpha}\Big\{\frac{P_0 \left( \xi \cdot \sqrt{\mu}\nabla_x g \right)}{\sqrt{\mu}}\Big\}\p^{\alpha}_{\beta}g dxd\xi=\iint_{\Omega\times\R^3}\omega^2\p^{\alpha}_{\beta}\Big\{\frac{1}{\sqrt{\mu}}\mathop{\sum }\limits_{{j = 0}}^{4}\left\langle  {\xi \cdot \sqrt{\mu}\nabla_x g,{\chi}_{j}}\right\rangle  {\chi}_{j}\Big\}\p^{\alpha}_{\beta}g dxd\xi\nonumber\\
				\leq&\eta\norm{\omega\p^\alpha_{\beta}g}^2_{\sigma}+C\sum_{0\leq|\bar{\alpha}|\leq| \alpha|-1}\norm{\p^{|\alpha|-|\bar{\alpha}|}(\rho,u,\theta)}_{L^4}^2\norm{\abs{\omega(0)\p^{\bar{\alpha}}\nabla_xg}_{\sigma}}^2_{L^4}+C_\eta\norm{\omega(0)\p^{{\alpha}}\nabla_xg}_{\sigma}^2\nonumber\\
				\leq&\eta\norm{\omega\p^\alpha_{\beta}g}^2_{\sigma}+\check{\delta}\mathcal{D}_{3,\omega}+C_\eta\norm{\omega(0)\p^{{\alpha}}\nabla_xg}_{\sigma}^2. 
			\end{align*}
			Plugging all the above estimates back to \eqref{gab}, we then have finished the proof of \eqref{5.30} and \eqref{5.30-4}. 
			
			The proof for \eqref{5.30-1}, \eqref{5.30-2}, \eqref{5.30-5} and \eqref{5.30-6} are basically the same as those for \eqref{5.30} and \eqref{5.30-4}. The only difference lies in the dissipative estimate derived from the linear operator $\mathcal{L}$:
			\begin{equation}\label{2025.6.06-1}
				-\langle\partial^\alpha\mathcal{L}g,\omega^2(0)\partial^\alpha g\rangle\geq c_{4}|\omega(0)\partial^\alpha g|_\sigma^2-C_\eta|\chi_{\eta}(\xi)\partial^\alpha g|_2^2,\qquad\quad -\langle\partial^\alpha\mathcal{L}g,\partial^\alpha g\rangle\geq c_{4}|\partial^\alpha g|_\sigma^2.
			\end{equation}
			Then we have completed the proof of  Lemma \ref{mic-energy-t123}.
		\end{proof}
		
		\subsection{Estimates of the highest-order derivatives}\label{sec4.5}
		For estimates of the highest-order spatial derivatives of solutions, one can not rely on the microscopic equation for $g$ \eqref{mic-perturbation}. Instead, it is necessary to turn to the original equation for $f$ \eqref{equ-landau}.  
		Recall the definition of $\vm$ \eqref{2025-11-5-1} and dissipation energy functionals $\mathcal{D}_{i,\omega}$ and $\mathcal{D}_i$ \eqref{2.11}-\eqref{2.11-5}. Then we have the following result.
		
		\begin{Lem}\label{mic-energy-t123-123}
			Under the same assumptions of  Proposition \ref{Thm-ape}, it holds that
			\begin{align}
				&\frac{d}{dt}\sum_{|\alpha|=3}\norm{\frac{\partial^\alpha f}{\sqrt{\mu}}}_{2,\omega}^2+c\sum_{|\alpha|=3}\|\partial^\alpha{g}\|_{\sigma,\omega}^2\leq C\check{\delta} \sum_{i=1}^{3}(1+t)^{-3+i}\mathcal{D}_{i,\omega}(t)+C\bar{\delta}(1+t)^{-\frac52}\notag\\
				&\qquad\quad\qquad\qquad\qquad\qquad\qquad\qquad\qquad+C_\eta\sum_{\abs{\alpha}=3}\left(\|\partial^\alpha{g}\|_\sigma^2+\norm{\p^{\alpha}\vm}_{L^2}^2 \right),\label{2025.08.24-1}\\
				& \frac{d}{dt}\sum_{|\alpha|=3}\norm{\frac{\partial^\alpha f}{\sqrt{\mu}}}_2^2+c\sum_{|\alpha|=3}\|\partial^\alpha{g}\|_\sigma^2\leq C\check{\delta} \sum_{i=1}^{3}(1+t)^{-3+i}\mathcal{D}_{i}(t)+C\bar{\delta}(1+t)^{-\frac52}.\label{2025.08.24-2}
			\end{align}
		\end{Lem}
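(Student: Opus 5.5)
We will work with the original Landau equation rather than the microscopic equation \eqref{mic-perturbation}. The reason is that at the top order $|\alpha|=3$ the equation for $g$ contains $\p^\alpha\nabla_x\vm$ and $\p^\alpha\nabla_x g$, which we cannot control. Write $f=\mu+\sqrt{\mu}\,\tilde f$ with $\tilde f:=(f-\mu)/\sqrt{\mu}$. Then
$$\p_t\tilde f+\xi\cdot\nabla_x\tilde f-\mathcal{L}\tilde f=\Gamma(\tilde f,\tilde f),$$
and since $\p^\alpha\tilde f=\p^\alpha f/\sqrt{\mu}$ for $|\alpha|=3$, this is exactly the quantity in \eqref{2025.08.24-1}–\eqref{2025.08.24-2}.

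We apply $\p^\alpha$ with $|\alpha|=3$ and multiply by $\omega^2(0)\p^\alpha\tilde f$ in the weighted case, or by $\p^\alpha\tilde f$ in the unweighted case, and integrate over $\Omega\times\R^3$. The transport term then drops out, because $\omega$ depends only on $\xi$.

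\textbf{Dissipation step.} Split $\p^\alpha\tilde f=\p^\alpha\big(\tfrac{M-\mu}{\sqrt{\mu}}\big)+\p^\alpha g+\p^\alpha\tfrac{\bar G_0}{\sqrt{\mu}}$. The part $\p^\alpha\frac{M-\mu}{\sqrt{\mu}}$ lies in the span of the collision invariants times $\sqrt{\mu}$, up to corrections proportional to $|M-\mu|\lesssim \deltab^{1/2}+\chi$. It is therefore annihilated by $\mathcal{L}$ modulo $\Gamma$-type terms, so $-\langle\mathcal{L}\p^\alpha\tilde f,\p^\alpha\tilde f\rangle\ge c|\p^\alpha g|_\sigma^2-$(cross terms). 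The $\bar G_0$ contribution, by Lemma \ref{2026-4-24-1} and the decay of $\p^\alpha$ of the profile, yields $C\deltab(1+t)^{-5/2}$.

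In the weighted case we use \eqref{2025.6.06-1}. The lower-order cutoff $C_\eta|\chi_\eta\p^\alpha\tilde f|_2^2$ is bounded by $\|\p^\alpha g\|_\sigma^2+\|\p^\alpha\vm\|_{L^2}^2$ plus profile terms, which is exactly the $C_\eta$ term in \eqref{2025.08.24-1}. In the unweighted case there is no cutoff, since $\mathcal{L}$ is non-positive. This explains why that term is absent from \eqref{2025.08.24-2}.

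\textbf{Nonlinear terms.} We split $\p^\alpha\Gamma(\tilde f,\tilde f)$ by Leibniz and use the trilinear estimates of Lemma \ref{lem5.7}–\ref{lem5.8}. In each product, one factor carries fewer than two derivatives and is placed in $L^\infty_x$ via Sobolev on $\R\times\Torus^2$; the other is measured in $\sigma$-norms. Decomposing each factor into macroscopic, $g$ and $\bar G_0$ parts, and using the a priori bounds \eqref{apa} together with Lemma \ref{high-der-ori} to convert time derivatives of $\vm$, we obtain
$$C\deltac\sum_{i=1}^3(1+t)^{-3+i}\mathcal{D}_{i(,\omega)}+C\deltab(1+t)^{-5/2}.$$
The time weights arise because lower-order factors decay like $(1+t)^{-1/2}$ per derivative lost.

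\textbf{Main obstacle.} The hardest step is the macroscopic-times-macroscopic part. Consider the terms where $\p^\alpha$ hits $M$ in $\langle \mathcal{L}\p^\alpha \tfrac{M-\mu}{\sqrt\mu},\p^\alpha g\rangle$ and in $\Gamma$: these involve $\p^\alpha\vm$ and $\p^\alpha$ of the wave. We must show that they never generate $\|\p^\alpha\vm\|^2$ with an $O(1)$ coefficient. In the unweighted case this requires the exact cancellation $\mathcal{L}(\text{macroscopic part of }\mu\text{-linearization})=0$. The remaining terms are then $O(|M-\mu|)\,|\p^\alpha\vm|\,|\p^\alpha g|_\sigma\le\deltac\mathcal{D}_3$, with $\|\nabla^3_x\vm\|\subset\mathcal{D}_3$. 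We would try first to write $\tfrac{M-\mu}{\sqrt\mu}=P_\mu(\cdot)+O(|M-\mu|^2)$ and apply $\mathcal{L}P_\mu=0$ directly.
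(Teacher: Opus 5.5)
Your proposal is correct and is essentially the paper's argument. In both, one returns to the Landau equation for $f/\sqrt{\mu}$ at order $|\alpha|=3$ and tests it against $\omega^2(0)\partial^\alpha f/\sqrt{\mu}$. In the weighted case one uses the coercivity of Lemma~\ref{lem5.3}, whose cutoff term produces the $C_\eta\sum_{|\alpha|=3}\big(\|\partial^\alpha g\|_\sigma^2+\|\partial^\alpha\mathbf{v}\|_{L^2}^2\big)$ in \eqref{2025.08.24-1}. In the unweighted case one uses the fact that the $\mu$-part $J_1^\alpha/\sqrt{\mu}$ of $\partial^\alpha M/\sqrt{\mu}$ lies in $\ker\mathcal{L}$.

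The only difference is bookkeeping. You keep $-\mathcal{L}\tilde f$ and $\Gamma(\tilde f,\tilde f)$, so you must also estimate $\mathcal{L}\frac{M-\mu}{\sqrt{\mu}}$ and $\Gamma\big(\frac{M-\mu}{\sqrt{\mu}},\frac{M-\mu}{\sqrt{\mu}}\big)$. These are not covered by Lemmas~\ref{lem5.7}--\ref{lem5.8}, but they are routine via \eqref{5.7}--\eqref{5.8}, and in fact they cancel identically since $Q(M,M)=0$. The paper instead keeps $-\mathcal{L}g$ and bounds the cross term $\iint\omega^2(0)\,\partial^\alpha\big(\frac{M+\bar G_0}{\sqrt{\mu}}\big)\,\mathcal{L}\partial^\alpha g\,d\xi\,dx$ directly.
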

		
		\begin{proof}
			From \eqref{equ-landau}, one has
			\begin{align}\label{equ-f}
				\p_t \left(\frac{f}{\sqrt{\mu}}\right) + \xi \cdot \nabla_x \left( \frac{f}{\sqrt{\mu}} \right)-\mathcal{L}g=\mathcal{S}_{f}+ \frac{1}{\sqrt{\mu}} {P}_1 \left[\xi_1 \left( \frac{\abs{\xi-u}^2}{2R\theta^2}\p_{x_1}\thetab + \frac{(\xi-u)\cdot\p_{x_1}\ub}{R\theta} \right)M\right],
			\end{align}
			where
			\begin{align*}
				\mathcal{S}_{f}=& \Gamma \left( \frac{M-\mu}{\sqrt{\mu}},g \right) + \Gamma\left( g,\frac{M-\mu}{\sqrt{\mu}} \right)+\Gamma\left(\frac{G}{\sqrt{\mu}},\frac{G}{\sqrt{\mu}}\right).
			\end{align*}
			Applying $\p^{\alpha}$ with $|\alpha|=3$ to \eqref{equ-f}, multiplying the results by $\omega^2(0)\frac{\p^{\alpha}f}{\sqrt{\mu}}$ and then integrating the resulting equation over $\Omega\times\R^3$, one has
			\begin{align*}
				&\frac{d}{dt}\norm{\omega(0)\frac{\p^{\alpha}f}{\sqrt{\mu}}}^2_{L^2_{\xi,x}}-\iint_{\Omega\times\R^3}\omega^2(0)\p^{\alpha}g\mathcal{L}\p^{\alpha}gd\xi dx=\iint_{\Omega\times\R^3}\omega^2(0)\p^{\alpha}\Big(\frac{M}{\sqrt{\mu}}+\frac{\bar{G}_0}{\sqrt{\mu}}\Big)\mathcal{L}\p^{\alpha}gd\xi dx\nonumber\\
				&\qquad+\iint_{\Omega\times\R^3}\omega^2(0)\frac{\p^{\alpha}f}{\sqrt{\mu}}\p^{\alpha}\mathcal{S}_{f}d\xi+\iint_{\Omega\times\R^3}\omega^2(0)\frac{\p^{\alpha}f}{\sqrt{\mu}}\p^{\alpha}\left\{\frac{1}{\sqrt{\mu}} {P}_1 \left[\xi_1 \left( \frac{\abs{\xi-u}^2}{2R\theta^2}\p_{x_1}\thetab + \frac{(\xi-u)\cdot\p_{x_1}\ub}{R\theta} \right)M\right]\right\}d\xi dx\nonumber\\
				&\qquad:=I_{f}^1+I_{f}^2+I_{f}^3.
			\end{align*}
			We first  calculate the following 
			\begin{align}
				\label{M1-2}
				\partial^{\alpha}M
				=&\mu\left\{\frac{\partial^{\alpha}\rho}{\rho}
				+\frac{(\xi-u)\cdot\partial^{\alpha}u}{R\theta}
				+\left(\frac{|\xi-u|^2}{2R\theta}-\frac32\right)\frac{\partial^{\alpha}\theta}{\theta}\right\}\nonumber\\
				&+(M-\mu)\left\{\frac{\partial^{\alpha}\rho}{\rho}
				+\frac{(\xi-u)\cdot\partial^{\alpha}u}{R\theta}
				+\left(\frac{|\xi-u|^2}{2R\theta}-\frac32\right)\frac{\partial^{\alpha}\theta}{\theta}\right\}\nonumber\\
				&+\sum_{|\alpha_1|=1,2}C_{\alpha}^{\alpha_1}\bigg\{\partial^{\alpha_1}\left(\frac M\rho\right)\partial^{\alpha-\alpha_1}\rho
				+\partial^{\alpha_1}\left(M\frac{\xi-u}{R\theta}\right)\cdot\partial^{\alpha-\alpha_1}u\nonumber\\
				&+\partial^{\alpha_1}\left(M\frac{|\xi-u|^2}{2R\theta^2}-M\frac{3}{2\theta}\right)\partial^{\alpha-\alpha_1}\theta\bigg\}
				=:J^\alpha_1+J^\alpha_2+J^\alpha_3.
			\end{align}
			For $\iint_{\Omega\times\R^3}\omega^2(0)\frac{\p^{\alpha}M}{\sqrt{\mu}}\mathcal{L}\p^{\alpha}gd\xi dx$, we have
			\begin{align}\label{2026-4-25-1}
				\iint_{\Omega\times\R^3}\omega^2(0)\frac{\p^{\alpha}M}{\sqrt{\mu}}\mathcal{L}\p^{\alpha}gd\xi dx\leq C\check{\delta}\left[\mathcal{D}_{3,\omega}(t)+(1+t)^{-1}\mathcal{D}_{2,\omega}\right]+C\bar{\delta}(1+t)^{-\frac52}+\eta\norm{\p^{\alpha}g}_{\sigma,\omega}^2+C_{\eta}\norm{\p^{\alpha} \vm}_{L^2}^2.
			\end{align}
			To provide estimates for $I_{f}^2$, by Lemma \ref{high-der-ori} and Lemma \ref{2026-4-24-1}, we first note that
			\begin{align*}
				\sum_{\abs{\alpha}=3} \norm{\frac{\p^{\alpha}f}{\sqrt{\mu}}}^2_{\sigma,\omega}&\leq C\sum_{\abs{\alpha}=3}\left(  \norm{\p^{\alpha}g}_{\sigma,\omega}^2 + \norm{\frac{\p^{\alpha}\bar{G}_0}{\sqrt{\mu}}}_{\sigma,\omega}^2 + \norm{\frac{\p^{\alpha}(M-\tilde{M})}{\sqrt{\mu}}}_{\sigma,\omega}^2+ \norm{\frac{\p^{\alpha}\tilde{M}}{\sqrt{\mu}}}_{\sigma,\omega}^2 \right) \notag\\
				&\leq C \sum_{\abs{\alpha}=3} \left( \norm{\p^{\alpha}g}_{\sigma,\omega}^2 + \norm{\nabla_x^3 \vm}_{L^2}^2 \right)+C \deltac \left[  (1+t)^{-\frac52}+(1+t)^{-1} \mathcal{D}_{2,\omega} \right].
			\end{align*}
			Thus, the estimate of $I_{f}^2$ can be found in  Lemma \ref{lem5.7} and Lemma \ref{lem5.8}, which is similar to \eqref{2025.6.05-4}.
			Then, we estimate the rest of $I_{f}^1$ and $I_{f}^3$. 
			By Lemma \ref{2026-4-24-1}, for $\alpha\ge1$, $\beta\ge0$ and any $b\ge0$, we observe that
			\begin{align*}
				\abs{\la \xi \ra^b \p^\alpha_\beta(\frac{\bar G_0}{\sqrt \mu})}_{2,\sigma} \approx \abs{\p_\beta^\alpha\left\{\frac{1}{\sqrt{\mu}} {P}_1 \left[\xi_1 \left( \frac{\abs{\xi-u}^2}{2R\theta^2}\p_{x_1}\thetab + \frac{(\xi-u)\cdot\p_{x_1}\ub}{R\theta} \right)M\right]\right\}}_{2,\sigma}\approx\deltab\tilde D_{-\frac12}\abs{\p_{\beta}^{\alpha}(\frac{M}{\sqrt \mu})}_{2,\sigma}.
			\end{align*}
			Using the same method as \eqref{2026-4-25-1}, it can directly obtain
			\begin{align}\notag
				\iint_{\Omega\times\R^3}\omega^2(0)\frac{\p^{\alpha}\bar{G}_0}{\sqrt{\mu}}\mathcal{L}\p^{\alpha}gd\xi dx+I_{f}^3\leq C\check{\delta}\left[\mathcal{D}_{3,\omega}(t)+(1+t)^{-1}\mathcal{D}_{2,\omega}\right]+C\bar{\delta}\Big((1+t)^{-\frac52}+\norm{\p^{\alpha}g}_{\sigma,\omega}^2+\norm{\p^{\alpha} \vm}_{L^2}^2\Big).
			\end{align}
			And the estimate of the linear operator $\mathcal{L}$ is consistent with \eqref{2025.6.06-1}, thereby enabling us to obtain the proof of \eqref{2025.08.24-1}.
			
			Next, we turn to prove \eqref{2025.08.24-2}. Due to \eqref{M1-2} and $(\rho,u,\theta)=\vm+(\rhot,\ut,\thetat)$, we have
			\begin{align*}
				&\iint_{\Omega\times\R^3}\Bigg(J^{\alpha}_1\mathcal{L}\p^{\alpha}g+J^{\alpha}_2\mathcal{L}\p^{\alpha}g+J^{\alpha}_3\mathcal{L}\p^{\alpha}g\Bigg)d\xi dx\leq C\check{\delta}\left[\mathcal{D}_{3}(t)+(1+t)^{-1}\mathcal{D}_{2}\right]+C\bar{\delta}(1+t)^{-\frac52}+\eta\norm{\p^{\alpha}g}_{\sigma}^2,
			\end{align*}
			where  we have used the fact that $J^{\alpha}_1\in \text{ker}\mathcal{L}$. It then follows that $\iint_{\Omega\times\R^3} J^{\alpha}_1\mathcal{L}\p^{\alpha}g d\xi dx=0 $. The estimate of the remaining terms is similar to the previous calculation, and the proof is omitted for the convenience of reading.
			Thus, we have completed the proof of  Lemma \ref{mic-energy-t123-123}.
		\end{proof}
		
		\section{Decay rate}\label{sec5}
		In this section, we present the differential inequalities for the instant energy functionals $\mathcal{E}_{i,\omega},\mathcal{E}_i$ (see \eqref{2.10}-\eqref{2.10-5}) and  dissipation energy functionals $\mathcal{D}_{i,\omega},\mathcal{D}_i$  (see \eqref{2.11}-\eqref{2.11-5}), and thereby prove Proposition \ref{Thm-ape}.
		At first, we present a useful Lemma. For the Landau collision operator, the dissipative norm is not equivalent to the $L^2$ norm. Motivated by \cite{Wang,Wang-1}, we need the following lemma to perform the time-velocity interpolation technique.
		
		\begin{Lem}\label{key-trans}
			For any integer $\alpha$ and positive numbers $\epsilon>0$ and $\upsilon>0$, one has
			\begin{align*}
				\abs{\p^{\alpha} f}_2^2 \leq \upsilon(1+t)^{\epsilon} \abs{\p^{\alpha} f}_{\sigma}^2+e^{-\frac{q}{4}\upsilon^2( 1+t )^{2\epsilon}} \abs{e^{\frac{q}{8}\la \xi\ra^2} \p^{\alpha} f}_2^2.
			\end{align*}
		\end{Lem}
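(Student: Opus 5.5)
The plan is a pointwise-in-$x$ splitting of velocity space into a low-velocity region, where the dissipative norm controls the $L^2$ norm with an explicit polynomial loss, and a high-velocity region, where the Gaussian weight yields the exponentially small factor. Set $h:=\p^{\alpha} f$ at fixed $(t,x)$. Every quantity in the statement is an $L^2_\xi$-type norm of $h$, so the derivative plays no role and the inequality reduces to a purely velocity-space estimate for an arbitrary function $h(\xi)$.

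\emph{Step 1 (low velocities).} From the norm equivalence \eqref{1.35} with $\gamma=-3$ we have
\begin{align*}
|h|_\sigma^2\ \ge\ c_0\,\big|\la\xi\ra^{-\frac12}h\big|_2^2
\end{align*}
for some constant $c_0>0$. Fix a cutoff radius $\mathcal{R}\ge 1$, to be chosen in Step 3. On the set $\{\la\xi\ra\le \mathcal{R}\}$ we have $1\le \mathcal{R}\la\xi\ra^{-1}$, and therefore
\begin{align*}
\int_{\la\xi\ra\le \mathcal{R}} h^2\,d\xi\ \le\ \mathcal{R}\int \la\xi\ra^{-1}h^2\,d\xi\ \le\ c_0^{-1}\mathcal{R}\,|h|_\sigma^2 .
\end{align*}

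\emph{Step 2 (high velocities).} On the set $\{\la\xi\ra>\mathcal{R}\}$ we have $1\le e^{-\frac q4\mathcal{R}^2}e^{\frac q4\la\xi\ra^2}$. Since $\big(e^{\frac q8\la\xi\ra^2}\big)^2=e^{\frac q4\la\xi\ra^2}$, this gives
\begin{align*}
\int_{\la\xi\ra>\mathcal{R}} h^2\,d\xi\ \le\ e^{-\frac q4\mathcal{R}^2}\,\big|e^{\frac q8\la\xi\ra^2}h\big|_2^2 .
\end{align*}

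\emph{Step 3 (choice of $\mathcal{R}$).} Choose $\mathcal{R}$ so that $c_0^{-1}\mathcal{R}=\upsilon(1+t)^{\epsilon}$. If $c_0\ge1$, this gives $\mathcal{R}\ge \upsilon(1+t)^\epsilon$, so the exponent satisfies $e^{-\frac q4\mathcal{R}^2}\le e^{-\frac q4\upsilon^2(1+t)^{2\epsilon}}$. Adding the two regions then gives exactly the stated inequality. If $\mathcal{R}<1$, the low-velocity region is empty, and the high-velocity estimate alone gives the bound trivially.

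The main, and really the only, obstacle is the constant $c_0$ coming from \eqref{1.35}. When $c_0<1$, the argument as written produces $c_0^{-2}$ in the exponent, or equivalently the estimate only holds up to harmless multiplicative constants. I would handle this by reading the lemma, as it is used later in the paper, up to a universal constant: either in the $\upsilon$ term or in the exponent, i.e. with $q$ replaced by $c_0^2 q$. Such a rescaling is absorbed in all subsequent applications, since $\upsilon$ is a free parameter and only the stretched-exponential form of the remainder matters.

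Finally, multiplying by the weights or integrating over $x\in\Omega$ is immediate, because the inequality holds pointwise in $x$.
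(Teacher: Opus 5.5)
Your argument is the same low/high-velocity splitting as the paper's proof, and it is more careful in two places. First, cutting at $\la\xi\ra\le\mathcal{R}$ makes $1\le\mathcal{R}\la\xi\ra^{-1}$ true, whereas the paper cuts at $|\xi|\le\upsilon(1+t)^{\epsilon}$ and uses $\la\xi\ra^{-1}\ge(\upsilon(1+t)^{\epsilon})^{-1}$ there, which fails because $\la\xi\ra>|\xi|$. Second, the paper silently takes $c_0=1$ in $\abs{f}_\sigma^2\ge c_0\int\la\xi\ra^{-1}f^2\,d\xi$.

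Your caveat about $c_0$ is genuinely needed. For $\gamma=-3$ one has $\frac14\sigma^{ij}\xi_i\xi_j=\frac12|\xi|^{-1}(1+o(1))$ as $|\xi|\to\infty$. Take a smooth radial bump $h$ supported in $\{R-1\le|\xi|\le R\}$ with $R\gg1$, $t=0$ and $\upsilon=\frac32R$. Then the right-hand side is about $\frac34\abs{h}_2^2+O(e^{-\frac{5q}{16}R^2})\abs{h}_2^2<\abs{h}_2^2$, so the constant-free inequality fails. The lemma therefore holds only up to a universal constant: either a factor $c_0^{2}$ multiplying $q$ in the exponent (not $c_0^{-2}$ as you first wrote, though your later "$q\mapsto c_0^2q$" is right), or alternatively a factor $c_0^{-1}$ in front of the $\sigma$-term. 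This loss is harmless in the applications in Sections~\ref{sec5} and~\ref{sec6}.
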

		\begin{proof}
			By the definition of dissipative norm, one has
			\begin{align*}
				\abs{\p^{\alpha} f}_{\sigma}^2& \geq \int_{\R^3} \la \xi\ra^{-1} \abs{\p^{\alpha} f}^2 d\xi=\int_{\abs{\xi}\leq \upsilon(1+t)^{\epsilon}}+\int_{\abs{\xi}\geq \upsilon(1+t)^{\epsilon}}\nonumber\\
				&\geq\frac{(1+t)^{-\epsilon}}{\upsilon}\int_{\abs{\xi}\leq \upsilon(1+t)^{\epsilon}}\abs{\p^{\alpha} f}^2 d\xi \nonumber\\
				&=\frac{(1+t)^{-\epsilon}}{\upsilon}\abs{\p^{\alpha}f}_{2}^2-\frac{(1+t)^{-\epsilon}}{\upsilon}\int_{\abs{\xi}\geq \upsilon (1+t)^{\epsilon}}\abs{\p^{\alpha} f}^2,
			\end{align*}
			and
			\begin{align*}
				\int_{\abs{\xi}\geq \upsilon(1+t)^{\epsilon}}\abs{\p^{\alpha} f}^2 d\xi\leq e^{-\frac{q}{4}\upsilon^2 (1+t)^{2\epsilon} } \int_{\R^3} e^{\frac{q}{4}\la \xi \ra^2 } \abs{\p^{\alpha} f}^2 d\xi.
			\end{align*}
			Then we complete the proof of  Lemma \ref{key-trans}. 
		\end{proof}
		
		Now we are going to conclude the proof of Proposition \ref{Thm-ape} for obtaining the stability and optimal decay rate of the planar entropy  wave for the Landau equation. 
		
		\begin{proof}[Proof of  Proposition \ref{Thm-ape}]
			Recall the definition of $(\Vmc,\vm)$ \eqref{2025-11-5-1}, instant energy functionals $\mathcal{E}_{i,\omega},\mathcal{E}_i$ \eqref{2.10}-\eqref{2.10-5} and  dissipation energy functionals $\mathcal{D}_{i,\omega},\mathcal{D}_i$  \eqref{2.11}-\eqref{2.11-5}.
			Combining Theorem \ref{H2-enenrgy-anti}, Theorem \ref{non-zero-111-111},  Theorem \ref{high-order-111-111}, Lemma \ref{high-der-ori}, Lemma \ref{mic-energy-t123} and  Lemma \ref{mic-energy-t123-123}, we have
			\begin{align}
				&\frac{d}{dt} \mathcal{E}_{1,\omega} + \mathcal{D}_{1,\omega}
				\leq C\deltac(1+t)^{-1}\mathcal{E}_{1,\omega} + C\deltab (1+t)^{-\frac{3}{2}}, \label{decay-1}\\
				&\frac{d}{dt}\mathcal{E}_{2,\omega}+\mathcal{D}_{2,\omega}\leq C\deltac \left[(1+t)^{-1}\norm{\vm}_{L^2}^2+(1+t)^{-2}\norm{\Vmc}_{L^2}^2 \right]+ C\deltab (1+t)^{-\frac{3}{2}}.\label{decay-1-tt}
			\end{align}
			Integrating \eqref{decay-1} and \eqref{decay-1-tt} with respect to time $t$
			and combining with the {\it a priori} assumptions \eqref{apa}, we have
			\begin{align}\label{2025.6.11-1}
				\mathcal{E}_{2,\omega}\leq C(\delta+\varepsilon_0).
			\end{align}
			Moreover, we derive the following  differential inequalities for the unweighted microscopic quantities
			\begin{align}
				&\frac{d}{dt}\mathcal{E}_1 + \mathcal{D}_1\leq C\deltac(1+t)^{-1}\mathcal{E}_1 + C\deltab (1+t)^{-\frac{3}{2}}, \label{decay-2-1} \\
				&\frac{d}{dt}\mathcal{E}_2 + \mathcal{D}_2\leq C\deltac \left[(1+t)^{-1}\mathcal{D}_1+(1+t)^{-2}\mathcal{E}_1 \right]+ C\deltab (1+t)^{-\frac{3}{2}}, \label{decay-2-2}\\
				&\frac{d}{dt}\tilde{\mathcal{E}}_2 +\tilde{\mathcal{D}}_2 \leq C\deltac\left[(1+t)^{-1}\mathcal{D}_1+(1+t)^{-2}\mathcal{E}_1\right]+C\deltab(1+t)^{-\frac52},\label{I-1-1-1}\\
				&\frac{d}{dt}\mathcal{E}_3 + \mathcal{D}_3\leq C\deltac \left[(1+t)^{-1}\mathcal{D}_2+(1+t)^{-2}\mathcal{D}_1+(1+t)^{-3}\mathcal{E}_1 \right]+ C\deltab (1+t)^{-\frac{5}{2}}.\label{decay-2-3}
			\end{align}
			Recalling $f=M-\Mt+\sqrt{\mu}g + \bar{G}_0+\Mt$, one has
			\begin{align}\label{decay-3-0-2}
				\sum_{\abs{\alpha}=3}\norm{ \frac{\p^{\alpha}f}{\sqrt{\mu}}}_2^2 &\lesssim\sum_{\abs{\alpha}=3}\norm{ \frac{\p^{\alpha}(M-\Mt)}{\sqrt{\mu}}}_2^2+\sum_{\abs{\alpha}=3}\norm{ \p^{\alpha}g}_2^2+\deltab(1+t)^{-\frac52} \notag\\
				&\lesssim \norm{\nabla_x \vm}_{H^2}^2  +\sum_{\abs{\alpha}=3}\norm{ \p^{\alpha}g}_2^2+\deltab(1+t)^{-\frac52}.
			\end{align}
			Combining  Lemma \ref{key-trans} together with \eqref{2025.6.11-1} and \eqref{decay-3-0-2}, and then taking $\epsilon=\frac{1}{10}$ and $\upsilon=\frac{1}{2}$, one has
			\begin{align}
				&\int_0^t \left(\mathcal{E}_2+\tilde{\mathcal{E}}_2\right) d \tau \le C(1+t)^{\frac{1}{10}} \int_0^t \mathcal{D}_1 d\tau +C\deltab,\label{decay-3-0}\\
				&\int_0^t (1+\tau)\mathcal{E}_3 d \tau \le C(1+t)^{\frac{1}{10}} \int_0^t(1+\tau) \tilde{\mathcal{D}}_2 d\tau +C\deltab.\label{decay-3-0-1}
			\end{align}
			By taking $\int_0^t$\eqref{decay-2-1}$d\tau$, $\int_0^t(1+\tau)$\eqref{decay-2-2}$d\tau$, $\int_0^t(1+\tau)$\eqref{I-1-1-1}$d\tau$, and $\int_0^t(1+\tau)^2$\eqref{decay-2-3}$d\tau$, and using \eqref{decay-3-0}-\eqref{decay-3-0-1}, one has
			\begin{align}
				&\mathcal{E}_1 +\int_0^t \mathcal{D}_1d\tau \leq C\deltab(1+t)^{\deltac}\leq C\deltab(1+t)^{\frac{1}{10}}, \label{decay-3-1}\\
				&(1+t)\mathcal{E}_2+\int_0^t (1+\tau)\mathcal{D}_2 d\tau \le \int_0^t \left(\mathcal{E}_2 +\mathcal{D}_1\right)d\tau + \int_0^t(1+\tau)^{-1} \mathcal{E}_1 d\tau+C\deltab(1+t)^{\frac12} \notag\\
				&\qquad\qquad\le C(1+t)^{\frac{1}{10}}\int_0^t \mathcal{D}_1 d\tau + \int_0^t(1+t)^{-1} \mathcal{E}_1 d\tau+C\deltab(1+t)^{\frac12}\leq C\deltab(1+t)^{\frac12},\label{decay-3-2}\\
				&(1+t)\tilde{\mathcal{E}}_2+\int_0^t(1+\tau)\tilde{\mathcal{D}}_2d\tau\leq\int_0^t \left(\tilde{\mathcal{E}}_2+\mathcal{D}_1\right)d\tau + \int_0^t (1+\tau)^{-1} \mathcal{E}_1 d\tau+C\deltab\notag\\
				&\qquad\qquad\leq C(1+t)^{\frac{1}{10}} \int_0^t \mathcal{D}_1 d\tau+ \int_0^t (1+\tau)^{-1} \mathcal{E}_1 d\tau+C\deltab\leq C\deltab(1+t)^{\frac15},\label{I-1-1-2}\\
				&(1+t)^2\mathcal{E}_3+\int_0^t (1+\tau)^2\mathcal{D}_3 d\tau \notag \\
				&\le C\int_0^t (1+\tau)\left(\mathcal{E}_3+\mathcal{D}_2\right) d\tau + C\int_0^t \mathcal{D}_1d\tau+ C\int_0^t(1+\tau)^{-1} \mathcal{E}_1 d\tau+C\deltab(1+t)^{\frac12} \notag\\
				&\le C\int_0^t \left[(1+\tau){\mathcal{D}}_2+ \mathcal{D}_1\right] d\tau+ C(1+t)^{\frac{1}{10}}\int_0^t (1+\tau)\tilde{\mathcal{D}}_2 d\tau + C\int_0^t(1+\tau)^{-1} \mathcal{E}_1 d\tau+C\deltab(1+t)^{\frac12} \notag\\
				&\leq C\deltab(1+t)^{\frac12}\label{decay-3-3}.
			\end{align}
			From \eqref{decay-3-1}, \eqref{decay-3-2}, \eqref{I-1-1-2} and \eqref{decay-3-3}, it is direct to see
			\begin{align}\label{decay-3}
				&  \mathcal{E}_2 \leq C\deltab(1+t)^{-\frac12},\qquad\quad\tilde{\mathcal{E}}_2\leq C\deltab (1+t)^{-\frac45},\qquad\quad\mathcal{E}_3\leq C\deltab(1+t)^{-\frac32}.
			\end{align}
			From \eqref{decay-3}, we then obtain the long-time behavior of the perturbation quantities of the Landau equation with respect to planar entropy waves as 
			\begin{align}\label{2025.6.11-2}
				\|\frac{f(t,x,\xi)-M_{[\rhot,\ut,\thetat](t,x)}(\xi)}{\sqrt{\mu}}\|_{L_{x}^{\infty}L_{\xi}^{2}}\leq C(\delta^{\frac12}+\varepsilon_0^{\frac12})(1+t)^{-\frac{1}{2}}.
			\end{align}
			Combining \eqref{2025.6.11-1}, \eqref{decay-3} and \eqref{2025.6.11-2}, we have completed the proof of  Proposition \ref{Thm-ape}.
		\end{proof}

		\section{Stretched exponential decay for non-zero modes}\label{sec6}
		
		In this section, we present the stretched exponential decay estimates for non-zero modes of solutions to the Landau equation \eqref{equ-landau}. This section is divided into two parts. The first part provides the dissipation estimates for non-zero mode of macroscopic quantities in subsection \ref{sec6.1}, and the second part gives the dissipation estimates for non-zero mode of microscopic quantities in subsection \ref{sec6.2}. By combining these two parts of estimates and applying  Lemma \ref{key-trans}, we can obtain the stretched exponential decay estimates for non-zero mode. We state the result as follows; the proof will be given in subsection \ref{sec6.3}.
		
		\begin{Thm}\label{lem999}
			Under the same assumptions of Theorem \ref{mt}, it holds that
			\begin{align}\label{add.sec6.thm}
				\norm{\frac{(f-M_{[\rhoc,\uc,\check \theta]})_{\neq}}{\sqrt{\mu}}}_{L_x^\infty L_{\xi}^2}^2\leq C (\delta+\varepsilon_0) e^{-ct^{\frac23}}.
			\end{align}
		\end{Thm}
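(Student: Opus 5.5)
The plan is to exploit that $M_{[\rhoc,\uc,\check\theta]}$ and $\mu$ do not depend on $(x_2,x_3)$, so that
\[
\Dn\Big(\frac{f-M_{[\rhoc,\uc,\check\theta]}}{\sqrt\mu}\Big)=\frac{(f-\mu)_{\neq}}{\sqrt\mu}.
\]
We work with $F:=(f-\mu)/\sqrt\mu$, which solves $\p_t F+\xi\cdot\nabla_x F-\mathcal L F=\Gamma(F,F)$. Applying $\Dn$ gives
\[
\p_t F_{\neq}+\xi\cdot\nabla_x F_{\neq}-\mathcal L F_{\neq}=\Dn\Gamma(F,F)=\Gamma(F_{\neq},\Do F)+\Gamma(\Do F,F_{\neq})+\Dn\Gamma(F_{\neq},F_{\neq}).
\]
The key observation is that $\Do F$ is \emph{not} small in decay, but it is uniformly small, of size $O(\delta+\varepsilon_0)^{1/2}$ by Proposition \ref{Thm-ape} and Corollary \ref{R-m-1-1-1}. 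Moreover, every term on the right is at least linear in $F_{\neq}$, so the equation for $F_{\neq}$ is linear with a small perturbation and no source term.

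\textbf{Step 1: macroscopic dissipation.} We decompose $F_{\neq}=P_\mu F_{\neq}+(I-P_\mu)F_{\neq}$ with respect to $\ker\mathcal L$. We then run the standard Guo-type macro-micro energy method on $\R\times\Torus^2$ in $H^2_x$, using the local conservation laws for $(a,b,c)_{\neq}$ together with the interactive functional $\int \nabla_x(a,b,c)_{\neq}\cdot(\text{moments of }(I-P_\mu)F_{\neq})$. The new ingredient is the Poincar\'e inequality \eqref{2025.6.08-1} in $(x_2,x_3)$, which makes $\|(a,b,c)_{\neq}\|_{H^2}\lesssim\|\nabla_x(a,b,c)_{\neq}\|_{H^1}$. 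This gives full macroscopic dissipation with no low-frequency loss. The terms involving $\Do F$ and the $\tilde D$ profiles are absorbed by smallness, with $\deltac\ll1$, exactly as in Lemma \ref{lem99}–Lemma \ref{lem9}.

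\textbf{Step 2: microscopic dissipation.} For the microscopic part we combine $\xi$-derivative estimates with the exponential weight $e^{q\la\xi\ra^2}$, as in Lemma \ref{mic-energy-t123} and Lemma \ref{mic-energy-t123-123}. This yields
\[
\frac{d}{dt}\mathcal E_{\neq}+c\,\mathcal D_{\neq}\le0,\qquad \frac{d}{dt}\mathcal E_{\neq,w}+c\,\mathcal D_{\neq,w}\le C\mathcal D_{\neq}.
\]
Here $\mathcal E_{\neq}\approx\sum_{|\alpha|\le2}\|\p^\alpha F_{\neq}\|_2^2$, $\mathcal D_{\neq}\gtrsim\sum\|\p^\alpha F_{\neq}\|_\sigma^2$, and the weighted energy $\mathcal E_{\neq,w}$, with weight $e^{\frac q8\la\xi\ra^2}$ or $\omega$, stays uniformly bounded by $C(\delta+\varepsilon_0)$ through \eqref{2025.6.11-1}.

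\textbf{Step 3: interpolation and the decay rate.} We apply Lemma \ref{key-trans} with a free choice of cutoff. For Coulomb interactions $|g|_\sigma^2\gtrsim|\la\xi\ra^{-3/2}g|_2^2$, so we split at $|\xi|\le R(t)$:
\[
\mathcal E_{\neq}\le R^3\mathcal D_{\neq}+e^{-\frac q4R^2}\mathcal E_{\neq,w}.
\]
Inserting this into Step 2 gives
\[
\frac{d}{dt}\mathcal E_{\neq}+cR^{-3}\mathcal E_{\neq}\le CR^{-3}e^{-\frac q4R^2}(\delta+\varepsilon_0).
\]
Choosing $R=t^{1/3}$ balances $\int_0^t R^{-3}\,ds$ against $R^2$, so both exponents are of order $t^{2/3}$. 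Gr\"onwall then yields $\mathcal E_{\neq}\le C(\delta+\varepsilon_0)e^{-ct^{2/3}}$. Finally, the $L^\infty_xL^2_\xi$ bound follows from Sobolev embedding $H^2_x\subset L^\infty_x$.

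\textbf{Main obstacle.} The hardest step is Step 1–2: closing the energy inequality with \emph{no} inhomogeneous source. Terms like $P_1(\xi\cdot\nabla_x\bar G_0)$ and $\p_x\tilde{\mathcal R}$ are zero modes and drop out under $\Dn$, but the products of $\Do F$ with $F_{\neq}$ must be bounded by $\deltac\mathcal D_{\neq}$. This requires the velocity weight in $\Gamma$ estimates (Lemma \ref{lem5.7}–\ref{lem5.8}) and the uniform weighted bound on $\Do F$. I would first try treating $\Gamma(\Do F,F_{\neq})$ using the $L^\infty_x$ smallness of the weighted norms of $\Do F$ from \eqref{apa}.
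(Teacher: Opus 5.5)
Your reduction to $F_{\neq}=(f-\mu)_{\neq}/\sqrt\mu$ matches the paper's treatment (Lemmas \ref{2025-7-30-a}--\ref{2025-7-30-b}). So do the splitting $\Dn\Gamma(F,F)=\Gamma(F_{\neq},\Do F)+\Gamma(\Do F,F_{\neq})+\Dn\Gamma(F_{\neq},F_{\neq})$ with no inhomogeneous source, and the use of Poincar\'e in $(x_2,x_3)$ to get full macroscopic dissipation from the $(a,b,c)_{\neq}$ moment system. The paper works only at the level of $\|\nabla_x\f_{\neq}\|_2^2$. It takes the weighted bound directly from the a priori estimate \eqref{2026-4-26-1} instead of deriving a separate weighted inequality. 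It recovers $L^\infty_x$ by Gagliardo--Nirenberg against the uniformly bounded $\|\nabla_x^2\f_{\neq}\|_2$. Your $H^2_x$ version with Sobolev embedding would also work. Your remark about $P_1(\xi\cdot\nabla_x\bar G_0)$ and $\tilde{\mathcal R}$ dropping out is moot: in the formulation around $\mu$ those terms never appear.

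The genuine problem is Step 3. From $|g|_\sigma^2\gtrsim|\la\xi\ra^{-3/2}g|_2^2$ you get $\mathcal E_{\neq}\le CR^3\mathcal D_{\neq}+e^{-\frac q4R^2}\mathcal E_{\neq,w}$. Then $R=t^{1/3}$ does not balance anything, because $\int_0^tR(s)^{-3}\,ds=\int_0^t s^{-1}\,ds$ grows only logarithmically. Gr\"onwall would then give at best polynomial decay. With an $R^{-3}$ damping rate the correct balance is $R=(1+t)^{1/5}$, which gives $\int_0^tR^{-3}\sim t^{2/5}\sim R^2$. That yields only $e^{-ct^{2/5}}$, weaker than the claimed $e^{-ct^{2/3}}$.

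The missing ingredient is the sharp Coulomb--Landau lower bound in \eqref{1.35}: $|g|_\sigma^2\gtrsim|\la\xi\ra^{-1/2}g|_2^2=\int\la\xi\ra^{-1}|g|^2\,d\xi$. This is the weight $\la\xi\ra^{(\gamma+2)/2}$, not the Boltzmann-type $\la\xi\ra^{\gamma/2}$ you used. It gives $\mathcal E_{\neq}\le R\,\mathcal D_{\neq}+e^{-\frac q4R^2}\mathcal E_{\neq,w}$, which is Lemma \ref{key-trans}. Now take $R=\upsilon(1+t)^{1/3}$ with $\upsilon$ large; use $1+t$ rather than $t$ so that $R$ does not vanish at $t=0$. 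Then the damping integral $\int_0^t(1+s)^{-1/3}\,ds\sim t^{2/3}$ matches $R^2\sim t^{2/3}$, and this is exactly where the exponent $\tfrac23$ comes from.
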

		
		In order to present the proof of the above result more conveniently, we introduce the new perturbation $\f$ as a perturbation near the global Maxwellian $\mu$ in the form that $f=\mu+\sqrt{\mu}\f$. With this notation, it holds that
		\begin{equation}
			\label{add.ff.p1}
			\f_{\neq}=\frac{f_{\neq}}{\sqrt \mu}=\frac{(f-M_{[\rhoc,\uc,\check\theta]})_{\neq}}{\sqrt \mu},
		\end{equation}
		so we only need to prove $\norm{\f_{\neq}}_{L_x^\infty L_\xi^2}\leq C (\delta+\varepsilon_0) e^{-ct^{\frac23}}$ corresponding to the desired result \eqref{add.sec6.thm}.
		Recall $f=M+G$ \eqref{assumption-1}, then we have
		\begin{align}\label{2026-4-26-2}
			\f=\frac{f-\mu}{\sqrt \mu}=\frac{M-M_{[\rhoc,\uc,\check\theta]}+M_{[\rhoc,\uc,\check\theta]}-\mu+\bar G_0+\sqrt \mu g}{\sqrt \mu}.
		\end{align}
		With this observation, by Proposition \ref{Thm-ape}, Corollary \ref{R-m-1-1-1} and \eqref{2026-4-26-2}, we obtain
		\begin{align}\label{2026-4-26-1}
			\norm{\abs{\f}_{2,\omega}}_{L^\infty}+\sum_{1\le|\alpha_1|\le3} \norm{\p^{\alpha_1} \f}_{2,\omega}+\sum_{\abs{\alpha_0}\le3}\norm{\p^{\alpha_0}\f_{\neq}}_{2,\omega}+\norm{\abs{\f_{\neq}}_{\sigma,\omega}}_{L_x^\infty}+\sum_{|\alpha_2|=1}\norm{\abs{\p^{\alpha_2}\Do\f}_{\sigma,\omega}}_{L_x^\infty}  \le C \deltab.
		\end{align}
		
		\subsection{Dissipation estimates for non-zero modes of macroscopic parts}\label{sec6.1}
		For the linearized operator $\mathcal L$ \eqref{2.5} near the global Maxwellian $\mu$, recall that $\ker(\mathcal L)=\text{span} \{\sqrt \mu ,\xi\sqrt \mu,\abs{\xi}^2\sqrt \mu  \}$. We will denote the projection onto the kernel as $\Pp$ and for the new perturbation $\f_{\neq}:=\f-\int_{\T^2}\f dx_2dx_3$ we denote 
		\begin{align*}
			\Pp \f_{\neq}=\Big(\aaa_{\neq}+\bb_{\neq}\cdot\xi +\cc_{\neq}(\abs{\xi}^2-3)\Big)\sqrt{\mu},
		\end{align*}
		where
		\begin{align*}
			\aaa_{\neq}=\int_{\R^3} \f_{\neq} \sqrt \mu d\xi,\qquad \bb_{i\neq}=\int_{\R^3} \f_{\neq} \xi_i\sqrt{\mu} d\xi,\qquad \cc_{\neq}=\frac16\int_{\R^3} \f_{\neq}(\abs{\xi}^2-3)\sqrt{\mu}d\xi.
		\end{align*}
		The  high-order moment functions $\Ee[h]:=[\Ee_{ij}(h)]_{3\times3}$ and $\Ff[h]:=(\Ff_1[h],\Ff_2[h],\Ff_3[h])$ are defined as
		\begin{align}\label{2026-4-26-7}
			\Ee_{ij}[h]=\int_{\R^3} (\xi_i\xi_j-1)\sqrt{\mu} h d\xi,\qquad \Ff_{i}[h]=\int_{\R^3}(\abs{\xi}^2-5) \xi_i \sqrt{\mu} h d\xi.
		\end{align}
		Then we have the hydrodynamic system (see $e.g.$ \cite{BCD,Duan-1})
		\begin{align}
			&\p_t \aaa_{\neq} + \nabla_x \cdot \bb_{\neq}=0, \label{2026-4-26-3}\\
			&\p_t \bb_{\neq}+\nabla_x \aaa_{\neq}=-2\nabla_x \cc_{\neq}-\nabla_x \cdot \Ee[(I-\Pp)\f_{\neq}],\label{2026-4-26-4}\\
			&\p_t \cc_{\neq}=-\frac13 \nabla_x \cdot \bb_{\neq}-\frac16 \nabla_x \cdot \Ff[(I-\Pp)\f_{\neq}] \label{2026-4-23-5},\\
			&\p_t \left( \Ee[(I-\Pp)\f_{\neq}]+2\cc_{\neq} I \right)+\nabla_x\bb_{\neq}+(\nabla_x\bb_{\neq})^{\perp}=-\Ee\left[ l+n\right],\label{2026-4-23-6}\\
			&\p_t\Ff[(I-\Pp)\f_{\neq}] + \nabla_x \cc_{\neq} =-\Ff[l+n],\label{2026-4-23-7}
		\end{align}
		where
		\begin{align*}
			l=\xi \cdot \nabla_x (I-\Pp)\f_{\neq}-\mathcal{L}(I-\Pp)\f_{\neq},\qquad n=-\Dn \Gamma(\f,\f).
		\end{align*}
		For the $\Ee[l],\Ee[n],\Ff[l],\Ff[n]$, we have the following estimate.
		
		\begin{Lem}\label{2026-4-27-2}
			Let $\Ee[\cdot],\Ff[\cdot]$ be defined as in \eqref{2026-4-26-7}, then there exists a universal constant $0<\epsilon\le\frac14$ such that
			\begin{align*}
				&\abs{\Ee_{ij}[(I-\Pp)\f_{\neq}]}+\abs{\Ff_i[(I-\Pp)\f_{\neq}]}\le C \abs{\mu^{\epsilon}(I-\Pp)\f_{\neq}}_2 ,\qquad \abs{\Ee_{ij}[l]}+\abs{\Ff_i[l]}\le C \abs{\mu^{\epsilon}\nabla_x(I-\Pp)\f_{\neq}}_2,\\
				&\abs{\Ee_{ij}[n]}+\abs{F_i[n]}\le C \deltab\left( \abs{\mu^{\epsilon} \f_{\neq}}_2+\abs{\f_{\neq}}_{\sigma}\right).
			\end{align*}
		\end{Lem}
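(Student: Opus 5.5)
The plan is to estimate the three families of moments separately, using in each case only two facts: the weights $(\xi_i\xi_j-1)\sqrt{\mu}$ and $(\abs{\xi}^2-5)\xi_i\sqrt{\mu}$ decay like a Gaussian, and $\mathcal{L}$ is self-adjoint. Fix $\epsilon\le\frac14$ small, for instance $\epsilon=\frac18$.

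\textbf{Moments of $(I-\Pp)\f_{\neq}$ and of $l$.} For the first bound, write each weight as $\mu^{\epsilon}\cdot\big[\mu^{-\epsilon}(\xi_i\xi_j-1)\sqrt{\mu}\big]$. The bracket is in $L^2_\xi$ because $\frac12-\epsilon>0$, so Cauchy--Schwarz gives $\abs{\Ee_{ij}[(I-\Pp)\f_{\neq}]}\le C\abs{\mu^\epsilon(I-\Pp)\f_{\neq}}_2$, and $\Ff_i$ is identical.

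For $l$ there are two pieces. The transport piece $\xi\cdot\nabla_x(I-\Pp)\f_{\neq}$ is handled the same way: the extra factor $\xi$ is absorbed into the Gaussian.

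For the collision piece, use self-adjointness:
\[\int(\xi_i\xi_j-1)\sqrt{\mu}\,\mathcal{L}h\,d\xi=\int \mathcal{L}\big[(\xi_i\xi_j-1)\sqrt{\mu}\big]\,h\,d\xi .\]
Since $(\xi_i\xi_j-1)\sqrt{\mu}$ is a polynomial times $\sqrt{\mu}$, the quantity $\mathcal{L}[(\xi_i\xi_j-1)\sqrt{\mu}]$ is a smooth function bounded by $C\mu^{2\epsilon}$. This follows from the explicit structure of $\mathcal{L}$ in \cite{Guo-2002}: derivatives of $\sigma^{ij}$ grow at most polynomially, and $\int\tilde\phi(\xi-\xi')\sqrt{\mu}(\xi')\,d\xi'$-type terms stay bounded.

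That yields the bound $C\abs{\mu^\epsilon(I-\Pp)\f_{\neq}}_2$ with no derivative. I have to be careful here, because the statement only lists $\nabla_x(I-\Pp)\f_{\neq}$ on the right-hand side. I would either read the statement as also allowing this zeroth-order term, or note that in the later use it is paired with the dissipation. I would state this honestly as $C\abs{\mu^\epsilon(I-\Pp)\f_{\neq}}_2+C\abs{\mu^\epsilon\nabla_x(I-\Pp)\f_{\neq}}_2$.

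\textbf{The nonlinear moment $n$.} This is the main step. First, $\Dn$ commutes with the $\xi$-integration, so $\Ee[n]=-\Dn\,\Ee[\Gamma(\f,\f)]$. Expand
\[\f=\Do\f+\f_{\neq},\]
so that $\Dn\Gamma(\f,\f)$ becomes
\[\Dn\Gamma(\Do\f,\f_{\neq})+\Dn\Gamma(\f_{\neq},\Do\f)+\Dn\Gamma(\f_{\neq},\f_{\neq}).\]
The term $\Gamma(\Do\f,\Do\f)$ has no nonzero mode and drops out. Every remaining term therefore carries one factor of $\f_{\neq}$.

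Next, move all derivatives onto the test function. Write $\Gamma(h_1,h_2)=\mu^{-1/2}Q(\sqrt{\mu}h_1,\sqrt{\mu}h_2)$ in divergence form and integrate by parts in $\xi$. This puts the derivatives on $(\xi_i\xi_j-1)\sqrt{\mu}\cdot\mu^{-1/2}$, which is a polynomial. The result is an expression of the form
\[\iint \tilde\phi(\xi-\xi')\,\sqrt{\mu}h_1(\xi')\,\sqrt{\mu}h_2(\xi)\cdot(\text{poly})\]
plus first-derivative terms.

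Then bound these using $\tilde\phi\ast(\sqrt{\mu}\abs{h})\lesssim\abs{h}_2$ for $\gamma=-3$. The polynomial weight is absorbed by $\sqrt{\mu}(\xi)$, and the $\xi$-derivative that lands on $h_2$ is controlled by $\abs{h_2}_\sigma$. This gives $\abs{\Ee[\Gamma(h_1,h_2)]}\le C\abs{h_1}_{2}\,\abs{\mu^\epsilon h_2}_{\sigma}$-type bounds, and symmetrically with the roles of $h_1$ and $h_2$ exchanged.

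Put $\f_{\neq}$ in whichever slot gives $\abs{\mu^\epsilon\f_{\neq}}_2+\abs{\f_{\neq}}_\sigma$. Bound the other factor pointwise in $x$ by \eqref{2026-4-26-1}, which gives $\norm{\abs{\f}_{2,\omega}}_{L^\infty}\le C\deltab$ and the $\sigma$-control of $\Do\f$ and $\f_{\neq}$. This produces the factor $\deltab$.

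\textbf{Expected obstacle.} The delicate point is the Coulomb singularity in the convolution $\tilde\phi\ast$ when the non-$\f_{\neq}$ factor sits in the $\xi'$ slot. There I would use the $L^\infty_x$ weighted bounds in \eqref{2026-4-26-1} together with the standard estimate of Lemma \ref{lem5.7}-type trilinear bounds with $\mu^\epsilon$ test functions. The $\Ff$ moments are treated identically.
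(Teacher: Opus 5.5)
Your plan follows the paper's own argument. The printed proof consists of your splitting $\Dn\Gamma(\f,\f)=\Dn\Gamma(\f_{\neq},\f_{\neq})+\Gamma(\Do\f,\f_{\neq})+\Gamma(\f_{\neq},\Do\f)$ from \eqref{2026-4-27-1}, the trilinear bound \eqref{5.8} against the test functions $(\xi_i\xi_j-1)\sqrt{\mu}$ and $(\abs{\xi}^2-5)\xi_i\sqrt{\mu}$, the smallness \eqref{2026-4-26-1}, and a citation of the standard moment lemmas in \cite{BCD,Duan-1} for the linear parts.

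Your objection to the $l$-bound is correct. These test functions are not in $\ker\mathcal{L}$, so $\mathcal{L}(I-\Pp)\f_{\neq}$ contributes a zeroth-order term, and the stated estimate fails pointwise in $x$.

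The cleanest repair is Poincar\'e's inequality in $(x_2,x_3)$, as in \eqref{2025.6.08-1}, rather than an appeal to the dissipation. It applies because $\Pp$ commutes with $\Dn$, and gives $\norm{\mu^{\epsilon}(I-\Pp)\f_{\neq}}_2\le C\norm{\mu^{\epsilon}\nabla_x(I-\Pp)\f_{\neq}}_2$ in the $x$-integrated form actually used in Lemma \ref{2025-7-30-a}. Reading the lemma in $L^2_x$ also handles the $\Do$-average hidden in $\Dn\Gamma(\f_{\neq},\f_{\neq})$, which is not controlled pointwise by $\f_{\neq}(x)$.
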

		\begin{proof}
			It should be noted that
			\begin{align}\label{2026-4-27-1}
				\Dn\Gamma(\f,\f)=\Dn\Gamma(\f_{\neq},\f_{\neq})+\Gamma(\Do \f,\f_{\neq})+\Gamma(\f_{\neq},\Do\f).
			\end{align}
			Combining \eqref{2026-4-26-1}, \eqref{2026-4-27-1} and \eqref{5.8}, and using the same method as Lemma 4.5 in \cite{BCD} and Lemma 3.5 in \cite{Duan-1}, we can prove Lemma \ref{2026-4-27-2}. For simplicity of presentation, we omit the details of proof here.
		\end{proof}
		
		Based on the hydrodynamic system \eqref{2026-4-26-3}-\eqref{2026-4-23-6}, the following estimates hold for the non-zero modes of macroscopic quantities.
		
		\begin{Lem}\label{2025-7-30-a}
			Under the same assumptions of Theorem \ref{mt}, for suitable large constants $C_{1},C_{2}$ and a universal constant $C$, it holds that
			\begin{align*}
				&\frac{d}{dt}\mathcal{E}_{mac} + C\Big(\norm{\nabla_x \aaa_{\neq}}_{L^2}^2+\norm{\nabla_x \bb_{\neq}}_{L^2}^2+\norm{\nabla_x \cc_{\neq}}_{L^2}^2\Big) \\
				\le& (C_{\eta_1}+C_{\eta_2})\norm{\mu^{\epsilon}\nabla_x(I-\Pp)\f_{\neq}}_2^2+C\deltab\left( \norm{\mu^{\epsilon} (I-\Pp)\f_{\neq}}_2^2+\norm{(I-\Pp)\f_{\neq}}_\sigma^2\right),
			\end{align*}
			where $\mathcal{E}_{mac}:=\int_{\Omega}\bb_{\neq}\cdot\nabla_x \aaa_{\neq}+C_{1} \left(\Ee[(I-\Pp)\f_{\neq}]+2\cc_{\neq}I \right):[\nabla_x \bb_{\neq}+(\nabla_x \bb_{\neq})^{\perp}]+C_{2}\Ff[(I-\Pp)\f_{\neq}]\cdot\nabla_x \cc_{\neq} dx$.
		\end{Lem}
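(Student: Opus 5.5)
We build $\mathcal{E}_{mac}$ from three interactive functionals, each targeting the gradient of one macroscopic component, following the standard macro--micro moment method of \cite{BCD,Duan-1}. The functionals are derived from the hydrodynamic system \eqref{2026-4-26-3}--\eqref{2026-4-23-7}.

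\emph{Step 1 ($\cc$).} Take the inner product of \eqref{2026-4-23-7} with $\nabla_x\cc_{\neq}$ and integrate over $\Omega$. Move the time derivative onto the product, which gives $\frac{d}{dt}\int\Ff[(I-\Pp)\f_{\neq}]\cdot\nabla_x\cc_{\neq}\,dx+\norm{\nabla_x\cc_{\neq}}_{L^2}^2$. The remainder consists of two parts. The first is $\int\Ff[(I-\Pp)\f_{\neq}]\cdot\nabla_x\p_t\cc_{\neq}$; using \eqref{2026-4-23-5} and an integration by parts, it is bounded by $\eta_1\norm{\nabla_x\bb_{\neq}}^2+C_{\eta_1}\norm{\mu^\epsilon\nabla_x(I-\Pp)\f_{\neq}}^2$. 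The second is $\int\Ff[l+n]\cdot\nabla_x\cc_{\neq}$, which we bound with Lemma \ref{2026-4-27-2}.

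\emph{Step 2 ($\bb$).} Pair \eqref{2026-4-23-6} with $\nabla_x\bb_{\neq}+(\nabla_x\bb_{\neq})^{\perp}$. The elliptic identity $\norm{\nabla_x\bb+(\nabla_x\bb)^\perp}^2\approx\norm{\nabla_x\bb}^2$ on $\Omega=\R\times\Torus^2$ gives dissipation of $\nabla_x\bb_{\neq}$. The time-derivative transfer produces $\p_t\bb_{\neq}$; we replace it using \eqref{2026-4-26-4}, which costs $\eta_2(\norm{\nabla_x\aaa_{\neq}}^2+\norm{\nabla_x\cc_{\neq}}^2)$ plus microscopic terms.

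\emph{Step 3 ($\aaa$).} Pair \eqref{2026-4-26-4} with $\nabla_x\aaa_{\neq}$. This yields $\frac{d}{dt}\int\bb_{\neq}\cdot\nabla_x\aaa_{\neq}+\norm{\nabla_x\aaa_{\neq}}^2$. The term $\int\bb_{\neq}\cdot\nabla_x\p_t\aaa_{\neq}$ becomes $\norm{\nabla_x\cdot\bb_{\neq}}^2$ via \eqref{2026-4-26-3}, and the remaining pieces are absorbed by $\frac12\norm{\nabla_x\aaa_{\neq}}^2+C(\norm{\nabla_x\cc_{\neq}}^2+\norm{\mu^\epsilon\nabla_x(I-\Pp)\f_{\neq}}^2)$.

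\emph{Combining.} Add the three estimates with weights $1\ll C_1\ll C_2$ and choose $\eta_1,\eta_2$ small so that the cross terms $\norm{\nabla_x\bb}^2$ and $\norm{\nabla_x\cc}^2$ are absorbed by the dissipation of the higher-weighted functionals. This produces the stated inequality.

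The main obstacle is the nonlinear term $n=-\Dn\Gamma(\f,\f)$, which is the only place the background (entropy wave plus perturbation) enters. Lemma \ref{2026-4-27-2} bounds it only by $\deltab(\abs{\mu^\epsilon\f_{\neq}}_2+\abs{\f_{\neq}}_\sigma)$, and this bound contains the macroscopic part $\Pp\f_{\neq}$ with no derivative. To handle it, split $\f_{\neq}=\Pp\f_{\neq}+(I-\Pp)\f_{\neq}$ and bound $\norm{(\aaa,\bb,\cc)_{\neq}}_{L^2}$ by the Poincar\'e inequality \eqref{2025.6.08-1}, using that non-zero modes have zero mean over $\Torus^2$. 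Then $\deltab\norm{\Pp\f_{\neq}}^2\lesssim\deltab\norm{\nabla_x(\aaa,\bb,\cc)_{\neq}}^2$, which is absorbed into the left side for $\deltab$ small. What remains is $C\deltab(\norm{\mu^\epsilon(I-\Pp)\f_{\neq}}^2+\norm{(I-\Pp)\f_{\neq}}_\sigma^2)$, as stated. This step relies on the uniform smallness \eqref{2026-4-26-1} of $\Do\f$ and $\f_{\neq}$ in $L^\infty_x$ weighted norms, in the decomposition \eqref{2026-4-27-1}.
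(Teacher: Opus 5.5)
Your proposal is correct and is essentially the paper's proof. It uses the same three interactive functionals from \eqref{2026-4-26-3}--\eqref{2026-4-23-7}, weighted by $1\ll C_1\ll C_2$, and your Poincar\'e absorption of the $\Pp\f_{\neq}$ part of the bound on $n$ makes explicit the step the paper summarizes as ``using the small strength $\deltab$''.

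The only slip is in Step~2. Substituting \eqref{2026-4-26-4} for $\p_t\bb_{\neq}$ against $\nabla_x\cdot(\Ee+2\cc_{\neq}I)$ produces $8\norm{\nabla_x\cc_{\neq}}_{L^2}^2$ and a $\nabla_x\cc_{\neq}\cdot\nabla_x\aaa_{\neq}$ cross term. So the true cost is $\eta\norm{\nabla_x\aaa_{\neq}}_{L^2}^2+C_{\eta}\norm{\nabla_x\cc_{\neq}}_{L^2}^2$, not $\eta_2(\cdots)$. This is harmless, because taking $C_2\gg C_1C_\eta$ absorbs it, which your ordering of constants already allows.
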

		\begin{proof}
			Multiplying \eqref{2026-4-26-4} by $\nabla_x \aaa_{\neq}$ and using \eqref{2026-4-26-3}, we have
			\begin{align}\label{2026-4-27-3}
				&\frac{d}{dt}\int_{\Omega} \bb_{\neq} \cdot \nabla_x \aaa_{\neq} dx + \norm{\nabla_x \aaa_{\neq}}_{L^2}^2 \notag\\
				=&-2\int_{\Omega} \nabla_x \cc_{\neq} \cdot \nabla_x \aaa_{\neq} dx-\int_{\Omega} \p_t \aaa_{\neq}  \nabla_x \cdot \bb_{\neq} dx -2\int_{\Omega} \nabla_x \cdot\Ee[(I-\Pp)\f_{\neq}] \cdot \nabla_x \aaa_{\neq} dx \notag\\
				\le& \frac{1}{100} \norm{\nabla_x \aaa_{\neq}}_{L^2}^2+ C\left( \norm{\nabla_x \cc_{\neq}}_{L^2}^2+\norm{\nabla_x \bb_{\neq}}_{L^2}^2 \right)+C\norm{\mu^{\epsilon} \nabla_x(I-\Pp)\f_{\neq}}_2^2.
			\end{align}
			Multiplying \eqref{2026-4-23-6} by $\nabla_x \bb_{\neq}+(\nabla_x \bb_{\neq})^{\perp}$, and using \eqref{2026-4-26-4} and Lemma \ref{2026-4-27-2}, we obtain
			\begin{align}\label{2026-4-27-4}
				& \frac{d}{dt}\int_{\Omega} (\Ee(I-\Pp)\f_{\neq}+2\cc_{\neq}I):(\nabla_x \bb_{\neq}+(\nabla_x \bb_{\neq})^{\perp}) dx +\norm{\nabla_x b+(\nabla_x \bb_{\neq})^{\perp}}_{L^2}^2 \notag\\
				=&-2\int_{\Omega} \p_t \bb_{\neq} \cdot \left( \nabla_x \cdot \Ee[(I-\Pp)\f_{\neq}]+2\nabla_x\cc_{\neq}I \right) dx-\int_{\Omega} \Ee[l+n] :(\nabla_x \bb_{\neq}+(\nabla_x \bb_{\neq})^{\perp}) dx\notag\\
				\le & \eta_1 \norm{\nabla_x \bb_{\neq}+(\nabla_x \bb_{\neq})^{\perp}}_{L^2}^2+ \eta_1\norm{\nabla_x \aaa_{\neq}}_{L^2}^2 + C\deltab\left(\norm{\mu^{\epsilon} \f_{\neq}}_2^2 +\norm{\f_{\neq}}_{\sigma}^2 \right) \notag\\
				&+C_{\eta_1}\left(\norm{\nabla_x \cc_{\neq}}_{L^2}^2 +\norm{\mu^{\epsilon}\nabla_x(I-\Pp)\f_{\neq}}_2^2 \right).
			\end{align}
			Multiplying \eqref{2026-4-23-7} by $\nabla_x \bb_{\neq}+(\nabla_x \bb_{\neq})^{\perp}$, and using \eqref{2026-4-23-5} and Lemma \ref{2026-4-27-2}, one has
			\begin{align}\label{2026-4-27-5}
				&\frac{d}{dt}\int_{\Omega}\Ff[(I-\Pp)\f_{\neq}] \cdot \nabla_x \cc_{\neq} dx + \norm{\nabla_x \cc_{\neq}}_{L^2}^2 \notag\\
				=&-\int_{\Omega} \p_t\cc_{\neq}\nabla_x \cdot\Ff[(I-\Pp)\f_{\neq}] dx-\int_{\Omega} \Ff[l+n] \cdot \nabla_x \cc_{\neq} dx \notag\\
				\le& \eta_2 \left( \norm{\nabla_x \bb_{\neq}}_{L^2}^2 + \norm{\nabla_x \cc_{\neq}}_{L^2}^2 \right) + C_{\eta_2}\norm{\mu^{\epsilon} \nabla_x (I-\Pp)\f_{\neq}}_2^2 + C \deltab\left( \norm{\mu^{\epsilon} \f_{\neq}}_2^2+\norm{\f_{\neq}}_\sigma^2\right).
			\end{align}
			Combining \eqref{2026-4-27-3}, \eqref{2026-4-27-4}, \eqref{2026-4-27-5} and using the small strength $\deltab$ of entropy wave, we have finished the proof of Lemma \ref{2025-7-30-a}.
		\end{proof}
		
		\subsection{Dissipation estimates for non-zero modes of microscopic parts}\label{sec6.2}
		Recall $f=\mu+\sqrt{\mu}\f$ and Landau equation \eqref{equ-landau}, then we can write the equation of $\f_{\neq}$ as
		\begin{align}\label{mic-non-f}
			\p_t \f_{\neq}+\xi \cdot\nabla_x \f_{\neq}-\mathcal{L} \f_{\neq}=\Dn\Gamma(\f,\f).
		\end{align}
		Based on the equation of $\f_{\neq}$ \eqref{mic-non-f}, we have the following result.
		
		\begin{Lem}\label{2025-7-30-b} 
			Under the same assumptions of Theorem \ref{mt},  it holds that
			\begin{align*}
				\frac{d}{dt} \norm{\nabla_x \f_{\neq}}_2^2 + c\norm{(I-\Pp)\nabla_x\f_{\neq}}_\sigma^2 \le C\deltab \left( \norm{\nabla_x \aaa_{\neq}}_{L^2}^2+\norm{\nabla_x \bb_{\neq}}_{L^2}^2+\norm{\nabla_x \cc_{\neq}}_{L^2}^2 \right).
			\end{align*}
		\end{Lem}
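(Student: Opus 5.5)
The plan is a standard $L^2$ energy estimate for $\nabla_x$ applied to \eqref{mic-non-f}, using the coercivity of $-\mathcal L$ on the microscopic part and the smallness \eqref{2026-4-26-1} to absorb the nonlinear term.

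First, apply $\nabla_x$ to \eqref{mic-non-f}, take the $L^2_{x,\xi}$ inner product with $\nabla_x\f_{\neq}$, and integrate over $\Omega\times\R^3$. The transport term $\xi\cdot\nabla_x$ drops out because its contribution is a total $x$-derivative. Since $\nabla_x$ commutes with $\mathcal L$ (whose coefficients are independent of $x$), and $\mathcal L$ is self-adjoint with kernel spanned by $\Pp$, we get
\[
-(\mathcal L\nabla_x\f_{\neq},\nabla_x\f_{\neq})=-(\mathcal L(I-\Pp)\nabla_x\f_{\neq},(I-\Pp)\nabla_x\f_{\neq})\ge c\norm{(I-\Pp)\nabla_x\f_{\neq}}_\sigma^2,
\]
which is the second inequality in \eqref{2025.6.06-1}. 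This produces the left-hand side of the lemma.

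Second, for the right-hand side $(\nabla_x\Dn\Gamma(\f,\f),\nabla_x\f_{\neq})$, decompose $\Dn\Gamma(\f,\f)$ as in \eqref{2026-4-27-1}. Observe that $\Gamma(\cdot,\cdot)$ is orthogonal to $\ker\mathcal L$, so only $(I-\Pp)\nabla_x\f_{\neq}$ is tested. Then invoke the standard trilinear estimate for the Landau $\Gamma$ with Coulomb potential (\eqref{5.8} and Lemmas \ref{lem5.7}, \ref{lem5.8}), roughly of the form
\[
|(\Gamma(h_1,h_2),h_3)|\lesssim \int_\Omega\big(|h_1|_{2,\omega}|h_2|_\sigma+|h_1|_\sigma|h_2|_{2,\omega}\big)|h_3|_\sigma\,dx .
\]
Distribute $\nabla_x$ by Leibniz and place the factor that is undifferentiated, or that involves only $\Do\f$, in $L^\infty_x$ using \eqref{2026-4-26-1}. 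This covers $\norm{|\f_{\neq}|_{\sigma,\omega}}_{L^\infty_x}$, $\norm{|\nabla_x\Do\f|_{\sigma,\omega}}_{L^\infty_x}$, and $\norm{|\f|_{2,\omega}}_{L^\infty}\le C\deltab$. Each term is then bounded by $C\deltab\norm{\nabla_x\f_{\neq}}_\sigma\norm{(I-\Pp)\nabla_x\f_{\neq}}_\sigma$, plus terms such as $C\deltab\norm{\f_{\neq}}_\sigma\norm{(I-\Pp)\nabla_x\f_{\neq}}_\sigma$. The latter reduce to the former by the Poincaré inequality in $(x_2,x_3)$, in the same way as \eqref{2025.6.08-1}.

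Third, split $\nabla_x\f_{\neq}=\Pp\nabla_x\f_{\neq}+(I-\Pp)\nabla_x\f_{\neq}$. We have $\norm{\Pp\nabla_x\f_{\neq}}_\sigma\lesssim\norm{\nabla_x(\aaa_{\neq},\bb_{\neq},\cc_{\neq})}_{L^2}$ because $\Pp$ has Gaussian decay in $\xi$. Applying Cauchy--Schwarz, the $(I-\Pp)$ part contributes $C\deltab\norm{(I-\Pp)\nabla_x\f_{\neq}}_\sigma^2$, which is absorbed into $c\norm{(I-\Pp)\nabla_x\f_{\neq}}_\sigma^2$ for small $\deltab$, and the remaining terms give $C\deltab\norm{\nabla_x(\aaa_{\neq},\bb_{\neq},\cc_{\neq})}^2_{L^2}$.

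The main obstacle is step two: the lack of a spectral gap means the $L^\infty_x$ factor must carry the weighted $|\cdot|_{\sigma,\omega}$ or $|\cdot|_{2,\omega}$ norm. We must check that \eqref{2026-4-26-1} supplies exactly the weighted bounds needed for the Coulomb trilinear estimate when derivatives fall on $\Do\f$ versus on $\f_{\neq}$. When $\nabla_x$ falls on the $L^2_x$ factor $\f_{\neq}$, the dissipation norm $\norm{\nabla_x\f_{\neq}}_\sigma$ appears, which is fine. When $\nabla_x$ falls on $\Do\f$, we use $\norm{|\nabla_x\Do\f|_{\sigma,\omega}}_{L^\infty_x}$ and put $\f_{\neq}$ in $L^2_x$ with the $\sigma$ norm, then apply Poincaré.
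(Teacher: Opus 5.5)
Your proposal follows the paper's proof essentially step for step. You differentiate \eqref{mic-non-f} in $x$, use the coercivity $-(\mathcal L h,h)\ge c\norm{(I-\Pp)h}_\sigma^2$, bound the nonlinearity via the decomposition \eqref{2026-4-27-1}, the trilinear estimate, the $L^\infty_x$ smallness \eqref{2026-4-26-1} and Poincar\'e in $(x_2,x_3)$, and finally split $\nabla_x\f_{\neq}$ into macroscopic and microscopic parts and absorb the latter. One small correction concerns orthogonality to $\ker\mathcal L$: it holds for $\Gamma(\f,\f)$ and for symmetric pairs such as $\Gamma(\Do\f,\f_{\neq})+\Gamma(\f_{\neq},\Do\f)$, but not for an individual cross term $\Gamma(h_1,h_2)$. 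The step is dispensable anyway, since testing against the full $\nabla_x\f_{\neq}$ and splitting afterwards, as the paper does, gives the same bound.
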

		\begin{proof}
			Applying $\p_{x_i}$ to \eqref{mic-non-f}, we have
			\begin{align}\label{mic-non-f-1}
				\p_t \p_{x_i}\f_{\neq} + \xi\cdot \nabla_x \p_{x_i}\f_{\neq} - \mathcal L \p_{x_i} \f_{\neq} = \p_{x_i} \Dn \Gamma(\f,\f).
			\end{align}
			Multiplying \eqref{mic-non-f-1} by $\p_{x_i} \f_{\neq}$, and applying $-\iint_{\Omega\times \R^3} h\mathcal{L}h d\xi dx \ge c \norm{(I-\Pp)h}_{\sigma}^2$, \eqref{2026-4-26-1}, \eqref{2026-4-27-1} and  \eqref{5.7}, one has
			\begin{align*}
				\frac{d}{dt}\norm{\p_{x_i} \f_{\neq}}_2^2 + c\norm{(I-\Pp)\p_{x_i}\f_{\neq}}_\sigma^2 \le C\deltab \left( \norm{\p_{x_i} \aaa_{\neq}}_{L^2}^2+\norm{\p_{x_i} \bb_{\neq}}_{L^2}^2+\norm{\p_{x_i} \cc_{\neq}}_{L^2}^2 \right),
			\end{align*}
			where we have used
			\begin{align*}
				&\iint_{\Omega\times\R^3} \Dn \p_{x_i} \Gamma (\f,\f) \p_{x_i} \f_{\neq} d\xi dx=\iint_{\Omega\times\R^3} \left[\p_{x_i} \Gamma(\f_{\neq},\f_{\neq})+\p_{x_i}\Gamma(\Do \f,\f_{\neq})+\p_{x_i}\Gamma(\f_{\neq},\Do\f) \right] \p_{x_i}\f_{\neq} d\xi dx \\
				\le&C\norm{\p_{x_i} \f_{\neq}}_\sigma \left[(\norm{\abs{\f}_{\sigma}}_{L^\infty}+\norm{\abs{\f}_{2}}_{L^\infty})\norm{\p_{x_i} \f_{\neq}}_{\sigma}+(\norm{\abs{\p_{x_1}\Do \f}_\sigma}_{L^\infty}+\norm{\abs{\p_{x_1}\Do \f}_2}_{L^\infty}) \norm{\f_{\neq}}_{\sigma} \right]\\
				\le& C\deltab \left( \norm{\p_{x_i} \aaa_{\neq}}_{L^2}^2+\norm{\p_{x_i} \bb_{\neq}}_{L^2}^2+\norm{\p_{x_i} \cc_{\neq}}_{L^2}^2 \right)+ C \deltab \norm{(I-\Pp)\p_{x_i}\f_{\neq}}_{\sigma}^2.
			\end{align*}
			Then we have completed the proof of Lemma \ref{2025-7-30-b}.
		\end{proof}
		
		\subsection{Proof of stretched exponential decay}\label{sec6.3}
		Based on Lemma \ref{2025-7-30-a} and  Lemma \ref{2025-7-30-b}, we give the proof of  Theorem \ref{lem999} regarding the stretched exponential decay estimates for non-zero mode.
		
		\begin{proof}[Proof of Theorem \ref{lem999}.]
			Combining  Lemma \ref{2025-7-30-a}, Lemma \ref{2025-7-30-b} and $\norm{\f_{\neq}}_{2}\le \norm{\nabla_x \f_{\neq}}_{2}$,  we have
			\begin{align}\notag
				&\frac{d}{dt} \norm{\nabla_x\f_{\neq}}_{2}^2 + c \norm{ \nabla_x\f_{\neq}}_{\sigma}^2  \leq 0.
			\end{align}
			Further from  Lemma \ref{key-trans}, by taking $\upsilon\ge 1$, we deduce that
			\begin{align}\label{2025-11-13-2}
				\frac{d}{dt} \norm{\nabla_x\f_{\neq}}_2^2 + \frac{c(1+t)^{-\epsilon}}{\upsilon} \norm{\nabla_x\f_{\neq}}_2^2 \le \frac{c(1+t)^{-\epsilon}}{\upsilon} e^{-\frac{q}{4}\upsilon^2(1+t)^{2\epsilon}}\norm{e^{\frac{q}{8}\la \xi \ra^2}\nabla_x\f_{\neq}}_2^2.
			\end{align}
			By \eqref{2026-4-26-1} and \eqref{2025-11-13-2}, we  have
			\begin{align}\notag
				\frac{d}{dt}\left\{ \exp\left[\frac{c(1+t)^{1-\epsilon}}{\upsilon(1-\epsilon)} \right]\norm{\nabla_x\f_{\neq}}_2^2 \right\}\le C (\delta+\varepsilon_0) (1+t)^{-\epsilon} \exp\left[ \frac{c(1+t)^{1-\epsilon}}{\upsilon(1-\epsilon)} - \frac{q^2}{4} \upsilon^2 (1+t)^{2\epsilon}\right].
			\end{align}
			Integrating the above inequality from $0$ to $t$ and taking $\epsilon=\frac13$ and $\upsilon$ sufficiently large,  we obtain the stretched exponential decay for $\norm{\nabla_x\f_{\neq}}_2^2$ by
			\begin{align}\notag
				\norm{\nabla_x\f_{\neq}}_2^2 \leq C (\delta+\varepsilon_0) e^{-ct^{\frac23}}.
			\end{align}
			Using the Gagliardo-Nirenberg inequality, $\norm{\f_{\neq}}_{2}\le \norm{\nabla_x \f_{\neq}}_{2}$ and employing \eqref{2026-4-26-1}, one has
			\begin{align}\notag
				\norm{\f_{\neq}}_{L_x^\infty L_\xi^2}^2\le C \norm{\f_{\neq}}_{2}^\frac{1}{2} \norm{\nabla_x^2\f_{\neq}}_{2}^\frac{3}{2}\le C \norm{\nabla_x\f_{\neq}}_{2}^\frac{1}{2} \norm{\nabla_x^2\f_{\neq}}_{2}^\frac{3}{2} \leq C (\delta+\varepsilon_0) e^{-c t^{\frac23}},
			\end{align}
			which gives the desired estimate \eqref{add.sec6.thm} in terms of \eqref{add.ff.p1}.
			Then we have completed the proof of  Theorem \ref{lem999}.
		\end{proof}
		
		\section{Appendix}\label{sec7}
		In this appendix, we will give some basic estimates that have been used in the previous sections. In section \ref{sec7.1}, we list properties of Burnett functions for the Landau collision operator. In section \ref{sec7.2}, we deduce estimates involving the Landau collision terms.   
		
		\subsection{Burnett functions}\label{sec7.1}
		
		To overcome some difficulties due to the terms involving $L^{-1}_{M}$ and $\bar{G}$, we need to consider the integrality about the velocity. In this subsection, we first list some properties of the Burnett functions and then give the fast decay about the velocity $\xi$ of the Burnett functions.
		Recall the Burnett functions, cf. \cite{Bardos,Chapman-1990,Guo-2006,Ukai-Yang}:
		\begin{equation}
			\label{5.1}
			\hat{A}_{j}(\xi)=\frac{|\xi|^{2}-5}{2}\xi_{j}\quad \mbox{and} \quad \hat{B}_{ij}(\xi)=\xi_{i}\xi_{j}-\frac{1}{3}\delta_{ij}|\xi|^{2} \quad \mbox{for} \quad i,j=1,2,3.
		\end{equation}
		Noting that $\hat{A}_{j}M$ and $\hat{B}_{ij}M$ are orthogonal to the null space $\mathcal{N}$ of $L_{M}$, we can define
		functions $A_{j}(\frac{\xi-u}{\sqrt{R\theta}})$ and $ B_{ij}(\frac{\xi-u}{\sqrt{R\theta}})$ such that $P_{0}A_{j}=0$,
		$P_{0}B_{ij}=0$ and
		\begin{equation}
			\label{5.2}
			A_{j}(\frac{\xi-u}{\sqrt{R\theta}})=L^{-1}_{M}[\hat{A}_{j}(\frac{\xi-u}{\sqrt{R\theta}})M]\quad
			\mbox{and} \quad B_{ij}(\frac{\xi-u}{\sqrt{R\theta}})=L^{-1}_{M}[\hat{B}_{ij}(\frac{\xi-u}{\sqrt{R\theta}})M].
		\end{equation}
		We shall list some elementary but important properties of the Burnett functions summarized in the following lemma, cf. \cite{Guo-2006,Bardos,Ukai-Yang}.
		
		\begin{Lem}
			The Burnett functions have the following properties:
			\begin{itemize}
				\item{$-\langle \hat{A}_{i}, A_{i}\rangle$ ~~is positive and independent of i;}
				\item{$\langle \hat{A}_{i}, A_{j}\rangle=0$ ~~for ~any ~$i\neq j$;\quad $\langle
					\hat{A}_{i}, B_{jk}\rangle=0$~~for ~any ~i,~j,~k;}
				\item{$\langle\hat{B}_{ij},B_{kj}\rangle=\langle\hat{B}_{kl},B_{ij}\rangle=\langle\hat{B}_{ji},B_{kj}\rangle$,~~
					which is independent of ~i,~j, for fixed~~k,~l;}
				\item{$-\langle \hat{B}_{ij}, B_{ij}\rangle$ ~~is positive and independent of i,~j when $i\neq j$;}
				\item{$-\langle \hat{B}_{ii}, B_{jj}\rangle$ ~~is positive and independent of i,~j when $i\neq j$;}
				\item{$-\langle \hat{B}_{ii}, B_{ii}\rangle$ ~~is positive and independent of i;}
				\item{$\langle \hat{B}_{ij}, B_{kl}\rangle=0$ ~~unless~either~$(i,j)=(k,l)$~or~$(l,k)$,~or~i=j~and~k=l;}
				\item{$\langle \hat{B}_{ii}, B_{ii}\rangle-\langle \hat{B}_{ii}, B_{jj}\rangle=2\langle \hat{B}_{ij},
					B_{ij}\rangle$ ~~holds for any~ $i\neq j$.}
			\end{itemize}
		\end{Lem}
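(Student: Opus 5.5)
The statement is a list of classical properties of the Burnett functions, so I would argue purely from symmetry of the linearized Landau operator rather than from any explicit formula. Fix the standard normalization $u=0$, $R\theta=1$, $\rho=1$ by the change of variables $\xi\mapsto (\xi-u)/\sqrt{R\theta}$. This changes each inner product only by a positive factor depending on $(\rho,\theta)$, which does not affect signs, vanishing, or index independence. Write $L=L_M$ with $M$ the standard Maxwellian. I would use three facts about $L$. First, $L$ is self-adjoint in $\langle\cdot,\cdot\rangle_M$. Second, $-L$ is nonnegative with kernel $\mathcal N$, and is strictly positive on $\mathcal N^\perp$. Third, $L$ commutes with every orthogonal transformation $T\in O(3)$ acting by $(Th)(\xi)=h(T^{-1}\xi)$. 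The third fact holds because the kernel $\tilde\phi(z)$ is rotation covariant, $\tilde\phi(Tz)=T\tilde\phi(z)T^{t}$, and $M$ is radial. Consequently $L^{-1}$, defined on $\mathcal N^\perp$, also commutes with $O(3)$.

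With these facts I would carry out the following steps in order.

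\textbf{Positivity.} Since $\hat A_iM$ and $\hat B_{ij}M$ lie in $\mathcal N^\perp$, we have $-\langle\hat A_i,A_i\rangle=-\langle \hat A_iM,L^{-1}(\hat A_iM)\rangle_M>0$. The same argument gives $-\langle\hat B_{ij},B_{ij}\rangle>0$ for $i\ne j$ and $-\langle\hat B_{ii},B_{ii}\rangle>0$.

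\textbf{Vanishing statements.} These follow from parity and reflection symmetries. Take $T$ to be the reflection $\xi_k\mapsto-\xi_k$. Then $\hat A_j$ is odd or even under $T$ according to whether $j=k$, while $\hat B_{ij}$ picks up the sign $(-1)^{\delta_{ik}+\delta_{jk}}$. Because $L^{-1}$ commutes with $T$, the integrand of $\langle \hat A_i,B_{jk}\rangle$ is odd under the full inversion $\xi\mapsto-\xi$, so it vanishes. The same reasoning kills $\langle\hat A_i,A_j\rangle$ for $i\ne j$. It also kills $\langle\hat B_{ij},B_{kl}\rangle$ unless the sign characters match, which forces $\{i,j\}=\{k,l\}$ or $i=j,\ k=l$.

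\textbf{Index independence.} Apply coordinate permutations $T$. These give independence of $i$, $j$ and the stated permutation equalities, using also the symmetry $\hat B_{ij}=\hat B_{ji}$.

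\textbf{Sign of the off-diagonal term.} The positivity of $-\langle\hat B_{ii},B_{jj}\rangle$ for $i\ne j$ is not a pure symmetry fact. I would use the trace identity $\sum_j\hat B_{jj}=0$, which gives $\langle\hat B_{ii},B_{ii}\rangle+2\langle\hat B_{ii},B_{jj}\rangle=0$. Hence $-\langle\hat B_{ii},B_{jj}\rangle=\tfrac12\langle\hat B_{ii},B_{ii}\rangle$. With the sign convention read literally, this would make the quantity negative. So at this step I would check the paper's sign convention against the cited sources \cite{Guo-2006,Ukai-Yang}, and state the correct sign as it follows from the trace identity.

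\textbf{The last identity.} This is the main obstacle, because it requires isotropy beyond the discrete symmetries. I would rotate by $45^\circ$ in the $(\xi_i,\xi_j)$ plane. This map sends $\hat B_{ij}$ to $\tfrac12(\hat B_{ii}-\hat B_{jj})$. Pairing with the rotated $B$ then gives
\[
\langle\hat B_{ij},B_{ij}\rangle=\tfrac14\big(2\langle\hat B_{ii},B_{ii}\rangle-2\langle\hat B_{ii},B_{jj}\rangle\big),
\]
which is exactly the stated relation.
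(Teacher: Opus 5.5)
The paper gives no proof of this lemma. It records the list and refers to the literature, so your symmetry argument is a genuinely different, self-contained route, and it is sound.

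\begin{itemize}
\item The normalization $u=0$, $R\theta=1$ multiplies every pairing by one common positive factor. This uses velocity translation invariance, homogeneity of the kernel, and linearity of $L_M$ in $M$. Signs, zeros and index relations are therefore preserved.
\item Positivity follows from strict negativity of $L_M$ on $\mathcal N^\perp$. This holds for the Coulomb operator even without a spectral gap.
\item The vanishing rules follow from the characters of the coordinate reflections, and index independence follows from permutations.
\item The last identity follows from your $45^\circ$ rotation. You are right that this is the only step needing full isotropy: under the cubic group alone, the diagonal traceless and off-diagonal parts of the $\hat B_{ij}$ are inequivalent representations and would carry independent constants.
\end{itemize}

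Compared with a citation, your argument verifies the list directly for the Landau operator, using only $O(3)$-equivariance and the sign of $L_M$.

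On the fifth item, your suspicion is correct, and no choice of convention rescues it. Your trace identity gives $\langle\hat B_{ii},B_{jj}\rangle=-\tfrac12\langle\hat B_{ii},B_{ii}\rangle$ for $i\neq j$. So the fifth and sixth items assert positivity of two quantities of opposite sign, and they cannot both hold. The paper's conventions are $L_M\le 0$ and $\langle\cdot,\cdot\rangle$ the $L^2_\xi$ pairing, as fixed by $\mu(\theta)=-R\theta\int\hat B_{ij}B_{ij}\,d\xi>0$. Under these, the sixth item is the true one, and the fifth should say that $\langle\hat B_{ii},B_{jj}\rangle$ is positive.

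The paper's own viscous stress confirms this. Since $P_1(\xi\cdot\nabla_xM)=\big[\sum_k\hat A_k\sqrt{R}\,\partial_{x_k}\theta/\sqrt{\theta}+\sum_{k,l}\hat B_{kl}\,\partial_{x_k}u_l\big]M$, the $(i,i)$ entry of the stress, namely $-\int\xi_i^2L_M^{-1}[P_1(\xi\cdot\nabla_xM)]\,d\xi$, has coefficient $-R\theta\langle\hat B_{ii},B_{jj}\rangle$ in front of $\partial_{x_j}u_j$. In \eqref{diff} that coefficient is $-\tfrac23\mu(\theta)<0$. Combined with your last identity, this also gives $-R\theta\langle\hat B_{ii},B_{ii}\rangle=\tfrac43\mu(\theta)$. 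So state and prove the corrected sign; there is nothing to look up.

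The third item has a similar defect, and here your proposal overclaims. Permutations cannot yield ``the stated permutation equalities'', because as printed they are false:
\begin{itemize}
\item For $(i,j,k,l)=(1,2,1,3)$ the item asserts $\langle\hat B_{12},B_{12}\rangle=\langle\hat B_{13},B_{12}\rangle$. The right side vanishes by the seventh item, while the left side is strictly negative.
\item The pairing $\langle\hat B_{ij},B_{1j}\rangle$ is nonzero for $(i,j)=(1,2)$ and zero for $(i,j)=(2,3)$, so it is not independent of $i,j$.
\end{itemize}
What your tools do prove, via self-adjointness of $L_M$ and $\hat B_{ij}=\hat B_{ji}$, is $\langle\hat B_{ij},B_{kl}\rangle=\langle\hat B_{kl},B_{ij}\rangle=\langle\hat B_{ji},B_{kl}\rangle$. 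State this corrected version explicitly rather than asserting the printed one.
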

		In terms of Burnett functions, the viscosity coefficient $\mu(\theta)$ and heat conductivity
		coefficient $\kappa(\theta)$ in \eqref{diff} can be represented by
		\begin{align*}
			\mu(\theta)=&- R\theta\int_{\mathbb{R}^{3}}\hat{B}_{ij}(\frac{\xi-u}{\sqrt{R\theta}})
			B_{ij}(\frac{\xi-u}{\sqrt{R\theta}})\,d\xi>0,\quad i\neq j,
			\nonumber\\
			\kappa(\theta)=&-R^{2}\theta\int_{\mathbb{R}^{3}}\hat{A}_{j}(\frac{\xi-u}{\sqrt{R\theta}})
			A_{j}(\frac{\xi-u}{\sqrt{R\theta}})\,d\xi>0.
		\end{align*}
		Notice that these coefficients are positive smooth functions depending only on $\theta$.
		\par
		The following lemma is borrowed from \cite[Lemma 6.1]{Duan-Yu1}, which is about
		the fast velocity decay of the Burnett functions.
		\begin{Lem}\label{lem5.2}
			Suppose that $U(\xi)$ is any polynomial of $\frac{\xi-\hat{u}}{\sqrt{R}\hat{\theta}}$ such that
			$U(\xi)\widehat{M}\in(\ker{L_{\widehat{M}}})^{\perp}$ for any Maxwellian $\widehat{M}=M_{[\widehat{\rho},\widehat{u},\widehat{\theta}]}(\xi)$
			as \eqref{2025.6.14-1} where $L_{\widehat{M}}$ is as in \eqref{2025.6.14.-2} .
			For any $\epsilon\in(0,1)$ and any multi-index $\beta$, there exists a constant $C_{\beta}>0$ such that
			$$
			|\partial_{\beta}L^{-1}_{\widehat{M}}(U(\xi)\widehat{M})|\leq C_{\beta}(\widehat{v},\widehat{u},\widehat{\theta})\widehat{M}^{1-\epsilon}.
			$$
			In particular, under the assumptions of \eqref{apa}, there exists a constant $C_{\beta}>0$ such that
			\begin{equation}\notag
				|\partial_{\beta}A_{j}(\frac{\xi-u}{\sqrt{R\theta}})|+|\partial_{\beta}B_{ij}(\frac{\xi-u}{\sqrt{R\theta}})|
				\leq C_{\beta}M^{1-\epsilon}.
			\end{equation}
		\end{Lem}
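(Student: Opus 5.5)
This is Lemma \ref{lem5.2}, a velocity-decay statement for $L^{-1}_{\widehat M}$ applied to microscopic polynomial-times-Maxwellian data. The plan is to reduce to the global normalized Maxwellian by scaling and then use weighted estimates for the linearized Landau operator. First, change variables $\eta=(\xi-\hat u)/\sqrt{R\hat\theta}$. Under this scaling $L_{\widehat M}$ becomes, up to the factor $\hat\rho$ and a power of $\hat\theta$ (homogeneity of the Coulomb kernel, $\gamma=-3$), the operator $h\mapsto \sqrt{\mu}\,\mathcal{L}(h/\sqrt{\mu})$ around $\mu$ in \eqref{2026-5-9-2}. So it suffices to show that if $U(\eta)\sqrt{\mu}\in(\ker\mathcal{L})^\perp$ with $U$ a polynomial, then the unique solution $w\in(\ker\mathcal L)^\perp$ of $\mathcal{L}w=U\sqrt{\mu}$ satisfies
\[
|\partial_\beta w|\le C_\beta\,\mu^{\frac12-\epsilon}.
\]
Scaling back, this gives $|\partial_\beta L^{-1}_{\widehat M}(U\widehat M)|\le C\widehat M^{1-\epsilon}$. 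The constant depends smoothly on $(\hat\rho,\hat u,\hat\theta)$, and $\xi$-derivatives produce powers of $\hat\theta^{-1/2}$. The special case then follows from \eqref{apa}, which keeps $(\rho,u,\theta)$ near $(1,0,\frac32)$.

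Second, for existence and weighted $L^2$ bounds, I would use the coercivity $-\langle\mathcal{L}w,w\rangle\ge c|(I-\Pp)w|_\sigma^2$ to solve $\mathcal{L}w=U\sqrt\mu$ by Lax--Milgram in the $|\cdot|_\sigma$-space on $(\ker\mathcal L)^\perp$. Next, I would test the equation with $e^{2q'\langle\eta\rangle^2}\partial_\beta w$ for $q'<\frac14$ close to $\frac14$, using the weighted coercivity already quoted in \eqref{2025.6.06-1} and Lemma \ref{lem5.3}:
\[
-\langle\partial_\beta\mathcal Lw,\omega^2\partial_\beta w\rangle\ge|\omega\partial_\beta w|_\sigma^2-\eta\sum|\omega\partial_{\beta_1}w|_\sigma^2-C_\eta\sum_{|\beta_1|<|\beta|}|\omega\partial_{\beta_1}w|_\sigma^2-C_\eta|\chi_\eta\partial_\beta w|_2^2 .
\]
The compactly supported term is controlled by the unweighted bound. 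The right side $U\sqrt\mu$ lies in every weighted space with $q'<\frac14$. Induction on $|\beta|$ then gives $e^{q'\langle\eta\rangle^2}\partial_\beta w\in L^2_\sigma$ for all $\beta$.

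Third, I would convert these weighted $L^2$ bounds into pointwise bounds by Sobolev embedding in $\eta$. Applying the second step to $\partial_\beta w$ with $|\beta|$ up to $|\beta|+2$ gives $|e^{q'\langle\eta\rangle^2}\partial_\beta w|_{H^2}<\infty$; the derivatives of the weight only produce polynomial factors, which can be absorbed by lowering $q'$ slightly. Hence $|\partial_\beta w|\le Ce^{-q''|\eta|^2}$ with $q''$ arbitrarily close to $\frac14$, which is exactly $\mu^{\frac12-\epsilon}$.

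The main obstacle is the second step. For $\gamma=-3$ the dissipation norm $|\cdot|_\sigma$ is weaker than $L^2$ at large velocities, so the Gaussian-weighted estimate only closes because the weight's commutator with the $\sigma$-operator contributes a good term $\omega^2|\xi|^2\sigma^{ij}\xi_i\xi_j$-type of the right sign. This holds provided $q'<\frac14$, which is Guo's exponential-weight argument from \cite{Guo-2002,Strain-Guo-2008}. I would import that estimate rather than reprove it, exactly as in \cite[Lemma 6.1]{Duan-Yu1}.
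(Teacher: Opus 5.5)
The paper gives no proof of this lemma; it simply imports it from \cite[Lemma 6.1]{Duan-Yu1}. Your outline is a sound reconstruction of the standard argument behind that citation, and it uses the correct threshold $q'<\frac14$. Its steps are: reduce to $\mathcal L w=U\sqrt\mu$ with $U\sqrt\mu\perp\ker\mathcal L$ via the scaling of the Coulomb kernel; solve by Lax--Milgram in the $|\cdot|_\sigma$-space; prove Gaussian-weighted estimates by induction on $|\beta|$; pass to pointwise bounds by Sobolev embedding; get uniform constants under \eqref{apa}. For $\gamma=-3$ the scaling is in fact exact: $Q(f,g)(\xi)=Q(F,G)(\eta)$, so $L^{-1}_{\widehat M}(U\widehat M)(\xi)=[L^{-1}_\mu(U\mu)](\eta)$. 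The parameters therefore enter only through converting $\mu^{1-\epsilon}$ into $\widehat M^{1-\epsilon}$ and through the chain-rule factors $(R\hat\theta)^{-|\beta|/2}$.

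One point in your sketch is stated backwards. Take $\omega=e^{q'\langle\eta\rangle^2}$ and $h=\omega g$, and integrate $\partial_i(\sigma^{ij}\partial_j g)$ against $\omega^2 g$. This gives $-\int\sigma^{ij}\partial_ih\,\partial_jh+\int\sigma^{ij}\partial_i\omega\,\partial_j\omega\,g^2$. The commutator term equals $4q'^2\int\sigma^{ij}\eta_i\eta_j h^2$, with no extra $|\eta|^2$, and it enters $-\langle\mathcal Lg,\omega^2g\rangle$ with the \emph{wrong} sign. The weighted estimate closes only because this term is strictly dominated by $\frac14\int\sigma^{ij}\eta_i\eta_jh^2$, which is already part of $|h|_\sigma^2$; the net coefficient is $\frac14-4q'^2$. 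That, together with the source condition $\langle\eta\rangle^{1/2}\omega U\sqrt\mu\in L^2$, is the real origin of $q'<\frac14$. It is also the only range in which you may invoke \eqref{2025.6.06-1} and Lemma \ref{lem5.3} with a Gaussian weight.

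Two further technical points need handling. First, Lax--Milgram only places $w$ in the unweighted $\sigma$-space, so you cannot test directly against $e^{2q'\langle\eta\rangle^2}\partial_\beta w$. Use a truncated weight instead, such as a concave smoothing of $\min\{e^{q'\langle\eta\rangle^2},N\}$, whose commutator obeys the same bound uniformly in $N$. Before that, obtain $\xi$-regularity of $w$ from the local uniform ellipticity of $\sigma^{ij}$, for example by difference quotients. Second, with a purely Gaussian weight the $\xi$-derivative commutators cost polynomial factors at each induction step. As you say, these are absorbed by lowering $q'$ finitely many times.
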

		
		\subsection{Estimates on collision terms}\label{sec7.2}
		Now, we shall turn to recall the refined estimates for the linearized operator $\mathcal{L}$
		and  the nonlinear collision terms $\Gamma(g_1,g_2)$ defined in \eqref{2.5}. They can be proved by a straightforward modification of the arguments used
		in \cite[Lemmas 9]{Strain-Guo-2008} and \cite[Lemmas 2.2-2.3]{Wang} and we thus omit their proofs for brevity.
		
		\begin{Lem}\label{lem5.3}
			Let  $\omega=\omega(\beta)$ be defined by \eqref{1.32}.
			For any $\eta>0$ small enough, there exists $C>0$ such that
			\begin{align}\notag
				-\langle\partial^\alpha_\beta\mathcal{L}g,\omega^2(\beta)\partial^\alpha_\beta g\rangle\geq |\omega(\beta)\partial^\alpha_\beta g|_\sigma^2-\eta\sum_{|\beta_1|=|\beta|}|\omega(\beta_1)\partial^\alpha_{\beta_1} g|_\sigma^2
				-C\sum_{|\beta_1|<|\beta|}|\omega(\beta_1)\partial^\alpha_{\beta_1} g|_\sigma^2.
			\end{align}
			If $|\beta| = 0$, there exists $c_{4}>0$ such that
			\begin{equation}\notag
				-\langle\partial^\alpha\mathcal{L}g,w^2(0)\partial^\alpha g\rangle\geq c_{4}|w(0)\partial^\alpha g|_\sigma^2-C|\chi_{\eta}(\xi)\partial^\alpha g|_2^2,
			\end{equation}
			where $\chi_\eta(\xi)$ is a general cutoff function depending on $\eta$.
		\end{Lem}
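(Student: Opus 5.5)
The statement is Lemma \ref{lem5.3}, the weighted coercivity estimate for the linearized Landau operator $\mathcal{L}$ with the time-independent weight $\omega(\beta)=\langle\xi\rangle^{l-|\beta|}e^{q\langle\xi\rangle^2}$. I would follow the scheme of \cite[Lemma 9]{Strain-Guo-2008} and \cite[Lemmas 2.2--2.3]{Wang}.

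\textbf{Splitting $\mathcal{L}$.} First, split $\mathcal{L}=-\mathcal{A}-\mathcal{K}$. Here
\[
\mathcal{A}g=\partial_{\xi_i}(\sigma^{ij}\partial_{\xi_j}g)-\sigma^{ij}\tfrac{\xi_i}{2}\tfrac{\xi_j}{2}g+\partial_{\xi_i}\sigma^i g
\]
is the main diffusive part, and $\mathcal{K}$ is the remainder. The remainder has a kernel of the form $\sqrt{\mu}\,\partial(\tilde\phi\ast\cdots)$, which decays rapidly in $\xi$ and costs no derivatives.

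\textbf{Commuting derivatives.} Since $\mathcal{L}$ acts only in $\xi$, $\partial^\alpha$ commutes with it, so it suffices to treat $\partial_\beta$ acting on $h=\partial^\alpha g$. Apply the Leibniz rule to $\partial_\beta\mathcal{A}h$.
\begin{itemize}
\item The top-order piece, where all derivatives fall on $h$, tested against $\omega^2\partial_\beta h$ and integrated by parts, gives $|\omega\partial_\beta h|_\sigma^2$.
\item Integration by parts also moves $\partial_{\xi}$ onto $\omega^2$. Since $\nabla_\xi\omega^2=\omega^2\big(2(l-|\beta|)\langle\xi\rangle^{-2}\xi+4q\xi\big)$, this produces cross terms $\int\omega^2\sigma^{ij}\xi_i\partial_jh\,h$.
\item The terms where derivatives hit $\sigma^{ij}$ or $\sigma^i$ involve $\partial_{\beta_1}\sigma$, $\beta_1>0$. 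These decay one power of $\langle\xi\rangle$ faster in the relevant directions, by the standard asymptotics of $\sigma^{ij}$ for $\gamma=-3$.
\end{itemize}

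\textbf{Controlling the cross terms and lower orders.} For the cross terms, use the eigenstructure of $\sigma^{ij}$. It has eigenvalue $\sim\langle\xi\rangle^{-3}$ along $\xi$ and $\sim\langle\xi\rangle^{-1}$ transversally, and $\sigma^{ij}\xi_j=\lambda_1\xi_i$. Hence the cross term is bounded by
\[
\int\omega^2\langle\xi\rangle^{-3}|\xi|\,|P_\xi\nabla h|\,|h|.
\]
By Cauchy--Schwarz and the characterization \eqref{1.35} of $|\cdot|_\sigma$, this is absorbed into $\eta|\omega h|_\sigma^2+C_\eta|\chi_R h|_2^2$.

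For the $\partial_{\beta_1}\sigma$ terms, the reduction of the weight by $\langle\xi\rangle^{-1}$ per $\xi$-derivative is exactly what makes $\omega(\beta)$ work. Each such term is bounded by
\[
\eta\sum_{|\beta_1|=|\beta|}|\omega(\beta_1)\partial_{\beta_1}h|_\sigma^2+C\sum_{|\beta_1|<|\beta|}|\omega(\beta_1)\partial_{\beta_1}h|_\sigma^2,
\]
using $\omega(\beta)\langle\xi\rangle\le\omega(\beta_1)$ when $|\beta_1|<|\beta|$. The compact part $\partial_\beta\mathcal{K}h$ is handled the same way: the $\sqrt{\mu}$-type decay gives $\eta|\omega\partial_\beta h|_\sigma^2+C\sum_{|\beta_1|\le|\beta|}|\chi\partial_{\beta_1}h|_2^2$, and the local $L^2$ norms are dominated by lower-order $\sigma$-norms.

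\textbf{The case $\beta=0$.} There are no lower-order $\beta$-terms. Write $-\langle\mathcal{L}h,\omega^2h\rangle=-\langle\mathcal{L}h,h\rangle+\text{weight terms}$. The weight terms give $c_4|\omega h|_\sigma^2$ at large $|\xi|$, and all errors are supported or decaying in $|\xi|\le R(\eta)$. This yields the cutoff term $C|\chi_\eta\partial^\alpha g|_2^2$.

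\textbf{Main obstacle.} The key difficulty is the exponential factor $e^{q\langle\xi\rangle^2}$. Its gradient produces a term $4q\,\omega^2\xi\,\sigma^{ij}\xi_i\partial_jh\,h$ which is not lower order at large $|\xi|$. I would handle it as in \cite{Strain-Guo-2008}: expand $|\omega h|_\sigma^2$ in terms of $\tilde h=\omega h$. The exponential contributes $q^2\sigma^{ij}\xi_i\xi_j|\tilde h|^2$ and a cross term. With $q<1$, these combine with $\sigma^{ij}\frac{\xi_i\xi_j}{4}$ into a positive quadratic form, up to a bounded-$\xi$ remainder. This positivity check is what fixes the restriction $0<q<1$.
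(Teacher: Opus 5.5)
Your outline follows the route the paper intends: the paper gives no proof and only points to \cite[Lemma 9]{Strain-Guo-2008} and \cite[Lemmas 2.2--2.3]{Wang}. Your treatment of $\partial_{\beta_1}\sigma^{ij}$, of the polynomial part of the weight and of $\mathcal{K}$ is the standard one. (Minor point: in the paper's convention $\mathcal{L}=\Gamma(\sqrt{\mu},\cdot)+\Gamma(\cdot,\sqrt{\mu})$ is non-positive, so $\mathcal{L}=\mathcal{A}+\mathcal{K}$ with your $\mathcal{A}$, not $-\mathcal{A}-\mathcal{K}$.)

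The gap is the exponential weight, which you treat in two mutually inconsistent ways, both incorrect. In the cross-term paragraph you absorb all of $\int\omega^2(\cdots)\sigma^{ij}\xi_i\partial_jh\,h$ into $\eta|\omega h|_\sigma^2+C_\eta|\chi_Rh|_2^2$. That is true only for the polynomial part $(l-|\beta|)\langle\xi\rangle^{-2}\xi$ of $\nabla\log\omega$. For the $2q\xi$ part, both Cauchy--Schwarz factors are full-strength pieces of $|\omega h|_\sigma$ by \eqref{1.35}, so nothing is small. In your last paragraph the positivity computation is wrong. With $\tilde h=\omega h$, the cross terms cancel identically by the symmetry of $\sigma^{ij}$, and exactly
\begin{equation*}
-\big\langle \partial_i(\sigma^{ij}\partial_j h)-\sigma^{ij}\tfrac{\xi_i\xi_j}{4}h,\ \omega^2 h\big\rangle=|\tilde h|_\sigma^2-\int_{\mathbb{R}^3}\big((l-|\beta|)\langle\xi\rangle^{-2}+2q\big)^2\sigma^{ij}\xi_i\xi_j\,\tilde h^2\,d\xi .
\end{equation*}
So the exponential contributes $-4q^2\sigma^{ij}\xi_i\xi_j\tilde h^2$, not $+q^2\sigma^{ij}\xi_i\xi_j\tilde h^2$ plus a cross term. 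The net potential is $(\tfrac14-4q^2)\sigma^{ij}\xi_i\xi_j\tilde h^2$ up to lower order, and positivity holds iff $q<\tfrac14$, not $q<1$. (Even your coefficient $q^2$ would give $q<\tfrac12$.)

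The consequences differ in the two cases.

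For $\beta=0$ the restriction is genuine: the second inequality is false for $\tfrac14<q<1$. Take $h=\omega^{-1}\chi(|\xi|/R)$ with $\chi\in C_c^\infty((1,2))$. Then $\chi_\eta h=0$ for $R$ large, and $\int\sigma^{ij}\partial_i\tilde h\partial_j\tilde h=O(R^{-2})$. The terms from $\partial_i\sigma^i$, from $\mathcal{K}$ and from the polynomial corrections are $O(1)$. But $(\tfrac14-4q^2)\int\sigma^{ij}\xi_i\xi_j\tilde h^2\sim -cR^2$. So with the normalization of \eqref{1.32} you must assume $q<\tfrac14$, and then $c_4$ is of order $1-16q^2$.

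For $|\beta|\ge1$, positivity is not the right mechanism: on its own it would give leading coefficient $1-16q^2$, not the stated $1$. The missing idea is that the whole $q$-dependent deficit is lower order in the $\beta$-hierarchy. Write $\beta=\beta_1+e_k$, so that $\omega(\beta_1)=\langle\xi\rangle\omega(\beta)$. Then
\begin{equation*}
\langle\xi\rangle^{-\frac12}\omega(\beta)\partial_\beta g=\langle\xi\rangle^{-\frac32}\Big[\partial_{\xi_k}\big(\omega(\beta_1)\partial_{\beta_1}g\big)-\big(\partial_{\xi_k}\log\omega(\beta_1)\big)\,\omega(\beta_1)\partial_{\beta_1}g\Big],
\end{equation*}
and \eqref{1.35} bounds the right-hand side in $L^2_\xi$ by $C|\omega(\beta_1)\partial_{\beta_1}g|_\sigma$. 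Hence
\begin{equation*}
\int\sigma^{ij}\xi_i\xi_j|\omega(\beta)\partial_\beta g|^2\,d\xi\le C\sum_{|\beta_1|<|\beta|}|\omega(\beta_1)\partial_{\beta_1}g|_\sigma^2 ,
\end{equation*}
so the deficit goes into the $C$-term and the leading coefficient is exactly $1$ for every $q$. This is precisely what the loss $\langle\xi\rangle^{-|\beta|}$ in $\omega(\beta)$ buys.
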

		
		\begin{Lem}
			Under the same assumptions of  Lemma \ref{lem5.3},  for any $\epsilon>0$ small enough, one has
			\begin{equation}
				\label{5.7}
				\langle\partial^\alpha \Gamma(g_1,g_2),    g_3\rangle\leq C\sum_{|\alpha_1|\leq|\alpha|}|\mu^{\epsilon}\partial^{\alpha_1}g_1|_2| \partial^{\alpha-\alpha_1}g_2|_\sigma|  g_3|_\sigma,
			\end{equation}
			and
			\begin{equation}
				\label{5.8}
				\langle\partial^\alpha_\beta \Gamma(g_1,g_2), \omega^2(\beta)   g_3\rangle\leq
				C\sum_{|\alpha_1|\leq|\alpha|}\sum_{|\bar{\beta}|\leq|\beta_1|\leq|\beta|}|\mu^{\epsilon}\partial^{\alpha_1}_{\bar{\beta}}g_1|_2|\omega(\beta)  \partial^{\alpha-\alpha_1}_{\beta-\beta_1}g_2|_{\sigma}|\omega(\beta)    g_3|_{\sigma}.
			\end{equation}
		\end{Lem}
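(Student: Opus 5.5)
Statement: Lemma (collision estimates (5.7)–(5.8)) in the appendix, the last statement shown. I will follow the Strain–Guo and Guo trilinear-estimate strategy, adapted to the weight $\omega(\beta)=\langle\xi\rangle^{l-|\beta|}e^{q\langle\xi\rangle^2}$.

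\textbf{Reduction and decomposition.} First reduce to $\alpha=0$ using the Leibniz rule in $(t,x)$: since $\Gamma$ is bilinear and involves no $(t,x)$-derivatives, $\partial^\alpha\Gamma(g_1,g_2)=\sum_{\alpha_1\le\alpha}C^{\alpha_1}_\alpha\Gamma(\partial^{\alpha_1}g_1,\partial^{\alpha-\alpha_1}g_2)$. For $\xi$-derivatives, write $\Gamma(g_1,g_2)=\partial_i[\{\phi^{ij}*(\mu^{1/2}g_1)\}\partial_jg_2]-\{\phi^{ij}*(\xi_i\mu^{1/2}g_1)\}\partial_jg_2-\partial_i[\{\phi^{ij}*(\mu^{1/2}\partial_jg_1)\}g_2]+\{\phi^{ij}*(\xi_i\mu^{1/2}\partial_jg_1)\}g_2$. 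Since $\partial_\beta$ commutes with convolution, each derivative landing on the first factor can be moved onto $\mu^{1/2}g_1$ and, after integrating by parts inside the convolution, onto $\phi$-free factors. This leaves the combinations $\partial_{\bar\beta}g_1$ weighted by a Gaussian-type factor and $\partial_{\beta-\beta_1}g_2$, giving exactly the index structure in (5.8).

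\textbf{Key pointwise bounds.} For each term I will use the estimates on $\sigma^{ij}$-type kernels, namely $|\phi^{ij}*(\mu^{\epsilon'}h)|(\xi)+|\partial_k\phi^{ij}*(\mu^{\epsilon'}h)|\lesssim \langle\xi\rangle^{\gamma+2}|\mu^{\epsilon}h|_2$ for $\gamma=-3$. Since $|\xi|^{-1}$ and $|\xi|^{-2}$ are locally $L^2$ in $\mathbb R^3$ after splitting $|\xi-\xi'|\le1$ and $>1$, Cauchy–Schwarz gives the $|\mu^\epsilon h|_2$ factor. The sharper part is the anisotropic projection bound: the projected component $\phi^{ij}\xi_i\xi_j$-type contraction gains the extra $\langle\xi\rangle^{-1}$ needed to match (1.35).

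\textbf{Pairing with $g_3$.} Pair the resulting terms with $\omega^2 g_3$ and integrate by parts once in $\partial_i$ for the divergence terms. When the derivative hits $\omega^2$, it produces $\omega^2(\langle\xi\rangle^{-1}+q\xi)$. The $q\xi$ growth is absorbed because the convolution factor carries the correct anisotropic decay: the radial component pairs with $\xi_i\xi_j/4$ in $|\cdot|_\sigma$.

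\textbf{Main obstacle.} The main obstacle is precisely this weight-derivative term for the Coulomb case. There the coefficient $\{\phi^{ij}*\cdot\}\xi_i$ decays like $\langle\xi\rangle^{-2}|\xi|$ only in the direction $\xi$. I would handle it by decomposing $\partial_jg_2$ into its $P_\xi$ and $(I-P_\xi)$ parts, as in Strain–Guo Lemma 9, and using $\phi^{ij}\xi_j=0$ structure up to convolution errors $\mu^\epsilon$. Each piece is then bounded by $|\omega g_2|_\sigma|\omega g_3|_\sigma$ times $|\mu^\epsilon\partial g_1|_2$. Summing over $\alpha_1,\bar\beta,\beta_1$ yields (5.8); dropping weights and $\beta$ yields (5.7).
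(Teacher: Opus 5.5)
Your overall route matches the Strain--Guo/Wang argument that the paper cites in place of a proof: Leibniz in $(t,x)$, Guo's four-term form of $\Gamma$, passing $\partial_\beta$ through the convolution to get $|\bar\beta|\le|\beta_1|$, and absorbing $\partial_i(\omega^2)$ via $\xi_i\{\phi^{ij}*h\}=\phi^{ij}*(\xi_i h)$. The gap is in the Coulomb-specific step. Your claim that $|\xi|^{-2}$ is locally $L^2$ in $\mathbb{R}^3$ is false, since $\int_{|\xi|<1}|\xi|^{-4}\,d\xi=4\pi\int_0^1 r^{-2}\,dr=\infty$. For $\gamma=-3$ one has $\sum_j\partial_j\phi^{ij}(z)=-2z_i|z|^{-3}$. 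For $h\in L^2$, the convolution $|z|^{-2}*h$ lies only in $L^6$ (Hardy--Littlewood--Sobolev), not in $L^\infty$. So the pointwise bound $|\partial_k\phi^{ij}*(\mu^{\epsilon'}h)|\lesssim|\mu^{\epsilon}h|_2$ fails.

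This term cannot be avoided. In the third and fourth pieces of $\Gamma$, namely $\phi^{ij}*(\mu^{1/2}\partial_j\partial_{\bar\beta}g_1)$ and its $\xi_i$-analogue, you must move $\partial_j$ off $g_1$ to keep $|\bar\beta|\le|\beta_1|$. In \eqref{5.7} this means no $\xi$-derivative on $g_1$ at all. Integrating by parts inside the convolution necessarily puts that derivative on $\phi^{ij}$, not on ``$\phi$-free factors''. So the genuine Coulomb obstacle is this $|z|^{-2}$ kernel, not the weight-derivative term, and your outline does not handle it.

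To close the gap, estimate the trilinear form as a double integral instead of looking for a pointwise bound. Split $b_i=\sum_j\partial_j\phi^{ij}$ as $b_i\mathbf{1}_{\{|z|\le1\}}+b_i\mathbf{1}_{\{|z|>1\}}$.

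\emph{Far part.} It is bounded, so Cauchy--Schwarz and the usual anisotropic decay apply: the radial component is $O(\langle\xi\rangle^{-2})$ and the tangential one is $O(\langle\xi\rangle^{-3})$. Pair these with $|\langle\xi\rangle^{-1/2}\omega g_2|_2$, $|\langle\xi\rangle^{-3/2}\partial_r(\omega g_3)|_2$ and $|\langle\xi\rangle^{-1/2}P^\perp_\xi\nabla_\xi(\omega g_3)|_2$.

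\emph{Near part.} On $|\xi-\xi'|\le1$ you can transfer Gaussian decay from $\mu^{1/2}(\xi')$ to $\xi$. Then HLS gives \[\iint|\xi-\xi'|^{-2}F(\xi')H(\xi)\,d\xi'\,d\xi\lesssim\|F\|_{L^2}\|H\|_{L^{6/5}},\] with $F=\mu^{\epsilon}|\partial^{\alpha_1}_{\bar\beta}g_1|$ and $H=e^{-c|\xi|^2}|\omega g_2|\,(|\nabla_\xi(\omega g_3)|+|\omega g_3|)$. Finish with H\"older in $L^3\times L^2$ and the embedding $H^1(\mathbb{R}^3)\hookrightarrow L^3$. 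This works because, by \eqref{1.35}, $|g|_\sigma$ controls $|\langle\xi\rangle^{-1/2}g|_2+|\langle\xi\rangle^{-3/2}\nabla_\xi g|_2$, and the Gaussian factor absorbs all polynomial weights.

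You could instead keep $\partial_j$ on $g_1$ and use $|z|^{-1}\in L^2_{\rm loc}$. That only yields a weaker estimate with $|\mu^{\epsilon}\nabla_\xi\partial^{\alpha_1}_{\bar\beta}g_1|_2$ on the right, not \eqref{5.7}--\eqref{5.8} as stated.
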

		
		\begin{Lem}\label{Lem-M}
			Under the same assumptions of Theorem \ref{mt-1}, 
			for any $\beta'\geq0$ and $b>0$, one has
			\begin{align}
				| \langle \xi\rangle^{b}\partial _{\beta'}(\frac{M-\mu}{\sqrt{\mu}})|_{\sigma,\omega}^2+| \langle \xi\rangle^{b}\partial _{\beta'}(\frac{M-\mu}{\sqrt{\mu}})|_{2,\omega}^2
				\leq C \deltab,\notag
			\end{align}
			where $\deltab=\delta+\varepsilon_0$.
		\end{Lem}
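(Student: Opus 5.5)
The statement to prove is Lemma \ref{Lem-M}: a uniform bound on $\langle\xi\rangle^b\partial_{\beta'}\big((M-\mu)/\sqrt{\mu}\big)$ in both the weighted $L^2_\xi$ norm and the weighted dissipation norm. The plan is to reduce it to a pointwise Gaussian estimate on the Maxwellian difference. The key input is that the fluid parameters of $M$ stay uniformly close to those of $\mu$:
\begin{align*}
|(\rho-1,u,\theta-\tfrac32)|\le |(\phi,\psi,\zeta)|+|(\rhot-1,\ut,\thetat-\tfrac32)|.
\end{align*}
The first term is bounded by $C\chi$, using the Sobolev embedding together with the a priori assumptions \eqref{apa}. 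The second is bounded by $C(\delta+\deltab^{\frac12})$, using the Remark following \eqref{2026-5-9-2}, the diffusion wave bounds \eqref{eqs19}, and Corollary \ref{R-m-1-1-1}.

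\textbf{Step 1 (interpolation).} Write
\begin{align*}
M-\mu=\int_0^1 \frac{d}{ds}M_{[\rho_s,u_s,\theta_s]}\,ds,
\end{align*}
where $(\rho_s,u_s,\theta_s)$ interpolates linearly between $(1,0,\tfrac32)$ and $(\rho,u,\theta)$. Differentiating a Maxwellian in its parameters produces a polynomial in $\xi$ times the Maxwellian. Hence the integrand has size $|(\rho-1,u,\theta-\tfrac32)|\,P(\xi)\,M_s$, where $P$ is a polynomial of degree at most $2$. Applying $\partial_{\beta'}$ only raises the polynomial degree and changes the constant.

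\textbf{Step 2 (Gaussian domination).} This is the main point. We must show that
\begin{align*}
\langle\xi\rangle^{b}\,\omega\,|P(\xi)|\,M_s/\sqrt{\mu}
\end{align*}
is square-integrable uniformly in $s$, where $\omega=\langle\xi\rangle^{l-|\beta|}e^{q\langle\xi\rangle^2}$ with $q<1$. We have
\begin{align*}
M_s/\sqrt{\mu}\lesssim \exp\Big(-\frac{|\xi-u_s|^2}{2R\theta_s}+\frac{|\xi|^2}{4}\Big),\qquad R\theta_s\approx 1.
\end{align*}
Once the weight $e^{2q\langle\xi\rangle^2}$ from $\omega^2$ is included, the net Gaussian exponent is negative provided $q$ is suitably small relative to $1/(R\theta_s)-\tfrac12$. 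The Remark gives $\theta_s\le \tfrac32(1+O(\delta))$, and $R=\tfrac23$, so $1/(R\theta_s)\approx 1$. Smallness of $\chi$ and $\deltab$ keeps $\theta_s$ close to $\tfrac32$; we also use the restriction on $q$ implicit in the a priori setting, as in \cite{Duan-Yu1,Wang}. This step is where the obstacle lies: it requires $\sup|\theta-\tfrac32|$ to be small enough that the exponential weight is absorbed. Any polynomial factor $\langle\xi\rangle^{b+l}$ is harmless under a strictly decaying Gaussian.

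\textbf{Step 3 (the $\sigma$-norm).} By \eqref{1.35}, $|\cdot|_{\sigma,\omega}$ is controlled by weighted $L^2$ norms of the function and its first $\xi$-derivatives. Each such derivative again has the form of Step 1, so Step 2 applies to it as well.

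\textbf{Conclusion.} Squaring and taking the supremum over $x$, both norms are bounded by
\begin{align*}
C|(\rho-1,u,\theta-\tfrac32)|^2\le C(\chi^2+\delta+\deltab)\le C\deltab.
\end{align*}
This uses that $\chi$ may be chosen of order $\deltab^{\frac12}$ in the continuity argument.
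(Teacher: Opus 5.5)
Your proposal is correct and follows essentially the paper's route. The paper bounds both weighted norms by $C_b\sum_{|\beta'|\le|\beta''|\le|\beta'|+1}\int_{\mathbb R^3}\mu^{-\varepsilon_1}|\partial_{\beta''}(\frac{M-\mu}{\sqrt\mu})|^2\,d\xi$, using \eqref{1.35} to trade the $\sigma$-norm for one extra $\xi$-derivative as in your Step 3, and then appeals to the definition of $M$ and \eqref{apa}, which is exactly what your interpolation and Gaussian-domination Steps 1--2 spell out. You also flag two restrictions that the paper uses only implicitly: absorbing $e^{2q\langle\xi\rangle^2}$ forces $2q<1/(R\theta)-\tfrac12$, i.e.\ essentially $q<\tfrac14$ (the paper's ``small'' $\varepsilon_1$ must in fact exceed $4q$), and the final bound $C\deltab$ needs $\chi^2\lesssim\deltab$.
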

		\begin{proof}
			For any $\beta'\geq0$  and any $b>0$, from \eqref{1.32}, \eqref{1.35} and \eqref{apa}, there exists a small $\varepsilon_{1}>0$ such that
			\begin{align*}
				| \langle \xi\rangle^{b}\partial _{\beta'}(\frac{M-\mu}{\sqrt{\mu}})|_{\sigma,\omega}^2+| \langle \xi\rangle^{b}\partial _{\beta'}(\frac{M-\mu}{\sqrt{\mu}})|_{2,\omega}^2\leq
				C_b\sum_{|\beta'|\leq|\beta''|\leq|\beta'|+1}\int_{{\mathbb R}^3}\mu^{-\varepsilon_1}
				|\partial _{\beta''}(\frac{M-\mu}{\sqrt{\mu}})|^2\,d\xi.
			\end{align*}
			From the definition of $M$ \eqref{2025.6.14-1} and the a {\it priori} assumptions \eqref{apa}, one has
			\begin{align*}
				\int_{\R^3}&\mu^{-\varepsilon_1}|\partial _{\beta''}(\frac{M-\mu}{\sqrt{\mu}})|^2 \,d\xi\leq C \deltab.
			\end{align*}
			Then we have completed the proof of  Lemma \ref{Lem-M}.
		\end{proof}
		
		\begin{Lem}\label{2026-4-24-1}
			For any $|\bar{\alpha}|\geq 1$ and $|\bar{\beta}|\geq 0$, we use the similar expansion as before to get
			\begin{equation*}
				| \langle \xi\rangle^{b}\partial _{\bar{\beta}}(\frac{\bar{G}_0}{\sqrt{\mu}})|_{2,\omega}+|\langle \xi\rangle^{b} \partial _{\bar{\beta}}(\frac{\bar{G}_0}{\sqrt{\mu}})|_{\sigma,\omega}
				\leq C|[\p_{x_1}{\ub},\p_{x_1}\thetab]|\leq C \deltab D_{-\frac{1}{2}},
			\end{equation*}
			and
			\begin{equation*}
				|\langle \xi\rangle^{b} \partial^{\bar{\alpha}}_{\bar{\beta}}(\frac{\bar{G}_0}{\sqrt{\mu}})|_{2,\omega}+| \langle \xi\rangle^{b} \partial^{\bar{\alpha}}_{\bar{\beta}}(\frac{\bar{G}_0}{\sqrt{\mu}})|_{\sigma,\omega}
				\lesssim\deltab \left(\tilde D_{-\frac{1+\bar\alpha}{2}}+ \sum_{i=1}^{\bar\alpha}\tilde D_{-\frac{1+\bar\alpha-i}{2}}\abs{\partial^{i}\vm^{\ast}}+\sum_{i=2}^{\bar\alpha}\sum_{j=1}^{i-1}\tilde D_{-\frac{1+\bar\alpha-i}{2}}\abs{\partial^j \vm^{\ast}}\abs{\partial^{i-j} \vm^{\ast}}\right).
			\end{equation*}
			Moreover, let $\p^{\bar \alpha}=(\p_{t}^{\bar \alpha_1}\p_{x_2}^{\bar \alpha_2}\p_{x_3}^{\bar \alpha_3})$, then we have
			\begin{align}\label{2026-4-27-a}
				|\langle \xi\rangle^{b} \partial^{\bar{\alpha}}_{\bar{\beta}}(\frac{\bar{G}_0}{\sqrt{\mu}})|_{2,\omega}+| \langle \xi\rangle^{b} \partial^{\bar{\alpha}}_{\bar{\beta}}(\frac{\bar{G}_0}{\sqrt{\mu}})|_{\sigma,\omega}
				\lesssim \deltab \left(\tilde D_{-\frac{2+\bar\alpha}{2}}+ \sum_{i=1}^{\bar\alpha}\tilde D_{-\frac{1+\bar\alpha-i}{2}}\abs{\partial^{i}\vm^{\ast}}+\sum_{i=2}^{\bar\alpha}\sum_{j=1}^{i-1}\tilde D_{-\frac{1+\bar\alpha-i}{2}}\abs{\partial^j \vm^{\ast}}\abs{\partial^{i-j} \vm^{\ast}}\right).
			\end{align}
			Here $\deltab=\delta+\varepsilon_0$ and we also recall the definition of $\vm^{\ast}$ \eqref{2025-11-5-1}.
		\end{Lem}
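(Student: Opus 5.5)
The plan is to argue directly from the explicit formula for $\bar G_0$ in \eqref{correction-G-2}. There $\bar G_0$ is written as $\frac{\sqrt R\,\p_{x_1}\thetab}{\sqrt\theta}A_1\big(\frac{\xi-u}{\sqrt{R\theta}}\big)+\sum_j \p_{x_1}\ub_j B_{1j}\big(\frac{\xi-u}{\sqrt{R\theta}}\big)$. The velocity-dependent factors $A_1, B_{1j}$ are Burnett functions evaluated at the local state $(\rho,u,\theta)$, and the coefficients depend only on $(t,x_1)$ through the background $(\ub,\thetab)$.

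\textbf{Step 1: velocity derivatives ($\bar\alpha=0$).} Differentiating in $\xi$ only hits $A_1,B_{1j}$. By Lemma \ref{lem5.2}, $|\p_{\bar\beta}A_1|+|\p_{\bar\beta}B_{1j}|\le C M^{1-\epsilon}$. Under \eqref{apa}, $(\rho,u,\theta)$ is uniformly close to $(1,0,\frac32)$, so $M^{1-\epsilon}\mu^{-1/2}$ decays like a Gaussian. Choosing $\epsilon$ small relative to $q$ in \eqref{1.32}, we get $\langle\xi\rangle^b\omega\, M^{1-\epsilon}/\sqrt\mu\in L^2_\xi$, including its $\sigma$-norm via \eqref{1.35}. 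This yields the first bound $C|(\p_{x_1}\ub,\p_{x_1}\thetab)|$. Then \eqref{eqs27}, \eqref{eqs19} and \eqref{new-ansatz-new} give $|\p_{x_1}(\ub,\thetab)|\le C\deltab D_{-1/2}$.

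\textbf{Step 2: space–time derivatives ($|\bar\alpha|\ge1$).} Apply Leibniz to $\p^{\bar\alpha}$. Each factor either differentiates the coefficient $\p_{x_1}(\ub,\thetab)$, or differentiates the Burnett function through $(u,\theta)$ by the chain rule.

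In the first case, $\p^{k}\p_{x_1}(\ub,\thetab)$ is bounded by $\deltab\tilde D_{-(1+k)/2}$, since $\p_t$ acts like $\p_{x_1}^2$ (or $\p_{x_1}$ times the wave speed) on heat-kernel profiles. In the second case, the chain rule produces products of $\p^j(u,\theta)$. We split $\p^j(u,\theta)=\p^j(\ut,\thetat)+\p^j(\psi,\zeta)$. The background parts contribute additional $\tilde D$-decay, using Corollary \ref{R-m-1-1-1} for the $\Xi$ part of $\tilde U$. The perturbation parts give the terms $|\p^i\vm^*|$ and the bilinear products $|\p^j\vm^*||\p^{i-j}\vm^*|$. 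Only $\vm^*=(0,\psi,\zeta)$ appears, because the Burnett functions depend on $(u,\theta)$ and not on $\rho$. Each $\xi$-derivative of these chain-rule factors is again a polynomial times $M^{1-\epsilon}$-type Gaussian, so the weighted velocity norms are controlled exactly as in Step 1. Collecting powers of $(1+t)$ gives the second estimate.

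\textbf{Step 3: refined estimate \eqref{2026-4-27-a}.} When $\p^{\bar\alpha}$ contains no $\p_{x_1}$, the $x_2,x_3$ derivatives annihilate the planar coefficient entirely. The $t$-derivative, in turn, gains a full $(1+t)^{-1}$ relative to the coefficient, because $\p_t$ of the self-similar profiles and diffusion waves costs $(1+t)^{-1}$, compared with $(1+t)^{-1/2}$ for $\p_{x_1}$. So the pure-coefficient term improves to $\tilde D_{-(2+\bar\alpha)/2}$, while the chain-rule terms remain as in Step 2.

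\textbf{Main obstacle.} The delicate point is the bookkeeping of decay exponents in the chain-rule terms, in particular verifying that $\p_t$ of the coupled diffusion wave $\Xi$ enjoys the same decay as $\p_{x_1}^2$. I would obtain this from the equation \eqref{omega} together with Corollary \ref{R-m-1-1-1}, using Sobolev embedding in $x_1$ to pass from $L^2$ to pointwise bounds. Since the source terms \eqref{Error-1-1}–\eqref{Error-1-3} are only controlled in integrated norms, this passage is exactly why the bound is stated with $\tilde D$ (same $L^p$ decay) rather than $D$.
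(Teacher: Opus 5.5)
Your route matches the paper's. You use the Burnett-function formula \eqref{correction-G-2}, Lemma~\ref{lem5.2} with Gaussian integrability of $\langle\xi\rangle^b\omega M^{1-\epsilon}\mu^{-1/2}$ in $\xi$, and Leibniz plus chain rule with $(u,\theta)=(\tilde u,\tilde\theta)+(\psi,\zeta)$ for $\partial^{\bar\alpha}$. Your observation that only $\mathbf v^{\ast}$ appears, because $A_1,B_{1j}$ do not depend on $\rho$, is correct. Steps 1 and 2 are fine. The gap is in Step 3 and in your \emph{main obstacle}: the mechanism you give for the extra time decay is false.

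The diffusion waves $\Theta_1,\Theta_3$ in \eqref{eqs19} are transported with speeds $\lambda_1^-,\lambda_3^+\neq0$. Hence $\partial_t\Theta_i=-\lambda\partial_{x_1}\Theta_i+\partial_{x_1}^2\Theta_i$ with $\lambda\in\{\lambda_1^-,\lambda_3^+\}$. So one time derivative gains only $(1+t)^{-1/2}$, as you said yourself in Step 2, not $(1+t)^{-1}$. Likewise $\partial_t\Xi$ cannot decay like $\partial_{x_1}^2\Xi$, because \eqref{omega} contains the $O(1)$ acoustic transport $\partial_{x_1}(\mathcal{A}_1(\bar{\mathbf U})\Xi)$. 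Together with Corollary~\ref{R-m-1-1-1}, that equation gives only $\partial_t\Xi\approx\partial_{x_1}\Xi\approx\tilde D_{-1}$, so the bound you plan to verify there does not hold.

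The target exponent $\tilde D_{-(2+\bar\alpha)/2}$ is still correct, but the extra half power comes from two different sources, and you should argue it that way.
\begin{itemize}
\item The contact-wave part of $\partial_{x_1}(\bar u,\bar\theta)$ is $O(\delta)D_{-1/2}$. It is self-similar, so each $\partial_t$ really costs $(1+t)^{-1}$. Thus $k\ge1$ time derivatives give $\delta D_{-(1+2k)/2}\le\delta D_{-(2+k)/2}$.
\item For the diffusion-wave part, $\partial_t$ costs only $(1+t)^{-1/2}$. But this part of the coefficient already starts at $O(\varepsilon_0)D_{-1}$, because $\Theta_i=O((1+t)^{-1/2})$ carries no $O(1)$ jump. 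This yields $\varepsilon_0 D_{-(2+k)/2}$, which is sharp; the zero-speed kernels $\bar m_2,\bar m_3$ are better still.
\end{itemize}
The paper's proof records exactly this through $\partial_{x_2}\bar\theta=\partial_{x_3}\bar\theta=0$ and $\partial_t\bar\theta\lesssim\bar\delta D_{-1}$, compared with $\partial_{x_1}\bar\theta\lesssim\bar\delta D_{-1/2}$.

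The same bookkeeping applies to the background chain-rule factors. You say these \emph{remain as in Step 2}, but they must also be placed in $\tilde D_{-(2+\bar\alpha)/2}$. For example, $\partial_{x_1}\bar\theta\,\partial_t\tilde\theta$ requires $\partial_t\tilde\theta\approx\tilde D_{-1}$. This follows from $\partial_t\bar\theta\lesssim\bar\delta D_{-1}$ and the weaker, true bound $\partial_t\Xi\approx\tilde D_{-1}$, so you never need $\partial_{x_1}^2$-type control of $\partial_t\Xi$.

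A minor point, shared with the paper: in Step 1, $\omega^2M^{2(1-\epsilon)}\mu^{-1}\sim e^{(2q+\epsilon-\frac12)|\xi|^2}$. Integrability therefore needs $q<\frac14$ (up to $O(\bar\delta)$); taking $\epsilon$ small does not suffice on its own.
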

		
		\begin{proof}
			By \eqref{5.1} and \eqref{5.2}, it holds 
			\begin{equation}
				\bar{G}_0(t,x,\xi)=\frac{\sqrt{R}\;\p_{x_1}\thetab}{\sqrt{\theta}}{A}_1(\frac{\xi-u}{\sqrt{R\theta}})
				+\bar{u}_{jx_1}{B}_{1j}(\frac{\xi-u}{\sqrt{R\theta}}),\notag
			\end{equation}
			which implies that for  $\beta_1=(1,0,0)$,
			\begin{equation*}
				\partial_{\beta_1}\bar{G}_0=\frac{\sqrt{R}\;\p_{x_1}\thetab}{\sqrt{\theta}}\partial_{\xi_1}{A}_1(\frac{\xi-u}{\sqrt{R\theta}})(\frac{1}{\sqrt{R\theta}})
				+\bar{u}_{1x_1}\partial_{\xi_1}{B}_{11}(\frac{\xi-u}{\sqrt{R\theta}})(\frac{1}{\sqrt{R\theta}}).
			\end{equation*}
			Similarly, we also have
			\begin{align}
				\label{5.20}
				\p_{x_1}\bar{G}_0=
				& \frac{\sqrt{R}\;\p_{x_1}^2\thetab}{\sqrt{\theta}}\bar{A}_1(\frac{\xi-u}{\sqrt{R\theta}})
				- \frac{\sqrt{R}\;\p_{x_1}\thetab\p_{x_1}\theta}{2\sqrt{\theta^3}}\bar{A}_1(\frac{\xi-u}{\sqrt{R\theta}})
				\notag\\
				&- \frac{\sqrt{R}\;\p_{x_1}\thetab}{\sqrt{\theta}}\nabla_\xi \bar{A}_1(\frac{\xi-u}{\sqrt{R\theta}})\cdot
				\frac{u_{x_1}}{\sqrt{R\theta}} - \frac{\sqrt{R}\;
					\p_{x_1}\thetab\p_{x_1}\theta}{\sqrt{\theta}}\nabla_\xi \bar{A}_1(\frac{\xi-u}{\sqrt{R\theta}})\cdot\frac{\xi-u}{2\sqrt{R\theta^3}}
				\notag\\
				&+ \p_{x_1}^2{\ub}_{1}\bar{B}_{11}(\frac{\xi-u}{\sqrt{R\theta}})
				- \frac{{\ub}_{1x_1}u_{x_1}}{\sqrt{R\theta}}\cdot \nabla_\xi \bar{B}_{11}(\frac{\xi-u}{\sqrt{R\theta}})
				- \frac{{\ub}_{1x_1}\p_{x_1}\theta(\xi-u)}{2\sqrt{R\theta^3}}\cdot \nabla_\xi \bar{B}_{11}(\frac{\xi-u}{\sqrt{R\theta}}).
			\end{align}
			And $\p_t \bar{G}$ has the similar expression as \eqref{5.20}.
			For $\abs{\bar{\alpha}}>1$ and $\abs{\bar{\beta}}\ge0$, we use the
			similar expansion as before to obtain 
			\begin{align*}
				&| \langle \xi\rangle^{b}\partial _{\bar{\beta}}(\frac{\bar{G}_0}{\sqrt{\mu}})|_{2,\omega}+|\langle \xi\rangle^{b} \partial _{\bar{\beta}}(\frac{\bar{G}_0}{\sqrt{\mu}})|_{\sigma,\omega}
				\leq C|[\p_{x_1}{\ub},\p_{x_1}\thetab]|\leq C \deltab D_{-\frac{1}{2}},\\
				&|\langle \xi\rangle^{b} \partial^{\bar{\alpha}}_{\bar{\beta}}(\frac{\bar{G}_0}{\sqrt{\mu}})|_{2,\omega}+| \langle \xi\rangle^{b} \partial^{\bar{\alpha}}_{\bar{\beta}}(\frac{\bar{G}_0}{\sqrt{\mu}})|_{\sigma,\omega}\le \abs{\p^{\bar\alpha}(\bar u_x,\bar{\theta}_x)}+\cdots+\abs{\p^{\bar\alpha}(u,\theta)}\abs{(\bar u_x,\bar{\theta}_x)}
				\\
				&\qquad\qquad\qquad\qquad\qquad\qquad\qquad\quad\leq C \deltab \left(\tilde D_{-\frac{1+\bar\alpha}{2}}+ \sum_{i=1}^{\bar\alpha}\tilde D_{-\frac{1+\bar\alpha-i}{2}}\abs{\partial^{i}\vm^{\ast}}+\sum_{i=2}^{\bar\alpha}\sum_{j=1}^{i-1}\tilde D_{-\frac{1+\bar\alpha-i}{2}}\abs{\partial^j \vm^{\ast}}\abs{\partial^{i-j} \vm^{\ast}}\right).
			\end{align*}
			Here we have used  Lemma \ref{lem5.2} and the fact that $|\langle \xi\rangle^b w(\bar{\beta})\mu^{-\frac{1}{2}}M^{1-\epsilon}|_2\leq C$
			for any $b\ge0$ and any small $\epsilon>0$. Using the fact that $\p_{x_2}\thetab=0$, $\p_{x_3}\thetab=0$ and $\p_{t}\thetab\lesssim \deltab D_{-1}$, we obtain \eqref{2026-4-27-a}.
		\end{proof}
		
		In what follows, we prove some linear and nonlinear estimates which have been used in  Lemma \ref{mic-energy-t123} and Lemma \ref{mic-energy-t123-123}. Recall the definition of  dissipation energy functionals $\mathcal{D}_{i,\omega}$ and $\mathcal{D}_i$  \eqref{2.11}-\eqref{2.11-5}.
		We first consider the estimates of the terms $\Gamma(g,\frac{M-\mu}{\sqrt{\mu}})$ and $\Gamma(\frac{M-\mu}{\sqrt{\mu}},g)$.
		
		\begin{Lem}\label{lem5.7}
			Under the same assumptions of Theorem \ref{mt}, let $|\alpha|+|\beta|\leq 3$ and  $\omega=\omega(\beta)$ be defined by \eqref{1.32}, then one has
			\begin{eqnarray}
				\label{5.29-t}
				&&\big|(\partial^\alpha_\beta \Gamma(\frac{M-\mu}{\sqrt{\mu}},g), \omega^2(\beta)  h)\big|
				+\big|(\partial^\alpha_\beta \Gamma(g,\frac{M-\mu}{\sqrt{\mu}}),\omega^2(\beta)  h)\big|
				\leq C\deltac\Big(\big\|\omega(\beta)h\big\|_{\sigma}^2+\mathcal{D}_{2,\omega}(t)\Big),
			\end{eqnarray}
			and
			\begin{eqnarray}
				\label{5.30-t}
				&&\big|(\partial^\alpha\Gamma(\frac{M-\mu}{\sqrt{\mu}},g),h)\big|
				+\big|(\partial^\alpha \Gamma(g,\frac{M-\mu}{\sqrt{\mu}}),h)\big|
				\leq C\deltac\Big(\|h\|_{\sigma}^2+\mathcal{D}_{2}(t)\Big),
			\end{eqnarray}
			where $\deltab:=\delta+\varepsilon_0$ and $\deltac:=\chi+\deltab^{\frac12}$.
			Moreover, for $\abs{\alpha}\geq1$, one has 
			\begin{align}\label{5.30-1t}
				&\Big|\big(\partial^\alpha_\beta \Gamma(\frac{M-\mu}{\sqrt{\mu}},g)+\partial^\alpha_\beta \Gamma(g,\frac{M-\mu}{\sqrt{\mu}}),\omega^2(\beta)  h\big)\Big|
				\leq C\deltac \left(\|\omega(\beta)h\|_{\sigma}^2+\mathcal{D}_{3,\omega}(t) + (1+t)^{-1} \mathcal{D}_{2,\omega}(t) \right),
			\end{align}
			and
			\begin{align}\label{5.30-2t}
				&\Big|\big(\partial^\alpha \Gamma(\frac{M-\mu}{\sqrt{\mu}},g)+\partial^\alpha \Gamma(g,\frac{M-\mu}{\sqrt{\mu}}),  h\big)\Big|
				\leq C\deltac \left(\|h\|_{\sigma}^2+\mathcal{D}_{3}(t) + (1+t)^{-1} \mathcal{D}_{2}(t) \right).
			\end{align}
		\end{Lem}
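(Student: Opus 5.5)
We would prove Lemma \ref{lem5.7} by a direct trilinear estimate. Expand $\partial^\alpha_\beta$ by the Leibniz rule and apply \eqref{5.8} (respectively \eqref{5.7} in the unweighted case) with $g_1=\frac{M-\mu}{\sqrt\mu}$, $g_2=g$ for the first term. For the second term, $\Gamma(g,\frac{M-\mu}{\sqrt\mu})$, we use the same bound with the roles reversed. This is legitimate because the $\mu^{\epsilon}$-weighted $L^2$ factor can always be placed on whichever argument carries the Maxwellian decay; the bilinear estimates in \cite{Strain-Guo-2008,Wang} are symmetric in this sense up to exchanging $|\cdot|_2$ and $|\cdot|_\sigma$ norms. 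Every term then has the form
\[
\int_\Omega \bigl|\mu^{\epsilon}\partial^{\alpha_1}_{\bar\beta}\tfrac{M-\mu}{\sqrt\mu}\bigr|_2\,\bigl|\omega\partial^{\alpha-\alpha_1}_{\beta-\beta_1}g\bigr|_\sigma\,|\omega h|_\sigma\,dx ,
\]
and the whole task is to bound the first factor in $L^\infty_x$ or $L^2_x/L^4_x$, as appropriate.

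\textbf{Step 1: $\alpha_1=0$.} By Lemma \ref{Lem-M}, $|\mu^\epsilon\partial_{\bar\beta}\frac{M-\mu}{\sqrt\mu}|_2\le C\deltab^{1/2}$ uniformly in $x$. The term is then bounded by $C\deltab^{1/2}\|\omega\partial^{\alpha}_{\beta-\beta_1}g\|_\sigma\|\omega h\|_\sigma\le C\deltac(\|\omega h\|_\sigma^2+\mathcal{D}_{2,\omega})$, since every $\|\partial^{\alpha'}_{\beta'}g\|_{\sigma,\omega}$ with $|\alpha'|+|\beta'|\le 3$ belongs to $\mathcal{D}_{2,\omega}$.

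\textbf{Step 2: $|\alpha_1|\ge1$.} Here $\partial^{\alpha_1}(M-\mu)$ is, pointwise in $\xi$, a sum of products of derivatives of $(\rho,u,\theta)=(\rhot,\ut,\thetat)+\vm$ multiplied by polynomially weighted Maxwellians. After applying $\mu^{\epsilon}$, each such factor has bounded $L^2_\xi$ norm. This gives
\[
\Bigl|\mu^\epsilon\partial^{\alpha_1}_{\bar\beta}\tfrac{M-\mu}{\sqrt\mu}\Bigr|_2\lesssim \sum |\partial^{\gamma_1}(\rhot,\ut,\thetat,\vm)|\cdots|\partial^{\gamma_k}(\cdot)|,\qquad \textstyle\sum|\gamma_i|=|\alpha_1|.
\]
The ansatz parts contribute $\deltab^{1/2}\tilde D_{-|\gamma|/2}$ in $L^\infty$, using \eqref{eqs27}, Corollary \ref{R-m-1-1-1}, and the fact that time derivatives cost at least a space derivative. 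The $\vm$ parts are placed in $L^\infty_x$ when $|\alpha_1|\le 1$. For higher $|\alpha_1|$ we split the factors into $L^4_x$, $L^2_x$ or $L^\infty_x$ according to the order, using Sobolev on $\Omega=\R\times\Torus^2$, and absorb the resulting norms via the a priori bound \eqref{apa}, which gives smallness $\chi$. The $\vm$-derivatives of order $\ge 1$ are then controlled by $\mathcal{D}_{2,\omega}$ or $\mathcal{D}_{3,\omega}$, which contain $\|\nabla_x^3\vm\|$ and $\|\nabla_x\vm_{\neq}\|_{H^1}$. The time derivatives are converted to spatial ones through Lemma \ref{high-der-ori} and Lemma \ref{s-t-s-tt-2}. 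This yields \eqref{5.29-t} and \eqref{5.30-t}. The unweighted estimates follow by the same argument with $\omega\equiv1$ and \eqref{5.7}.

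\textbf{Step 3: the refined estimates \eqref{5.30-1t} and \eqref{5.30-2t} for $|\alpha|\ge1$.} Here the goal is to avoid the lowest-order quantities $\|\vm\|_{L^2}$ and $\|g\|_\sigma$, which sit in $\mathcal{D}_2$ but not in $\mathcal{D}_3$. When $\alpha_1=0$, $g$ carries $|\alpha|\ge1$ derivatives, so its norm lies in $\mathcal{D}_{3,\omega}$. When $\alpha_1=\alpha$, $g$ appears underived. In that case we put $|g|_\sigma$ in $L^\infty_x$, or pair it with the heat-kernel decay of the ansatz factor $\deltab^{1/2}\tilde D_{-1/2}$, producing $(1+t)^{-1}\mathcal{D}_{2,\omega}$. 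A first-order $\vm$ factor is bounded by $\|\nabla_x\vm\|$ interpolated between $\mathcal{D}_3$ and $(1+t)^{-1}\mathcal{D}_2$.

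I expect this last bookkeeping to be the main obstacle. The difficulty is securing the gain $(1+t)^{-1}$ on every term that involves zeroth-order $\vm$ or $g$. For the ansatz this follows from the derivative decay $\tilde D_{-1/2}$. For the nonlinear $\vm$ terms I would first try a Gagliardo--Nirenberg inequality, $\|\nabla_x\vm\|_{L^\infty}\lesssim\|\nabla_x\vm\|^{1/2}\|\nabla^2_x\vm\|^{1/2}$, combined with the decay rates in \eqref{apa}.
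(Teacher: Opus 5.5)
Your proposal is correct and follows essentially the same route as the paper: the Leibniz rule plus \eqref{5.8} (resp.\ \eqref{5.7}), Lemma \ref{Lem-M} when no $(t,x)$-derivative falls on $\frac{M-\mu}{\sqrt{\mu}}$, and an $L^\infty_x$/$L^2_x$ splitting according to $|\alpha_1|$ otherwise. For \eqref{5.30-1t}--\eqref{5.30-2t}, it likewise splits the case $|\alpha_1|=|\alpha|$ into a background part, which gains $(1+t)^{-1/2}$ from $\partial^{\alpha}(\rhot,\ut,\thetat)$, and a perturbation part controlled by interpolation and the decay rates in \eqref{apa}. The one point to fix is that on $\R\times\Torus^2$ the one-dimensional Gagliardo--Nirenberg inequality you quote fails for non-zero modes, so you should use, as the paper does, $\|\partial^{\alpha}\vm\|_{L^\infty}\lesssim\|\partial^{\alpha}\vm\|_{L^2}^{1/2}\|\partial^{\alpha}\nabla_x\vm\|_{L^2}^{1/2}+\|\partial^{\alpha}\nabla_x^2\vm\|_{L^2}$, whose extra term is harmless by \eqref{apa}; also, no role reversal is needed for $\Gamma(g,\frac{M-\mu}{\sqrt{\mu}})$, since \eqref{5.8} applies directly and Lemma \ref{Lem-M} controls the weighted $\sigma$-norm of $\frac{M-\mu}{\sqrt{\mu}}$ as well.
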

		\begin{proof}
			We only consider the first term on the left hand side of \eqref{5.29-t} while the second term can be handled in the same way.
			It follows from \eqref{5.8} that
			\begin{align}
				\label{5.31}
				|(\partial^\alpha_\beta \Gamma(\frac{M-\mu}{\sqrt{\mu}},g), \omega^2(\beta)  h)|
				\leq C\sum_{|\alpha_1|\leq|\alpha|}\sum_{ |\bar{\beta}|\leq|\beta_1|\leq|\beta|}
				\underbrace{\int_{\Omega}|\mu^{\epsilon}\partial^{\alpha_1}_{\bar{\beta}}(\frac{M-\mu}{\sqrt{\mu}})|_2| \omega(\beta) \partial^{\alpha-\alpha_1}_{\beta-\beta_1}g|_{\sigma}|  \omega(\beta)h|_{\sigma}\,dx}_{I_{1}}.
			\end{align}
			Thus, for any $\abs{\beta'}\geq0$ and $b>0$, we deduce from the  estimates in Lemma \ref{Lem-M} that
			\begin{align}\label{5.32-t}
				| \langle \xi\rangle^{b}\partial _{\beta'}(\frac{M-\mu}{\sqrt{\mu}})|_{\sigma,\omega}^2+| \langle \xi\rangle^{b}\partial _{\beta'}(\frac{M-\mu}{\sqrt{\mu}})|_{2,\omega}^2
				\leq C \deltac.
			\end{align}
			Note that $|\alpha_1|\leq|\alpha|\leq 3$ in \eqref{5.31} since we consider $|\alpha|+|\beta|\leq 3$.
			If $|\alpha_1|=0$ ,
			we have from  \eqref{5.32-t} and \eqref{2.11-1} that
			\begin{align}\label{5-1}
				I_{1}&\le\int_{\Omega}|\mu^{\epsilon}\partial_{\bar{\beta}}(\frac{M-\mu}{\sqrt{\mu}})|_2| \omega(\beta) \partial^{\alpha}_{\beta-\beta_1}g|_{\sigma}|  \omega(\beta)h|_{\sigma}\,dx\leq C\deltac\|\partial^{\alpha}_{\beta-\beta_1}g\|_{\sigma,\omega}\| \omega(\beta)h\|_{\sigma} \leq C\deltac\mathcal{D}_{2,\omega}(t)+C\deltac\| \omega(\beta)h\|_{\sigma}^2.
			\end{align}
			If $|\alpha_1|=1$, we have from the {\it a priori} assumptions \eqref{apa} and  Lemma \ref{high-der-ori} that
			\begin{align}
				I_{1}&\leq C\|\partial^{\alpha_1}[\rho,u,\theta]\|_{L_{x}^\infty}
				\| \omega(\beta)\partial^{\alpha-\alpha_1}_{\beta-\beta_1}g\|_{\sigma}\|  \omega(\beta)  h\|_{\sigma}
				\notag\\
				&\leq C\deltac\left( \norm{\p_{\beta-\beta_1}^{\alpha-\alpha_1}g}_{\sigma,\omega}+\sum_{2\le\abs{\alpha}\le3}\norm{\p^{\alpha}g}_{\sigma} \right) \norm{\omega(\beta)h}_{\sigma} \leq C\deltac\mathcal{D}_{2,\omega}(t)+C\deltac\| \omega(\beta)h\|_{\sigma}^2.\notag
			\end{align}
			If $|\alpha_1|=2$, we can obtain
			\begin{align}\label{higer-order-N2}
				I_{1}&\leq C(\|\partial^{\alpha_1}[\rho,u,\theta]\|_{L_x^2}+\sum_{|\alpha'|=1}\|[\partial^{\alpha'}(\rho,u,\theta)]^2\|_{L_x^2})
				\Big\||\omega(\beta)\partial^{\alpha-\alpha_1}_{\beta-\beta_1}g|_{\sigma}\Big\|_{L_{x}^{\infty}}\|  \omega(\beta)  h\|_{\sigma}
				\leq C\deltac\|  \omega(\beta)  h\|^{2}_{\sigma}+C\deltac\mathcal{D}_{2,\omega}(t).
			\end{align}
			For $|\alpha_1|\geq2$, using a method similar to that of \eqref{higer-order-N2}, we can obtain the same conclusion as that of \eqref{higer-order-N2}.
			Hence, for $\eta_0>0$, $\delta>0$ and $\varepsilon_{0}>0$ small enough, we deduce from the above   estimates that
			\begin{align*}
				|(\partial^\alpha_\beta[ \Gamma(\frac{M-\mu}{\sqrt{\mu}},g)],\omega^2(\beta)   h)|
				\leq C\deltac\big(\|\omega(\beta)h\|_{\sigma}^2+\mathcal{D}_{2,\omega}(t)\big).
			\end{align*}
			Similar arguments as the above give
			\begin{equation*}
				|(\partial^\alpha_\beta [\Gamma(g,\frac{M-\mu}{\sqrt{\mu}})], \omega^2(\beta)  h)|
				\leq C\deltac\big(\|\omega(\beta)h\|_{\sigma}^2+\mathcal{D}_{2,\omega}(t)\big).
			\end{equation*}
			The estimate \eqref{5.29-t} thus follows from the above two estimates.  By \eqref{5.7} and  the similar
			calculations as \eqref{5.29-t}, we can prove that \eqref{5.30-t}  holds and we omit the details for brevity.
			
			We now proceed to prove \eqref{5.30-1t}. When $\abs{\alpha_1}<\abs{\alpha}$, using the method similar to that of \eqref{5-1}-\eqref{higer-order-N2}, we have
			\begin{align}\notag
				I_1\leq C\deltac \mathcal{D}_{3,\omega}(t)+ C \deltac \norm{\omega(\beta)h}_{\sigma}^2.
			\end{align}
			When $\abs{\alpha_1}=\abs{\alpha}$, more meticulous calculations are required. 
			When $|\alpha|=|\alpha_1|=1$, one has
			\begin{align}\label{5-3.1}
				I_{1}&\leq C\underbrace{\|\partial^{\alpha}[\rhot,\ut,\thetat]\|_{L_{x}^\infty}
					\| \omega(\beta)\partial_{\beta-\beta_1}g\|_{\sigma}\|  \omega(\beta)  h\|_{\sigma}}_{I_{11}} +C\underbrace{\|\partial^{\alpha}[\phi,\psi,\zeta]\|_{L_{x}^\infty}
					\norm{ \omega(\beta)\partial_{\beta-\beta_1}g}_{\sigma} \|  \omega(\beta)  h\|_{\sigma}}_{I_{12}}.
			\end{align}
			For $I_{11}$, one has
			\begin{align}\label{5-3.2}
				I_{11}\leq C \deltab^{\frac12}(1+t)^{-1} \| \omega(\beta)\partial_{\beta-\beta_1}g\|_{\sigma}^2 +C\deltab^{\frac12}\| \omega(\beta)  h\|_{\sigma}^2.
			\end{align}
			Applying Lemma \ref{s-t-s-tt-2},  Lemma \ref{high-der-ori} and the {\it a priori} assumptions \eqref{apa}, one has
			\begin{align}\label{5-3.4}
				I_{12}&\leq\norm{\p^{\alpha}(\phi,\psi,\zeta)}_{L_x^2}^{\frac{1}{2}}\norm{\p^{\alpha}\nabla_x(\phi,\psi,\zeta)}_{L_x^2}^{\frac{1}{2}}\norm{\p_{\beta-\beta_1} g}_{\sigma,\omega} \norm{h}_{\sigma,\omega}+\norm{\p^{\alpha}\nabla_x^2(\phi,\psi,\zeta)}_{L_x^2} \norm{\p_{\beta-\beta_1} g}_{\sigma,\omega}\norm{h}_{\sigma,\omega} \notag\\
				&\leq C\chi(1+t)^{-1}\norm{\p_{\beta-\beta_1}g}_{\sigma,\omega}^2+C\chi\norm{\p_x^2\vm}_{H^1}^2+ C\chi \sum_{1\leq \abs{\gamma}\leq 3} \norm{\p^{\gamma}g}_{\sigma}^2 + C\chi \norm{h}_{\sigma,\omega}^2.
			\end{align}
			Combining \eqref{5-3.1}, \eqref{5-3.2} and \eqref{5-3.4}, one has
			\begin{align}\notag
				I_1 \leq  C\deltac(1+t)^{-1}\mathcal{D}_{2,\omega}+C\deltac\mathcal{D}_{3,\omega} + C\deltac \norm{h}_{\sigma,\omega}^2.
			\end{align}
			When $|\alpha_1|=\abs{\alpha}=2$, using Lemma \ref{s-t-s-tt-2},  Lemma \ref{high-der-ori} and the {\it a priori} assumptions \eqref{apa}, we can obtain
			\begin{align}\label{higer-order-N2-1}
				I_{1}\leq& C\left(\|\partial^{\alpha}(\phi,\psi,\zeta)\|_{L_x^2}+\sum_{|\alpha'|=1}\|[\partial^{\alpha'}(\phi,\psi,\zeta)]^2\|_{L_x^{2}}\right)
				\Big\||\omega(\beta)\partial_{\beta-\beta_1}g|_{\sigma}\Big\|_{L_{x}^{\infty}}\|  \omega(\beta)  h\|_{\sigma}
				\notag\\
				&+ C  \left(\norm{\p^{\alpha}(\rhot,\ut,\thetat)}_{L_x^\infty}+\sum_{|\alpha'|=1}\norm{[\partial^{\alpha'}(\rhot,\ut,\thetat)]^2}_{L_x^\infty} \right)\Big\|\omega(\beta)\partial_{\beta-\beta_1}g\Big\|_{\sigma}\|  \omega(\beta)  h\|_{\sigma} \notag\\
				\leq& C\deltac\|  \omega(\beta)  h\|^{2}_{\sigma}+ C\deltac\mathcal{D}_{3,\omega}(t) +  C\deltac(1+t)^{-1} \mathcal{D}_{2,\omega}.
			\end{align}
			For $|\alpha_1|\geq2$, using the method similar to that of \eqref{higer-order-N2-1}, we can obtain the same conclusion as that of \eqref{higer-order-N2-1}.
			Hence, for $\abs{\alpha}\geq1$, we deduce from the above   estimates that
			$$
			|(\partial^\alpha_\beta[ \Gamma(\frac{M-\mu}{\sqrt{\mu}},g)],\omega^2(\beta)   h)|
			\leq C\deltac\big(\|\omega(\beta)h\|_{\sigma}^2+(1+t)^{-1}\mathcal{D}_{2,\omega}(t)+\mathcal{D}_{3,\omega}(t)\big).
			$$
			Similar arguments as the above give
			\begin{equation*}
				|(\partial^\alpha_\beta [\Gamma(g,\frac{M-\mu}{\sqrt{\mu}})], \omega^2(\beta)  h)|
				\leq C\deltac\big(\|\omega(\beta)h\|_{\sigma}^2+(1+t)^{-1}\mathcal{D}_{2,\omega}(t)+\mathcal{D}_{3,\omega}(t)\big).
			\end{equation*}
			Then we have proved \eqref{5.30-1t}. By \eqref{5.7} and  the similar
			calculations as \eqref{5.30-1t}, we can prove that \eqref{5.30-2t}  holds and we omit the details for brevity.
			This completes the proof of Lemma \ref{lem5.7}.
		\end{proof}
		
		The following estimates are concerned with the nonlinear term $\Gamma(\frac{G}{\sqrt{\mu}},\frac{G}{\sqrt{\mu}})$.
		
		\begin{Lem}\label{lem5.8}
			Under the same assumptions of Theorem \ref{mt}, let $|\alpha|+|\beta|\leq 3$ and $\omega=\omega(\beta)$ be defined by \eqref{1.32}.
			For $\alpha=0$ one has
			\begin{equation}
				\label{5.33}
				|(\partial_\beta[\Gamma(\frac{G}{\sqrt{\mu}},\frac{G}{\sqrt{\mu}})], \omega^2(\beta) h)|
				\leq C\deltac\big(\|\omega(\beta)h\|_{\sigma}^2+\mathcal{D}_{2,\omega}(t)\big)+C\deltab(1+t)^{-\frac{3}{2}},
			\end{equation}
			and
			\begin{equation}
				\label{5.34}
				|( \Gamma(\frac{G}{\sqrt{\mu}},\frac{G}{\sqrt{\mu}}),h)|
				\leq C\deltac\big(\|h\|_{\sigma}^2+\mathcal{D}_{2}(t)\big)+C\deltab(1+t)^{-\frac{3}{2}}.
			\end{equation}
			For $\abs{\alpha}\geq1$, one has
			\begin{align}
				\label{5.33-1}
				&|(\partial^\alpha_\beta[\Gamma(\frac{G}{\sqrt{\mu}},\frac{G}{\sqrt{\mu}})], \omega^2(\beta) h)| \leq C\deltac\Big(\|\omega(\beta)h\|_{\sigma}^2+(1+t)^{-1}\mathcal{D}_{2,\omega}(t)+\mathcal{D}_{3,\omega}(t)\Big)+C\deltab(1+t)^{-\frac{3}{2}-\abs{\alpha}},
			\end{align}
			and
			\begin{align}
				\label{5.34-1}
				|(\partial^\alpha [\Gamma(\frac{G}{\sqrt{\mu}},\frac{G}{\sqrt{\mu}})],h)|
				\leq C\deltac\Big(\|h\|_{\sigma}^2+(1+t)^{-1}\mathcal{D}_{2}(t)+\mathcal{D}_{3}(t)\Big)+C\deltab(1+t)^{-\frac{3}{2}-\abs{\alpha}}.
			\end{align}
		\end{Lem}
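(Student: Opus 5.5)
The plan is to write $G=\bar G_0+\sqrt{\mu}g$, so that $\frac{G}{\sqrt\mu}=\frac{\bar G_0}{\sqrt\mu}+g$, and to expand by bilinearity:
\[
\Gamma\Big(\frac{G}{\sqrt{\mu}},\frac{G}{\sqrt{\mu}}\Big)=\Gamma\Big(\frac{\bar G_0}{\sqrt{\mu}},\frac{\bar G_0}{\sqrt{\mu}}\Big)+\Gamma\Big(\frac{\bar G_0}{\sqrt{\mu}},g\Big)+\Gamma\Big(g,\frac{\bar G_0}{\sqrt{\mu}}\Big)+\Gamma(g,g).
\]
Each of the four pieces will be bounded separately. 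In the weighted case I would apply \eqref{5.8}; in the unweighted estimates \eqref{5.34} and \eqref{5.34-1} I would apply \eqref{5.7}. In every case the Leibniz sum over $\alpha_1\le\alpha$ and $\bar\beta\le\beta_1\le\beta$ is distributed so that the lower-order factor goes in $L^\infty_x$ and the higher-order factor in $L^2_x$.

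\textbf{The pure source term.} For $\Gamma(\frac{\bar G_0}{\sqrt\mu},\frac{\bar G_0}{\sqrt\mu})$, Lemma \ref{2026-4-24-1} gives $\abs{\partial^{\alpha_1}_{\bar\beta}(\bar G_0/\sqrt\mu)}_{2}$ and the corresponding $\sigma,\omega$-norms pointwise of size $\deltab\tilde D_{-\frac{1+|\alpha_1|}{2}}$, plus terms involving $\partial^i\vm^{\ast}$. Taking the product of two such factors and the $L^2_x$-norm of $\tilde D_{-\frac12}\tilde D_{-\frac{1+|\alpha|}{2}}$, I get $\deltab^2(1+t)^{-\frac{5}{4}-\frac{|\alpha|}{2}}$ against $\norm{\omega h}_\sigma$. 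Cauchy–Schwarz then yields $\deltac\norm{\omega h}_\sigma^2+C\deltab(1+t)^{-\frac52-|\alpha|}$, which is better than the required $(1+t)^{-\frac32-|\alpha|}$. The $\partial^i\vm^{\ast}$ pieces are put in $L^2_x$ and are controlled by $\mathcal D_{2,\omega}$ or $\mathcal D_{3,\omega}$, using Lemma \ref{high-der-ori} for the time derivatives.

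\textbf{The cross terms.} For $\Gamma(\frac{\bar G_0}{\sqrt\mu},g)$ and $\Gamma(g,\frac{\bar G_0}{\sqrt\mu})$, the $\bar G_0$ factor is placed in $L^\infty_x$, where it is $\le C\deltab(1+t)^{-\frac12}$ and hence $\le\deltac$. The $g$ factor goes in $\norm{\cdot}_{\sigma,\omega}$, which is part of $\mathcal D_{2,\omega}$ when $\alpha=0$. When $|\alpha|\ge1$ and all derivatives fall on $\bar G_0$, one of its factors $\tilde D$ carries an extra $(1+t)^{-\frac12}$. Writing $\deltab(1+t)^{-1}\norm{\partial_{\beta-\beta_1}g}_{\sigma,\omega}\norm{\omega h}_\sigma$ then gives the $(1+t)^{-1}\mathcal D_{2,\omega}$ term, exactly as for $I_{11}$ in Lemma \ref{lem5.7}. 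When derivatives fall on $g$, the result is $\mathcal D_{3,\omega}$.

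\textbf{The quadratic term.} For $\Gamma(g,g)$ the argument repeats the case analysis of Lemma \ref{lem5.7} with $\frac{M-\mu}{\sqrt\mu}$ replaced by $g$. The $\mu^\epsilon$-weighted factor is bounded using the a priori assumption \eqref{apa}, via $\norm{\abs{\mu^\epsilon\partial^{\alpha_1}_{\bar\beta}g}_2}_{L^\infty_x}\lesssim\chi$ by Sobolev embedding in $x$ when $|\alpha_1|+|\bar\beta|\le 1$. When $|\alpha_1|\ge2$, I switch the roles: the $\mu^\epsilon$ factor goes in $L^2_x$ and the $\sigma$-factor in $L^\infty_x$, using $H^2_x$ of the low-order part. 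A factor of the $\mu^\epsilon$ weight also lets me replace $\abs{\cdot}_2$ by $\abs{\cdot}_\sigma$, so that products of the form $\chi\cdot\mathcal D$ appear.

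\textbf{Main obstacle.} The delicate step is the top-order case $|\alpha|+|\beta|=3$ in $\Gamma(g,g)$ and in the cross terms. There both factors can carry many derivatives, and pure time derivatives $\partial_t^k g$ enter. For these I would use Lemma \ref{high-der-ori} to exchange time derivatives for spatial ones plus controlled remainders. I must also make sure the resulting $L^\infty_x$ norms come only from $\mathcal E$-level quantities bounded by $\chi$, never from dissipation, so that every bound has the form $\deltac\mathcal D$.
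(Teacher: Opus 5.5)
Your proposal follows the paper's proof essentially step for step: the same four-term splitting $\Gamma(\frac{G}{\sqrt{\mu}},\frac{G}{\sqrt{\mu}})=\Gamma(\frac{\bar G_0}{\sqrt{\mu}},\frac{\bar G_0}{\sqrt{\mu}})+\Gamma(\frac{\bar G_0}{\sqrt{\mu}},g)+\Gamma(g,\frac{\bar G_0}{\sqrt{\mu}})+\Gamma(g,g)$; the trilinear bounds \eqref{5.8} and \eqref{5.7} with an $L^2_x\times L^\infty_x$ split and Lemma \ref{2026-4-24-1} for the $\bar G_0$ factors (the paper closes the source term via $\deltab^{-1}\|\cdot\|_{L^2_x}^2\|\cdot\|_{L^\infty_x}^2+\deltab\|\omega(\beta)h\|_\sigma^2$); and a Lemma \ref{lem5.7}-type case analysis for the cross terms and for $\Gamma(g,g)$. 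There are two small slips. First, the $L^2_x$ norm of $\tilde{D}_{-1-\frac{|\alpha|}{2}}$ is of order $(1+t)^{-\frac34-\frac{|\alpha|}{2}}$, because the Gaussian in $x_1$ contributes $(1+t)^{\frac14}$; so the pure source term gives exactly the required $C\deltab(1+t)^{-\frac32-|\alpha|}$, not a faster rate. Second, in $\Gamma(g,g)$ you should switch the roles of the two factors as soon as $|\alpha_1|+|\bar\beta|\ge 2$, not only when $|\alpha_1|\ge 2$, since $\xi$-derivatives on $g$, unlike those on $\frac{M-\mu}{\sqrt{\mu}}$ in Lemma \ref{lem5.7}, are not free.
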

		\begin{proof}
			Recalling that  $G=\bar{G}_0+\sqrt{\mu}g$, we   see
			\begin{equation}
				\label{5.35}
				\Gamma(\frac{G}{\sqrt{\mu}},\frac{G}{\sqrt{\mu}})=\Gamma(\frac{\bar{G}_0}{\sqrt{\mu}},\frac{\bar{G}_0}{\sqrt{\mu}})
				+\Gamma(\frac{\bar{G}_0}{\sqrt{\mu}},g)+\Gamma(g,\frac{\bar{G}_0}{\sqrt{\mu}})+\Gamma(g,g).
			\end{equation}
			For the first term in \eqref{5.35}, we have from the similar arguments as \eqref{5.31} that
			\begin{align}
				\label{5.36}
				&|(\partial^\alpha_\beta [\Gamma(\frac{\bar{G}_0}{\sqrt{\mu}},\frac{\bar{G}_0}{\sqrt{\mu}})], \omega^2(\beta)h)|\leq C\sum_{|\alpha_1|\leq|\alpha|}\sum_{ |\bar{\beta}|\leq|\beta_1|\leq|\beta|}
				\underbrace{\int_{\Omega}|\mu^{\epsilon}\partial^{\alpha_1}_{\bar{\beta}}(\frac{\bar{G}_0}{\sqrt{\mu}})|_2| \omega(\beta) \partial^{\alpha-\alpha_1}_{\beta-\beta_1}(\frac{\bar{G}_0}{\sqrt{\mu}})|_{\sigma}|  \omega(\beta)h|_{\sigma}\,dx}_{I_{2}}.
			\end{align}
			Note that $ |\alpha_1|\leq|\alpha|\leq 3$ in \eqref{5.36} due to the fact that $|\alpha|+|\beta|\leq 3$.
			For $\abs{\alpha-\alpha_1}\le\abs{\alpha_1}$,  one has from \eqref{5.36} that
			\begin{align}\label{5-4-1}
				I_{2}&\leq C \deltab^{-1} \norm{\mu^{\epsilon}\p_{\bar\beta}^{\alpha_1}(\frac{\bar G_0}{\sqrt{\mu}})}_{L^2}^2\norm{\omega(\beta)\p_{\beta-\beta_1}^{\alpha-\alpha_1}(\frac{\bar G_0}{\sqrt{\mu}})}_{L^\infty}^2 +C \deltab \norm{\omega(\beta) h}_{\sigma}^2.
			\end{align}
			By  Lemma \ref{2026-4-24-1} and \eqref{5-4-1}, it holds
			\begin{align*}
				&I_2 \leq C\deltab (1+t)^{-\frac32}+C\deltac(\norm{\omega(\beta)h}_{\sigma}^2+\mathcal{D}_{2,\omega}(t)), \quad \text{for}\quad \alpha=0,\\
				&I_2 \leq C\deltab (1+t)^{-\frac32-\abs{\alpha}}+C\deltac\Big(\norm{\omega(\beta)h}_{\sigma}^2+(1+t)^{-1}\mathcal{D}_{2,\omega}(t)+\mathcal{D}_{3,\omega}(t)\Big),\quad \text{for} \quad \alpha\ge1.
			\end{align*}
			For $\abs{\alpha-\alpha_1}\ge\abs{\alpha_1}$, it can be treated in the same way. By the above estimate and \eqref{5.36}, we have
			\begin{align*}
				&|(\partial^\alpha_\beta [\Gamma(\frac{\bar{G}_0}{\sqrt{\mu}},\frac{\bar{G}_0}{\sqrt{\mu}})], \omega^2(\beta)h)|
				\leq  C\deltab (1+t)^{-\frac32}+C\deltac(\norm{\omega(\beta)h}_{\sigma}^2+\mathcal{D}_{2,\omega}(t)), \quad \text{for}\quad \alpha=0,\\
				&|(\partial^\alpha_\beta [\Gamma(\frac{\bar{G}_0}{\sqrt{\mu}},\frac{\bar{G}_0}{\sqrt{\mu}})], \omega^2(\beta)h)|\leq C\deltab (1+t)^{-\frac32-\abs{\alpha}}+C\deltac\Big(\norm{\omega(\beta)h}_{\sigma}^2+(1+t)^{-1}\mathcal{D}_{2,\omega}(t)+\mathcal{D}_{3,\omega}(t)\Big),\quad\text{for} \;\; \alpha\ge1.\notag
			\end{align*}
			
			For the second term in \eqref{5.35}, by \eqref{5.8}, we can obtain
			\begin{align}
				\label{5.40c}
				&|(\partial^\alpha_\beta [\Gamma(\frac{\bar{G}_0}{\sqrt{\mu}},g)], \omega^2(\beta)h)|
				\leq C\sum_{ |\alpha_1|\leq|\alpha|}\sum_{ |\bar{\beta}|\leq|\beta_1|\leq|\beta|}
				\underbrace{\int_{\Omega}|\mu^{\epsilon}\partial^{\alpha_1}_{\bar{\beta}}(\frac{\bar{G}_0}{\sqrt{\mu}})|_2| \omega(\beta) \partial^{\alpha-\alpha_1}_{\beta-\beta_1}g|_{\sigma}|  \omega(\beta)h|_{\sigma}\,dx}_{I_{3}}.
			\end{align}
			From the estimate of $\bar G_0$ in Lemma \ref{2026-4-24-1}, one has
			\begin{align*}
				\abs{\mu^{\epsilon} \p_{\bar\beta}^{\alpha_1}(\frac{\bar G_0}{\sqrt{\mu}})}_2\approx C\deltab\tilde D_{-\frac12}\abs{\mu^{\epsilon} \p_{\bar\beta}^{\alpha_1}(\frac{M-\mu}{\sqrt{\mu}})}_2.
			\end{align*}
			It follows that the estimate of $I_3$ is similar to that of 
			$I_1$ \eqref{5.8}, and using the same method, we obtain from \eqref{5.40c} that
			\begin{align}
				\label{5.41c}
				|(\partial^\alpha_\beta [\Gamma(\frac{\bar{G}_0}{\sqrt{\mu}},g)], \omega^2(\beta)h)|
				\leq C\deltab\left(\|\omega(\beta)h\|_{\sigma}^2+(1+t)^{-1}\mathcal{D}_{2,\omega}(t)\right).
			\end{align}
			Similar arguments as for obtaining \eqref{5.41c} imply
			\begin{align*}
				|(\partial^\alpha_\beta [\Gamma(g,\frac{\bar{G}_0}{\sqrt{\mu}})], \omega^2(\beta)h)|
				\leq C \deltab\big(\|\omega(\beta)h\|_{\sigma}^2+(1+t)^{-1}\mathcal{D}_{2,\omega}(t)\big).
			\end{align*}
			By \eqref{5.8}, for $\alpha=0$, we can arrive at
			\begin{align*}
				|(\partial_\beta [\Gamma(g,g)],\omega^2(\beta)h)|
				&\leq C\sum_{ |\bar{\beta}|\leq|\beta_1|\leq|\beta|}
				\int_{\mathbb R}|\mu^{\epsilon}\partial_{\bar{\beta}}g|_2| \omega(\beta) \partial_{\beta-\beta_1}g|_{\sigma}|  \omega(\beta)h|_{\sigma}\,dx
				\leq C\deltac\|\omega(\beta)h\|_{\sigma}^2+C\deltac\mathcal{D}_{2,\omega}(t).
			\end{align*}
			For $\abs{\alpha}\geq1$, one has
			\begin{align*}
				|(\partial^\alpha_\beta [\Gamma(g,g)],\omega^2(\beta)h)|
				&\leq C\sum_{ |\alpha_1|\leq|\alpha|}\sum_{ |\bar{\beta}|\leq|\beta_1|\leq|\beta|}
				\int_{\mathbb R}|\mu^{\epsilon}\partial^{\alpha_1}_{\bar{\beta}}g|_2| \omega(\beta) \partial^{\alpha-\alpha_1}_{\beta-\beta_1}g|_{\sigma}|  \omega(\beta)h|_{\sigma}\,dx
				\nonumber\\
				&\leq C\deltac\|\omega(\beta)h\|_{\sigma}^2+C\deltac\mathcal{D}_{3,\omega}(t).
			\end{align*}
			Plugging all the above estimates back to \eqref{5.36}, one has \eqref{5.33} and \eqref{5.33-1}. We can use the similar calculations for obtaining \eqref{5.34} and \eqref{5.34-1}. Therefore, the proof of Lemma \ref{lem5.8} is completed.
		\end{proof}
		
		\medskip
		\noindent {\bf Acknowledgment:}\,
		The research of Renjun Duan was partially supported by the grant from the National Natural Science Foundation of China (Project No.~12425109). The  research of Feimin Huang was partially supported by the Strategic Priority Research Program of
		the Chinese Academy of Sciences (No. XDB0510201) and National Natural Sciences Foundation of China (Project  No.
		12288201).

		\medskip
		\noindent{\bf Data availability:} The manuscript contains no associated data.

		\medskip
		\noindent{\bf Conflict of Interest:} The authors declare that they have no conflict of interest.


		\normalsize
	\end{document}